%
%
%


\documentclass[a4paper]{amsart}




\usepackage{amssymb}
\usepackage{xypic}
\usepackage[only,mapsfrom]{stmaryrd}
\usepackage{graphicx}


\newtheorem{theorem}{Theorem}[section]
\newtheorem{lemma}[theorem]{Lemma}
\newtheorem{Lemma}[theorem]{Lemma}
\newtheorem{cor}[theorem]{Corollary}

\theoremstyle{definition}
\newtheorem{definition}[theorem]{Definition}

\theoremstyle{remark}
\newtheorem{remark}[theorem]{Remark}
\newtheorem{convention}[theorem]{Convention}

\numberwithin{equation}{section}
\newcommand{\Q}{\mathbb{Q}}
\newcommand{\C}{\mathbb{C}}

\newcommand{\R}{\mathbb{R}}

\DeclareMathOperator{\image}{im}

\DeclareMathOperator{\Hom}{Hom}

\DeclareMathOperator{\conv}{conv}
\DeclareMathOperator{\trace}{trace}

\DeclareMathOperator{\codim}{codim}
\DeclareMathOperator{\rank}{rank}
\DeclareMathOperator{\id}{Id}
\DeclareMathOperator{\ord}{ord}

\begin{document}

\title{Torus manifolds with non-abelian symmetries}


\author{Michael Wiemeler}
\address{ Department of Mathematics,
University of Fribourg, Chemin du Mus\'ee 23, CH-1700 Fribourg, Switzerland}
\email{michwiem@web.de}
\thanks{Part of the research was supported by SNF Grant No. 200021-117701}


\subjclass[2000]{Primary 57S15, 57S25}

\keywords{quasitoric manifolds, blow ups, compact non-abelian Lie-groups}



\begin{abstract}
Let \(G\) be a connected compact non-abelian Lie-group and \(T\) a maximal torus of \(G\).
A torus manifold with \(G\)-action is defined to be a smooth connected closed oriented manifold of dimension \(2\dim T\) with an almost effective action of \(G\) such that \(M^T\neq \emptyset\).
We show that if there is a torus manifold \(M\) with \(G\)-action, then the action of a finite covering group of \(G\) factors through \(\tilde{G}=\prod SU(l_i+1)\times\prod SO(2l_i+1)\times \prod SO(2l_i)\times T^{l_0}\).
The action of \(\tilde{G}\) on \(M\) restricts to an action of \(\tilde{G}'=\prod SU(l_i+1)\times\prod SO(2l_i+1)\times \prod U(l_i)\times T^{l_0}\) which has the same orbits as the \(\tilde{G}\)-action.

We define invariants of torus manifolds with \(G\)-action which determine their \(\tilde{G}'\)-equivariant diffeomorphism type.
We call these invariants admissible 5-tuples.
A simply connected torus manifold with \(G\)-action is determined by its admissible 5-tuple up to \(\tilde{G}\)-equivariant diffeomorphism.
Furthermore, we prove that all admissible 5-tuples may be realised by torus manifolds with \(\tilde{G}''\)-action, where \(\tilde{G}''\) is a finite covering group of \(\tilde{G}'\). 
\end{abstract}

\maketitle

\section{Introduction}
\label{sec:results}

A \(2n\)-dimensional smooth connected closed oriented manifold \(M\) with an almost effective action of an \(n\)-dimensional torus \(T\) is called \emph{torus manifold} if \(M^T\neq\emptyset\).
If each point of \(M\) has an invariant open neighborhood, which is weakly equivariantly diffeomorphic to an open subset of the standard action of \(T\) on \(\mathbb{C}^n\), then the orbit space \(M/T\) is an \(n\)-dimensional manifold with corners \cite[p.720-721]{1111.57019}.
In this case \(M\) is said to be \emph{quasitoric} if \(M/T\) is face preserving homeomorphic to a simple polytope \(P\).
In that case there are strong relations between the topology of \(M\) and the combinatorics of \(P\) \cite{davis91:_convex_coxet, 1012.52021}.

In this article we study torus manifolds, for which the \(T\)-action may be extended by an action of a connected compact non-abelian Lie-group \(G\).
To state our results, we introduce a bit more notations, which are used to describe the structure of torus manifolds. 

A closed, connected submanifold \(M_i\) of codimension two of a torus manifold \(M\), which is pointwise fixed by a one dimensional subtorus \(\lambda(M_i)\) of \(T\) and which contains a \(T\)-fixed point, is called \emph{characteristic} submanifold of \(M\).

All characteristic submanifolds \(M_i\) are orientable and an orientation of \(M_i\) determines a complex structure on the normal bundle \(N(M_i,M)\) of \(M_i\).

We denote the set of unoriented characteristic submanifolds of \(M\) by \(\mathfrak{F}\). 
If \(M\) is quasitoric the characteristic submanifolds of  \(M\) are given by the preimages of the facets of \(P\). In this case we identify \(\mathfrak{F}\) with the set of facets of \(P\).

Let \(G\) be a connected compact non-abelian Lie-group.
We call a smooth connected closed oriented \(G\)-manifold \(M\) a \emph{torus manifold with \(G\)-action} if \(G\) acts almost effectively on \(M\), \(\dim M=2\rank G\) and \(M^T\neq \emptyset\) for a maximal torus \(T\) of \(G\).
That means that \(M\) with the action of \(T\) is a torus manifold.
Because all maximal tori of \(G\) are conjugated, \(M\) together with the action of any other maximal torus \(T'\) is also a torus manifold.
Moreover, for all choices of a maximal torus of \(G\), we get up to weakly equivariant diffeomorphism the same torus manifold.
The \(G\)-action on \(M\) induces an action of the Weyl-group \(W(G)\) of \(G\) on \(\mathfrak{F}\) and the \(T\)-equivariant cohomology of \(M\). Results of Masuda~\cite{0940.57037} and Davis-Januszkiewicz~\cite{davis91:_convex_coxet} make a comparison of these actions possible.
From this comparison we get a description of the action on \(\mathfrak{F}\) and the isomorphism type of \(W(G)\).
Namely there is a partition of \(\mathfrak{F}=\mathfrak{F}_0\amalg\dots\amalg \mathfrak{F}_k\) and a finite covering group \(\tilde{G}=\prod_{j=1}^k G_j\times T^{l_0}\) of \(G\) such that
each \(G_{j_0}\) is non-abelian and \(W(G_{j_0})\) acts transitively on \(\mathfrak{F}_{j_0}\) and trivially on \(\mathfrak{F}_j\), \(j\neq j_{0}\), and the orientation of each \(M_i \in \mathfrak{F}_j\), \(j\neq j_{0}\), is preserved by \(W(G_{j_0})\) (see section~\ref{sec:charman}).

We call such \(G_i\) the \emph{elementary factors} of \(\tilde{G}\).

By looking at the orbits of the \(T\)-fixed points, we find that we may assume without loss of generality that all elementary factors are isomorphic to \(SU(l_i+1)\), \(SO(2l_i)\) or \(SO(2l_i+1)\) (see section~\ref{sec:Gact}). If \(M\) is quasitoric then all elementary factors are isomorphic to \(SU(l_i+1)\).

Now assume \(\tilde{G}=G_1\times G_2\) with \(G_1=SO(2l_1)\) elementary.
Then the restriction of the action of \(G_1\) to \(U(l_1)\) has the same orbits as the \(G_1\)-action (see section~\ref{sec:so2l}).
The following theorem shows that the classification of simply connected torus manifolds with \(\tilde{G}\)-action reduces to the classification of torus manifolds with \(U(l_1)\times G_2\)-action.

\begin{theorem}[Theorem~\ref{sec:case-g_1=so2l_1-2}]
\label{thm:intro2}
  Let \(M,M'\) be two simply connected torus manifolds with \(\tilde{G}\)-action, \(\tilde{G}=G_1\times G_2\) with \(G_1=SO(2l_1)\) elementary.
  Then \(M\) and \(M'\) are \(\tilde{G}\)-equivariantly diffeomorphic if and only if they are \(U(l_1)\times G_2\)-equivariantly diffeomorphic.
\end{theorem}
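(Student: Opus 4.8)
The ``only if'' direction is immediate: since \(U(l_1)\subset SO(2l_1)=G_1\), any \(\tilde{G}\)-equivariant diffeomorphism is a fortiori \(U(l_1)\times G_2\)-equivariant. So the entire content lies in the converse, and my plan is to recast it as a uniqueness statement. Given a \(U(l_1)\times G_2\)-equivariant diffeomorphism \(f\colon M\to M'\), I would transport the \(\tilde{G}\)-action of \(M'\) back to \(M\) along \(f\). This produces on the single manifold \(M\) two \(\tilde{G}=SO(2l_1)\times G_2\)-actions which, because \(f\) is \(U(l_1)\times G_2\)-equivariant, agree after restriction to \(U(l_1)\times G_2\). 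It then suffices to find a \(U(l_1)\times G_2\)-equivariant self-diffeomorphism \(\phi\) of \(M\) intertwining the two; composing \(f\) with \(\phi\) yields the desired \(\tilde{G}\)-equivariant diffeomorphism. Thus everything reduces to showing that an extension of a fixed \(U(l_1)\times G_2\)-action to a \(\tilde{G}\)-action is unique up to \(U(l_1)\times G_2\)-equivariant diffeomorphism.

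Next I would exploit the structural fact, recalled just before the statement, that the \(SO(2l_1)\)- and \(U(l_1)\)-actions have the same orbits; since \(G_2\) commutes with \(G_1\) and is untouched, the problem is genuinely one about the pair \(U(l_1)\subset SO(2l_1)\), equivariant for the fixed common maximal torus \(T\) and for \(G_2\). Here the key is that \((SO(2l_1),U(l_1))\) is a symmetric pair, so that \(\mathfrak{so}(2l_1)=\mathfrak{u}(l_1)\oplus\mathfrak{m}\) with \([\mathfrak{m},\mathfrak{m}]\subset\mathfrak{u}(l_1)\) and \([\mathfrak{u}(l_1),\mathfrak{m}]\subset\mathfrak{m}\). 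Because the two groups share their orbits, the fundamental vector fields associated to \(\mathfrak{m}\) are everywhere tangent to the \(U(l_1)\)-orbit foliation, hence are pointwise expressible through the fundamental vector fields of \(\mathfrak{u}(l_1)\), which are already prescribed. I would use the bracket relations of the symmetric pair together with the connectedness of \(SO(2l_1)\) to argue that this expression, and therefore the infinitesimal \(\mathfrak{m}\)-action, is forced by the \(\mathfrak{u}(l_1)\)-action up to the discrete Weyl-group data; the latter is precisely the action of \(W(G_1)\) on \(\mathfrak{F}\) together with the orientations of the characteristic submanifolds, all of which \(f\) already matches.

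To make this rigorous I would work locally via the slice theorem at a \(T\)-fixed point \(p\in M^T\): the isotropy representation of the \(SO(2l_1)\)-isotropy subgroup at \(p\) (which contains \(U(l_1)_p\)) determines the \(\tilde{G}\)-action on an invariant neighbourhood of \(p\), and the symmetric-pair constraint shows this local extension is uniquely determined by the given \(U(l_1)\times G_2\)-slice data. The remaining, and hardest, task is global: to glue the local intertwiners over the various orbit-type strata into a single smooth \(U(l_1)\times G_2\)-equivariant \(\phi\). This is exactly where simple connectivity of \(M\) and \(M'\) enters, ruling out the monodromy and covering-group ambiguities that would otherwise obstruct a consistent choice, and where I would invoke the classification of torus manifolds with \(SO(2l_1)\)-action from Section~\ref{sec:so2l}. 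I expect this global assembly---ensuring smoothness across the strata and consistency of the chosen intertwiners---to be the main obstacle, the infinitesimal and local steps being comparatively formal.
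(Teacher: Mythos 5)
Your reduction of the theorem to uniqueness of the \(SO(2l_1)\)-extension of a fixed \(U(l_1)\times G_2\)-action is fine, but the mechanism you propose for that uniqueness cannot work: the extension is not determined infinitesimally, pointwise, or even orbitwise, because it is genuinely global data over the orbit space. On a principal orbit the identification with \(SO(2l_1)/SO(2l_1-1)\) is ambiguous exactly by \(\mathbb{Z}_2=S(O(2l_1-1)\times O(1))/SO(2l_1-1)\), and the coherent choices over \(X=M/U(l_1)\) form a principal \(\mathbb{Z}_2\)-bundle; in the language the paper borrows from J\"anich, an extension is precisely a reduction to \(\mathbb{Z}_2\) of the structure group of the principal \(S^1\)-bundle \(P_M\rightarrow X\). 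That this is global and not local is shown by the product \(U(l_1)\)-action on \(S^{2l_1-1}\times S^1\) (not a torus manifold, but it defeats any pointwise argument): it extends both to the product \(SO(2l_1)\)-action and to the mapping torus of the antipodal map, and these are not \(SO(2l_1)\)-equivariantly diffeomorphic (their \(SO(2l_1-1)\)-fixed sets are two circles versus one circle), yet the antipodal map is \(U(l_1)\)-equivariantly isotopic to the identity through the central circle of \(U(l_1)\), so both extensions restrict to the same \(U(l_1)\)-manifold. Hence no bracket computation with \(\mathfrak{so}(2l_1)=\mathfrak{u}(l_1)\oplus\mathfrak{m}\) can force the \(\mathfrak{m}\)-fields, and the discrete ambiguity is monodromy in \(H^1(X;\mathbb{Z}_2)\), not the Weyl-group and orientation data you cite. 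The paper's actual mechanism is cohomological: simple connectivity forces the relevant first homology to be torsion, hence \(H^1(\,\cdot\,;\mathbb{Z})=0\), hence the Bockstein \(H^1(\,\cdot\,;\mathbb{Z}_2)\rightarrow H^2(\,\cdot\,;\mathbb{Z})\) is injective, hence \(P_M\) admits at most one \(\mathbb{Z}_2\)-reduction, so the two \(SO(2l_1)\)-structures must agree (Lemma~\ref{sec:case-g_1=so2l_1-1}). Your proposal contains no substitute for this step, and the tool you invoke for the gluing---the classification of section~\ref{sec:so2l}---is circular, since for simply connected manifolds that classification is a consequence of the very theorem being proved.

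A second, independent omission is the exceptional orbits \(SO(2l_1)/S(O(2l_1-1)\times O(1))\), where J\"anich's theory does not apply and where half of the paper's proof takes place: one removes an invariant tubular neighborhood \(B\) of the exceptional set \(A\), proves \(H_1(M-\mathring{B};\mathbb{Z})\) is torsion (a Poincar\'e-duality and divisibility argument with the duals of the codimension-two components of \(A\)) so that the bundle argument applies to \(M-\mathring{B}\), arranges the resulting \(SO(2l_1)\times G_2\)-equivariant diffeomorphism to induce the same map as \(f\) on \((\partial B)/U(l_1)\), and finally extends it over the disc bundles by showing it is fiberwise linear on the sphere bundles over \(A\). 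Slices at \(T\)-fixed points see none of this: neighborhoods of \(M^T\) do not cover \(M\), and the strata where the work happens lie away from them. So the steps you call ``comparatively formal'' are not where the content is, and the step you call the ``main obstacle'' is the entire theorem, for which no method is supplied.
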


By applying a blow up construction along the fixed points of an elementary factor of \(\tilde{G}\) isomorphic to \(SU(l_i+1)\) or \(SO(2l_i+1)\), we get a fiber bundle over a complex or real projective space with some torus manifold as fiber.

This construction may be reversed and we call the inverse construction a blow down.
With this notation we get:

\begin{theorem}[Corollaries~\ref{cor:hhh},~\ref{sec:case-g_1-=-9},~\ref{cor:so3}, Theorem~\ref{sec:case-g_1=so2l_1-+1-2}]
\label{thm:intro}
  Let \(\tilde{G}=G_1\times G_2\), \(M\) a torus manifold with \(G\)-action such that \(G_1\) is elementary and \(l_2=\rank G_2\).
  \begin{itemize}
  \item If \(G_1=SU(l_1+1)\) and \(\#\mathfrak{F}_1=2\) in the case \(l_1=1\), then \(M\) is the blow down of a fiber bundle \(\tilde{M}\) over \(\C P^{l_1}\) with fiber some \(2l_2\)-dimensional torus manifold with \(G_2\)-action along an invariant submanifold of codimension two.
    Here the \(G_1\)-action on \(\tilde{M}\) covers the standard action of \(SU(l_i+1)\) on \(\C P^{l_1}\).
  \item If \(G_1=SO(2l_1+1)\) and \(\#\mathfrak{F}_1=1\) in the case \(l_1=1\), then \(M\) is a blow down of a fiber bundle \(\tilde{M}\) over \(\R P^{2l_1}\) with fiber some \(2l_2\)-dimensional torus manifold with \(G_2\)-action along an invariant submanifold of codimension one or a Cartesian product of a \(2l_1\)-dimensional sphere and a \(2l_2\)-dimensional torus manifold with \(G_2\)-action.
    In the first case the \(G_1\)-action on \(\tilde{M}\) covers the standard action of \(SO(2l_1+1)\) on \(\R P^{2l_1}\).
    In the second case \(G_1\) acts  in the usual way on \(S^{2l_1}\).
  \end{itemize}
\end{theorem}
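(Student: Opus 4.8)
The theorem is really two statements, one for each elementary factor type, and both follow the same blow-up/blow-down template, so I would set up a single structural argument and then specialize. The unifying idea is this: an elementary factor $G_1$ of type $SU(l_1+1)$ or $SO(2l_1+1)$ acts on $M$ with a distinguished orbit structure coming from the transitive $W(G_1)$-action on $\mathfrak{F}_1$, and the fixed-point data of $G_1$ on $M$ must assemble into a standard homogeneous model. Concretely, the representation of $G_1$ on the normal space of the $G_1$-fixed-point set, together with the fact that $\dim M = 2\rank G$, forces the "directions moved by $G_1$" to look like the standard $G_1$-representation that defines $\C P^{l_1}$ (resp.\ $\R P^{2l_1}$) as a homogeneous space $SU(l_1+1)/(S(U(1)\times U(l_1)))$ (resp.\ $SO(2l_1+1)/O(2l_1)$).

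\medskip

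\textbf{Step 1: locate the singular set and its normal representation.} First I would analyze the isotropy of the $T$-fixed points under the full $\tilde G$-action. Because $W(G_1)$ acts transitively on $\mathfrak{F}_1$, the characteristic submanifolds in $\mathfrak{F}_1$ are permuted, and their common intersection (or the fixed set of $G_1$) is an invariant submanifold $F$ on which $G_2$ still acts as a torus manifold of the expected dimension $2l_2$. The slice representation of $G_1$ transverse to $F$ must be, up to the covering, the standard defining representation. I would extract this from the weight data on the tangent space at a $T$-fixed point lying in $F$, using the Masuda and Davis--Januszkiewicz comparison already invoked in the excerpt to pin down which weights occur.

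\medskip

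\textbf{Step 2: build the blow-up and identify the projective base.} Having identified the normal representation, I would blow up $M$ along $F$ (for $SU$) or perform the codimension-one analogue (for $SO(2l_1+1)$, where the real projective space and the appearance of the codimension-one alternative both come from $\R P^{2l_1}$ being non-orientable / the $SO(2l_1+1)$ slice). The exceptional locus then fibers $G_1$-equivariantly over the projectivization of the normal representation, which is exactly $\C P^{l_1}$ (resp.\ $\R P^{2l_1}$) with its standard $G_1$-action. The fiber inherits a $G_2$-action and, by a dimension count, is a $2l_2$-dimensional torus manifold with $G_2$-action. The resulting $\tilde M$ is then a fiber bundle over projective space as claimed, and $M$ is recovered as its blow-down along the invariant codimension-two (resp.\ codimension-one) submanifold.

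\medskip

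\textbf{Step 3: handle the low-rank degenerate cases.} The hypotheses $\#\mathfrak{F}_1 = 2$ when $l_1 = 1$ for $SU(2)$, and $\#\mathfrak{F}_1 = 1$ when $l_1 = 1$ for $SO(3)$, together with the product-with-sphere alternative, are exactly the cases where the generic structure can degenerate: an $SO(3)=SU(2)/\{\pm1\}$ action can either fix a submanifold (giving the $\R P^{2}$-bundle) or act as rotations on a sphere factor $S^{2}$ with no fixed directions transverse to it, giving the Cartesian product $S^{2l_1}\times (\text{fiber})$. I would separate these by examining whether $G_1$ has a fixed point on $M$: existence of a fixed point yields the bundle-over-projective-space picture, while its absence forces the sphere factor. \textbf{The main obstacle} I anticipate is Step 2 — verifying that the blow-up is genuinely a locally trivial $G_2$-equivariant fiber bundle over the projective base, rather than merely a map with the right fibers; this requires showing the $G_1$-action on the exceptional divisor is the standard one with constant fiber type, which is where the precise identification of the slice representation in Step 1 must be leveraged carefully, and where the covering group $\tilde G''$ (rather than $\tilde G'$) enters to make the action on $\C P^{l_1}$ or $\R P^{2l_1}$ literally standard.
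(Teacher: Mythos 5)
There is a genuine gap in your Step 2, and it is precisely the point your ``main obstacle'' remark identifies but does not resolve. In the paper the fibration of \(\tilde M\) over \(\C P^{l_1}\) is not obtained from the exceptional divisor at all: it comes from a global structure theorem proved before any blow-up is taken (Lemma~\ref{lem:poly}, and Lemma~\ref{lem:so1} in the \(SO\) case). One takes \(N_1\) to be a component of the intersection of only \(l_1\) of the characteristic submanifolds in \(\mathfrak{F}_1\) (an orbit \(\mathfrak{F}_1'\) of \(W(S(U(l_1)\times U(1)))\), Lemma~\ref{sec:case-g_1-=}), shows by the weight analysis at \(T\)-fixed points that \(N_1\) is \(G_2\)-invariant and is a component of \(M^{H_2}\), where \(H_2\) is the graph of a homomorphism \(\psi_1\colon S(U(l_1)\times U(1))\to Z(G_2)\), proves \(M=G_1N_1\) by a transversality and open--closed argument, and finally checks that \(gN_1\cap N_1\neq\emptyset\) forces \(g\in H_1\), so that \(M\) (resp.\ its blow-up) is the associated bundle \(H_0\times_{H_1}N_1\to H_0/H_1=\C P^{l_1}\) (Corollary~\ref{cor:hhh}). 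Your proposal never constructs this global fiber \(N_1\). Your \(F\) conflates two different submanifolds: in the \(SU\) case the common intersection \(\bigcap_{M_i\in\mathfrak{F}_1}M_i\) equals the blow-up center \(M^{G_1}\) (Corollary~\ref{cor:10}) and has codimension \(2l_1+2\), hence dimension \(2l_2-2\), so it cannot serve as the \(2l_2\)-dimensional fiber --- the fiber is the intersection of only \(l_1\) of the characteristic submanifolds --- and your dimension count in Step 2 is already inconsistent. More importantly, knowing that the exceptional divisor \(P_\C(N(F,M))\cong\C P^{l_1}\times F\) fibers \(G_1\)-equivariantly over \(\C P^{l_1}\) produces no map from the rest of \(\tilde M\) to \(\C P^{l_1}\): the projection can only be defined by sending \(y\in\tilde M\) to the coset \(gH_1\) with \(y\in g\tilde N_1\), and the well-definedness of this assignment is exactly the normalizer computation behind Corollary~\ref{cor:hhh}. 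No refinement of the slice representation at \(F\), and no verification that the action on the exceptional divisor is standard, substitutes for that global argument; this is the missing idea.

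A secondary error: in the \(SO(2l_1+1)\) case your dichotomy ``fixed point \(\Rightarrow\) bundle over \(\R P^{2l_1}\); no fixed point \(\Rightarrow\) product \(S^{2l_1}\times(\text{fiber})\)'' is false. When \(M^{G_1}=\emptyset\), Corollary~\ref{cor:so3} allows two outcomes: the product \(S^{2l_1}\times N_1\) and the twisted bundle \(S^{2l_1}\times_{\mathbb{Z}_2}N_1\) over \(\R P^{2l_1}\), and the latter also has no \(G_1\)-fixed points. What separates the two is whether \(\bigcap_{M_i\in\mathfrak{F}_1}M_i=M^{SO(2l_1)}\) is disconnected or connected (Lemmas~\ref{sec:case-g_1=so2l_1-+1} and~\ref{sec:case-g_1=so2l_1-+1-3}), equivalently whether \(S(O(2l_1)\times O(1))\) preserves \(N_1\); emptiness of \(M^{G_1}\) alone does not decide it.
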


If all elementary factors of \(\tilde{G}\) are isomorphic to \(SO(2l_i+1)\) or \(SU(l_i+1)\), then we may iterate this construction.
By this iteration we get a complete classification of torus manifolds with \(\tilde{G}\)-action up to \(\tilde{G}\)-equivariant diffeomorphism in terms of admissible 5-tuples (Theorem~\ref{thm:class1}).
For general \(G\) we have \(\tilde{G}=\prod SU(l_i+1)\times\prod SO(2l_i+1)\times SO(2l_i)\times T^{l_0}\).
We may restrict the action of \(\tilde{G}\) to \(\prod SU(l_i+1)\times \prod SO(2l_i+1)\times \prod U(l_i) \times T^{l_0}\).
Therefore we get invariants for torus manifolds with \(G\)-action from the above classification.
With Theorem~\ref{thm:intro2}, we see that these invariants determine the \(G\)-equivariant diffeomorphism type of simply connected torus manifolds with \(G\)-action. 

At the end we apply our classification to get more explicit results in special cases. These are:

For the special case \(G_2=\{1\}\) we get:
\begin{cor}[Corollary \ref{sec:g-action-m-2}]
  Assume that \(G\) is elementary and \(M\) a torus manifold with \(G\)-action.
  Then \(M\) is equivariantly diffeomorphic to \(S^{2l}\) or \(\mathbb{C}P^l\) if \(G=SO(2l+1),SO(2l)\) or \(G=SU(l+1)\), respectively.
\end{cor}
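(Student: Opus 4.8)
The plan is to deduce this corollary directly from the structural classification established earlier in the paper, specialising to the case where the complementary factor $G_2$ is trivial. Since $G$ is elementary, the earlier reduction (section~\ref{sec:Gact}) tells us that, after passing to a finite covering, $G$ is isomorphic to one of $SU(l+1)$, $SO(2l+1)$, or $SO(2l)$, and $M$ is a torus manifold with $G$-action of dimension $2l = 2\rank G$. I would first record that, because $G_2 = \{1\}$, any fibre appearing in the blow-up/fibre-bundle descriptions is a $0$-dimensional torus manifold with trivial group action, i.e. a single point. This collapses each fibre bundle in Theorem~\ref{thm:intro} to its base.

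Next I would treat the three cases separately using Theorem~\ref{thm:intro} and Theorem~\ref{sec:case-g_1=so2l_1-+1-2}. For $G = SU(l+1)$, the first bullet of Theorem~\ref{thm:intro} realises $M$ as a blow down of a fibre bundle over $\C P^l$ whose fibre is a point; hence $\tilde{M} = \C P^l$ itself, and the blow down along a codimension-two invariant submanifold must be trivial, so $M$ is equivariantly diffeomorphic to $\C P^l$ with the standard $SU(l+1)$-action. For $G = SO(2l+1)$, the second bullet gives either a fibre bundle over $\R P^{2l}$ with point fibre — which one must rule out on the grounds that $M$ is oriented and carries a $T$-fixed point, forcing the sphere alternative — or a product $S^{2l} \times \{pt\} = S^{2l}$ with the usual action. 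For $G = SO(2l)$, I would invoke Theorem~\ref{thm:intro2} together with the reduction to $U(l)$-actions (section~\ref{sec:so2l}): the $SO(2l)$-action and its restriction to $U(l)$ have the same orbits, and $U(l) \supset SU(l)$, so the problem reduces to the $SU(l)$ case already handled, yielding $M \cong S^{2l}$.

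The main obstacle I anticipate is the $SO(2l)$ case, where the answer $S^{2l}$ rather than $\C P^{l}$ is perhaps counterintuitive given the $U(l)$-reduction. Here the care lies in correctly identifying which low-rank or degenerate fibre-bundle configurations survive once $G_2$ is trivial, and in confirming that the orientation and fixed-point hypotheses single out the sphere. I would verify this by examining the $T$-fixed point set and the characteristic submanifolds $\mathfrak{F}$ directly: for an elementary $G$ with $G_2 = \{1\}$, the transitivity of the Weyl-group action on $\mathfrak{F}$ pins down $\#\mathfrak{F}$ and hence the orbit structure, and a short computation with the tangential representation at a $T$-fixed point identifies $M$ among the three model spaces. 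The remaining verifications — that the $G$-action on each model is the standard one and that the diffeomorphism is genuinely equivariant — are routine given the explicit bundle descriptions, so the substance of the argument is the case analysis and the elimination of the non-sphere alternative in the orthogonal cases.
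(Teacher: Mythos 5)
Your route---deducing the corollary from the classification machinery of sections~\ref{sec:su}--\ref{sec:classif}---is genuinely different from the paper's own proof and is legitimate in principle: the paper itself remarks after Corollary~\ref{sec:g-action-m-2} that such an alternative proof exists. (The paper's proof, placed already in section~\ref{sec:Gact}, uses none of this machinery: Lemmas~\ref{lem:iso} and~\ref{lem:iso2} pin down the possible orbit types, and then \(M\) is identified directly, via a cohomogeneity-one decomposition \(M=D^{2l}_1\cup_\phi D^{2l}_2\) for \(SO(2l)\), and via homogeneity, \(M=SO(2l+1)/SO(2l)\) resp.\ \(M=SU(l+1)/S(U(l)\times U(1))\), in the other two cases.) Your treatment of \(G=SU(l+1)\) and \(G=SO(2l+1)\) is essentially correct: with \(G_2=\{1\}\) the fibre \(N\) is a point, \(\psi\) is forced to be trivial and \(A=B=\emptyset\), so \(M=SU(l+1)\times_{S(U(l)\times U(1))}\text{pt}=\C P^l\) by Corollary~\ref{cor:hhh}, while in the orthogonal case the twisted alternative \(S^{2l}\times_{\mathbb{Z}_2}\text{pt}=\R P^{2l}\) is excluded because the \(\mathbb{Z}_2\)-action on a point cannot be orientation reversing (equivalently, \(M\) is oriented), leaving \(M=S^{2l}\).

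The genuine gap is the \(SO(2l)\) case. The reduction of section~\ref{sec:so2l} does \emph{not} reduce the problem ``to the \(SU(l)\) case already handled'': it replaces \(SO(2l)\) by \(U(l)\), i.e.\ up to finite cover by \(SU(l)\times S^1\) with a \emph{nontrivial} complementary circle \(G_2=S^1\). The elementary-\(SU(l)\)-with-trivial-complement case you handled gives \(\C P^{l-1}\), which is not even of the right dimension. What the reduction actually produces is an admissible triple \((\psi,N,A)\) in which \(N\) has dimension \(2l-2(l-1)=2\), hence \(N=S^2\) with an effective \(S^1\)-action, \(\psi\) is the surjection determined in section~\ref{sec:so2l}, and \(A=N^{S^1}=M^{U(l)}\) consists of two points; one must then actually prove that the blow-down of the \(S^2\)-bundle \(H_0\times_{H_1}S^2\rightarrow \C P^{l-1}\) along the two copies of \(\C P^{l-1}\) is \(S^{2l}\), and that any torus manifold with these invariants has a triple equivalent to that of the standard \(S^{2l}\). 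This is precisely the work done in the paper's Lemma~\ref{sec:classification-1} (or, in the paper's own proof, by the disk decomposition); your proposal asserts the conclusion without it, and your fallback---``a short computation with the tangential representation at a \(T\)-fixed point identifies \(M\)''---is not a substitute, since tangential data at fixed points does not determine \(M\) globally. A second, smaller omission: Theorem~\ref{sec:case-g_1=so2l_1-2}, which you need to pass from \(U(l)\)- to \(SO(2l)\)-equivariant diffeomorphism, is stated only for \emph{simply connected} torus manifolds, a hypothesis the corollary does not make. It can be patched a posteriori (once \(M\) is shown \(U(l)\)-equivariantly diffeomorphic to \(S^{2l}\) it is simply connected, so the theorem applies to \(M\) and the standard sphere), but the patch must be stated.
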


We recover certain results of Kuroki~\cite{kuroki_pre_1_2009,kuroki_pre_2_2009,kuroki_pre_3_2009} who gave a classification of torus manifolds with \(G\)-action and \(\dim M/G\leq 1\) (see Corollaries~\ref{sec:classification-4} and~\ref{sec:classification-8}).

For quasitoric manifolds we have the following result.
\begin{theorem}[Corollary~\ref{sec:classification-5}]
  If \(G\) is semi-simple and \(M\) a quasitoric manifold with \(G\)-action, then
  \begin{equation*}
    \tilde{G}=\prod_{i=1}^kSU(l_i+1)
  \end{equation*}
  and \(M\) is equivariantly diffeomorphic to a product of complex projective spaces.
\end{theorem}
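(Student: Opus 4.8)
The plan is to combine the structural reductions already available in the paper with an induction on the number \(k\) of elementary factors. First I would dispose of the assertion about \(\tilde G\). Because \(M\) is quasitoric, the result recalled in section~\ref{sec:Gact} forces every elementary factor of \(\tilde G\) to be of the form \(SU(l_i+1)\); no factor \(SO(2l_i)\) or \(SO(2l_i+1)\) can occur. Since \(G\) is semi-simple, so is its finite covering group \(\tilde G\), and a semi-simple group has no central torus factor; hence \(l_0=0\) and the factor \(T^{l_0}\) is trivial. Combining these, \(\tilde G=\prod_{i=1}^k SU(l_i+1)\), which is the first claim. Recall also that every quasitoric manifold is simply connected, so \(M\) is in particular a simply connected torus manifold with \(\tilde G\)-action, and the transitive \(W(SU(l_i+1))=S_{l_i+1}\)-action on \(\mathfrak F_i\) described in section~\ref{sec:charman} gives \(\#\mathfrak F_i=l_i+1\).

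I would then prove \(M\cong\prod_{i=1}^k\C P^{l_i}\) by induction on \(k\). For \(k=1\) the group \(G\) is elementary and equal to \(SU(l_1+1)\), so Corollary~\ref{sec:g-action-m-2} gives \(M\cong\C P^{l_1}\) directly; this is the base case. For the inductive step I would write \(\tilde G=G_1\times G_2\) with \(G_1=SU(l_1+1)\) elementary and \(G_2=\prod_{i=2}^k SU(l_i+1)\). Since \(\#\mathfrak F_1=l_1+1\), the hypothesis needed for the case \(l_1=1\), namely \(\#\mathfrak F_1=2\), is automatic, and the first bullet of Theorem~\ref{thm:intro} applies: \(M\) is the blow down of a fibre bundle \(\tilde M\to\C P^{l_1}\) whose fibre \(F\) is a \(2l_2\)-dimensional torus manifold with \(G_2\)-action, and on which \(G_1\) covers the standard transitive action of \(SU(l_1+1)\) on \(\C P^{l_1}\). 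Transitivity of this action identifies \(\tilde M\) with the associated bundle \(SU(l_1+1)\times_{U(l_1)}F\).

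The crux, and the step I expect to be the main obstacle, is to show that this associated bundle is in fact the trivial product and that the blow down in Theorem~\ref{thm:intro} is trivial, so that \(M=\tilde M=\C P^{l_1}\times F\). A priori the bundle is twisted by the \(U(l_1)\)-action on the fibre, and a genuine blow down along a characteristic submanifold could take place; both possibilities must be excluded. To do so I would analyse the orbit space \(P=M/T\), which is a simple polytope. The \(l_1+1\) facets in \(\mathfrak F_1\) are permuted by the full symmetric group \(S_{l_1+1}\), and I would show that this forces them to be the facets of a simplex direction, i.e. any \(l_1\) of them meet in a common face while all \(l_1+1\) have empty common intersection, so that \(P\) splits as \(\Delta^{l_1}\times Q\). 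This product splitting of the orbit space simultaneously trivialises the bundle over every face of the simplex factor and shows that there is no additional characteristic submanifold in \(\mathfrak F_0\) that a blow down could have produced, whence \(M=\C P^{l_1}\times F\).

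With \(M=\C P^{l_1}\times F\) established, the fibre \(F\) is a factor of a quasitoric manifold, so its orbit space \(Q\) is a simple polytope and \(F\) is again quasitoric; it carries a \(G_2\)-action with \(k-1\) elementary factors, all of type \(SU\). Applying the induction hypothesis to \(F\) yields \(F\cong\prod_{i=2}^k\C P^{l_i}\), and hence \(M\cong\C P^{l_1}\times\prod_{i=2}^k\C P^{l_i}=\prod_{i=1}^k\C P^{l_i}\) equivariantly. Finally, the product structure of \(P\) together with the standard characteristic data on the simplex factor lets the Davis--Januszkiewicz reconstruction confirm that this diffeomorphism is \(\tilde G\)-equivariant, completing the induction and the proof.
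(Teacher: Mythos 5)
Your first claim and the overall inductive frame (base case via Corollary~\ref{sec:g-action-m-2}, inductive step via Corollary~\ref{cor:hhh}) are sound, but the step you yourself identify as the crux contains a genuine gap, and the method you propose for it would fail. You assert that the transitive \(S_{l_1+1}\)-permutation of the \(l_1+1\) facets in \(\mathfrak{F}_1\) forces any \(l_1\) of them to meet while all \(l_1+1\) have empty common intersection. That implication is false: take \(M=\C P^{l_1+1}\) with the standard action of \(SU(l_1+1)\times S^1\). Here \(W(SU(l_1+1))=S_{l_1+1}\) permutes the \(l_1+1\) facets of \(\mathfrak{F}_1\) transitively, yet all \(l_1+1\) of them intersect --- their intersection is the \(SU(l_1+1)\)-fixed point --- and \(M\) really is a nontrivial blow down, not a product. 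So no purely combinatorial analysis of the Weyl-group action on the polytope can exclude the blow down or the twisting; semi-simplicity of \(G_2\) must enter, and your sketch never says how it does.

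The mechanism that actually closes this step is group-theoretic, not combinatorial: the invariant \(\psi_1\in\Hom(S(U(l_1)\times U(1)),Z(G_2))\) of Lemma~\ref{lem:poly} is automatically trivial when \(G_2\) is semi-simple, because \(S(U(l_1)\times U(1))\) is connected while \(Z(G_2)\) is finite. Corollary~\ref{cor:case-g_1-=} then gives \(M^{G_1}=\emptyset\) (equivalently, Corollary~\ref{sec:case-g_1-=-8} shows a \(G_1\)-fixed point would force \(\image\psi_1\cong S^1\subset Z(G_2)\), impossible), so no blow down occurs; and since \(\image\psi_1=\{1\}\), we get \(H_1=H_2\), which acts trivially on \(N_1\), so the bundle \(H_0\times_{H_1}N_1\) of Corollary~\ref{cor:hhh} is the trivial product \(\C P^{l_1}\times N_1\), \(G_1\times G_2\)-equivariantly. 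With this replacement your induction goes through (use Lemma~\ref{sec:action-weyl-group-2} to keep the remaining factors elementary and Theorem~\ref{sec:case-g_1-=-4} to keep the fibre quasitoric). Note that the paper's own proof bypasses the induction on bundles entirely: since \(l_0=0\), the only admissible \(5\)-tuple for a semi-simple \(\tilde{G}\) is \((\text{const},\text{pt},\emptyset,\emptyset,\emptyset)\), and the correspondence of Theorem~\ref{thm:class1} identifies \(M\) with \(\prod_i\C P^{l_i}\). A smaller further point: even if one had \(P\cong\Delta^{l_1}\times Q\), Davis--Januszkiewicz reconstruction controls only the \(T\)-equivariant homeomorphism type, so it could not by itself deliver the \(\tilde{G}\)-equivariant diffeomorphism you need.
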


Furthermore, we give an explicit classification of simply connected torus manifolds with \(G\)-action such that \(\tilde{G}\) is semi-simple and has two simple factors.

\begin{theorem}[Corollaries~\ref{sec:g-action-m-2},~\ref{sec:classification-7},~\ref{sec:classification-6}]
  Let \(\tilde{G}=G_1\times G_2\) with \(G_i\) simple and \(M\) a simply connected torus manifold with \(G\)-action.
  Then \(M\) is one of the following:
  \begin{equation*}
    \mathbb{C}P^{l_1}\times\mathbb{C}P^{l_2},\;\; \mathbb{C}P^{l_1}\times S^{2l_2},\;\; \#_i(S^{2l_1}\times S^{2l_2})_i,\;\; S^{2l_1+2l_2}
  \end{equation*}
The \(\tilde{G}\)-actions on these spaces is unique up to equivariant diffeomorphism.
\end{theorem}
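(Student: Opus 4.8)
The plan is to reduce the classification to an enumeration over the isomorphism types of the two simple factors and then read off the total space from the bundle descriptions already established. Since \(\tilde{G}=G_1\times G_2\) has no toral factor (so \(l_0=0\)) and each \(G_i\) is simple, each \(G_i\) is an elementary factor of one of the three types \(SU(l_i+1)\), \(SO(2l_i+1)\), or \(SO(2l_i)\). Whenever a factor equals \(SO(2l_i)\), Theorem~\ref{thm:intro2} lets me replace it by \(U(l_i)\) without changing the orbits, and since \(M\) is simply connected the \(\tilde{G}\)-equivariant diffeomorphism type is recovered from the \(U(l_i)\)-reduced problem. Thus it suffices to treat each factor as \(SU\), odd orthogonal, or (the image of) even orthogonal, and in all of these the relevant ``building block'' will turn out to be a sphere or a complex projective space.

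First I would fix \(G_1\) and apply Theorem~\ref{thm:intro}. If \(G_1=SU(l_1+1)\), this presents \(M\) as a blow down of a fiber bundle \(\tilde{M}\to\mathbb{C}P^{l_1}\) whose fiber \(F\) is a \(2l_2\)-dimensional torus manifold with \(G_2\)-action; if \(G_1=SO(2l_1+1)\) it presents \(M\) either as a blow down of a bundle over \(\mathbb{R}P^{2l_1}\) or as a product \(S^{2l_1}\times F\); the even orthogonal case is fed through the \(U(l_1)\)-reduction above. In every case the fiber \(F\) is a torus manifold acted on by the single simple elementary factor \(G_2\), so Corollary~\ref{sec:g-action-m-2} identifies it: \(F\cong\mathbb{C}P^{l_2}\) when \(G_2=SU(l_2+1)\) and \(F\cong S^{2l_2}\) when \(G_2\) is orthogonal. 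This already singles out the four candidate patterns: a \(\mathbb{C}P^{l_2}\)- or \(S^{2l_2}\)-bundle over \(\mathbb{C}P^{l_1}\), and the analogous \(\mathbb{R}P^{2l_1}\)-bundles or products \(S^{2l_1}\times F\) in the orthogonal case.

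Next I would determine the total space in each pattern. When both factors are special unitary, equivariance together with simple connectivity forces the \(\mathbb{C}P^{l_2}\)-bundle over \(\mathbb{C}P^{l_1}\) to be the trivial product and to carry no nontrivial blow down, giving \(\mathbb{C}P^{l_1}\times\mathbb{C}P^{l_2}\); the mixed case \(SU\times SO\) similarly yields \(\mathbb{C}P^{l_1}\times S^{2l_2}\). In the purely orthogonal case the \(S^{2l_1}\)- and \(S^{2l_2}\)-directions can be assembled either as a product or as iterated equivariant connected sums, producing \(\#_i(S^{2l_1}\times S^{2l_2})_i\), while the degenerate assembly in which the two orthogonal actions combine into a single linear action on one big sphere yields \(S^{2l_1+2l_2}\). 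I would verify that no further manifolds occur by checking that the admissible 5-tuple associated to \(M\) (Theorem~\ref{thm:class1}) must be one of these, and that each of the four spaces indeed carries such an action; the asserted uniqueness of the \(\tilde{G}\)-action up to equivariant diffeomorphism then follows at once, since a simply connected torus manifold with \(G\)-action is determined by its admissible 5-tuple.

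The main obstacle is the purely orthogonal case. Here the blow-down description over \(\mathbb{R}P^{2l_1}\) and the product-with-\(S^{2l_1}\) alternative feed into the same dimension count, so I must show that simple connectivity forces the covering \(\tilde{M}\) to assemble into exactly the connected-sum family \(\#_i(S^{2l_1}\times S^{2l_2})_i\) or to collapse to \(S^{2l_1+2l_2}\), with nothing in between. Tracking how the equivariant blow-up and blow-down interact with the fundamental group, and ruling out bundles with nontrivial monodromy or clutching, is the delicate step; by contrast the \(SU\)-cases and the \(SO(2l)\)-to-\(U(l)\) reduction are comparatively routine once Theorems~\ref{thm:intro} and~\ref{thm:intro2} and Corollary~\ref{sec:g-action-m-2} are in hand.
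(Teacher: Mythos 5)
Your overall strategy---enumerate the types of the two simple factors, use Theorem~\ref{thm:intro2} to replace \(SO(2l_i)\) by \(U(l_i)\), identify the single-factor building block via Corollary~\ref{sec:g-action-m-2}, and pin down the assembly through admissible 5-tuples---is the paper's strategy in outline, and it does dispose of the cases where at least one factor is \(SU(l_i+1)\) or \(SO(2l_i+1)\). But there is a genuine gap exactly where you flag ``the delicate step'': the case \(\tilde{G}=SO(2l_1)\times SO(2l_2)\). Your picture (a sphere or projective-space fiber over \(\C P^{l_1}\) or \(\R P^{2l_1}\), possibly blown down) cannot produce \(\#_k(S^{2l_1}\times S^{2l_2})\) for \(k\geq 2\), and the obstruction is not ``monodromy or clutching'' of such bundles. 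In the paper (Corollary~\ref{sec:classification-6}, resting on Lemma~\ref{sec:classification-1}), after replacing both factors by \(SU(l_i)\times S^1\), the admissible data for \(M\) has residual torus \(T^2\), and the residual manifold is a \emph{four-dimensional} simply connected torus manifold \(N\) (simple connectivity descends by Lemma~\ref{sec:case-g_1-=-5}) on which both circles act with codimension-two fixed point sets. Completeness of the list then comes from an external input your proposal lacks: the classification of simply connected four-dimensional \(T^2\)-manifolds (the Orlik--Raymond result invoked in Lemma~\ref{sec:classification-1}), which shows that the equivariant type of \(N\)---hence, through the tower of bundles and blow downs, of \(M\)---is determined by \(\#M^T=\chi(M)\in 2\mathbb{Z}\); one then realizes every value \(\chi=2k+2\) by an explicit \(\tilde{G}\)-action on \(\#_k(S^{2l_1}\times S^{2l_2})\), and \(\chi=2\) by \(S^{2l_1+2l_2}\). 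Saying you would ``verify that the admissible 5-tuple must be one of these'' names precisely the step that has to be carried out; without the four-dimensional classification (or a substitute) it does not go through, and the uniqueness assertion rests on the same missing step.

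Two smaller problems. First, in the \(SU\times SU\) and \(SU\times SO\) cases your mechanism---``equivariance together with simple connectivity forces the \(\C P^{l_2}\)-bundle over \(\C P^{l_1}\) to be the trivial product''---is not a valid reason: projectivizations of nontrivial vector bundles are simply connected \(\C P^{l_2}\)-bundles over \(\C P^{l_1}\). What actually forces triviality and rules out blow downs is semi-simplicity: \(\psi_1\) takes values in the finite center \(Z(G_2)\) and is therefore trivial (its source \(S(U(l_1)\times U(1))\) is connected), so \(H_1\) acts trivially on the fiber and the admissibility condition \(\ker\psi_1=SU(l_1)\) fails, forcing \(A_1=\emptyset\); simple connectivity is not needed in these cases at all (they hold in Corollary~\ref{sec:classification-7} without it, and it is used only to discard the \(\mathbb{Z}_2\)-quotients \(S_1^{2l_1}\times_{\mathbb{Z}_2}S_1^{2l_2}\) and \(S_1^{2l_1}\times_{\mathbb{Z}_2}S_2^{2l_2}\)). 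Second, your opening reduction ``each \(G_i\) is an elementary factor'' can fail: when \(G_1\cong G_2\cong SU(2)\), the two simple factors may together form a single elementary factor \(\text{Spin}(4)\); this case is skipped by your enumeration, and it is covered by Corollary~\ref{sec:g-action-m-2} (via Convention~\ref{sec:g-action-m-3}), yielding \(M=S^4=S^{2l_1+2l_2}\).
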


The paper is organized as follows.
In section~\ref{sec:charman} we investigate the action of the Weyl-group of \(G\) on \(\mathfrak{F}\) and  \(H^*_T(M)\).
In section~\ref{sec:Gact} we determine the orbit-types of the \(T\)-fixed points in \(M\) and the isomorphism types of the elementary factors of \(G\).
In section~\ref{sec:blow} the basic properties of the blow up construction are established.
In section~\ref{sec:su} actions with elementary factor \(G_1=SU(l_1+1)\) are studied.
In section~\ref{sec:so2l} we give an argument which reduces the classification problem for actions with an elementary factor \(G_1=SO(2l_1)\) to that with an elementary factor \(SU(l_1)\).
In section~\ref{sec:so} we classify torus manifolds with \(G\)-action with elementary factor \(G_1=SO(2l_1+1)\).
In section~\ref{sec:classif} we iterate the classification results of the previous sections and illustrate them with some applications.
There are two appendices with preliminary facts on Lie-groups and torus manifolds.

I would like to thank Prof. Anand Dessai for helpful discussions.
I would also like to thank Prof. Mikiya Masuda for a simplification of the proof of Lemma~\ref{lem:action-weyl-group}.

\section{The action of the Weyl-group on $\mathfrak{F}$}
\label{sec:charman}

Let \(G\) be a compact connected Lie-group of rank \(n\) and \(T\) a maximal torus of \(G\).
Moreover, let \(M\) be a torus manifold with \(G\)-action.
That means that \(G\) acts almost effectively on the \(2n\)-dimensional smooth closed connected oriented manifold \(M\) such that \(M^T\neq \emptyset\).
We call a closed connected submanifold \(M_i\) of codimension two of \(M\), which is pointwise fixed by a one-dimensional subtorus \(\lambda(M_i)\) of \(T\) and which contains a \(T\)-fixed point, a characteristic submanifold of \(M\). 
If \(g\) is an element of the normalizer \(N_GT\) of \(T\) in \(G\), then, for every characteristic submanifold \(M_i\), \(gM_i\) is also a characteristic submanifold.
Therefore there are  actions of \(N_GT\) and the Weyl-group of \(G\) on \(\mathfrak{F}\).

In this section we describe this action of the Weyl-group of \(G\) on \(\mathfrak{F}\). At first we recall the definition of the equivariant cohomology of a \(G\)-space \(X\).
Let \(EG \rightarrow BG\) be a universal principal \(G\)-bundle.
Then \(EG\) is a contractible free right \(G\)-space.
If \(T\) is a maximal torus of \(G\), then we may identify \(ET=EG\) and \(BT=EG/T\).
The Borel-construction \(X_G\) of \(X\) is the orbit space of the right action \(((e,x),g) \mapsto (eg,g^{-1}x)\) on \(EG \times X\).
The equivariant cohomology \(H^*_G(X)\) of \(X\) is defined as the cohomology of \(X_G\).

In this section we take all cohomology groups with coefficients in \(\Q\).

The \(G\)-action on  \(EG \times X\) induces a right action of the normalizer of \(T\) on \(X_T\).
Therefore it induces a left action of the Weyl-group of \(G\) on the \(T\)-equivariant cohomology of \(X\).

Now let \(X=M\) be a torus manifold with \(G\)-action.
Denote the characteristic submanifolds of \(M\) by \(M_i\), \(i=1,\dots,m\).
Then, for any \(g \in N_GT\), \(M_{g(i)}=gM_i\) is also  a characteristic submanifold which depends only on the class \(w =[g]\in W(G)=N_GT/T\).
Therefore we get an action of the Weyl-group of \(G\) on \(\mathfrak{F}\).
Notice that \(M_i\in\mathfrak{F}\) is a fixed point of the \(W(G)\)-action on \(\mathfrak{F}\) if and only if it is invariant under the action of \(N_GT\) on \(M\).

A choice of an orientation for each characteristic submanifold of \(M\) together with an orientation for \(M\) is called an \emph{omniorientation} of \(M\).
If we fix an omniorientation for \(M\), then the \(T\)-equivariant Poincar\'e-dual \(\tau_i\) of \(M_i\) is well defined.

It is the image of the Thom-class of  \(N(M_i, M)_T\) under the natural map
 \begin{equation*}
   \psi: H^2(N(M_i,M)_T,N(M_i,M)_T-(M_i)_T) \rightarrow H^2(M_{T},M_{T}-(M_i)_T) \rightarrow H_T^2(M).
 \end{equation*}
Because of the uniqueness of the Thom-class \cite[p.110]{0298.57008} and  because \(\psi\) commutes with the action of \(W(G)\), we have
\begin{equation}
  \label{eq:jjkj}
   \tau_{g(i)}=\pm g^*\tau_i.
\end{equation}
Here the minus-sign occurs if and only if \(g|_{M_i}:M_{i}\rightarrow M_{g(i)}\) is orientation reversing.
We say that the class \([g]\in W(G)\) acts orientation preserving at \(M_i\) if this map is orientation preserving.
If \([g]\) acts orientation preserving at all characteristic submanifolds, then we say that \([g]\) preserves the omniorientation of \(M\).

Let \(S= H^{>0}(BT)\) and \(\hat{H}_T^*(M)= H_T^*(M)/S\text{-torsion}\). Because \(M^T\neq \emptyset\), there is an injection \(H^2(BT) \hookrightarrow H^2_T(M)\) and 
\begin{equation}
  \label{eq:lkjlk}
   H^2(BT)\cap S\text{-torsion}=\{0\}.
\end{equation}
By \cite[p. 240-241]{0940.57037}, the \(\tau_i\) are linearly independent in \(\hat{H}_T^*(M)\).
By Lemma 3.2 of \cite[p. 246]{0940.57037}, they form a basis of \(\hat{H}_T^2(M)\).

The Lie-algebra \(LG\) of \(G\) may be endowed with an Euclidean inner product which is invariant for the adjoint representation.
This allows us to identify the Weyl-group \(W(G)\) of \(G\) with a group of orthogonal transformations on the Lie-algebra \(LT\) of \(T\).
It is generated by reflections in the walls of the Weyl-chambers of \(G\) \cite[p. 192-193]{0581.22009}.
In the following we say that an element of \(W(G)\) is a reflection if and only if it is a reflection in a wall of a Weyl-chamber of \(G\).
An element \(w\in W(G)\) is a reflection if and only if it acts as a reflection on \(H^2(BT)\).

Here we say that \(A\in \text{Gl}(L)\) acts as a reflection on the \(\mathbb{Q}\)-vector space \(L\) if there is a decomposition \(L=L_+\oplus L_-\) with \(\dim_\Q L_-=1\) and \(A|_{L_\pm}=\pm \id\).
Notice that \(A\in \text{Gl}(L)\) acts as a reflection on \(L\) if and only if \(\ord A=2\) and \(\trace(A,L)=\dim_\Q L -2\).
 
\begin{lemma}
\label{lem:action-weyl-group}
  Let \(w \in W(G)\) be a reflection. Then there are the following possibilities for the action of \(w\) on \(\mathfrak{F}\):
\begin{enumerate}
\item \(w\) fixes all except exactly two elements of \(\mathfrak{F}\). It acts orientation preserving at all characteristic submanifolds.
\item \(w\) fixes  all except exactly two elements of \(\mathfrak{F}\).
  Denote the elements of \(\mathfrak{F}\) which are not fixed by \(w\) by \(M_1,M_2\).
  The action of \(w\) is orientation preserving at all characteristic submanifolds of \(M\) except \(M_1,M_2\).
  It is orientation reversing at \(M_1,M_2\).
\item \(w\) fixes all elements of \(\mathfrak{F}\).
  It acts orientation reversing at exactly one characteristic submanifold of \(M\).
\end{enumerate}
\end{lemma}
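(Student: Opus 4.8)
The plan is to show that the entire statement is equivalent to the single numerical fact that the $(-1)$-eigenspace of $w$ on $\hat H^2_T(M)$ is exactly one-dimensional, and then to read off the three cases from the combinatorics of the signed permutation that $w$ induces on the basis $\{\tau_i\}$. Throughout write $U:=\hat H^2_T(M)$, and let $U_\pm,V_\pm$ denote the $(\pm1)$-eigenspaces of $w$ on $U$ and on $V:=H^2(BT)$.

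First I would record how $w$ acts on $U$. By \eqref{eq:jjkj} a representative $g\in N_GT$ of $w$ satisfies $g^*\tau_i=\varepsilon_i\tau_{g(i)}$ with $\varepsilon_i=\pm1$, the sign being $-1$ precisely when $g|_{M_i}$ reverses orientation. Since $\{\tau_i\}$ is a basis of $U$, this says that $w$ acts by a signed permutation matrix whose underlying permutation $\sigma$ of $\mathfrak F$ is the $W(G)$-action. Now $w$ has order two (it acts as an order-two reflection on $H^2(BT)$, on which $W(G)$ acts faithfully), so $\sigma$ is an involution, and applying $(w|_U)^2=\id$ to a transposed pair $\{\tau_i,\tau_j\}$ forces $\varepsilon_i\varepsilon_j=1$; thus the two signs on a $2$-cycle coincide. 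Counting eigenvalues over $\Q$, a $\sigma$-fixed $M_i$ with $\varepsilon_i=+1$ contributes a $(+1)$-eigenvector, a $\sigma$-fixed $M_i$ with $\varepsilon_i=-1$ a $(-1)$-eigenvector, and each $2$-cycle contributes exactly one eigenvector of each sign. Hence, writing $b$ for the number of orientation-reversing $\sigma$-fixed characteristic submanifolds and $c$ for the number of $2$-cycles, the multiplicity of $-1$ in $w|_U$ equals $b+c$, i.e. $\dim U_-=b+c$.

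The heart of the argument, and what I expect to be the main obstacle, is to prove $\dim U_-=1$. One inequality is free: the injection $\iota\colon V\hookrightarrow U$ of the base is $w$-equivariant, and since $w$ acts on $V$ as a reflection we have $\dim V_-=1$ and $\iota(V_-)\subseteq U_-$, so $\dim U_-\geq1$. For the reverse inequality I would exploit that $G$ is connected: the diffeomorphism of $M$ induced by $g\in G$ is isotopic to the identity, so $g^*=\id$ on the ordinary cohomology $H^*(M;\Q)$. The fibre inclusion $j\colon M\hookrightarrow M_T$ intertwines $w$ on $H^2_T(M)$ with the (now trivial) $G$-action on $H^2(M)$, whence $(w-1)H^2_T(M)\subseteq\kernel j^*$. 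Because $H^1(BT)=0$, the Serre spectral sequence of $M\to M_T\to BT$ collapses in the relevant range and gives $\kernel\bigl(j^*\colon H^2_T(M)\to H^2(M)\bigr)=\image\bigl(\pi^*\colon H^2(BT)\to H^2_T(M)\bigr)$, the image of the base. Passing to $\hat H^2_T(M)$, on which $w$ still acts because the $S$-torsion submodule is $w$-invariant, this reads $(w-1)U\subseteq\iota(V)$.

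Finishing is then linear algebra and bookkeeping. Over $\Q$ every $u\in U_-$ satisfies $u=-\tfrac12(w-1)u\in(w-1)U\subseteq\iota(V)$, and $\iota(V)\cap U_-=\iota(V_-)$ by equivariance and injectivity of $\iota$; hence $U_-=\iota(V_-)$ and $\dim U_-=1$. Therefore $b+c=1$, leaving exactly two possibilities. If $b=1$ and $c=0$, then $\sigma$ fixes every element of $\mathfrak F$ and $w$ reverses orientation at a single characteristic submanifold, which is case (3). If $b=0$ and $c=1$, then $\sigma$ fixes all but two characteristic submanifolds $M_1,M_2$ and preserves the orientation of every fixed one; the common sign $\varepsilon$ on the pair $\{M_1,M_2\}$ is either $+1$, giving orientation preservation everywhere (case (1)), or $-1$, giving orientation reversal exactly at $M_1,M_2$ (case (2)). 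This exhausts the list.
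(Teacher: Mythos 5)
Your proof is correct and is essentially the paper's own argument: both rest on the exact row \(0\to H^2(BT)\to H^2_T(M)\to H^2(M)\), on the triviality of the \(W(G)\)-action on \(H^2(M)\) coming from the connectedness of \(G\), and on the signed-permutation combinatorics of \(w\) on the basis \(\{\tau_i\}\) of \(\hat{H}^2_T(M)\). The only difference is bookkeeping: you show the \((-1)\)-eigenspace of \(w\) on \(\hat{H}^2_T(M)\) equals the image of the \((-1)\)-eigenspace of \(H^2(BT)\), whereas the paper derives the equivalent identity \(\trace(w,\hat{H}^2_T(M))=\dim_\Q\hat{H}^2_T(M)-2\) by adding traces along the same diagram -- for an order-two element these statements coincide, as the paper's remark preceding the lemma makes explicit.
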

\begin{proof}
  Using the arguments given before Lemma \ref{lem:action-weyl-group}, we have the following commutative diagram of \(W(G)\)-representations with exact rows and columns
\begin{equation*}
  \xymatrix{
    & & S\text{-torsion in }H^2_T(M) \ar[d]\\
    0 \ar[r] & H^2(BT) \ar[r] &H^2_T(M) \ar^\phi[r]\ar[d] & H^2(M)\\
     &  & \hat{H}^2_T(M)\ar[d]\\
    &&0\\
}
\end{equation*}

Here \(\phi\) denotes the natural map \(H^2_T(M)\rightarrow H^2(M)\).

Because \(G\) is connected, the \(W(G)\)-action on \(H^2(M)\) is trivial.
By (\ref{eq:lkjlk}) the \(S\)-torsion in \(H^2_T(M)\) injects into \(H^2(M)\).
Therefore \(W(G)\) acts trivially on the \(S\)-torsion in \(H^2_T(M)\).

Because \(w\) is a reflection, we have \(\trace(w,H^2(BT))=\dim_\Q H^2(BT)-2\).
From the exact row in the diagram we get
\begin{align*}
  \trace(w,H^2_T(M)) &= \trace(w,H^2(BT)) + \trace(w,\image\phi)\\
  &= \dim_\Q H^2(BT)-2 +\dim_\Q \image\phi\\
  &= \dim_\Q H^2_T(M)-2.
\end{align*}

Similarly we get
\begin{align*}
    \trace(w,\hat{H}^2_T(M)) &=\trace(w,H^2_T(M))-\trace(w,S\text{-torsion in }H^2_T(M))\\
    &= \dim_\Q \hat{H}^2_T(M) -2.
\end{align*}
Now the statement follows from (\ref{eq:jjkj}) because the \(\tau_i\) form a basis of \(\hat{H}^2_T(M)\).
\end{proof}

\begin{lemma}
  An element \(w\in W(G)\) acts as a reflection on \(\hat{H}^2_T(M)\) if and only if it is a reflection.
\end{lemma}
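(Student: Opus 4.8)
The plan is to show that each of the two conditions is equivalent to a single numerical invariant of $w$, namely the dimension of the non-fixed part of $w$ acting on $H_T^2(M)$. Since it was already recorded just before Lemma~\ref{lem:action-weyl-group} that an element of $W(G)$ is a reflection if and only if it acts as a reflection on $H^2(BT)$, it suffices to compare the action of $w$ on $H^2(BT)$ with its action on $\hat H^2_T(M)$, and the whole argument will be symmetric in the two directions.

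First I would record an elementary remark about the ground field. As $G$ is compact, any $w\in W(G)$ has finite order, so its action on a finite-dimensional $\Q$-vector space $L$ is semisimple and $L=\kernel(w-\id)\oplus\image(w-\id)$, the second summand being the sum of the non-trivial isotypic components. A one-dimensional $w$-invariant $\Q$-subspace on which $w$ acts non-trivially must carry the scalar $-1$, since $\pm 1$ are the only roots of unity in $\Q^\times$. Consequently, for finite-order $w$, the assertion that $w$ acts as a reflection on $L$ (in the sense defined before Lemma~\ref{lem:action-weyl-group}) is equivalent to the single numerical condition $\dim_\Q\image((w-\id)|_L)=1$.

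Next I would show that this dimension is the same for $L=H^2(BT)$ and for $L=\hat H^2_T(M)$, by passing through $V:=H^2_T(M)$. As established in the proof of Lemma~\ref{lem:action-weyl-group}, $W(G)$ acts trivially on $H^2(M)$, hence on $\image\phi=V/H^2(BT)$ and on the $S$-torsion of $V$ (the latter injecting into $H^2(M)$ by (\ref{eq:lkjlk})). Because $w$ has finite order, the two short exact sequences of $w$-modules $0\to H^2(BT)\to V\to\image\phi\to 0$ and $0\to S\text{-torsion}\to V\to\hat H^2_T(M)\to 0$ split $w$-equivariantly. Since $w-\id$ annihilates the trivial summands $\image\phi$ and the $S$-torsion, the first splitting gives $\image((w-\id)|_V)=\image((w-\id)|_{H^2(BT)})$ and the second gives $\image((w-\id)|_V)\cong\image((w-\id)|_{\hat H^2_T(M)})$; in particular all three images have equal dimension.

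Combining the two steps, $w$ acts as a reflection on $\hat H^2_T(M)$ if and only if $\dim_\Q\image((w-\id)|_{\hat H^2_T(M)})=1$, if and only if $\dim_\Q\image((w-\id)|_{H^2(BT)})=1$, if and only if $w$ acts as a reflection on $H^2(BT)$, which is equivalent to $w$ being a reflection. I expect the only delicate point to be the bookkeeping of the non-fixed subspaces through the diagram: one must check that restricting to $H^2(BT)$ and passing to the quotient by the $S$-torsion both leave the non-trivial isotypic component of $V$ unchanged, and this is precisely what the triviality of the $w$-action on $\image\phi$ and on the $S$-torsion guarantees. (The forward implication could alternatively be extracted directly from the trace computation in the proof of Lemma~\ref{lem:action-weyl-group}, but the symmetric argument above yields both directions at once.)
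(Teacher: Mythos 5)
Your proof is correct, and its skeleton is the same as the paper's: both arguments reduce to the recorded fact that \(w\in W(G)\) is a reflection iff it acts as a reflection on \(H^2(BT)\), and both compare \(H^2(BT)\) with \(\hat{H}^2_T(M)\) by passing through \(H^2_T(M)\) and exploiting that \(W(G)\) acts trivially on \(\image\phi\) and on the \(S\)-torsion. The tactical execution differs, though. The paper transports the quantity \(\dim_\Q L-\trace(w,L)\), using additivity of traces along the diagram, and detects reflections via the remark before Lemma~\ref{lem:action-weyl-group} (order two plus the trace condition); for this it also needs that \(W(G)\) acts effectively on \(\hat{H}^2_T(M)\), coming from (\ref{eq:lkjlk}), in order to move the order-two condition between \(W(G)\) and its action on \(\hat{H}^2_T(M)\). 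You instead transport \(\dim_\Q\image(w-\id)\), using \(w\)-equivariant splittings of the two exact sequences (available because a finite-order operator on a \(\Q\)-vector space generates a semisimple algebra), and detect reflections by the criterion \(\dim_\Q\image((w-\id)|_L)=1\), which is valid for any finite-order operator since \(-1\) is the only nontrivial rational root of unity. What your version buys is that all order and effectiveness bookkeeping disappears and both implications fall out of one symmetric dimension count; what the paper's version buys is brevity, since the trace identity and the order-two reflection criterion were already set up for the proof of Lemma~\ref{lem:action-weyl-group}. Both proofs are complete.
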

\begin{proof}
  Because, by (\ref{eq:lkjlk}), \(H^2(BT)\) injects into \(\hat{H}^2_T(M)\), \(W(G)\) acts effectively on \(\hat{H}^2_T(M)\).
  Therefore we may identify \(W(G)\) with a subgroup of \(\text{Gl}(\hat{H}^2_T(M))\).

  If \(w\in W(G)\), then, as in the proof of Lemma~\ref{lem:action-weyl-group}, we see that
  \begin{equation*}
    \dim_\Q H^2(BT)- \trace(w,H^2(BT)) = \dim_\Q \hat{H}^2_T(M) - \trace(w,\hat{H}^2_T(M)).
  \end{equation*}
  Therefore, by the remark before Lemma~\ref{lem:action-weyl-group}, an element of \(W(G)\) of order two is a reflection if and only if it acts as a reflection on \(\hat{H}^2_T(M)\).
\end{proof}

Let \(\mathfrak{F}_0\) be the set of characteristic submanifolds, which are fixed by the \(W(G)\)-action on \(\mathfrak{F}\) and at which \(W(G)\) acts orientation preserving.
Furthermore let \(\mathfrak{F}_i\), \(i = 1,\dots, k\), be the other orbits of the \(W(G)\)-action on \(\mathfrak{F}\) and
 \(V_i\) the subspace of \(\hat{H}^2_T(M)\) spanned by the \(\tau_j\) with \(M_j\in\mathfrak{F}_i\).
 Then \(W(G)\) acts trivially on \(V_0\).
For \(i>0\), let \(W_i\) be the subgroup of \(W(G)\) which is generated by the reflections which act non-trivially on \(V_i\).
Then, by Lemma~\ref{lem:action-weyl-group}, \(W_i\) acts trivially on \(V_j\), \(j\neq i\).

By (\ref{eq:lkjlk}), \(H^2(BT)\) injects into \(\hat{H}^2_T(M)\).
Therefore \(W(G)\) acts effectively on \(\hat{H}^2_T(M)\).
This fact implies that the subgroups \(W_i\), \(i=1,\dots,k\), of  \(W(G)\) pairwise commute and \(\langle W_1,\dots, W_i\rangle\cap W_{i+1} =\{1\}\) for all \(i=1,\dots, k-1\).
Here \(\langle W_1,\dots, W_i\rangle\) denotes the subgroup of \(W(G)\) which is generated by \(W_1,\dots,W_i\).
Hence, we have an injective group homomorphism \(\prod W_i \rightarrow W(G)\), \((w_1,\dots,w_k)\mapsto w_1\dots w_k\).

\begin{lemma}
  The group homomorphism \(\prod W_i \rightarrow W(G)\), \((w_1,\dots,w_k)\mapsto w_1\dots w_k\) is an isomorphism.
\end{lemma}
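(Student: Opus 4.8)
The plan is to prove surjectivity, since injectivity of \((w_1,\dots,w_k)\mapsto w_1\cdots w_k\) was already established just before the statement. Because \(W(G)\) is a Weyl group, it is generated by its reflections, so it suffices to show that every reflection \(w\in W(G)\) lies in the image of \(\prod W_i\), that is, in \(\langle W_1,\dots,W_k\rangle\). In fact I would prove the sharper statement that each reflection already lies in a single \(W_i\).

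First I would record that the \(\tau_j\) form a basis of \(\hat{H}^2_T(M)\) partitioned according to the orbits \(\mathfrak{F}_0,\mathfrak{F}_1,\dots,\mathfrak{F}_k\), so that \(\hat{H}^2_T(M)=V_0\oplus V_1\oplus\cdots\oplus V_k\) and \(W(G)\) respects this decomposition, acting trivially on \(V_0\). Now fix a reflection \(w\). The key step is to combine the case analysis of Lemma~\ref{lem:action-weyl-group} with formula (\ref{eq:jjkj}) to locate the non-trivial action of \(w\). In cases (1) and (2) of the lemma \(w\) interchanges exactly two characteristic submanifolds \(M_1,M_2\); since these lie in one \(W(G)\)-orbit, they belong to a common \(\mathfrak{F}_i\) with \(i>0\), and by (\ref{eq:jjkj}) the map \(w\) acts on \(\langle\tau_1,\tau_2\rangle\subseteq V_i\) by \(\pm\) the coordinate swap while fixing every other \(\tau_j\); in either case this is a reflection supported entirely on \(V_i\). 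In case (3) \(w\) fixes every element of \(\mathfrak{F}\) but reverses the orientation at exactly one \(M_1\); by (\ref{eq:jjkj}) this means \(w^*\tau_1=-\tau_1\) and \(w^*\tau_j=\tau_j\) otherwise, and since \(M_1\) is fixed but orientation-reversing it cannot lie in \(\mathfrak{F}_0\), so its singleton orbit is some \(\mathfrak{F}_i\) with \(i>0\) and again \(w\) acts non-trivially only on \(V_i\).

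Thus in every case \(w\) acts non-trivially on exactly one block \(V_i\) with \(i>0\) and trivially on the remaining \(V_j\); in particular \(w\) is one of the generating reflections of \(W_i\), so \(w\in W_i\). Since the reflections generate \(W(G)\), this yields \(W(G)=\langle W_1,\dots,W_k\rangle\), which is precisely the image of \(\prod W_i\). Hence the homomorphism is surjective, and together with injectivity it is an isomorphism.

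I do not expect a genuine obstacle here: the structural input (the block decomposition, the commuting of the \(W_i\), and injectivity) has all been assembled before the statement, and Lemma~\ref{lem:action-weyl-group} already carries out the delicate work of classifying reflections. The only point demanding care is the bookkeeping that ties each of the three cases of the lemma to a single positive-index block \(V_i\) via (\ref{eq:jjkj}); in particular one must use that a fixed, orientation-reversed characteristic submanifold is excluded from \(\mathfrak{F}_0\) by definition, so that its contribution really does land in some \(W_i\) with \(i>0\) rather than in the trivially-acting summand \(V_0\).
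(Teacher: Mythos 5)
Your proof is correct and takes essentially the same route as the paper, which likewise obtains surjectivity from the facts that \(W(G)\) is generated by reflections and that every reflection lies in some \(W_i\), injectivity having been noted just before the statement; you merely make explicit, via the three cases of Lemma~\ref{lem:action-weyl-group} and (\ref{eq:jjkj}), why each reflection acts non-trivially on exactly one block \(V_i\) and is therefore one of the generators of \(W_i\). (One harmless imprecision: in your case (3) the \(W(G)\)-orbit of \(M_1\) need not be a singleton, since other elements of \(W(G)\) may move \(M_1\); all the argument requires is that \(M_1\notin\mathfrak{F}_0\), which you correctly establish.)
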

\begin{proof}
  Because \(W(G)\) is generated by reflections and each reflection is contained in a \(W_i\), the above homomorphism is surjective.
  As noted before, it is injective.
  Therefore it is an isomorphism.
\end{proof}

\begin{lemma}
\label{sec:action-weyl-group-4}
  For each pair \(M_{j_1},M_{j_2}\in \mathfrak{F}_i\), \(i>0\), with \(M_{j_1}\neq M_{j_2}\) there is a reflection \(w\in W_i\) with \(w(M_{j_1})=M_{j_2}\).
\end{lemma}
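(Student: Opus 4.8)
The plan is to reduce the statement to a purely combinatorial fact about the permutation action of $W_i$ on the finite set $\mathfrak{F}_i$, and then to upgrade a transposition realised by \emph{some} reflection to the desired one by conjugation. I may assume $\#\mathfrak{F}_i\geq 2$, since otherwise there is nothing to prove.

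First I would show that $\mathfrak{F}_i$ is a single orbit of $W_i$. Since $W_j$ acts trivially on $V_i$ for $j\neq i$, for $w\in W_j$ and $M_l\in\mathfrak{F}_i$ the class $\tau_l\in V_i$ is fixed, so by (\ref{eq:jjkj}) we get $\tau_{w(l)}=\pm\tau_l$. As the $\tau_j$ corresponding to elements of $\mathfrak{F}$ are linearly independent, this forces $w(M_l)=M_l$; hence each $W_j$, $j\neq i$, fixes $\mathfrak{F}_i$ pointwise. Because $W(G)=\prod_j W_j$ and $\mathfrak{F}_i$ is a single $W(G)$-orbit, it is therefore a single $W_i$-orbit.

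Next I would analyse the image $\bar{W}_i$ of $W_i$ in the symmetric group $\mathrm{Sym}(\mathfrak{F}_i)$. By Lemma~\ref{lem:action-weyl-group} every reflection acts on $\mathfrak{F}$, and hence on the orbit $\mathfrak{F}_i$ (which it preserves), either trivially or as a transposition. Since $W_i$ is generated by reflections, $\bar{W}_i$ is generated by transpositions; and since $\bar{W}_i$ is transitive by the previous step and $\#\mathfrak{F}_i\geq2$, the graph on $\mathfrak{F}_i$ whose edges are the realised transpositions is connected. A transitive subgroup of a symmetric group generated by transpositions is the full symmetric group, so $\bar{W}_i=\mathrm{Sym}(\mathfrak{F}_i)$. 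In particular at least one reflection $w_0\in W_i$ acts as a transposition $(M_a\,M_b)$. Finally I would pass from this single reflection to an arbitrary pair. The set of reflections of $W(G)$ is closed under conjugation, since a conjugate has the same order and trace and therefore again acts as a reflection on $H^2(BT)$; as $W_i$ is a subgroup, for any $\sigma\in W_i$ the element $\sigma w_0\sigma^{-1}$ is again a reflection in $W_i$, acting on $\mathfrak{F}_i$ as the transposition $(\bar\sigma(M_a)\,\bar\sigma(M_b))$. Because $\bar{W}_i=\mathrm{Sym}(\mathfrak{F}_i)$ acts transitively on the two-element subsets of $\mathfrak{F}_i$, choosing $\sigma$ with $\{\bar\sigma(M_a),\bar\sigma(M_b)\}=\{M_{j_1},M_{j_2}\}$ yields a reflection $w=\sigma w_0\sigma^{-1}\in W_i$ with $w(M_{j_1})=M_{j_2}$.

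The hard part will be the last step: transitivity of $W_i$ on $\mathfrak{F}_i$ only connects the two submanifolds by \emph{some} element of $W_i$, not by a reflection, and connectivity of the transposition graph alone does not make it complete. The crucial point is the combination of the identity $\bar{W}_i=\mathrm{Sym}(\mathfrak{F}_i)$, which yields $2$-transitivity on pairs, with the conjugation-invariance of the set of reflections; together these force every transposition of $\mathfrak{F}_i$ to be realised by an honest reflection.
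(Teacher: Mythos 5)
Your proof is correct and takes essentially the same route as the paper's: both arguments combine Lemma~\ref{lem:action-weyl-group} (reflections act on \(\mathfrak{F}\) trivially or as transpositions) with the fact that a transitive permutation group generated by transpositions is the full symmetric group to get surjectivity of \(W_i\rightarrow S(\mathfrak{F}_i)\), and then conjugate one reflection realising a transposition into the desired position. The only differences are cosmetic — the paper cites this combinatorial fact from the literature and conjugates by an element fixing \(M_{j_2}\), while you conjugate so as to carry the pair \(\{M_a,M_b\}\) onto \(\{M_{j_1},M_{j_2}\}\).
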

\begin{proof}
  Because  \(\mathfrak{F}_i\) is an orbit of the \(W(G)\)-action on \(\mathfrak{F}\) and \(W(G)\) is generated by reflections, there is a \(M_{j_1}'\in \mathfrak{F}_i\) with \(M_{j_1}'\neq M_{j_2}\) and a reflection \(w\in W_i\) with \(w(M_{j_1}')=M_{j_2}\).
  
  Because \(W_i\) is generated by reflections and acts transitively on \(\mathfrak{F}_i\)
  the natural map \(W_i\rightarrow S(\mathfrak{F}_i)\) to the permutation group  \(S(\mathfrak{F}_i)\) of \(\mathfrak{F}_i\) is a surjection  by Lemma~\ref{lem:action-weyl-group} and Lemma 3.10 of \cite[p. 51]{0225.20020}.
  Therefore there is a \(w'\in W_i\) with
  \begin{align*}
    w'(M_{j_1})&=M_{j_1}', &  w'(M_{j_1}')&=M_{j_1}, & w'(M_{j_2})&=M_{j_2}.
  \end{align*}
  
  Now \(w'^{-1}ww'\in W_i\) is a reflection with the required properties.
\end{proof}

It follows from Lemma~\ref{lem:action-weyl-group} that for each pair \(M_{j_1},M_{j_2}\in \mathfrak{F}_i\), \(i>0\), with \(M_{j_1}\neq M_{j_2}\) there are at most two reflections,
which map \(M_{j_1}\) to \(M_{j_2}\).

If \(M_{j_1'}, M_{j_2'}\in \mathfrak{F}_i\) is another pair with \(M_{j_1'}\neq M_{j_2'}\), then one sees as in the proof of Lemma~\ref{sec:action-weyl-group-4} that there is a \(w'\in W_i\) with
\begin{align*}
  w'(M_{j_1'})&=M_{j_1},& w'(M_{j_2'})&=M_{j_2}.
\end{align*}
Therefore there is a bijection
\begin{align*}
  \{ w\in W_i;\; w \text{ reflection, } w(M_{j_1})=M_{j_2}\}&\rightarrow \{ w\in W_i;\; w \text{ reflection, } w(M_{j_1'})=M_{j_2'}\}\\
w &\mapsto w'^{-1}w w'.
\end{align*}
In particular, the number of reflections which map \(M_{j_1}\) to \(M_{j_2}\) does not depend on the choice of \(M_{j_1},M_{j_2}\in \mathfrak{F}_i\).

\begin{lemma}
\label{sec:action-weyl-group-6}
  Assume \(\# \mathfrak{F}_i>1\) and \(i>0\).
  If for each pair  \(M_{j_1},M_{j_2}\in \mathfrak{F}_i\) with \(M_{j_1}\neq M_{j_2}\) there is exactly one reflection in \(W_i\), which maps \(M_{j_1}\) to \(M_{j_2}\),
  then \(W_i\) is isomorphic to \(S(\mathfrak{F}_i)\cong W(SU(l_i+1))\) with \(l_i+1=\#\mathfrak{F}_i\).
\end{lemma}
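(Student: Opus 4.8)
The plan is to prove that the surjection $\pi\colon W_i\to S(\mathfrak{F}_i)$ produced in the proof of Lemma~\ref{sec:action-weyl-group-4} is in fact an isomorphism; since $\#\mathfrak{F}_i=l_i+1$ and $W(SU(l_i+1))\cong S(\mathfrak{F}_i)$, this is exactly the assertion. Write $n=\#\mathfrak{F}_i=l_i+1$. First I would record how the generating reflections of $W_i$ sit over $S(\mathfrak{F}_i)$. Such a reflection $r$ acts non-trivially on $V_i$ and, being in $W_i$, trivially on every $V_j$ with $j\neq i$; in particular it fixes each element of $\mathfrak{F}_j$, $j\neq i$. Hence by Lemma~\ref{lem:action-weyl-group} the submanifolds of $\mathfrak{F}$ at which $r$ is non-trivial all lie in $\mathfrak{F}_i$, leaving two possibilities: either (transposition-type) $r$ swaps two elements of $\mathfrak{F}_i$ and fixes the rest, so $\pi(r)$ is a transposition, or (orientation-type) $r$ fixes $\mathfrak{F}_i$ pointwise and reverses the orientation of a single $M_j\in\mathfrak{F}_i$, so $\pi(r)=\id$ and $r^*\tau_j=-\tau_j$. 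By hypothesis each pair carries exactly one transposition-type reflection.

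Next I fix an enumeration $M_1,\dots,M_n$ of $\mathfrak{F}_i$ and let $s_a\in W_i$ be the unique reflection with $\pi(s_a)=(a\;a+1)$, $a=1,\dots,n-1$. Since conjugation preserves order and trace, a conjugate of a reflection is again a reflection, and this lets me verify the Coxeter relations of $S(\mathfrak{F}_i)$. Indeed $s_as_{a+1}s_a$ and $s_{a+1}s_as_{a+1}$ are reflections, both mapped by $\pi$ to $(a\;a+2)$, so uniqueness forces them to coincide, giving the braid relation; and for $|a-b|\geq2$ the reflection $s_as_bs_a$ is mapped to $(b\;b+1)$, hence equals $s_b$, giving $s_as_b=s_bs_a$. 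Together with $s_a^2=1$ these are the defining relations of $S(\mathfrak{F}_i)$, so $\langle s_1,\dots,s_{n-1}\rangle$ is a quotient of $S(\mathfrak{F}_i)$; as $\pi$ carries it onto $S(\mathfrak{F}_i)$, the restriction of $\pi$ is an isomorphism $\langle s_1,\dots,s_{n-1}\rangle\xrightarrow{\sim}S(\mathfrak{F}_i)$.

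It then remains to prove $W_i=\langle s_1,\dots,s_{n-1}\rangle$, i.e.\ that every generating reflection already lies in this subgroup. A transposition-type reflection $w$ with $\pi(w)=(j\;l)$ is handled by writing $(j\;l)=\sigma(a\;a+1)\sigma^{-1}$ in $S(\mathfrak{F}_i)$, lifting $\sigma$ to $g\in\langle s_a\rangle$, and noting that $gs_ag^{-1}$ is a reflection with $\pi(gs_ag^{-1})=(j\;l)$; uniqueness yields $w=gs_ag^{-1}\in\langle s_a\rangle$. The main obstacle is to rule out orientation-type reflections, and this is precisely where the "exactly one reflection'' hypothesis is essential (it is what separates type $A$ from the $B$/$D$ situations of the following lemmas). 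Suppose $w$ were orientation-type with $w^*\tau_j=-\tau_j$, and pick $M_l\in\mathfrak{F}_i$, $l\neq j$, which exists because $\#\mathfrak{F}_i>1$; let $r$ be the unique reflection with $\pi(r)=(j\;l)$. Then $wrw$ is a reflection with $\pi(wrw)=(j\;l)$, so uniqueness gives $wrw=r$, i.e.\ $w$ and $r$ commute. On the other hand, computing the two operators on $V_i$ — where $w$ negates $\tau_j$ and fixes $\tau_l$, while $r$ interchanges $\tau_j$ and $\pm\tau_l$ — gives $wr\,\tau_j=-\,rw\,\tau_j\neq rw\,\tau_j$, so $w$ and $r$ do not commute, a contradiction. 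Hence no orientation-type reflections occur, every generating reflection of $W_i$ is transposition-type and lies in $\langle s_a\rangle$, and therefore $W_i=\langle s_1,\dots,s_{n-1}\rangle\cong S(\mathfrak{F}_i)\cong W(SU(l_i+1))$.
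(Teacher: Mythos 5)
Your proof is correct, and it reaches the conclusion by a genuinely different mechanism than the paper for the crucial injectivity step. The paper first rules out reflections of the third type exactly as you do (conjugating a transposition-type reflection by the putative orientation-type one and invoking uniqueness — your commutation formulation $wrw=r$ versus $wr\tau_j=-rw\tau_j$ is the same argument in different clothing), but it then proves directly that the kernel of $W_i\to S(\mathfrak{F}_i)$ is trivial: an arbitrary kernel element $w$ with $w\tau_{j_0}=-\tau_{j_0}$ is written as a product of reflections $w_1\dots w_n$ satisfying certain minimality conditions ("after removing some of the $w_i$"), and a computation shows $w_n\dots w_2w_1w_2\dots w_n$ and $w_n$ are distinct reflections carrying $M_{j_0}$ to the same submanifold, contradicting uniqueness. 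You instead build an explicit copy of $S(\mathfrak{F}_i)$ inside $W_i$: the distinguished reflections $s_a$ over the adjacent transpositions satisfy the Coxeter relations of $S_n$ (braid and commutation relations both forced by uniqueness), so $\langle s_1,\dots,s_{n-1}\rangle$ maps isomorphically onto $S(\mathfrak{F}_i)$, and then every generating reflection of $W_i$ is shown to lie in this subgroup. What each approach buys: the paper's is self-contained but hinges on a somewhat delicate word-reduction step whose bookkeeping (the two displayed non-degeneracy conditions) must be checked carefully; yours outsources that combinatorics to the standard presentation of the symmetric group, which makes the role of the uniqueness hypothesis more transparent — it is used only to identify conjugate reflections — at the mild cost of invoking that presentation. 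Both arguments use the hypothesis $\#\mathfrak{F}_i>1$ in the same place, namely to exclude orientation-type reflections.
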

\begin{proof}
  First we show that there is no reflection of the third type as described in Lemma~\ref{lem:action-weyl-group} in \(W_i\).
  Assume that \(w'\in W_i\) is a reflection of the third type.
  Then let \(M_1\in \mathfrak{F}_i\) be the characteristic submanifold at which \(w'\) acts orientation reversing.
  Furthermore, let \(M_1\neq M_2\in \mathfrak{F}_i\).
  
  Then by Lemma~\ref{sec:action-weyl-group-4} there is a reflection \(w\in W_i\) such that \(wM_1=M_2\).
  Hence, \(w'ww'\) is a reflection with \(w'ww'M_1=M_2\).
  Because \(w\) and \(w'ww'\) have a different orientation behaviour at \(M_1\), we have  \(w\neq w'ww'\), contradicting our assumption.

  To prove the lemma, it is sufficient to show that the kernel of the natural map \(W_i\rightarrow S(\mathfrak{F}_i)\) is trivial.
  Let \(w\) be an element of this kernel.
  Then for each \(\tau_j\in V_i\) we have
  \begin{equation*}
    w\tau_j=\pm \tau_j.
  \end{equation*}
  If we have \( w\tau_j= \tau_j\) for all \(\tau_j\in V_i\), then \(w=\id\).

  Now assume that \(w \tau_{j_0}=-\tau_{j_0}\) for a \(\tau_{j_0}\in V_i\).
  Then there are reflections \(w_1,\dots,w_n\in W_i\), \(n\geq 2\), with \(-\tau_{j_0}=w\tau_{j_0}=w_1\dots w_n\tau_{j_0}\).
  After removing some of the \(w_i\), we may assume that
  \begin{align*}
    w_i\dots w_n\tau_{j_0}&\neq \pm \tau_{j_0} & \text{for all }&  i=2,\dots, n,\\
    w_{i+1}\dots w_n\tau_{j_0}&\neq \pm w_{i}\dots w_n \tau_{j_0} & \text{for all }& i=2,\dots,n.\\
  \end{align*}
  Therefore, by Lemma~\ref{lem:action-weyl-group}, we have \(w_i\tau_{j_0}=\tau_{j_0}\) for \(2\leq i < n\).
  This equation  together with \(w\tau_{j_0}=-\tau_{j_0}\) implies
  \begin{equation*}
    w_n\dots w_2 w_1 w_2 \dots w_n \tau_{j_0}= - w_n \tau_{j_0}.
  \end{equation*}
  Therefore \( w_n\dots w_2 w_1 w_2 \dots w_n M_{j_0}= w_n M_{j_0}\).

  But \( w_n\dots w_2 w_1 w_2 \dots w_n\) is a reflection.
  Therefore, by assumption, we have
  \begin{equation*}
    w_n\dots w_2 w_1 w_2 \dots w_n =  w_n 
  \end{equation*}
  and
  \begin{equation*}
    w_n\tau_{j_0}= w_{n}w_{n-1}\dots w_2w_1w_2\dots w_n \tau_{j_0} = -w_n\tau_{j_0}.
  \end{equation*}
  Because \(w_n\tau_{j_0}\neq 0\), this is impossible.
  Hence, our assumption that \(w\tau_{j_0}=-\tau_{j_0}\) is false.

  Therefore the kernel is trivial.
\end{proof}

To get the isomorphism type of \(W_i\) in the case, where there is a pair  \(M_{j_1},M_{j_2}\in \mathfrak{F}_i\), \(i>0\), with \(M_{j_1}\neq M_{j_2}\) and exactly two reflections in \(W_i\), which map \(M_{j_1}\) to \(M_{j_2}\), we first give a description of the Weyl-groups of some Lie-groups.

Let \(L\) be an \(l\)-dimensional \(\Q\)-vector space with basis \(e_1,\dots,e_l\).
For \(1\leq i<j\leq l\) let \(f_{ij\pm},g_i\in \text{Gl}(L)\) such that
\begin{gather*}
  f_{ij+}e_k=
  \begin{cases}
     e_i & \text{if } k=j\\
     e_j & \text{if } k=i\\
    e_k & \text{else}
  \end{cases}\\
  f_{ij-}e_k=
  \begin{cases}
    - e_i & \text{if } k=j\\
    - e_j & \text{if } k=i\\
    e_k & \text{else}
  \end{cases}\\
  g_ie_k=
  \begin{cases}
    -e_i &\text{if } k=i\\
    e_k &\text{else}.
  \end{cases}
\end{gather*}
Then we have the following isomorphisms of groups \cite[p. 171-172]{0581.22009}:
\begin{align*}
  W(SU(l-1))\cong S(l)&\cong\langle f_{ij+}; 1\leq i<j\leq l\rangle,\\
  W(SO(2l))&\cong \langle f_{ij\pm}; 1\leq i<j\leq l\rangle,\\
  W(SO(2l+1))\cong W(Sp(l)) &\cong\langle f_{ij\pm},g_1; 1\leq i<j\leq l \rangle.
\end{align*}

From this description and Lemma~\ref{lem:action-weyl-group}, we get:
\begin{lemma}
\label{sec:action-weyl-group-5}
  If for each pair \(M_{j_1},M_{j_2}\in \mathfrak{F}_i\), \(i>0\), with \(M_{j_1}\neq M_{j_2}\) there are exactly two reflections in \(W_i\) which map \(M_{j_1}\) to \(M_{j_2}\),
  then  with \(l_i=\# \mathfrak{F}_i\)  we have
  \begin{enumerate}
  \item \(W_i\cong W(SO(2l_i))\) if there is no reflection of the third type as described in Lemma~\ref{lem:action-weyl-group} in \(W_i\).
  \item \(W_i\cong W(SO(2l_i+1)) \cong W(Sp(l_i))\)  if there is a reflection of the third type in \(W_i\).
  \end{enumerate}
\end{lemma}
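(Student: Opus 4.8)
The plan is to identify the reflections of $W_i$ acting on $V_i$ explicitly with the matrices $f_{ab\pm}$ and $g_a$ introduced just above, and then read off the isomorphism type directly from the hypothesis. First I would fix the identification $V_i\cong L$ sending $\tau_j\mapsto e_j$ for the $M_j\in\mathfrak{F}_i$, so that $l=l_i=\#\mathfrak{F}_i$. Since $W(G)$ acts effectively on $\hat{H}^2_T(M)$ and $W_i$ acts trivially on every $V_j$ with $j\neq i$, the group $W_i$ acts effectively on $V_i$; hence every element of $W_i$ is determined by its action on $V_i$.

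Next, using Lemma~\ref{lem:action-weyl-group} together with the sign rule (\ref{eq:jjkj}), I would match the three reflection types to the explicit generators. A reflection of type (1) interchanging $M_a,M_b\in\mathfrak{F}_i$ acts on $V_i$ by $\tau_a\leftrightarrow\tau_b$, hence as $f_{ab+}$; one of type (2) sends $\tau_a\mapsto-\tau_b$, $\tau_b\mapsto-\tau_a$, hence acts as $f_{ab-}$; and one of type (3) reversing orientation at $M_a$ sends $\tau_a\mapsto-\tau_a$ while fixing the other $\tau_c$, hence acts as $g_a$. Because $W_i$ acts effectively on $V_i$, each of these is the unique reflection of its type realising the given signed permutation. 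In particular a reflection mapping $M_a$ to $M_b$ (with $a\neq b$) is either $f_{ab+}$ or $f_{ab-}$, since every type (3) reflection fixes all elements of $\mathfrak{F}$.

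The hypothesis that there are exactly two reflections in $W_i$ mapping $M_{j_1}$ to $M_{j_2}$ therefore forces both $f_{j_1j_2+}$ and $f_{j_1j_2-}$ to lie in $W_i$, for every pair. Since $W_i$ is by definition generated by the reflections acting nontrivially on $V_i$, and every such reflection is one of $f_{ab+},f_{ab-},g_a$, the group $W_i$ is generated by all the $f_{ab\pm}$ together with whichever $g_a$ happen to lie in it. In case (1) there is no reflection of type (3), so no $g_a$ lies in $W_i$, whence $W_i=\langle f_{ab\pm}\rangle$, which by the displayed isomorphisms equals $W(SO(2l_i))$. In case (2) some $g_{a_0}$ lies in $W_i$, and conjugating by $f_{a_0b+}$ produces $g_b$ for every $b$; thus all $g_a\in W_i$ and $W_i=\langle f_{ab\pm},g_1\rangle\cong W(SO(2l_i+1))\cong W(Sp(l_i))$. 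The effective action on $V_i\cong L$ turns these equalities of subgroups of $\text{Gl}(L)$ into the claimed group isomorphisms.

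The hard part is the upper bound: one must rule out reflections in $W_i$ beyond the listed $f_{ab\pm}$ and $g_a$. This is exactly what Lemma~\ref{lem:action-weyl-group} supplies, since it constrains every reflection to act on $\mathfrak{F}_i$ as one of the three signed-permutation patterns, while effectiveness on $V_i$ removes any remaining ambiguity and prevents two distinct reflections of the same type from realising the same transposition. Everything else is bookkeeping with the generators $f_{ab\pm},g_a$ and the quoted presentations of $W(SO(2l_i))$, $W(SO(2l_i+1))$, and $W(Sp(l_i))$.
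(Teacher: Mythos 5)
Your proof is correct and follows essentially the same route as the paper, which derives the lemma directly from the explicit generators \(f_{ab\pm},g_a\) of the relevant Weyl groups together with Lemma~\ref{lem:action-weyl-group}; the paper leaves the details implicit, and your write-up simply makes them explicit (effectiveness of \(W_i\) on \(V_i\), the matching of the three reflection types to \(f_{ab+},f_{ab-},g_a\), and the conjugation argument producing all \(g_a\) from one). No gaps.
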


By \cite[p. 233]{0581.22009}, \(G\) has a finite covering group \(\tilde{G}\) such that \(\tilde{G}=\prod_i G_i \times T^{l_0}\), where the \(G_i\) are simple simply connected compact Lie-groups. 
The Weyl-group of \(G\) is given by \(W(G)=\prod_i W(G_i)\).

We call two reflections \(w,w'\in W(G)\) equivalent if there are reflections \(w_1,\dots,w_k\in W(G)\) such that
\begin{align*}
  w&=w_1,&w'&=w_k,&[w_i,w_{i+1}]&\neq 1.
\end{align*}
Here \([w_i,w_{i+1}]\) denotes the commutator of  \(w_i\) and \(w_{i+1}\).
Because the Dynkin-diagram of a simple Lie-group is connected, each \(W(G_i)\) is generated by equivalent reflections.
Therefore each \(W(G_i)\) is contained in a \(W_j\).
Therefore we get
\(W_i=\prod_{j\in J_i}W(G_j)\).
Using Lemmas~\ref{sec:action-weyl-group-6} and~\ref{sec:action-weyl-group-5}, we deduce:
\begin{equation*}
  W_i=
  \begin{cases}
    W(G_j) & \text{for some } j \text{ if } W_i \not\cong W(SO(4))\\
    W(G_{j_1})\times W(G_{j_2}) & \text{ with } G_{j_1}\cong G_{j_2}\cong SU(2) \text{ if } W_i \cong W(SO(4)).
  \end{cases}
\end{equation*}
Therefore we may write \(\tilde{G}=\prod_i G_i\times T^{l_0}\) with  \(W_i=W(G_i)\) and \(G_i\) simple and simply connected or \(G_i=\text{Spin}(4)\).
In the following we will call these \(G_i\) the \emph{elementary factors} of \(\tilde{G}\).

We summarize the above discussion in the following lemma.
\begin{lemma}
\label{lem:1}
  Let \(M\) be a torus manifold with \(G\)-action and \(\tilde{G}\) as above.
  Then all \(G_i\) are non-exceptional, i.e. \(G_i= SU(l_i +1), \text{Spin}(2l_i), \text{Spin}(2l_i+1), Sp(l_i)\).

  The Weyl-group of an elementary factor \(G_i\) of \(\tilde{G}\) acts transitively on \(\mathfrak{F}_i\) and
  trivially on \(\mathfrak{F}_j\), \(j\neq i\).

For a given isomorphism type of \(G_i\), there are at most two possible values of \(\#\mathfrak{F}_i\).
The possible values of \(\#\mathfrak{F}_i\) are listed in the following table.
  \begin{center}
    \begin{tabular}{|c|c|}
      \(G_i\) &  \(\#\mathfrak{F}_i\)\\\hline\hline
      \(SU(2)=\text{Spin}(3)=Sp(1)\) &  \(1,2\) \\\hline
      \(\text{Spin}(4)\) & \(2\)\\\hline
      \(\text{Spin}(5)=Sp(2)\) & \(2\)\\\hline
      \(SU(4)=\text{Spin}(6)\) & \(3,4\) \\\hline
      \(SU(l_i+1)\), \(l_i \neq 1,3\) & \(l_i+1\)\\\hline
      \(\text{Spin}(2l_i+1)\), \(l_i > 2\) & \(l_i\)\\\hline
      \(\text{Spin}(2l_i)\), \(l_i > 3\) & \(l_i\)\\\hline
      \(Sp(l_i)\), \(l_i > 2\) &\(l_i\)
  \end{tabular}  
\end{center}
\end{lemma}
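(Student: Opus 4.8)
The plan is to assemble the three assertions of the lemma --- non-exceptionality of the $G_i$, the transitivity and triviality of the Weyl-group action, and the table of possible values of $\#\mathfrak{F}_i$ --- from the structural results already obtained, without any new geometric input.

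I would first dispose of the transitivity and triviality claims, which are essentially recorded in the discussion preceding the lemma. By construction each $\mathfrak{F}_i$ with $i>0$ is an orbit of $W(G)$, and $W_i=W(G_i)$ is generated by the reflections acting non-trivially on $V_i$; by Lemma~\ref{sec:action-weyl-group-4} these reflections already move any element of $\mathfrak{F}_i$ to any other, so $W_i$ acts transitively on $\mathfrak{F}_i$. For $j\neq i$ the subgroup $W_i$ acts trivially on $V_j$ by Lemma~\ref{lem:action-weyl-group}; since the $\tau_m$ with $M_m\in\mathfrak{F}_j$ form part of a basis of $\hat{H}^2_T(M)$, relation~(\ref{eq:jjkj}) then forces $w(m)=m$ for each such $m$ and each $w\in W_i$, i.e. $W_i$ fixes $\mathfrak{F}_j$ pointwise.

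Next I would pin down the isomorphism type of $W_i=W(G_i)$. The number of reflections of $W_i$ carrying a fixed pair $M_{j_1},M_{j_2}\in\mathfrak{F}_i$ to one another is one or two and independent of the pair, so exactly one of Lemmas~\ref{sec:action-weyl-group-6} and~\ref{sec:action-weyl-group-5} applies; the degenerate case $\#\mathfrak{F}_i=1$ I would treat by hand, noting that then every reflection of $W_i$ is of the third type of Lemma~\ref{lem:action-weyl-group} and acts as $-1$ on the line $V_i$, so the product of two of them is trivial on $\hat{H}^2_T(M)$ and hence (by effectiveness of the $W(G)$-action) equal to the identity, giving $W_i\cong\mathbb{Z}/2$. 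In every case this identifies $W_i$ abstractly with $W(SU(l_i+1))$, $W(SO(2l_i))$ or $W(SO(2l_i+1))\cong W(Sp(l_i))$, a Weyl-group of classical type. Since $G_i$ is simple and simply connected (or $\text{Spin}(4)$) and the Weyl-groups of $G_2,F_4,E_6,E_7,E_8$ are not isomorphic to any of these --- a comparison of orders already excludes them --- the group $G_i$ must be of type $A$, $B$, $C$ or $D$, and reading off the simply connected group with the prescribed Weyl-group gives $G_i\in\{SU(l_i+1),\text{Spin}(2l_i),\text{Spin}(2l_i+1),Sp(l_i)\}$.

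Finally I would produce the table by translating the same dichotomy into values of $\#\mathfrak{F}_i$: the one-reflection case of Lemma~\ref{sec:action-weyl-group-6} gives $\#\mathfrak{F}_i=l_i+1$ with $G_i=SU(l_i+1)$, and the two cases of Lemma~\ref{sec:action-weyl-group-5} give $\#\mathfrak{F}_i=l_i$ with $G_i=\text{Spin}(2l_i)$, $\text{Spin}(2l_i+1)$ or $Sp(l_i)$. For a generic classical type this reading is unique; the two-valued rows come precisely from the low-rank coincidences of Weyl-groups, namely $W(SU(2))\cong W(\text{Spin}(3))\cong W(Sp(1))$, which permits both $\#\mathfrak{F}_i=2$ and the degenerate $\#\mathfrak{F}_i=1$, and $W(SU(4))\cong W(\text{Spin}(6))$, which permits both $4$ and $3$; the remaining special rows $\text{Spin}(4)$ and $\text{Spin}(5)=Sp(2)$ each yield the single value $2$. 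I expect the main obstacle to be the bookkeeping of this last step: one has to be certain that these are the only coincidences producing two admissible values of $\#\mathfrak{F}_i$ and that the exceptional types are genuinely ruled out, both of which rest on the classification of the abstract isomorphism types of the finite reflection groups involved rather than on any property of $M$.
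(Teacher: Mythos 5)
Your proposal is correct and takes essentially the same route as the paper: the paper gives no separate proof of this lemma, presenting it instead as a summary of the preceding discussion culminating in Lemmas~\ref{lem:action-weyl-group}, \ref{sec:action-weyl-group-4}, \ref{sec:action-weyl-group-6} and~\ref{sec:action-weyl-group-5} together with the identification \(W_i=W(G_i)\), which is exactly the material you assemble. You additionally make explicit two points the paper leaves implicit---the degenerate case \(\#\mathfrak{F}_i=1\) (which yields \(W_i\cong\mathbb{Z}/2\) and the entry \(1\) in the first row of the table) and the order comparison ruling out the exceptional Weyl groups---and both of these check out.
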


If we restrict our attention to quasitoric manifolds with \(G\)-action, then we get a much shorter list of possible isomorphism types of the elementary factors.
In fact, if \(M\) is  a quasitoric manifold with \(G\)-action, then, as shown in the next lemma, all elementary factors of \(G\) are isomorphic to \(SU(l_i+1)\) for some \(l_i\geq 1\).

\begin{lemma}
\label{sec:action-weyl-group-1}
  Let \(M\) be a quasitoric manifold with \(G\)-action.
  Then there is a covering group \(\tilde{G}\) of \(G\) with \(\tilde{G}= \prod_{i=1}^{k_1} SU(l_i + 1)\times T^{l_0} \).
\end{lemma}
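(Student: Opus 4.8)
The plan is to reduce everything to an orientation-rigidity statement and then feed it into Lemma~\ref{sec:action-weyl-group-6}. Concretely, I would show that each subgroup \(W_i\) is isomorphic to \(W(SU(l_i+1))\); by the discussion preceding Lemma~\ref{lem:1} this forces every elementary factor to be \(SU(l_i+1)\), whence \(\tilde G=\prod SU(l_i+1)\times T^{l_0}\). By Lemma~\ref{sec:action-weyl-group-6} it is enough to prove that for each pair \(M_{j_1}\neq M_{j_2}\) in \(\mathfrak{F}_i\) there is exactly one reflection of \(W_i\) carrying \(M_{j_1}\) to \(M_{j_2}\); Lemma~\ref{sec:action-weyl-group-4} already gives at least one, so the whole statement reduces to the claim that \(W(G)\) acts faithfully on \(\mathfrak{F}\). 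Indeed, any two reflections carrying \(M_{j_1}\) to \(M_{j_2}\) act on \(\mathfrak{F}\) by the same transposition \((j_1\,j_2)\) — each fixes all other elements by Lemma~\ref{lem:action-weyl-group} — so a faithful action on \(\mathfrak{F}\) would force them to coincide.

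The heart of the argument, and the place where the quasitoric hypothesis is used, is the following rigidity: I would show that if \(w\in W(G)\) fixes a characteristic submanifold \(M_a\), i.e. \(w(a)=a\), then \(w\) acts orientation preservingly at \(M_a\). To see this, set \(\bar\tau_a=\phi(\tau_a)\in H^2(M)\), the image of the equivariant dual under \(\phi\colon H^2_T(M)\to H^2(M)\). Since \(G\) is connected, \(W(G)\) acts trivially on \(H^2(M)\), and \(\phi\) is \(W(G)\)-equivariant; combining these with (\ref{eq:jjkj}) yields \(\bar\tau_a=w\cdot\bar\tau_a=\phi(w\cdot\tau_a)=\pm\bar\tau_a\), where the minus sign occurs precisely when \(w\) reverses the orientation of \(M_a\). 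The point is that for a quasitoric manifold \(\bar\tau_a\neq 0\): choosing a vertex \(v\) of the facet corresponding to \(M_a\), the product of the \(\bar\tau_b\) over the \(n\) facets through \(v\) is a generator of \(H^{2n}(M)\) by the description of \(H^*(M)\) of Davis--Januszkiewicz~\cite{davis91:_convex_coxet}, and \(\bar\tau_a\) is one of these factors. Hence the minus sign is impossible, and \(w\) preserves the orientation of \(M_a\).

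Granting this, faithfulness follows at once. If \(w\) lies in the kernel of the \(W(G)\)-action on \(\mathfrak{F}\), then \(w(a)=a\) for all \(a\), so by the rigidity \(w\) preserves every orientation and, via (\ref{eq:jjkj}), fixes each \(\tau_a\). As the \(\tau_a\) form a basis of \(\hat H^2_T(M)\) and \(W(G)\) acts effectively on \(\hat H^2_T(M)\) by (\ref{eq:lkjlk}), this gives \(w=1\). Thus each \(W_i\) acts faithfully on \(\mathfrak{F}_i\), there is exactly one reflection per pair, and Lemma~\ref{sec:action-weyl-group-6} identifies \(W_i\cong W(SU(l_i+1))\) with \(l_i+1=\#\mathfrak{F}_i\), completing the proof. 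I expect the main obstacle to be the rigidity step, whose real content is the non-vanishing \(\bar\tau_a\neq 0\); this is special to quasitoric manifolds and fails for general torus manifolds — for the \(SO(2l+1)\)-sphere one even has \(H^2(S^{2l})=0\) — which is exactly what permits the other root systems appearing in Lemma~\ref{lem:1}.
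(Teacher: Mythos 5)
Your proof is correct and is essentially the paper's own argument: the paper likewise establishes the rigidity statement (an element of \(N_GT\) fixing a characteristic submanifold \(M_a\) must preserve its orientation) by combining the triviality of the \(W(G)\)-action on \(H^2(M)\) with the non-vanishing of the Poincar\'e dual \(PD(M_a)\in H^2(M)\), which follows from the quasitoric hypothesis that every \(T\)-fixed point is a transverse intersection of exactly \(n\) characteristic submanifolds. The only difference is cosmetic bookkeeping at the end: the paper phrases the conclusion as the existence of a \(W(G)\)-preserved omniorientation forcing \(W_i\cong S(\mathfrak{F}_i)\), while you phrase it as faithfulness of the \(W(G)\)-action on \(\mathfrak{F}\) fed into Lemma~\ref{sec:action-weyl-group-6}; both hinge on the effectiveness of \(W(G)\) on \(\hat{H}^2_T(M)\).
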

\begin{proof}
  First we show for \(i>0\):
  \begin{equation}
    \label{eq:8}
    W_i \cong S(\mathfrak{F}_i).
  \end{equation}
  To do so,
  it is sufficient to prove that 
  there is an omniorientation on \(M\) which is preserved by the action of \(W(G)\). This is true if
  for every characteristic submanifold \(M_i\) and \(g \in N_GT\) such that \(gM_i =M_i\), \(g\) preserves the orientation of \(M_i\).
  Since \(G\) is connected, \(g\) preserves the orientation of \(M\) and acts trivially on  \(H^2(M)\).

  Because each vertex of the orbit polytope \(P\) of \(M\) is the intersection of exactly \(n\) facets of \(P\), every fixed point of the \(T\)-action on \(M\) is the transverse intersection of exactly \(n\) characteristic submanifolds.
  Thus, the Poincar\'e-dual \(PD(M_i)\in H^2(M)\) of \(M_i\) is non-zero because \(M_i\cap M^T\neq \emptyset\).
  Therefore \(g\) preserves the orientation of \(M_i\) since otherwise
  \begin{align*}  
    PD(M_i)&=\frac{1}{2}(PD(M_i)+PD(M_i))\\
    &=\frac{1}{2}(PD(M_i)+g^*PD(M_i))&&(g \text{ acts trivially on }H^2(M))\\
    &=\frac{1}{2}(PD(M_i)-PD(M_i))&& (g \text{ reverses the orientation of } M_i)\\
    &=0.
  \end{align*}
  
  This establishes (\ref{eq:8}).
  Recall that all simple compact simply connected Lie-groups having a Weyl-group isomorphic to some symmetric group are isomorphic to some \(SU(l+1)\). 
  Therefore all elementary factors of \(\tilde{G}\) are isomorphic to \(SU(l_i+1)\).
  From this the statement follows.
\end{proof}

\begin{remark}
\label{sec:action-weyl-group-3}
  In \cite{1111.57019} Masuda and Panov show that the cohomology with coefficients in \(\mathbb{Z}\) of a torus manifold \(M\) is generated by its degree-two part if and only if the torus action on \(M\) is locally standard and the orbit space \(M/T\) is a homology polytope.
That means that all faces of \(M/T\) are acyclic and all intersections of facets of \(M/T\) are connected.
In particular, each \(T\)-fixed point is the transverse intersection of \(n\) characteristic submanifolds.
Therefore the above lemma also holds in this case.
\end{remark}

For a characteristic submanifold \(M_i\) of \(M\), let \(\lambda(M_i)\)  denote the one-dimensional subtorus of \(T\) which fixes \(M_i\) pointwise.
The normalizer \(N_GT\) of \(T\) in \(G\) acts by conjugation on the set of one-dimensional subtori of \(T\).
The following lemma shows that 
\begin{equation*}
  \lambda:\mathfrak{F}\rightarrow \{\text{one-dimensional subtori of }T\} 
\end{equation*}
is  \(N_GT\)-equivariant.

\begin{lemma}
\label{sec:action-weyl-group}
  Let \(M\) be a torus manifold with \(G\)-action, \(g\in N_GT\) and \(M_i\subset M\) be a characteristic submanifold. Then we have:
  \begin{enumerate}
  \item\label{item:12} \(\lambda(gM_i) =g\lambda(M_i)g^{-1}\).
  \item\label{item:13} If \(gM_i=M_i\), then \(g\) acts orientation preserving on \(M_i\) if and only if
    \begin{equation*}
      \lambda(M_i)\rightarrow \lambda(M_i)\quad t \mapsto gtg^{-1}
    \end{equation*}
is orientation preserving.
  \end{enumerate}
\end{lemma}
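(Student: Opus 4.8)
The plan is to treat the two assertions separately: the first is a direct orbit computation, and the second is a linearisation of the action on the normal bundle.

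For (\ref{item:12}) I would first record that the full pointwise stabiliser transforms naturally under conjugation by \(g\). For \(s\in T\) one has \(s|_{gM_i}=\id\) if and only if \((g^{-1}sg)|_{M_i}=\id\), because \(sy=y\) for all \(y=gx\in gM_i\) is equivalent to \((g^{-1}sg)x=x\) for all \(x\in M_i\). Hence the closed subgroups \(\{s\in T : s|_{gM_i}=\id\}\) and \(\{t\in T : t|_{M_i}=\id\}\) are conjugate by \(g\); here I use \(g\in N_GT\), so that conjugation by \(g\) is an automorphism of \(T\) and these subgroups really lie in \(T\). Passing to identity components, and using that \(\lambda(M_i)\) (resp.\ \(\lambda(gM_i)\)) is by definition the identity component of the pointwise stabiliser of the characteristic submanifold \(M_i\) (resp.\ \(gM_i\)), this yields \(\lambda(gM_i)=g\lambda(M_i)g^{-1}\). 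This step is routine.

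For (\ref{item:13}) assume \(gM_i=M_i\); then by (\ref{item:12}) conjugation by \(g\) preserves \(\lambda(M_i)\), so \(\phi\colon\lambda(M_i)\to\lambda(M_i)\), \(t\mapsto gtg^{-1}\), is an automorphism of the circle \(\lambda(M_i)\), hence either the identity (orientation preserving) or inversion (orientation reversing). Next I would fix an orientation of \(M\) and of \(M_i\); this co-orients the normal bundle \(N=N(M_i,M)\) and endows it with the induced complex structure \(J\), and I may take this as the complex structure used throughout. Since \(\lambda(M_i)\) is connected and fixes \(M_i\) pointwise, it preserves the orientations of \(M\) and of \(M_i\), hence of \(N\), so it acts \(\C\)-linearly on \(N\) with some weight \(k\); moreover \(k\neq0\), for otherwise a whole circle would act trivially on a neighbourhood of \(M_i\), contradicting almost effectiveness. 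Because \(G\) is connected, \(g\) preserves the orientation of \(M\). With the convention that the orientation of \(M\) is the product of those of \(M_i\) and \(N\), and since \(dg\) preserves \(TM_i\) and induces a map on \(N\), the orientation behaviour of \(g\) on \(M\) is the product of its behaviours on \(M_i\) and on \(N\); as \(g\) preserves the orientation of \(M\), I conclude that \(g\) is orientation preserving on \(M_i\) if and only if \(dg\) is orientation preserving on \(N\).

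It then remains to read off the orientation behaviour of \(dg\) on \(N\) from \(\phi\). Differentiating the identity \(g\,t=(gtg^{-1})\,g\) along the normal directions at \(x\in M_i\) (note \(gx\in M_i\)) gives \(dg_x\circ(t\cdot)=(\phi(t)\cdot)\circ dg_x\) on \(N\), where \(t\cdot\) denotes the \(\C\)-linear action of weight \(k\). If \(\phi=\id\) this reads \(dg_x(t^k v)=t^k\,dg_x(v)\); since \(k\neq0\) the map \(t\mapsto t^k\) is onto \(U(1)\), so \(dg_x\) commutes with multiplication by every unit complex number, in particular with \(J\), and is therefore \(\C\)-linear, hence orientation preserving on \(N\). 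If \(\phi\) is inversion the relation reads \(dg_x(t^k v)=t^{-k}\,dg_x(v)\), i.e.\ \(dg_x(wv)=\bar w\,dg_x(v)\) for all \(w\in U(1)\), so \(dg_x\) is \(\C\)-antilinear, hence orientation reversing on \(N\). Combined with the reduction of the previous paragraph, this shows that \(g\) is orientation preserving on \(M_i\) if and only if \(\phi\) is orientation preserving, as claimed.

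I expect the main obstacle to be the bookkeeping in (\ref{item:13}): one must linearise the twisted equivariance \(dg_x\circ(t\cdot)=(\phi(t)\cdot)\circ dg_x\) correctly at the pair \((x,gx)\), extract \(\C\)-linearity versus \(\C\)-antilinearity of \(dg\) on the normal bundle using that the weight \(k\) is nonzero, and then transport the resulting statement from \(N\) back to \(M_i\) via the product-orientation identity together with the fact that \(g\) preserves the orientation of \(M\). Part (\ref{item:12}) and the reduction of \(\phi\) to an automorphism of the circle are straightforward by comparison.
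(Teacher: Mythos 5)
Your proof is correct and takes essentially the same approach as the paper: both parts rest on conjugating pointwise stabilizers, and for (\ref{item:13}) on linearizing \(g\) on the normal bundle with its induced complex structure, using the equivariance of \(dg\) with the \(\lambda(M_i)\)-action and the fact that connected \(G\) preserves the orientation of \(M\). The only difference is organizational: the paper deduces \(gtg^{-1}=t^{\pm1}\) from the complex linear/anti-linear dichotomy of \(Dg\) via a discreteness argument with \(m\)-th roots of unity, whereas you start from the classification of automorphisms of the circle \(\lambda(M_i)\) and then read off the (anti)linearity of \(dg\) — the same ingredients in reversed order.
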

\begin{proof}
  First we prove (\ref{item:12}).
  Let \(x\in M_i\) be a generic point.
    Then the identity component \(T_x^0\) of the stabilizer of \(x\) in \(T\) is given by \(T_x^0=\lambda(M_i)\). Therefore we have
    \begin{equation*}
      \lambda(gM_i)=T_{gx}^0=gT_x^0g^{-1}=  g\lambda(M_i)g^{-1}.
    \end{equation*}

    Now we prove (\ref{item:13}).
 An orientation of \(M_i\) induces a complex structure on \(N(M_i,M)\).
    We fix an isomorphism \(\rho:\lambda(M_i)\rightarrow S^1\) such that the action of \(t\in \lambda(M_i)\) on \(N(M_i,M)\) is given by multiplication with \(\rho(t)^m\), \(m>0\).
    The differential \(Dg:N(M_i,M)\rightarrow N(M_i,M)\) is orientation preserving if and only if it is complex linear.
    Otherwise it is complex anti-linear.
    Therefore for \(v\in N(M_i,M)\) we have
    \begin{align*}
      \rho(gtg^{-1})^m v &= (Dg) (Dt) (Dg)^{-1}v
      =(Dg)\rho(t)^m(Dg)^{-1}v\\
      &=\rho(t)^{\pm m} (Dg)(Dg)^{-1}v
      =\rho(t^{\pm 1})^m v.
    \end{align*}
    This equation implies that \(\rho(gtg^{-1}t^{\mp 1})\in \mathbb{Z} /m\mathbb{Z}\).
    Because \(\lambda(M_i)\) is connected and \(\mathbb{Z}/m\mathbb{Z}\) is discrete, \(gtg^{-1}=t^{\pm 1}\) follows, where the plus-sign arises if and only if \(g\) acts orientation preserving on \(M_i\).
\end{proof}

\section{G-action on M}
\label{sec:Gact}

In this section we consider torus manifolds with \(G\)-action such that \(\tilde{G}\) has only one elementary factor \(G_1\), i.e. \(\tilde{G}=G_1\times T^{l_0}\).
There are two cases:
\begin{enumerate}
\item There is a \(T\)-fixed point, which is not fixed by \(G_1\).
\item There is a \(G\)-fixed point.
\end{enumerate}

We first discuss the case, where there is a \(T\)-fixed point which is not fixed by \(G_1\).

\begin{lemma}
\label{lem:iso}
Let \(\tilde{G}= G_1\times T^{l_0}\) with \(G_1\) elementary, \(\rank G_1=l_1\) and \(M\) a torus manifold with \(G\)-action of dimension \(2n=2(l_0+l_1)\).
If there is an \(x \in M^T\), which is not fixed by the action of \(G_1\), then
\begin{enumerate}
\item\label{item:15} \(G_1=SU(l_1+1)\) or \(G_1=\text{Spin}(2l_1+1)\) and the stabilizer of \(x\) in \(G_1\) is conjugated to \(S(U(l_1)\times U(1))\) or \(\text{Spin}(2l_1)\), respectively.
\item The \(G_1\)-orbit of \(x\) equals the component of \(M^{T^{l_0}}\) which contains \(x\).
\end{enumerate}
Moreover, if \(G_1=SU(4)\), one has \(\#\mathfrak{F}_1 =4\).
\end{lemma}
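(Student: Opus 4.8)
Let me sketch my approach.

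=== PLAN ===

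The plan is to analyze the local representation of $T$ at the fixed point $x$ and exploit the transitivity of the Weyl-group action on $\mathfrak{F}_1$ together with the equivariance of the map $\lambda$ established in Lemma~\ref{sec:action-weyl-group}.

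First I would set up the local picture. Since $x \in M^T$, the tangent space $T_xM$ is a $2n$-dimensional real $T$-representation, which splits as a sum of $n$ two-dimensional weight spaces $V_1,\dots,V_n$ with weights $\mu_1,\dots,\mu_n \in H^2(BT)$; these weights are the tangential weights at $x$. Because $x$ is the transverse intersection of the characteristic submanifolds passing through it, each such $M_i$ contributes a weight $\mu_i$, and up to sign $\mu_i$ is the restriction to $x$ of the equivariant Poincaré dual $\tau_i$; the subtorus $\lambda(M_i)$ is (up to finite cover) the kernel circle determined by $\mu_i$. The hypothesis that $x$ is \emph{not} fixed by $G_1$ means the isotropy group $(G_1)_x$ is a proper subgroup of $G_1$ of maximal rank (it contains $T$, hence contains a maximal torus of $G_1$), so $(G_1)_x$ is one of the standard maximal-rank subgroups. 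The key combinatorial input is that $W(G_1) = W_1$ permutes the weights $\{\mu_i : M_i \in \mathfrak{F}_1\}$ transitively (by Lemma~\ref{lem:1}), via its standard reflection action on $LT$.

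Next I would identify $(G_1)_x$. Since $W_1$ acts transitively on the tangential weights coming from $\mathfrak{F}_1$ and these weights span the relevant piece of $LT^*$, the fixed point $x$ sits inside the $G_1$-orbit whose isotropy representation is the standard one. I would compare the root system of $(G_1)_x$ (a maximal-rank subgroup) against the requirement that $W((G_1)_x)$ fixes the tangential weight that is normal to the orbit. Running through the non-exceptional types listed in Lemma~\ref{lem:1}: for $G_1 = SU(l_1+1)$ the orbit $G_1/S(U(l_1)\times U(1)) = \mathbb{C}P^{l_1}$ appears, giving isotropy $S(U(l_1)\times U(1))$; for $G_1 = \mathrm{Spin}(2l_1+1)$ the orbit $\mathrm{Spin}(2l_1+1)/\mathrm{Spin}(2l_1) = S^{2l_1}$ gives isotropy $\mathrm{Spin}(2l_1)$. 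The cases $\mathrm{Spin}(2l_1)$ and $Sp(l_1)$ must be \emph{excluded}, and this is where the dimension count $\dim M = 2n = 2(l_0+l_1)$ is essential: the smallest nontrivial $G_1$-orbit through a non-$G_1$-fixed $T$-fixed point must have dimension at most $2l_1$, and for $\mathrm{Spin}(2l_1)$ and $Sp(l_1)$ every such orbit is too large (or fails to produce a $T$-fixed point of the correct type). I would formalize this by showing the orbit $G_1 x$, which contains the $T$-fixed point $x$, must be one of $\mathbb{C}P^{l_1}$ or $S^{2l_1}$ on dimension grounds, forcing $G_1 \in \{SU(l_1+1),\mathrm{Spin}(2l_1+1)\}$.

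For part (2), I would argue that the component $N$ of $M^{T^{l_0}}$ containing $x$ is a $G_1$-invariant submanifold on which $T^{l_0}$ acts trivially, hence it is acted on by $G_1$ with $x \in N^T = N^{T_1}$ a $T_1$-fixed point ($T_1$ a maximal torus of $G_1$); since $\dim N \le 2l_1$ and $N \supseteq G_1 x$ with $G_1 x$ already of dimension $2l_1$, we get $N = G_1 x$. The final assertion, that $G_1 = SU(4)$ forces $\#\mathfrak{F}_1 = 4$, I would handle separately: here the two a priori values are $3$ and $4$ from Lemma~\ref{lem:1}, and I would rule out $\#\mathfrak{F}_1 = 3$ by noting that the orbit through $x$ is $\mathbb{C}P^3$, whose $T_1$-fixed points see exactly $4 = l_1+1$ characteristic submanifolds of $\mathfrak{F}_1$ meeting $x$, matching the standard weight configuration of $S(U(3)\times U(1))$.

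\medskip

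\noindent\textbf{Main obstacle.} The hardest step is the exclusion of $\mathrm{Spin}(2l_1)$ and $Sp(l_1)$ and the clean identification of the isotropy subgroup purely from the weight/orbit data. The subtlety is that maximal-rank subgroups can share Weyl-group data, so I cannot read off $(G_1)_x$ from $W_1$ alone; I must combine the transitivity of $W_1$ on the tangential weights from $\mathfrak{F}_1$ with the rigid dimension constraint $\dim G_1 x \le 2l_1$ to pin down that the orbit is forced to be the smallest Hermitian-symmetric or spherical orbit. Getting this dimension bound tight — i.e. showing no other maximal-rank isotropy type yields a $T$-fixed point whose orbit fits inside a $2n$-manifold without over-counting characteristic submanifolds — is the crux, and it is precisely what forces $G_1$ to be $SU(l_1+1)$ or $\mathrm{Spin}(2l_1+1)$.
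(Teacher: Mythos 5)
Your overall strategy --- bounding \(\dim G_1x=\codim G_{1x}\leq\dim N\leq 2l_1\) and then running through the maximal-rank subgroups of the non-exceptional groups from Lemma~\ref{lem:1} --- is the same as the paper's, and it does dispose of \(Sp(l_1)\), \(l_1>2\), and of \(\mathrm{Spin}(2l_1)\), \(l_1>3\), since there the smallest codimension of a proper connected maximal-rank subgroup is \(4l_1-4>2l_1\). But there is a genuine gap: the case \(G_1=\mathrm{Spin}(4)\), which Lemma~\ref{lem:1} allows as an elementary factor, cannot be excluded by any dimension count, and your claim that for \(\mathrm{Spin}(2l_1)\) ``every such orbit is too large'' is false there. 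Indeed \(\mathrm{Spin}(4)=SU(2)\times SU(2)\) has the maximal-rank subgroups \(SU(2)\times S(U(1)\times U(1))\) of codimension \(2\) and \(S(U(1)\times U(1))\times S(U(1)\times U(1))\) of codimension \(4=2l_1\), so orbits of dimension \(2\) and \(4\) through a non-fixed \(T\)-fixed point are perfectly compatible with your bound. The paper devotes roughly half of its proof to exactly this case: it lists the possible orbits (\(S^2\times S^2\), \(S^2\times_{\mathbb{Z}_2}S^2\), and three quotients involving \(\mathbb{R}P^2\), the latter excluded since the orbit is a component of the orientable set \(M^{T^{l_0}}\)), and then derives a contradiction from the \(W(\mathrm{Spin}(4))=\mathbb{Z}_2\times\mathbb{Z}_2\)-action on characteristic submanifolds: the characteristic submanifolds meeting the orbit transversely must lie in \(\mathfrak{F}_1\) (via Lemma~\ref{lem:torus-m2} and Lemma~\ref{sec:action-weyl-group}), hence in a single \(W(G_1)\)-orbit, while the explicit combinatorics of the one-skeleton of the orbit space (in the four-dimensional case), respectively the containment \(M_2\supset G_1x\not\subset M_1\) (in the two-dimensional case), shows they cannot be. Without an argument of this kind your proof does not establish conclusion~(1).

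A second, smaller gap concerns pinning down the stabilizer itself: your comparison of root systems and Weyl groups only sees the identity component \(G_{1x}^0\), but \(G_{1x}\) may be disconnected. For \(G_1=\mathrm{Spin}(2l_1+1)\) the stabilizer could a priori be the normalizer \(N_{G_1}\mathrm{Spin}(2l_1)\), which contains \(\mathrm{Spin}(2l_1)\) with index two (Lemma~\ref{sec:lie-groups-1}) and yields a non-orientable orbit; the paper rules this out because \(G_1x\) equals a component of \(M^{T^{l_0}}\), which is orientable. Likewise, for \(SU(l_1+1)\), \(l_1>1\), you need the maximality of \(S(U(l_1)\times U(1))\) as an abstract, not merely connected, subgroup (Lemma~\ref{sec:lie-algebra-calc}) to get \(G_{1x}=S(U(l_1)\times U(1))\) exactly. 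Your outline is silent on both points.
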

\begin{proof}
  The \(G_1\)-orbit of \(x\) is contained in the component \(N\) of \(M^{T^{l_0}}\) containing \(x\). Therefore we have
  \begin{equation*}
    \codim G_{1x} = \dim G_1/G_{1x} = \dim G_1x \leq \dim  N \leq 2l_1.
  \end{equation*}
Furthermore the stabilizer \(G_{1x}\) of \(x\) has maximal rank \(l_1\).
In particular, its identity component \(G_{1x}^0\) is a closed connected maximal rank subgroup.

Next we use the theory of Lie-groups to determine the isomorphism types of \(G_1\) and \(G_{1x}\).
At first we consider the case \(G_1\neq \text{Spin}(4)\).
From the classification of closed connected maximal rank subgroups of a compact Lie-group given in \cite[p. 219]{0034.30701}  we get the following connected maximal rank subgroups \(H\) of maximal dimension:
\begin{center}
    \begin{tabular}{|c|c|c|}
      \(G_1\)                              &  \(H\)                                          & \(\codim H\)   \\\hline\hline
      \(SU(2)=\text{Spin}(3)=Sp(1)\)       &  \(S(U(1)\times U(1))\)                         & \(2\)          \\\hline
      \(\text{Spin}(5)=Sp(2)\)             &  \(\text{Spin}(4)\)                             & \(4\) \\\hline
      \(SU(4)=\text{Spin}(6)\)             & \(S(U(3)\times U(1))\)                          & \(6\)          \\\hline
      \(SU(l_1+1)\), \(l_1 \neq 1,3\)      & \(S(U(l_1)\times U(1))\)                        & \(2l_1\)       \\\hline
      \(\text{Spin}(2l_1+1)\), \(l_1 > 2\) & \(\text{Spin}(2l_1)\)                           & \(2l_1\)        \\\hline
      \(\text{Spin}(2l_1)\), \(l_1 > 3\)   & \(\text{Spin}(2l_1-2)\times \text{Spin}(2)\)    & \(4l_1-4\)     \\\hline
      \(Sp(l_1)\), \(l_1 > 2\)             &\(Sp(l_1-1)\times Sp(1)\)                        & \(4l_1-4\) 
  \end{tabular}
\end{center}

Because \(H\) is unique up to conjugation and 
\begin{equation*}
 \codim H \leq \codim G_{1x}^0 =\codim G_{1x}\leq 2l_1, 
\end{equation*}
we see \(G_1=SU(l_1+1)\) or \(G_1=\text{Spin}(2l_1+1)\). Moreover, \(G_{1x}\) is conjugated to a subgroup of \(G_1\) which contains \(S(U(l_1)\times U(1))\) or \(\text{Spin}(2l_1)\), respectively.

If \(l_1>1\), then  \(S(U(l_1)\times U(1))\) is a maximal subgroup  of \(SU(l_1+1)\) by Lemma \ref{sec:lie-algebra-calc}.
Therefore, if \(G_1=SU(l_1+1)\) and \(l_1>1\), then \(G_{1x}\) is conjugated to  \(S(U(l_1)\times U(1))\).
Because \(\codim S(U(l_1)\times U(1)) = 2l_1 \geq \dim N \geq \codim G_{1x}\), we have \(G_1x=N\) in this case.

If \(G_1=\text{Spin}(2l_1+1)\), \(l_1\geq 1\), then by Lemma~\ref{sec:lie-groups-1} there are two proper subgroups of \(G_1\), which contain \(\text{Spin}(2l_1)\); \(\text{Spin}(2l_1)\) and its normalizer  \(H_0\).
Because of dimension reasons we have \(N=G_1x\).
Because \(\text{Spin}(2l_1+1)/H_0\) is not orientable and \(M^{T^{l_0}}\) is orientable, \(G_{1x}=\text{Spin}(2l_1)\) follows.
The case \(G_1=SU(2)\) is included in the discussion in this paragraph because \(SU(2)=\text{Spin}(3)\).

Now we prove the last statement of the lemma.
If \(G_1=SU(4)\), then \(G_1x\) is \(G_1\)-equivariantly diffeomorphic to \(\mathbb{C}P^3\) by the above discussion.
Because \(\C P^3\) has four characteristic submanifolds with pairwise non-trivial intersections, by Lemmas~\ref{lem:torus-m1} and~\ref{sec:gener-torus-manif}, there are four characteristic submanifolds \(M_1,\dots,M_4\), which intersect transversely with \(G_1x=N\).
Because \(G_1x\) is a component of \(M^{T^{l_0}}\) we have by Lemma~\ref{lem:torus-m2} that \(\lambda(M_i)\not\subset T^{l_0}\).
Therefore \(\lambda(M_i)\) is not fixed pointwise by the action of \(W(G_1)\) on \(T\).
Here \(W(G_1)\) acts on \(T\) by conjugation.
Now it follows with Lemma~\ref{sec:action-weyl-group} that \(M_1,\dots,M_4\) belong to \(\mathfrak{F}_1\).

Now we turn to the case \(G_1=\text{Spin}(4)=SU(2)\times SU(2)\).

Then there are the following proper closed connected maximal rank subgroups \(H\) of \(G_1\) of codimension at most \(4\):
\begin{equation*}
  SU(2)\times S(U(1)\times U(1)),\; S(U(1)\times U(1))\times SU(2),\;  S(U(1)\times U(1)) \times S(U(1)\times U(1)).
\end{equation*}

The last has codimension four in \(G_1\). 
The others have codimension two in \(G_1\).

At first assume that \(G_1x\) has dimension four.
Then we have \(G^0_{1x}= S(U(1)\times U(1)) \times S(U(1)\times U(1))\).
There are five proper subgroups of \(\text{Spin}(4)\) which contain \(S(U(1)\times U(1)) \times S(U(1)\times U(1))\) as a maximal connected subgroup, namely:
\begin{align*}
  H_1'&=S(U(1)\times U(1)) \times S(U(1)\times U(1))\\
  H_2'&=N_{SU(2)}S(U(1)\times U(1)) \times S(U(1)\times U(1))\\
  H_3'&= S(U(1)\times U(1)) \times N_{SU(2)}S(U(1)\times U(1))\\
  H_4'&=N_{SU(2)}S(U(1)\times U(1)) \times N_{SU(2)}S(U(1)\times U(1))\\
  H_5'&=\{(g_1,g_2)\in N_{SU(2)}S(U(1)\times U(1)) \times N_{SU(2)}S(U(1)\times U(1));\\
  &\quad\quad g_1\in S(U(1)\times U(1))\Leftrightarrow g_2\in S(U(1)\times U(1))\} 
\end{align*}
Therefore \(G_1x\) is \(G_1\)-equivariantly diffeomorphic to one of the following spaces:
\begin{align*}
  \text{Spin}(4)/H_1'&=S^2\times S^2,\\ 
  \text{Spin}(4)/H_5'&=S^2\times_{\mathbb{Z}_2}S^2=\text{orientable double cover of } \mathbb{R}P^2\times \mathbb{R}P^2,\\
  \text{Spin}(4)/H_2'&=\mathbb{R}P^2\times S^2,\\
  \text{Spin}(4)/H_3'&=S^2 \times \mathbb{R}P^2,\\
  \text{Spin}(4)/H_4'&=\mathbb{R}P^2\times \mathbb{R}P^2.
\end{align*}

Since \(G_1x=M^{T^{l_0}}\) is orientable, the latter three do not occur.

For \(N= G_1x=S^2\times S^2,S^2\times_{\mathbb{Z}_2}S^2\), let \(N^{(1)}\) be the union of the \(T\)-orbits in \(N\) of dimension less than or equal to one.
Then \(W(G_1)=\mathbb{Z}_2\times \mathbb{Z}_2\) acts on the orbit space \(N^{(1)}/T\).
This space is given by one of the following graphs:
\begin{equation*}
  \begin{array}{cc}
  \xymatrix{
    \circ \ar@{-}[rr]\ar@{-}[dd]& \ar@{<-}`u[r]`/0pt[r]`r/10pt[dd]`/0pt[dd]^{w_2}`d[dd]
 & \circ \ar@{-}[dd] &\\
    \ar@{<->}[rr]^{w_1} & &\\
    \circ \ar@{-}[rr]& \ar@{<-}`d/10pt[r]`/0pt[r]`r/10pt[ur]`/0pt[ur] & \circ\\
} 
   &\xyoption{curve}
  \xymatrix{
    & \circ\ar@{-}`r/20pt[dr]`/0pt[dr]\ar@{-}`l/20pt[dl]`/0pt[dl] &\\
    \ar@(ul,dl)@{<->}[]_{w_1}&       &\ar@(ur,dr)@{<->}[]^{w_2}\\
    & \circ\ar@{-}`r/20pt[ur]`/0pt[ur]\ar@{-}`l/20pt[ul]`/0pt[ul] &\\
}\\
\\
(S^2\times S^2)^{(1)}/T \quad\quad\quad&(S^2\times_{\mathbb{Z}_2}S^2)^{(1)}/T
  \end{array}
\end{equation*}
Where the edges correspond to orbits of dimension one and the vertices to the fixed points.
The arrows indicate the action of the generators \(w_1,w_2\in W(G_1)\) on this space.
Let \(M_1,M_2\) be the two characteristic submanifolds of \(M\) which intersect transversely with \(N\) in \(x\).
Because \(N\) is a component of \(M^{T^{l_0}}\), \(\lambda(M_i)\), \(i=1,2\), is not a subgroup of \(T^{l_0}\) by Lemma~\ref{lem:torus-m2}.
Therefore \(\lambda(M_i)\) is not fixed pointwise by \(W(G_1)\).
By Lemma~\ref{sec:action-weyl-group}, this fact implies \(M_1,M_2\in \mathfrak{F}_1\).
Therefore there is a \(w \in W(G_1)\) with \(w(M_1)=M_2\).
But from the pictures above we see that \(M_1\) and \(M_2\) are not in the same \(W(G_1)\)-orbits.
Therefore the case \(\dim G_1x=4\) does not occur.

Now assume that \(G_1x\) has dimension two.
Then we may assume without loss of generality that \(G_{1x}^0= SU(2) \times S(U(1)\times U(1))\).
Therefore  \(G_1x \subset M^{SU(2)\times 1}\).
Because \(G_1x\subset M^{T^{l_0}}\), \(G_1x\) is a component of \(M^{S(U(1)\times U(1))\times{1}\times T^{l_0}}\) in this case.
Therefore, by Lemmas \ref{lem:torus-m2} and \ref{sec:gener-torus-manif}, there are characteristic submanifolds \(M_2,\dots,M_{l_0+2}\) of \(M\) such that \(G_1x\) is a component of \(\bigcap_{i=2}^{l_0+2}M_i\).
Furthermore, we may assume that \(\lambda(M_2)\not\subset T^{l_0}\). 
Therefore,  by Lemma \ref{sec:action-weyl-group}, we have \(M_2\in \mathfrak{F}_1\).

But there is also a characteristic submanifold \(M_1\) of \(M\) which intersects \(G_1x\) transversely in \(x\).
With the Lemmas \ref{lem:torus-m2} and \ref{sec:action-weyl-group}, we see \(M_1\in \mathfrak{F}_1\).

Therefore there is a \(w\in W(G_1)\) with \(w(M_2)=M_1\).
But this is impossible because \(M_2\supset G_1x\not\subset M_1\).

Therefore \(G_1\neq \text{Spin}(4)\) and the lemma is proved. 
\end{proof}

\begin{remark}
\label{sec:g-action-m-1}
If, in the situation of Lemma~\ref{lem:iso}, \(T\cap G_1\) is the standard maximal torus of \(G_1\), then it follows
  by Proposition 2 of \cite[p. 325]{0712.57010} that \(G_{1x}\) is conjugated to the groups given in Lemma~\ref{lem:iso} (\ref{item:15})  by an element of the normalizer of the maximal torus.
\end{remark}

\begin{lemma}
\label{sec:g-action-m}
  In the situation of the previous lemma \(x\) is contained in the intersection of exactly \(l_1\) characteristic submanifolds belonging to \(\mathfrak{F}_1\).
\end{lemma}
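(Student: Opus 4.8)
The plan is to push everything onto the orbit \(N:=G_1x\). By Lemma~\ref{lem:iso}, \(N\) is the component of \(M^{T^{l_0}}\) through \(x\) and is \(G_1\)-equivariantly diffeomorphic to \(\C P^{l_1}\) when \(G_1=SU(l_1+1)\) and to \(S^{2l_1}\) when \(G_1=\text{Spin}(2l_1+1)\), each carrying the standard action. In both cases \(N\) is a \(2l_1\)-dimensional torus manifold for the maximal torus \(T\cap G_1\) of \(G_1\), and \(x\in N^{T\cap G_1}\). The first step is the purely combinatorial observation that exactly \(l_1\) characteristic submanifolds of \(N\) pass through \(x\): for \(\C P^{l_1}\) these are the \(l_1\) coordinate hyperplanes \(\C P^{l_1-1}\) not disjoint from the fixed point, and for \(S^{2l_1}\) they are the \(l_1\) subspheres \(S^{2l_1-2}\) through each pole. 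Call them \(N_1,\dots,N_{l_1}\).

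Next I would transport these to \(M\) exactly as in the proof of Lemma~\ref{lem:iso}. By Lemmas~\ref{lem:torus-m1} and~\ref{sec:gener-torus-manif}, each \(N_j\) is a component of \(M_{i_j}\cap N\) for a characteristic submanifold \(M_{i_j}\) of \(M\) meeting \(N\) transversely at \(x\), with distinct \(N_j\) yielding distinct \(M_{i_j}\), so that the \(M_{i_j}\) are exactly the characteristic submanifolds of \(M\) through \(x\) transverse to \(N\). Since \(N\) is a component of \(M^{T^{l_0}}\), Lemma~\ref{lem:torus-m2} gives \(\lambda(M_{i_j})\not\subset T^{l_0}\); hence \(\lambda(M_{i_j})\) is not fixed pointwise under the conjugation action of \(W(G_1)\), and Lemma~\ref{sec:action-weyl-group} forces \(M_{i_j}\in\mathfrak{F}_1\). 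This already exhibits \(l_1\) characteristic submanifolds of \(\mathfrak{F}_1\) through \(x\).

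The harder half is the reverse count: no further \(M_i\in\mathfrak{F}_1\) passes through \(x\). Every characteristic submanifold \(M_i\) through \(x\) either meets \(N\) transversely at \(x\) or satisfies \(N\subseteq M_i\); this dichotomy is read off the isotropy representation \(T_xM=T_xN\oplus N(N,M)_x\), because the normal weight space \(N(M_i,M)_x\) is an irreducible \(T\)-summand while \(T^{l_0}\) acts trivially on \(T_xN\) and without non-zero fixed vectors on \(N(N,M)_x\). The transverse ones are precisely the \(M_{i_j}\) of the previous paragraph. For one with \(N\subseteq M_i\) I would argue that \(M_i\in\mathfrak{F}_0\): here \(\lambda(M_i)\) fixes \(N\) pointwise, and since \(T\cap G_1\) acts almost effectively on \(N\) the only such subtorus is contained in \(T^{l_0}\), so \(\lambda(M_i)\subset T^{l_0}\). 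Then any \(g\in N_{G_1}T\) centralizes \(T^{l_0}\supseteq\lambda(M_i)\), so by Lemma~\ref{sec:action-weyl-group} we get \(\lambda(gM_i)=g\lambda(M_i)g^{-1}=\lambda(M_i)\); thus \(M_i\) and \(gM_i\) are both codimension-two components of \(M^{\lambda(M_i)}\) containing \(x\). As distinct components are disjoint, \(gM_i=M_i\), so \(M_i\) is \(W(G_1)\)-fixed, and the conjugation being trivial on \(\lambda(M_i)\) makes \(g\) orientation preserving at \(M_i\) by Lemma~\ref{sec:action-weyl-group}; hence \(M_i\in\mathfrak{F}_0\), not \(\mathfrak{F}_1\). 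Combining the two halves gives exactly \(l_1\) characteristic submanifolds of \(\mathfrak{F}_1\) through \(x\).

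I expect the main obstacle to be precisely this last step, namely upgrading ``\(\lambda(M_i)\subset T^{l_0}\)'' to ``\(M_i\in\mathfrak{F}_0\)''. The subtlety is that a characteristic submanifold is not determined by its isotropy circle \(\lambda(M_i)\), so the \(W(G_1)\)-invariance of \(\lambda(M_i)\) does not by itself fix \(M_i\); the point that rescues the argument is that two characteristic submanifolds sharing the same \(\lambda\) are disjoint components of one fixed-point set, and so cannot both contain \(x\). This also disposes of the orientation-reversing (type~3) behaviour of Lemma~\ref{lem:action-weyl-group}, which is the only place where the case \(l_1=1\) could otherwise cause trouble.
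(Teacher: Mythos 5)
Your proof is correct, and its first half coincides with the paper's argument: both count the \(l_1\) characteristic submanifolds of \(N=G_1x\) through \(x\) (the paper only uses \(\dim N=2l_1\), while you identify \(N\) explicitly with \(\C P^{l_1}\) or \(S^{2l_1}\)), lift them to characteristic submanifolds of \(M\) via Lemmas~\ref{lem:torus-m1} and~\ref{sec:gener-torus-manif}, and place them in \(\mathfrak{F}_1\) via Lemma~\ref{lem:torus-m2} (\(\lambda(M_i)\not\subset T^{l_0}\)) and Lemma~\ref{sec:action-weyl-group}. Where you genuinely diverge is the exclusion step. The paper notes, again by Lemmas~\ref{sec:gener-torus-manif} and~\ref{lem:torus-m2}, that \(G_1x\) is the intersection of \(l_0\) further characteristic submanifolds, and rules these out of \(\mathfrak{F}_1\) with a two-line orbit argument: if such an \(M_i\) were in \(\mathfrak{F}_1\), transitivity of \(W(G_1)\) on \(\mathfrak{F}_1\) would produce \(w\) with \(w(M_i)=M_j\) for a transverse \(M_j\), which is impossible since Weyl representatives preserve \(G_1x\), so \(w(M_i)\supset G_1x\not\subset M_j\). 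You instead prove the stronger statement that every characteristic submanifold containing \(N\) lies in \(\mathfrak{F}_0\): you show \(\lambda(M_i)\subset T^{l_0}\) (using that \(T\cap G_1\) acts almost effectively on \(N\)), deduce \(gM_i=M_i\) from the disjointness of components of \(M^{\lambda(M_i)}\), and invoke part~(\ref{item:13}) of Lemma~\ref{sec:action-weyl-group} for the orientation behaviour — a criterion the paper's proof never needs, but which, as you rightly observe, is exactly what disposes of the type-3 (orientation-reversing, \(\#\mathfrak{F}_1=1\)) case without case analysis. The paper's route is shorter; yours yields slightly more information and avoids appealing to transitivity of \(W(G_1)\) on \(\mathfrak{F}_1\). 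The only soft spot is your dichotomy ``transverse or \(N\subseteq M_i\)'': passing from \(N_x(M_i,M)\subset N_x(N,M)\) to \(N\subseteq M_i\) needs the (standard, but worth recording) argument that \(T_xN\subset(T_xM)^{\lambda(M_i)}\) forces \(\lambda(M_i)\) to fix the connected manifold \(N\) pointwise, whence \(N\) lies in the component of \(M^{\lambda(M_i)}\) through \(x\), which is \(M_i\).
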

\begin{proof}
  Because \(N=G_1x\) has dimension \(2l_1\), \(x\) is contained in exactly \(l_1\) characteristic submanifolds of \(N\).
  By Lemmas \ref{lem:torus-m1} and \ref{sec:gener-torus-manif}, we know that they are components of intersections of characteristic submanifolds \(M_1,\dots,M_{l_1}\) of \(M\) with \(N\).

  Because \(G_1x\) is a component of \(M^{T^{l_0}}\), \(\lambda(M_i)\) is not a subgroup of \(T^{l_0}\) for \(i=1,\dots,l_1\) by Lemmas \ref{lem:torus-m2} and~\ref{sec:gener-torus-manif}.
  Therefore \(\lambda(M_i)\) is not fixed pointwise by \(W(G_1)\).
  By Lemma~\ref{sec:action-weyl-group}, this implies that \(M_i\) belongs to \(\mathfrak{F}_1\).

  By Lemmas~\ref{sec:gener-torus-manif} and~\ref{lem:torus-m2}, \(G_1x\) is the intersection of \(l_0\) characteristic submanifolds \(M_{l_1+1},\dots,M_n\) of \(M\).
  We show that these manifolds do not belong to \(\mathfrak{F}_1\).
  Assume that there is an \(i\geq l_1+1\) such that \(M_i\) belongs to \(\mathfrak{F}_1\).
  Because \(W(G_1)\) acts transitively on \(\mathfrak{F}_1\), there is a \(w\in W(G_1)\) with \(w(M_i)=M_j\), \(j\leq l_1\).
  But this is impossible because \(M_i\supset G_1x\not\subset M_j\).
\end{proof}

Now we turn to the case, where there is  a \(T\)-fixed point which is fixed by \(G_1\).

\begin{lemma}
  \label{lem:iso2}
  Let \(\tilde{G}= G_1\times T^{l_0}\) with \(G_1\) elementary, \(\rank G_1=l_1\) and \(M\) a torus manifold with \(G\)-action of dimension \(2n=2(l_0+l_1)\).
If there is a \(T\)-fixed point \(x \in M^T\), which is fixed by \(G_1\), then \(G_1=SU(l_1+1)\) or \(G_1=\text{Spin}(2l_1)\).

Moreover, if \(G_1\neq \text{Spin}(8)\) one has
\begin{gather}
 \label{eq:9} T_xM=V_1\oplus V_2 \otimes_\C W_1 \text{ if } G_1=SU(l_1+1) \text{ and } \#\mathfrak{F}_1=4 \text{ in the case } l_1=3,\\
 \label{eq:10} T_xM=V_3\oplus W_2             \text{ if } G_1=\text{Spin}(2l_1) \text{ and } \#\mathfrak{F}_1=3 \text{ in the case } l_1=3,
\end{gather}
where \(W_1\) is the standard complex representation of \(SU(l_1+1)\) or its dual, \(W_2\) is the standard real representation of \(SO(2l_1)\) and the \(V_i\) are complex  \(T^{l_0}\)-representations.

In the case \(G_1=\text{Spin}(8)\), one may change the action of \(G_1\) on \(M\) by an automorphism of \(G_1\), which is independent of \(x\), to reach the situation described in (\ref{eq:10}).

Furthermore we have \(x \in\bigcap_{M_i\in \mathfrak{F}_1}M_i\).
If \(l_1=1\), then we have \(\#\mathfrak{F}_1=2\).
\end{lemma}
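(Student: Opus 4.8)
The plan is to analyse the isotropy representation of \(\tilde G=G_1\times T^{l_0}\) on the tangent space \(T_xM\) at the \(G\)-fixed point \(x\), much as in the proof of Lemma~\ref{lem:iso} but now with \(x\) a genuine fixed point. Since \(x\in M^T\) and \(M\) is a torus manifold, \(T_xM\) splits \(T\)-equivariantly into \(n\) complex lines \(L_1,\dots,L_n\) with weights \(\alpha_1,\dots,\alpha_n\); almost effectiveness of the \(T\)-action forces these weights to span, hence to form a \(\Q\)-basis of \(H^2(BT)\), so every weight has multiplicity one and each hyperplane \(\bigoplus_{j\neq i}L_j\) is the tangent space of a characteristic submanifold \(M_i\) through \(x\). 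Because \(T^{l_0}\) commutes with \(G_1\), I would write \(T_xM=U_0\oplus U\), where \(U_0=(T_xM)^{G_1}\) is a complex \(T^{l_0}\)-representation and \(U\) is the sum of the non-trivial \(G_1\)-isotypical components; both summands are \(T\)-invariant, hence sums of the lines \(L_i\), and since the \(G_1\)-action on \(M\) is almost effective and fixes \(x\), \(G_1\) acts almost faithfully on \(U\).

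The key structural input is Section~\ref{sec:charman}: since \(G_1\) is elementary, \(\mathfrak F_1\) is a single \(W(G_1)\)-orbit and \(W(G_1)=W_1\) fixes, orientation-preservingly, every characteristic submanifold outside \(\mathfrak F_1\). Through Lemma~\ref{sec:action-weyl-group} this says that the only lines \(L_i\) moved or orientation-reversed by \(W(G_1)\) are those with \(M_i\in\mathfrak F_1\), and that these constitute one \(W(G_1)\)-orbit. Any non-trivial irreducible \(G_1\)-summand of \(U\) has all of its \(T_1\)-weight lines moved or reversed by \(W(G_1)\) (here \(T_1=T\cap G_1\)), so these must lie in \(\mathfrak F_1\) and form a single Weyl orbit; together with the linear independence of the \(\alpha_i\) (which rules out relations such as the one among the weights of the adjoint representation) this forces each summand to be \emph{minuscule}, and since its lines inject into the single orbit \(\mathfrak F_1\) there can be only one summand. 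Thus \(U\) is an irreducible minuscule \(G_1\)-representation whose weight lines biject \(W(G_1)\)-equivariantly with \(\mathfrak F_1\); in particular every \(M_i\in\mathfrak F_1\) passes through \(x\), proving \(x\in\bigcap_{M_i\in\mathfrak F_1}M_i\), and \(\dim_\C U=\#\mathfrak F_1\), i.e.\ \(\dim_\R U=2\#\mathfrak F_1\).

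With \(U\) identified as a minuscule, almost faithful real \(G_1\)-representation of real dimension \(2\#\mathfrak F_1\), I would run Lemma~\ref{lem:1} against the minimal dimensions of almost faithful real representations: \(2(l_1+1)\) for \(SU(l_1+1)\), \(2l_1\) for \(\text{Spin}(2l_1)\), \(2l_1+1\) for \(\text{Spin}(2l_1+1)\), and \(4l_1\) for \(Sp(l_1)\). As Lemma~\ref{lem:1} gives \(\dim_\R U=2l_1\) for \(\text{Spin}(2l_1+1)\), \(Sp(l_1)\) and \(\text{Spin}(2l_1)\) and \(2(l_1+1)\) for \(SU(l_1+1)\), the groups \(\text{Spin}(2l_1+1)\) (since \(2l_1<2l_1+1\)) and \(Sp(l_1)\) (since \(2l_1<4l_1\)) cannot occur, while for \(SU(l_1+1)\) and \(\text{Spin}(2l_1)\) the dimensions match and the uniqueness of the minimal faithful representation identifies \(U\) with the standard complex representation \(W_1\) (or its dual), respectively with the standard real representation \(W_2\). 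Since \(T^{l_0}\) commutes with \(G_1\) and preserves the irreducible \(U\), Schur's lemma makes it act through a character \(V_2\) in the complex case and — the commutant of \(SO(2l_1)\) on \(\R^{2l_1}\) being \(\R\) — trivially in the real case; with \(U_0=V_1\) (resp.\ \(V_3\)) this yields the decompositions (\ref{eq:9}) and (\ref{eq:10}).

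It remains to dispose of the low-rank and exceptional cases, which I expect to be the main obstacle. For \(l_1=1\) the group \(SU(2)=\text{Spin}(3)=Sp(1)\) admits no almost faithful real representation below dimension \(4\), so \(2\#\mathfrak F_1\ge 4\) and hence \(\#\mathfrak F_1=2\) by Lemma~\ref{lem:1}. For \(l_1=3\) the coincidence \(SU(4)=\text{Spin}(6)\) makes both the standard representation of \(SU(4)\) (real dimension \(8\), giving \(\#\mathfrak F_1=4\)) and the vector representation of \(\text{Spin}(6)\) (real dimension \(6\), giving \(\#\mathfrak F_1=3\)) admissible, which is exactly why (\ref{eq:9}) and (\ref{eq:10}) must be separated by the value of \(\#\mathfrak F_1\) there. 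Finally, for \(G_1=\text{Spin}(8)\) the vector representation and the two half-spin representations all have real dimension \(8=2\#\mathfrak F_1\) and are permuted by the triality automorphisms; composing the \(G_1\)-action with one fixed such automorphism (independent of \(x\)) brings \(U\) into vector form and reduces to (\ref{eq:10}). The delicate points throughout are the passage from the combinatorial \(W(G)\)-data of Section~\ref{sec:charman} to the minuscule condition on \(U\), and the careful real-versus-complex bookkeeping needed to match representation dimensions with \(2\#\mathfrak F_1\) in these coincidental cases.
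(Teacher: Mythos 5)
Your strategy is essentially the paper's: analyse the isotropy representation at \(x\), identify the weight lines that are moved or orientation-reversed by \(W(G_1)\) with the characteristic submanifolds in \(\mathfrak{F}_1\) (this is precisely the paper's equation \eqref{eq:ff}), and then compare dimensions against the smallest representations of the candidate groups, treating the coincidences \(SU(2)\), \(SU(4)=\text{Spin}(6)\) and \(\text{Spin}(8)\) separately. The only organisational difference is that you split \(T_xM\) into \(G_1\)-isotypical pieces and pin the dimension down exactly as \(\dim_\R U = 2\#\mathfrak{F}_1\), whereas the paper splits off \(N_x(M^{T^{l_0}},M)=\bigoplus_i V_i\otimes_\C W_i\) and works with the cruder inequalities \(\dim_\C W_i\le l_1+1\) and \(\dim_\R T_xM^{T^{l_0}}\le 2l_1\); these are two bookkeepings of the same mechanism.

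Two steps of your plan are genuinely defective. First, the assertion that \(SU(2)\) ``admits no almost faithful real representation below dimension 4'' is false: the vector representation of \(SO(3)\) is \(3\)-dimensional and almost faithful for \(SU(2)\) (its kernel is \(\{\pm 1\}\)). Your conclusion \(\#\mathfrak{F}_1=2\) survives, but for a different reason which you must supply: \(U\) is a sum of the complex lines \(L_i\), hence even-dimensional and without trivial summands, and a zero \(T_1\)-weight line inside \(U\) is impossible (it would either be \(T\)-fixed, contradicting that \(x\) is isolated in \(M^T\), or carry a pure \(T^{l_0}\)-weight and hence lie in \((T_xM)^{G_1}\)); since \(SU(2)\) has no non-trivial real representation of dimension \(2\), this forces \(2\#\mathfrak{F}_1=4\). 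Relatedly, your ``there can be only one summand'' argument is incomplete: transitivity of \(W(G_1)\) on \(\mathfrak{F}_1\) rules out two distinct isotypical components (these are \(T\)-invariant), but it does not rule out multiplicity \(\ge 2\) inside one isotypical component, whose individual irreducible summands need not be \(T\)-invariant; here you must invoke the linear independence of the \(\alpha_i\) once more, since two copies of one irreducible twisted by different \(T^{l_0}\)-characters produce four weights satisfying a linear relation.

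Second, and more seriously, for \(\text{Spin}(8)\) you assert that the triality automorphism can be chosen ``independent of \(x\)'' without any argument. A priori different \(T\)-fixed points could require different automorphisms, and then no single twist of the action would put every tangent representation into vector form, which is what the statement of the lemma requires. The missing idea is the paper's kernel argument: the three \(8\)-dimensional real representations of \(\text{Spin}(8)\) have pairwise distinct kernels, and the kernel of the isotropy representation \(T_xM^{T^{l_0}}\) coincides with the kernel of the \(G_1\)-action on \(M\), which is a global object independent of \(x\); hence the same representation (up to isomorphism) occurs at every \(T\)-fixed point, and one fixed automorphism serves all of them simultaneously.
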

\begin{proof}

Let \(M_1,\dots,M_n\) be the characteristic submanifolds of \(M\), which intersect in \(x\). Then the weight spaces of the \(\tilde{G}\)-representation \(T_xM\) are given by
\begin{equation*}
  N_x(M_1,M),\dots,N_x(M_n,M).
\end{equation*}
For \(g \in N_GT\) we have \(M_i=gM_j\) if and only if \(N_x(M_i,M)=gN_x(M_j,M)\).
Because \(G_1\) acts non-trivially on \(T_xM\), there is at least one \(M_i\), \(i\in\{1,\dots,n\}\), such that \(M_i\in\mathfrak{F}_1\).

In the following a weight space of \(T_xM\) together with a choice of an orientation for this weight space is called an oriented weight space of \(T_xM\).
The action of \(G_1\) on \(T_xM\) induces an action of \(W(G_1)\) on the set of oriented weight spaces of \(T_xM\).

Because \(W(G_1)\) acts transitively on \(\mathfrak{F}_1\) and \(x\) is a \(G\)-fixed point, we have 
\begin{equation}
  \label{eq:ff}
  \frac{1}{2}\#\{\text{oriented weight spaces of } T_xM \text{ which are not fixed by } W(G_1)\}=\#\mathfrak{F}_1
\end{equation}
and 
\(x \in\bigcap_{M_i\in \mathfrak{F}_1}M_i\).

For the  \(\tilde{G}\)-representation \(T_xM\) we have
  \begin{equation}
    \label{eq:T_x}
    T_xM = N_x(M^{T^{l_0}},M) \oplus T_xM^{T^{l_0}}.
  \end{equation}
If \(l_0=0\), then we have \(N_x(M^{T^{l_0}},M)=\{0\}\).
Otherwise the action of \(T^{l_0}\) induces a complex structure on \(N_x(M^{T^{l_0}},M)\).
By \cite[p. 68]{0581.22009} and \cite[p. 82]{0581.22009}, we have
\begin{equation}
\label{eq:1}
   N_x(M^{T^{l_0}},M) = \bigoplus_i V_i\otimes_\C W_i,
\end{equation}
where the \(V_{i}\)  are one-dimensional complex \(T^{l_0}\)-representations and the \(W_{i}\) are irreducible complex \(G_1\)-representations.
Since \(T^{l_0}\) acts almost effectively on \(M\), there are at least \(n-l_1\) summands in this decomposition.
Therefore we get
\begin{equation}
\label{eq:11}  
 \dim_{\mathbb{C}} W_{i} = \dim_{\mathbb{C}} N_x(M^{T^{l_0}},M) - \sum_{j\neq i}\dim_{\mathbb{C}} V_j\otimes_\C W_j
   \leq n - (n-l_1-1) = l_1+1.
\end{equation}
Furthermore
\begin{equation}
  \label{eq:12}
   \dim_{\mathbb{R}} T_xM^{T^{l_0}} \leq 2(n-l_0)=2l_1.
\end{equation}
 
If there is a \(W_{i_0}\) with \(\dim_{\mathbb{C}} W_{i_0}=l_1+1\), then from equation (\ref{eq:1}) we get, for all other \(W_i\),
\begin{equation}
  \label{eq:13}
   \dim_{\mathbb{C}} W_{i} = \dim_{\mathbb{C}}  N_x(M^{T^{l_0}},M) - \dim_{\mathbb{C}} V_{i_0}\otimes_\C W_{i_0} -\sum_{j\neq i,i_0}\dim_{\mathbb{C}} V_j\otimes_\C W_j\leq 1.
\end{equation}
So they are one-dimensional.
Therefore they are trivial.
Furthermore we have
\begin{equation*}
  \dim_{\mathbb{C}} N_x(M^{T^{l_0}},M) = \sum_i \dim_{\mathbb{C}} V_i \otimes_\C W_i \geq n
\end{equation*}
because there are at least \(n-l_1\) summands in the decomposition~(\ref{eq:1}).
Therefore \(T_xM^{T^{l_0}}\) is zero-dimensional in this case.

If \(\dim_{\mathbb{R}} T_xM^{T^{l_0}}=2l_1\), then we have
\begin{equation*}
   \dim_{\mathbb{C}} W_{i} = \dim_{\mathbb{C}} N_x(M^{T^{l_0}},M) - \sum_{j\neq i}\dim_{\mathbb{C}} V_j\otimes_\C W_j\leq 1.
\end{equation*}
Therefore all \(W_i\) are one dimensional.
So they are trivial in this case.

There are the following lower bounds \(d_\mathbb{R},d_\mathbb{C}\) for the dimension of real and complex non-trivial irreducible  representations of \(G_1\) \cite[p. 53-54]{0796.57001}:
\begin{center}
    \begin{tabular}{|c|c|c|}
      \(G_1\)                                  &  \(d_\mathbb{R}\) & \(d_\mathbb{C}\)   \\\hline\hline
      \(SU(2)=\text{Spin}(3)=Sp(1)\)           &  \(3\)           & \(2\)              \\\hline
      \(\text{Spin}(4)\)                       &  \(3\)           & \(2\)              \\\hline
      \(\text{Spin}(5)=Sp(2)\)                 &  \(5\)           & \(4\)              \\\hline
      \(SU(4)=\text{Spin}(6)\)                 & \(6\)            & \(4\)              \\\hline
      \(SU(l_1+1)\), \(l_1 \neq 1,3\)          & \(2l_1+2\)       & \(l_1+1\)         \\\hline
      \(\text{Spin}(2l_1+1)\), \(l_1 >2\)      & \(2l_1+1\)       & \(2l_1+1\)         \\\hline
      \(\text{Spin}(2l_1)\), \(l_1 > 3\)       & \(2l_1\)         & \(2l_1\)           \\\hline
      \(Sp(l_1)\), \(l_1 >2\)                  &\(2l_1+1\)        & \(2l_1\)  
  \end{tabular}  
\end{center}

In \cite[p. 53-54]{0796.57001} the dominant weights of the \(G_1\)-representations realising these bounds are also given.
They are important in the discussion below.

Because \(G_1\) acts non-trivially on \(T_xM\), one of the \(W_i\)'s or \(T_xM^{T^{l_0}}\) is a non-trivial \(G_1\)-representation.
Therefore we have \(d_{\mathbb{R}}\leq 2l_1\) or \(d_{\mathbb{C}}\leq l_1+1\) by (\ref{eq:11}) and (\ref{eq:12}).
Therefore \(G_1 \neq Sp(l_1)\), \(l_1>1\), and \(G_1 \neq \text{Spin}(2l_1+1)\), \(l_1>1\).

If \(G_1=\text{Spin}(2l_1),l_1>3\), then all \(W_i\) are trivial because
\begin{equation*}
  \dim_{\mathbb{C}}W_i\leq l_1+1<2l_1=d_\mathbb{C}.
\end{equation*}
Moreover, \(T_xM^{T^{l_0}}\) has dimension \(2l_1\).
Therefore it is the standard real \(SO(2l_1)\)-representation if \(l_1>4\).
If \(l_1=4\), then there are three eight-dimensional real representations of \(\text{Spin}(8)\), namely the standard real \(SO(8)\)-representation and the two half spinor representations.
They have three different kernels.
Notice that the kernel of the \(G_1\)-representation \(T_xM^{T^{l_0}}\) is equal to the kernel of the \(G_1\)-action on \(M\).
Therefore, if one of them is isomorphic to \(T_xM^{T^{l_0}}\), then it is isomorphic to  \(T_yM^{T^{l_0}}\) for all \(y\in M^{T}\).
So we may -- after changing the action of \(\text{Spin}(8)\) on \(M\) by an automorphism -- assume that \(T_xM^{T^{l_0}}\) is the standard real \(SO(8)\)-representation.

If \(G_1=SU(l_1+1), l_1\neq 1,3\), then only one \(W_i\) is non-trivial and \(T_xM^{T^{l_0}}\) has dimension zero.
The non-trivial \(W_i\) is the standard representation of \(SU(l_1+1)\) or its dual depending on the complex structure of \(N_x(M^{T^{l_0}},M)\).

If  \(G_1=SU(4)\), then there are one real representation of dimension \(6\) and two complex representations of dimension \(4\).
If the first representation occurs in the decomposition of  \(T_xM\), then, by (\ref{eq:ff}), we have \(\#\mathfrak{F}_1=3\). 
If one of the others occurs, then  \(\#\mathfrak{F}_1=4\).

If \(G_1=SU(2)\), then there is one non-trivial \(W_i\) of dimension 2. Therefore, by (\ref{eq:ff}), one has \(\#\mathfrak{F}_1=2\).

If \(G_1=\text{Spin}(4)\), then \(T_xM\) is an almost faithful representation.
Because all almost faithful complex representations of \(\text{Spin}(4)\) have at least dimension four there is no \(W_i\) of dimension three.

If there is one \(W_{i_0}\) of dimension two, then we see as in (\ref{eq:13}) that all other \(W_i\) and \(T_xM^{T^{l_0}}\) have dimension less than or equal to two.
Because there is no non-trivial two-dimensional real \(\text{Spin}(4)\)-representation there is another \(W_i\) of dimension two.
Therefore there are eight oriented weight spaces of \(T_xM\) which are not fixed by the action on \(W(G_1)\).
But this contradicts (\ref{eq:ff}) because \(\#\mathfrak{F}_1=2\).

Therefore all \(W_i\) are one-dimensional.
Hence, they are trivial.
\(T_xM^{T^{l_0}}\) has to be the standard four-dimensional real representation of \(\text{Spin}(4)\).
\end{proof}

With the Lemmas \ref{lem:iso} and \ref{lem:iso2}, we see that there is no elementary factor of \(\tilde{G}\), which is isomorphic to \(Sp(l_1)\) for \(l_1>2\).

Now let \(G_1=\text{Spin}(2l)\).
 If \(l=3\), we assume \(\# \mathfrak{F}_1=3\).
 Then, by looking at the \(G_1\)-representation \(T_xM\), one sees with Lemma~\ref{lem:iso2} that the \(G_1\)-action factors through \(SO(2l)\).

Now let \(G_1=\text{Spin}(2l+1)\), \(l>1\).
Then, by Lemma~\ref{lem:iso}, we have \(G_{1x}\cong\text{Spin}(2l)\).
Because the \(G_{1x}\)-action on \(N_x(G_1x,M)\) is trivial by Lemma~\ref{lem:iso2},
 the \(G_1\)-action factors through \(SO(2l+1)\).

In the case \(G_1=\text{Spin}(3)\) and \(\# \mathfrak{F}_1=1\) we have \(G_1x=S^2\).
The characteristic submanifold \(M_1\in \mathfrak{F}_1\) intersects \(G_1x\) transversely in \(x\).
Because  \(\# \mathfrak{F}_1=1\), \(\lambda(M_1)\)
is invariant under the action of \(W(G_1)\) on the maximal torus of \(G\).
Because, by Lemma~\ref{sec:action-weyl-group}, the non-trivial element of \(W(G_1)\) reverses the orientation of  \(\lambda(M_1)\),
it is a maximal torus of \(G_1\).
Therefore the center of \(G_1\) acts trivially on \(M\).
Hence, the \(G_1\)-action on \(M\) factors through \(SO(3)\).

If, in the case \(G_1=\text{Spin}(3)\) and \(\# \mathfrak{F}_1=2\), the principal orbit type of the \(G_1\)-action is given by \(\text{Spin}(3)/\text{Spin}(2)\), then the \(G_1\)-action factors through \(SO(3)\).

Therefore in the following we may replace an elementary factor \(G_i\) of \(\tilde{G}\) isomorphic to \(\text{Spin}(l)\), which satisfies the above conditions, by \(SO(l)\).

\begin{convention}
\label{sec:g-action-m-3}
  If we say that an elementary factor \(G_i\) is isomorphic to \(SU(2)\) or \(SU(4)\), then we mean that \(\#\mathfrak{F}_i=2\) or \(\#\mathfrak{F}_i=4\), respectively.
Conversely, if we say that \(G_i\) is isomorphic to \(SO(3)\) we mean that \(\#\mathfrak{F}_i=1\) or \(\#\mathfrak{F}_i=2\) and the \(SO(3)\)-action has principal orbit type \(SO(3)/SO(2)\).
If we say \(G_i=SO(6)\), then we mean \(\#\mathfrak{F}_i=3\).
\end{convention}

\begin{cor}
\label{sec:g-action-m-2}
 Assume that \(G\) is elementary.
  Then \(M\) is equivariantly diffeomorphic to \(\C P^{l_1}\) or \(M=S^{2l_1}\) if \(\tilde{G}=SU(l_1+1)\) or \(\tilde{G}=SO(2l_1+1),SO(2l_1)\), respectively.
\end{cor}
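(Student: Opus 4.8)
The plan is to use that \(G\) being elementary means \(\tilde G=G_1\) with \(l_0=0\), so that \(M\) is a \(2l_1\)-dimensional almost effective \(G_1\)-manifold with \(M^T\neq\emptyset\). The key preliminary observation is that, because \(l_0=0\), at any \(G\)-fixed point \(x\) one has \(M^{T^{l_0}}=M\) and \(N_x(M^{T^{l_0}},M)=0\), so the isotropy representation is the whole of \(T_xM\): a non-trivial real \(G_1\)-representation of real dimension exactly \(2l_1\). Comparing this with the lower bounds \(d_{\mathbb R}\) for non-trivial real irreducible \(G_1\)-representations collected in the proof of Lemma~\ref{lem:iso2} immediately separates the three cases.

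For \(G_1=SU(l_1+1)\) and \(G_1=\text{Spin}(2l_1+1)\) one has \(d_{\mathbb R}=2l_1+2\) and \(d_{\mathbb R}=2l_1+1\) respectively, both strictly larger than \(2l_1\). Hence no non-trivial real \(G_1\)-representation fits into \(T_xM\), so \(M\) has no \(G\)-fixed point at all. Since \(M^T\neq\emptyset\), any \(x\in M^T\) is then a \(T\)-fixed point which is not fixed by \(G_1\), and Lemma~\ref{lem:iso} applies. As \(l_0=0\), part~(2) of that lemma gives \(G_1x=M\), so \(M\) is the single orbit \(G_1/G_{1x}\); inserting the stabilizers from Lemma~\ref{lem:iso}(\ref{item:15}) yields the equivariant diffeomorphisms \(M=SU(l_1+1)/S(U(l_1)\times U(1))=\C P^{l_1}\) and \(M=\text{Spin}(2l_1+1)/\text{Spin}(2l_1)=S^{2l_1}\) with their standard actions.

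The remaining case \(G_1=SO(2l_1)\) is the substantive one, and I expect it to be the main obstacle, since the single-orbit argument is unavailable once a \(G\)-fixed point is present. Here \(SO(2l_1)\) does not occur in the list of Lemma~\ref{lem:iso}, so by the contrapositive of that lemma every \(T\)-fixed point is fixed by \(G_1\), i.e. \(M^T=M^{G}\neq\emptyset\). Choosing \(x\in M^{G}\) and applying Lemma~\ref{lem:iso2} with \(l_0=0\) (after the automorphism adjustment provided by Lemma~\ref{lem:iso2} in the case \(l_1=4\)) shows that the isotropy representation \(T_xM=W_2\) is the standard real \(SO(2l_1)\)-representation on \(\mathbb{R}^{2l_1}\). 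By the slice theorem an invariant neighbourhood of \(x\) is equivariantly diffeomorphic to this representation, whose non-zero orbits are the round spheres \(S^{2l_1-1}\). Thus the principal orbits of \(M\) have codimension one and \(M\) is a cohomogeneity-one \(SO(2l_1)\)-manifold with principal isotropy \(SO(2l_1-1)\).

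Finally I would read off the global diffeomorphism type from the cohomogeneity-one picture. The orbit space \(M/SO(2l_1)\) is a compact \(1\)-manifold; as it contains the image of the fixed point \(x\) it must be the interval \([0,1]\), with a singular orbit over each endpoint, one of which is \(\{x\}\). The other is \(SO(2l_1)/K\) with \(SO(2l_1-1)\subseteq K\subseteq SO(2l_1)\) and \(K/SO(2l_1-1)\) a sphere. Using that \(SO(2l_1-1)\) is a maximal connected subgroup of \(SO(2l_1)\) — equivalently, using the constraint \(M^T=M^{G}\), which forbids \(T\)-fixed points on a positive-dimensional orbit and so forces \(\rank K\in\{l_1-1,l_1\}\) — the only possibilities are \(K=SO(2l_1)\), giving a second fixed point and \(M=D^{2l_1}\cup_{S^{2l_1-1}}D^{2l_1}=S^{2l_1}\) with the linear action, or \(K=S(O(2l_1-1)\times O(1))\), giving singular orbit \(\mathbb{R}P^{2l_1-1}\) and \(M=\mathbb{R}P^{2l_1}\). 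Since \(M\) is oriented while \(\mathbb{R}P^{2l_1}\) is not, the latter is excluded, and \(M\) is equivariantly diffeomorphic to \(S^{2l_1}\) with its usual action, completing all three cases.
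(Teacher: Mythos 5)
Your overall strategy (exclude \(G\)-fixed points, deduce that \(M\) is a single orbit; in the \(SO(2l_1)\) case use the slice representation at a fixed point to get a cohomogeneity-one action and classify it) is close to the paper's, and it goes through for \(SO(2l_1+1)\), for \(SU(l_1+1)\) with \(l_1\neq 3\), and for \(SO(2l_1)\) with \(l_1\neq 3\). But there is a genuine gap at the exceptional isomorphism \(SU(4)\cong\text{Spin}(6)\). Your key inequality ``\(d_{\mathbb R}>2l_1\)'' is false for \(G_1=SU(4)\): the table in the proof of Lemma~\ref{lem:iso2} gives \(d_{\mathbb R}=6=2l_1\), realized by the standard real representation of \(SO(6)\) pulled back along \(SU(4)=\text{Spin}(6)\rightarrow SO(6)\). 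This is not a removable technicality: the round \(S^6\), with \(SU(4)\) acting through \(SO(6)\subset SO(7)\), is a torus manifold with \(SU(4)\)-action having \(G\)-fixed points, so no dimension count alone can exclude fixed points here, and without further input your conclusion ``\(M\cong\C P^{3}\)'' would be false for that action. What saves the statement is Convention~\ref{sec:g-action-m-3}: ``\(\tilde G=SU(4)\)'' means \(\#\mathfrak{F}_1=4\), and then by (\ref{eq:9}) of Lemma~\ref{lem:iso2} a \(G\)-fixed point would force \(T_xM\) to contain \(V_2\otimes_\C W_1\) with \(W_1\) the standard complex \(SU(4)\)-module, which needs \(8>6\) real dimensions. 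Your proof never invokes \(\#\mathfrak{F}_1\), so it cannot distinguish the \(SU(4)\) case from the \(SO(6)\) case. The same coincidence infects your \(SO(2l_1)\) argument at \(l_1=3\): \(\text{Spin}(6)=SU(4)\) \emph{does} occur in the list of Lemma~\ref{lem:iso}, so the ``contrapositive'' step establishing \(M^T=M^G\) needs the clause of that lemma saying that \(G_1=SU(4)\) forces \(\#\mathfrak{F}_1=4\), combined with \(\#\mathfrak{F}_1=3\) for \(SO(6)\), not merely a comparison of isomorphism types.

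The paper avoids this issue by a different fixed-point exclusion for \(SU(l_1+1)\), which works uniformly in \(l_1\): by Lemma~\ref{lem:iso2}, a \(G_1\)-fixed \(T\)-fixed point would lie in \(\bigcap_{M_i\in\mathfrak{F}_1}M_i\), and since \(\#\mathfrak{F}_1=l_1+1\) while by Lemma~\ref{lem:torus-m2} an intersection of \(l_1+1\) distinct characteristic submanifolds has codimension \(2l_1+2>\dim M\), that intersection is empty. If you replace your \(d_{\mathbb R}\)-count by this argument (and add the \(\#\mathfrak{F}_1\)-clause in the \(SO(6)\) case), your proof is complete. Your cohomogeneity-one treatment of \(SO(2l_1)\) is then a fine variant of the paper's: you exclude the second possible endpoint orbit by noting the total space would be the non-orientable \(\R P^{2l_1}\), whereas the paper excludes exceptional orbits via orientability of \(\R P^{2l_1-1}\) and Bredon's structure theorem before gluing the two disks.
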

\begin{proof}
  If \(G\) is elementary, then we may assume that \(G=\tilde{G}= SO(2l_1),SO(2l_1 +1),SU(l_1+1)\) and \(\dim M= 2l_1\).
  
  If \(G=SO(2l_1)\), then, by Lemmas  \ref{lem:iso} and \ref{lem:iso2}, the principal orbit type  of the \(SO(2l_1)\)-action is given by \(SO(2l_1)/SO(2l_1-1)\), which has codimension one in \(M\).

  The group \(S(O(2l_1-1)\times O(1))\) is the only proper subgroup of \(SO(2l_1)\), which contains \(SO(2l_1-1)\) properly.
  Because \(SO(2l_1)/S(O(2l_1-1)\times O(1))=\mathbb{R}P^{2l_1-1}\) is orientable all orbits of the \(SO(2l_1)\)-action are of types \(SO(2l_1)/SO(2l_1 -1)\) or \(SO(2l_1)/SO(2l_1)\)  by \cite[p. 185]{0246.57017}.

By \cite[p. 206-207]{0246.57017}, we have
\begin{equation*}
  M= D^{2l_1}_1\cup_\phi D^{2l_1}_2,
\end{equation*}
where \(SO(2l_1)\) acts on the disks \(D_i^{2l_1}\) in the usual way and
\begin{equation*}
  \phi: S^{2l_1-1}=SO(2l_1)/SO(2l_1-1) \rightarrow  S^{2l_1-1}=SO(2l_1)/SO(2l_1-1)
\end{equation*}
  is given by
\(gSO(2l_1-1)\mapsto gnSO(2l_1-1)\), where \(n \in N_{SO(2l_1)}SO(2l_1-1)=S(O(2l_1-1)\times O(1))\).

Therefore \(\phi=\pm \id_{S^{2l_1-1}}\) and \(M=S^{2l_1}\).

  If \(G=SO(2l_1+1)\), then 
  \begin{equation*}
    M=SO(2l_1+1)/SO(2l_1)=S^{2l_1}
  \end{equation*}
  follows directly from Lemmas \ref{lem:iso} and \ref{lem:iso2}.

  Now assume \(G=SU(l_1+1)\).
  Because \(\dim M=2l_1\), the intersection of \(l_1+1\) pairwise distinct characteristic submanifolds of \(M\) is empty.
  By Lemma \ref{lem:iso2}, no \(T\)-fixed point is fixed by \(G\).
  Therefore from Lemma \ref{lem:iso} we get
  \begin{equation*}
    M=SU(l_1+1)/S(U(l_1)\times U(1))=\C P^{l_1}.
  \end{equation*}
\end{proof}
\begin{remark}
  Another proof of this statement follows from the classification given in section \ref{sec:classif}.
\end{remark}

\section{Blowing up}
\label{sec:blow}

In this section we describe blow ups of torus manifolds with \(G\)-action.
They are used in the following sections to construct from a torus manifold \(M\) with \(G\)-action another torus manifold \(\tilde{M}\) with \(G\)-action, such that an elementary factor of the covering group \(\tilde{G}\) of \(G\) has no fixed point in \(\tilde{M}\).

References for this construction are \cite[p. 602-611]{0408.14001} and \cite[p. 269-270]{0567.53031}.

As before we write \(\tilde{G}=\prod_{i=1}^k G_i \times T^{l_0}\) with \(G_i\) elementary and \(T^{l_0}\) a torus.

We will see in sections \ref{sec:su} and \ref{sec:so} that there are the following two cases:
\begin{enumerate}
\item A component \(N\) of \(M^{G_1}\) has odd codimension in \(M\).
\item A component \(N\) of \(M^{G_1}\) has even codimension in \(M\) and there is a \(g \in Z(\tilde{G})\) such that \(g\) acts trivially on \(N\) and \(g^2\) acts as \(-\id\) on \(N(N,M)\). 
\end{enumerate}

In the second case the action of \(g\) on \(N(N,M)\) induces a \(G\)-invariant complex structure.
We equip \(N(N,M)\) with this structure.
Let \(E=N(N,M) \oplus\mathbb{K}\), where \(\mathbb{K}= \mathbb{R}\) in the first case and \(\mathbb{K}=\mathbb{C}\) in the second case.

In the following we call case (1) the real case and case (2) the complex case.

\begin{Lemma}
  The projectivication \(P_{\mathbb{K}}(E)\) is orientable.
\end{Lemma}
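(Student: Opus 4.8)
The plan is to prove orientability by showing that the first Stiefel--Whitney class \(w_1(TP_{\mathbb{K}}(E))\) vanishes. Write \(\pi\colon P_{\mathbb{K}}(E)\to N\) for the bundle projection and let \(\gamma\subset \pi^*E\) be the tautological subbundle. The short exact sequence of vector bundles over \(P_{\mathbb{K}}(E)\),
\begin{equation*}
  0\to T^{\mathrm{vert}}P_{\mathbb{K}}(E)\to TP_{\mathbb{K}}(E)\to \pi^*TN\to 0,
\end{equation*}
together with the Whitney sum formula gives \(w_1(TP_{\mathbb{K}}(E))=w_1(T^{\mathrm{vert}})+\pi^*w_1(TN)\), so the problem reduces to computing \(w_1\) of the vertical tangent bundle. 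For this I would use the standard identification \(T^{\mathrm{vert}}P_{\mathbb{K}}(E)\cong \Hom(\gamma,\pi^*E/\gamma)\).

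In the complex case \(\mathbb{K}=\mathbb{C}\) the conclusion is immediate: the vertical tangent bundle is then a complex vector bundle, so \(w_1(T^{\mathrm{vert}})=0\); moreover \(N\) is orientable, because \(N(N,M)\) carries the \(g\)-induced complex structure, whence \(w_1(N(N,M))=0\), while \(w_1(TM)|_N=w_1(TN)+w_1(N(N,M))=0\) since \(M\) is oriented. Thus \(w_1(TP_{\mathbb{C}}(E))=\pi^*w_1(TN)=0\).

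The real case \(\mathbb{K}=\mathbb{R}\) is where the computation is needed. From \(T^{\mathrm{vert}}\cong\gamma^*\otimes(\pi^*E/\gamma)\) and \(w_1(\gamma^*)=w_1(\gamma)\) one obtains
\begin{equation*}
  w_1(T^{\mathrm{vert}})=\rank E\cdot w_1(\gamma)+\pi^*w_1(E).
\end{equation*}
The key input is that \(N\) has odd codimension, so \(\rank_{\mathbb{R}}E=\codim N+1\) is even and the \(w_1(\gamma)\) term drops out mod \(2\). It then remains to note \(w_1(E)=w_1(N(N,M))\), since \(E=N(N,M)\oplus\mathbb{R}\), and \(w_1(TN)=w_1(N(N,M))\), again from \(w_1(TM)=0\); hence
\begin{equation*}
  w_1(TP_{\mathbb{R}}(E))=\pi^*\bigl(w_1(E)+w_1(TN)\bigr)=2\,\pi^*w_1(N(N,M))=0.
\end{equation*}

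The main obstacle is precisely that in the real case the base \(N\) need not be orientable, so one cannot simply invoke ``orientable base plus orientable fiber.'' The argument instead hinges on two facts acting in tandem: the parity of \(\rank E\), forced by the odd codimension in case~(1), kills the tautological contribution, and the orientation of \(M\) forces \(w_1(TN)\) and \(w_1(E)\) to coincide, after which they cancel mod~\(2\). The only points requiring care are the parity bookkeeping in the vertical-tangent formula and checking that \(N(N,M)\) genuinely has odd rank in the real case.
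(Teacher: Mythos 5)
Your proof is correct, but it follows a genuinely different route from the paper's. The paper works directly from the definition \(P_{\mathbb{K}}(E)=S(E)/Z_{\mathbb{K}}\), where \(Z_{\mathbb{K}}=\mathbb{Z}/2\mathbb{Z}\) (real case) or \(S^1\) (complex case) acts by fiberwise multiplication: since \(M\) is oriented, the total space of \(N(N,M)\) (a tubular neighborhood of \(N\) in \(M\)) is orientable, hence so are \(E\) and its sphere bundle \(S(E)\); the quotient \(S(E)/Z_{\mathbb{K}}\) is then orientable because \(Z_{\mathbb{K}}\) acts orientation-preservingly on \(S(E)\) --- for \(\mathbb{K}=\mathbb{C}\) because \(S^1\) is connected, and for \(\mathbb{K}=\mathbb{R}\) because the antipodal map on the even-dimensional fibers of \(E\) preserves orientation. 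Your argument replaces this geometric step by a Stiefel--Whitney computation via the relative Euler sequence. Notably, the two proofs consume exactly the same two inputs, just packaged differently: the even rank of \(E\) forced by the odd codimension of \(N\) in the real case (for the paper it makes the antipodal map orientation-preserving; for you it kills the term \(\rank E\cdot w_1(\gamma)\)), and the orientability of \(M\) (for the paper it gives orientability of \(S(E)\); for you it gives \(w_1(TN)=w_1(N(N,M))=w_1(E)\)). The paper's version is more elementary, needing no tangent-bundle formula for projective bundles and no splitting principle, while yours isolates the obstruction cleanly: it exhibits the general identity \(w_1(TP_{\mathbb{K}}(E))=\rank E\cdot w_1(\gamma)+\pi^*\bigl(w_1(E)+w_1(TN)\bigr)\), which makes transparent precisely when such a projectivization is orientable and would be reusable beyond the two cases considered here.
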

\begin{proof}
Because \(M\) is orientable the total space of the normal bundle of \(N\) in \(M\) is orientable.
Therefore
\begin{equation*}
  E=N(N,M)\oplus \mathbb{K}=N(N,M)\times \mathbb{K}
\end{equation*}
and the associated sphere bundle \(S(E)\) are orientable.

Let \(Z_{\mathbb{K}}=\mathbb{Z}/2\mathbb{Z}\) if \(\mathbb{K}=\mathbb{R}\) and \(Z_{\mathbb{K}}=S^1\) if \(\mathbb{K}=\mathbb{C}\).
Then \(Z_{\mathbb{K}}\) acts on \(E\) and \(S(E)\) by multiplication on the fibers.
Now \(P_{\mathbb{K}}(E)\) is given by \(S(E)/Z_{\mathbb{K}}\).
If  \(\mathbb{K}=\mathbb{C}\), then \(Z_\mathbb{K}\) is connected.
Therefore it acts orientation preserving on \(S(E)\).

If \(\mathbb{K}=\mathbb{R}\), then  \(\dim E\) is even.
Therefore the restriction of the \(Z_{\mathbb{K}}\)-action to a fiber of \(E\) is orientation preserving.
Hence, it preserves the orientation of \(S(E)\).

Because the action of \(Z_{\mathbb{K}}\)  is orientation preserving on \(S(E)\), \(P_{\mathbb{K}}(E)\) is orientable.
\end{proof}

Choose a \(G\)-invariant Riemannian metric on \(N(N,M)\) and
 a \(G\)-equivariant closed tubular neighborhood \(B\) around \(N\). Then one may identify
\begin{equation*}
  B= \{z_0\in N(N,M); |z_0| \leq 1\} = \{(z_0:1)\in P_\mathbb{K}(E); |z_0| \leq 1\}.
\end{equation*}

By gluing the complements of the interior of \(B\) in M and \(P_{\mathbb{K}}(E)\) along the boundary of \(B\), we get a new torus manifold with \(G\)-action \(\tilde{M}\),  the \emph{blow up} of \(M\) along \(N\).
It is easy to see, using isotopies of tubular neighborhoods, that the \(G\)-equivariant diffeomorphism-type of \(\tilde{M}\) does not depend on the choices of the Riemannian metric and the tubular neighborhood. 

\(\tilde{M}\) is oriented in such a way that the induced orientation on \(M-\mathring{B}\) coincides with the orientation induced from \(M\).
This forces the inclusion of \(P_\mathbb{K}(E)-\mathring{B}\) to be orientation reversing.
Because \(G_1\) is elementary there is no one-dimensional \(G_1\)-invariant subbundle of \(N(N,M)\).
Therefore we have
\(\#\pi_0(\tilde{M}^{G_1})= \#\pi_0(M^{G_1})-1\).

So by iterating this process over all components of \(M^{G_1}\) one ends up at a torus manifold \(\tilde{M}'\) with \(G\)-action without \(G_1\)-fixed points.
In the following we will call \(\tilde{M}'\) the blow up of \(M\) along \(M^{G_1}\).

\begin{Lemma}
  \label{lem:blow1}
  There is a \(G\)-equivariant map \(F:\tilde{M}\rightarrow M\) which maps the exceptional submanifold \(M_0=P_{\mathbb{K}}(N(N,M)\oplus \{0\})\) to \(N\) and is the identity on \(M-B\).
  Moreover, \(F\) restricts to a diffeomorphism \(\tilde{M}-M_0\rightarrow M-N\).
  Its restriction to \(M_0\) is the bundle projection \(P_{\mathbb{K}}(N(N,M)\oplus\{0\})\rightarrow N\).
\end{Lemma}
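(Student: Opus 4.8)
The plan is to build $F$ by hand: on the part of $\tilde{M}$ coming from $M$ it will be the identity, and on the part coming from $P_{\mathbb{K}}(E)$ it will be an explicit fiberwise radial collapse; then all four assertions can be read off directly. First I would use the $G$-invariant metric to identify $B$ with the closed unit disc bundle $\{w\in N(N,M):|w|\le 1\}$, so that $\tilde{M}=(M\setminus\mathring{B})\cup_{\partial B}(P_{\mathbb{K}}(E)\setminus\mathring{B})$. On the affine chart $\{\lambda\neq 0\}$ of $P_{\mathbb{K}}(E)$ the rule $[z_0:\lambda]\mapsto w:=z_0/\lambda$ is a $G$-equivariant diffeomorphism onto the total space of $N(N,M)$; under it $\mathring{B}$ becomes $\{|w|<1\}$ and the exceptional submanifold $M_0=P_{\mathbb{K}}(N(N,M))$ sits at $|w|=\infty$, so that $P_{\mathbb{K}}(E)\setminus\mathring{B}=\{|w|\ge 1\}\cup M_0$.

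Next I would fix a smooth strictly decreasing diffeomorphism $\psi:[1,\infty)\to(0,1]$ and set $F=\mathrm{id}$ on $M\setminus\mathring{B}$ and, on the projective piece,
\[ F([z_0:\lambda]) = \psi(|w|)\,\frac{w}{|w|}\quad(|w|\ge 1),\qquad F|_{M_0}=\bigl(P_{\mathbb{K}}(N(N,M))\to N\bigr), \]
so that $F$ preserves the radial direction, reparametrizes the radius by $\psi$, and collapses the sphere at infinity to the zero section $N$. All stated properties are then immediate: $F(M_0)=N$ with $F|_{M_0}$ the bundle projection, $F$ maps into $B$, and since $\psi$ is a diffeomorphism onto $(0,1]$ the restriction of $F$ to $\{|w|\ge 1\}$ is a diffeomorphism onto $B\setminus N$. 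Glued with the identity on $M\setminus\mathring{B}$ (and noting $N\subset\mathring{B}$) this gives the diffeomorphism $\tilde{M}\setminus M_0\to M\setminus N$. Equivariance holds because $G$ acts by bundle isometries on $N(N,M)$, hence preserves $|w|$ and commutes with $w\mapsto w/|w|$.

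The one point requiring care, and the main obstacle, is the smoothness of $F$, which must hold both across the gluing locus $\partial B$ and along the exceptional submanifold $M_0$. For smoothness across $\partial B$ I would require $\psi(r)=2-r$ for $r$ near $1$, so that in a collar coordinate $s$ (with $s\ge 0$ on $M\setminus\mathring{B}$ and $|w|=1-s$ on $P_{\mathbb{K}}(E)\setminus\mathring{B}$) the radius of $F$ equals $1+s$ on both sides; thus $F$ matches the identity to all orders and is in particular the identity near, and hence on, $M\setminus B$. For smoothness along $M_0$ I would require $\psi$ to be flat at infinity (all derivatives vanishing as $r\to\infty$): writing $\psi(|w|)\,w/|w|=\psi(|z_0|/|\lambda|)\,z_0\bar\lambda/(|z_0|\,|\lambda|)$ and using $z_0\neq 0$ near $M_0$, the flatness forces $F$ and all its derivatives transverse to $M_0$ to vanish, so $F$ extends smoothly across $M_0$ with value the bundle projection. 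Such a $\psi$ clearly exists, and since the $G$-equivariant diffeomorphism type of $\tilde{M}$ is independent of the auxiliary metric and tubular neighborhood, the resulting $F$ has all the required properties.
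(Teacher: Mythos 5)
Your proof is correct, and it reaches the lemma by a genuinely different route than the paper. The paper also starts from an explicit equivariant map, but it uses the inversion \(f(z_0:z_1)=(z_0\bar z_1:|z_0|^2)\) (in your affine chart, \(w\mapsto w/|w|^2\)): this formula is smooth across \(M_0\) wherever \(z_0\neq 0\), restricts there to the bundle projection, and is the identity on \(\partial B\); however, the map obtained by gluing it with the identity on \(M-\mathring{B}\) is a priori only continuous along the seam, and the paper repairs this not by committing to a collar but by quoting the equivariant isotopy/uniqueness theorem for tubular neighborhoods \cite{pre05136053}, which produces a smooth equivariant diffeomorphism \(F'\colon\tilde{M}-M_0\to M-N\) equal to the identity on \(M-\mathring{B}\) and equal to \(f\) near \(M_0\), and then extends \(F'\) over \(M_0\) by \(f\). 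You instead build smoothness in by hand through the radial profile \(\psi\): taking \(\psi(r)=2-r\) near \(r=1\) amounts to fixing one (equivariant) collar defining the smooth structure of \(\tilde{M}\) at the seam --- legitimate, since \(\tilde{M}\) is only defined up to equivariant diffeomorphism --- and taking \(\psi\) flat at infinity makes the map extend over \(M_0\). This is self-contained, needing no external smoothing theorem, which is a real advantage. Two caveats. First, your parenthetical gloss of flatness (``all derivatives vanishing as \(r\to\infty\)'') is too weak: \(\psi(r)=1/\log(er)\) satisfies it, yet \(\mathrm{sign}(v)\,\psi(c/|v|)\) then fails to be \(C^1\) at \(v=0\); what is needed, and what ``flat at infinity'' should be taken to mean, is that \(t\mapsto\psi(1/t)\) extends smoothly and flatly to \(t=0\), i.e. every \(\psi^{(k)}(r)\) decays faster than any power of \(r\); such \(\psi\) exist, so this is a fixable imprecision rather than a gap. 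Second, your \(F\) is flat along \(M_0\), hence nowhere an immersion there, whereas the paper's \(F\) agrees with the inversion near \(M_0\) and therefore restricts to diffeomorphisms on fibers and on proper transforms; the lemma does not ask for this, but the paper exploits exactly this extra property later (in the proof of Corollary~\ref{cor:blowing-up}), so your map, while proving the lemma as stated, could not be substituted for the paper's \(F\) in the rest of Section~\ref{sec:blow}.
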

\begin{proof}
  The \(G\)-equivariant map
  \begin{equation*}
    f: P_\mathbb{K}(E)-\mathring{B} \rightarrow B \;\; (z_0:z_1) \mapsto (z_0\bar{z}_1:|z_0|^2) \;\; (z_0 \in N(N,M),z_1\in \mathbb{K})
  \end{equation*}
is the identity on \(\partial B\).
Therefore it may be extended to a continuous map \(h: \tilde{M}\rightarrow M\), which is the identity outside of \(P_{\mathbb{K}}(E)-\mathring{B}\).

Because \(f|_{P_\mathbb{K}(E)-\mathring{B}-M_0}: P_\mathbb{K}(E)-\mathring{B}-M_0 \rightarrow B-N\) is a diffeomorphism there is a \(G\)-equivariant diffeomorphism \(F':\tilde{M}-M_0\rightarrow M-N\), which is the identity outside \(P_\mathbb{K}(E)-\mathring{B}-M_0\) and coincides with \(f\) near \(M_0\) by \cite[p. 24-25]{pre05136053}.
Therefore \(F'\) extends to a differentiable map \(F:\tilde{M}\rightarrow M\) such that \(F|_{M_0}=f|_{M_0}\) is the bundle projection.
\end{proof}

\begin{Lemma}
\label{lem:proper}
  Let \(H\) be a closed subgroup of \(G\). Then there is  a bijection
  \begin{equation*}
    \{\text{components of } M^H \not\subset N\} \rightarrow \{\text{components of } \tilde{M}^H \not\subset M_0\}
\end{equation*}
such that
\begin{equation*}
      N' \mapsto \tilde{N}'=\left(P_{\mathbb{K}}(N(N\cap N',N')\oplus \mathbb{K})-\mathring{B}\right)
      \cup_{\partial B \cap N'} \left(N'-\mathring{B}\right)
\end{equation*}
and its inverse is given by
\begin{equation*}
   F(N'') \mapsfrom N'',
\end{equation*}
   where \(N'\) is a component of \(M^H\) and \(N''\) is one of \(\tilde{M}^H\).
Here \(F(N'')\) is the image of \(N''\) under the map \(F\) defined in Lemma~\ref{lem:blow1}.
For a component \(N'\) of \(M^H\), we call \(\tilde{N}'\) the proper transform of \(N'\).
\end{Lemma}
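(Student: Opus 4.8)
The plan is to exploit the \(G\)-equivariant map \(F:\tilde M\to M\) of Lemma~\ref{lem:blow1} and to reduce everything to the behaviour of \(F\) on fixed point sets. First I would record that \(N\) is \(G\)-invariant: since \(G_1\) is a factor of \(\tilde G\) it is normal in \(G\), so \(M^{G_1}\) is \(G\)-invariant, and as \(G\) is connected it fixes each component setwise; hence \(N\), the tubular neighbourhood \(B\) and the exceptional manifold \(M_0\) are all \(G\)-invariant, in particular \(H\)-invariant. By Lemma~\ref{lem:blow1} we have \(F^{-1}(N)=M_0\) and \(F\) restricts to a \(G\)-equivariant diffeomorphism \(\tilde M-M_0\to M-N\). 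Restricting to \(H\)-fixed points, \(F\) gives a diffeomorphism \(\tilde M^H-M_0\to M^H-N\).

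The two maps of the asserted bijection are \(N'\mapsto\tilde N':=\overline{F^{-1}(N'-N)}\) (closure taken in \(\tilde M\)) and \(N''\mapsto F(N'')\). I would first check they are inverse to each other at the level of \emph{sets}. Because \(N'\not\subseteq N\) and \(N'\) is connected, \(N\cap N'\) is a proper closed submanifold of \(N'\), so \(N'-N\) is dense in \(N'\); since \(\tilde N'\) is compact and \(F\) is continuous with \(F(F^{-1}(N'-N))=N'-N\), we get \(F(\tilde N')=\overline{N'-N}=N'\). Symmetrically, for a component \(N''\) of \(\tilde M^H\) with \(N''\not\subseteq M_0\) the set \(N''-M_0\) is dense in \(N''\) and maps under \(F\) into \(M^H-N\), so \(F(N'')\) is a compact connected subset of \(M^H\) meeting \(M^H-N\); hence it lies in a unique component \(N'\) of \(M^H\) with \(N'\not\subseteq N\), and the same compactness argument yields \(F(N'')=N'\) and, via injectivity of \(F\) off \(M_0\), \(N''\subseteq\tilde N'\). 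Thus both assignments are well defined and mutually inverse, \emph{once} we know that \(\tilde N'\) is a single component of \(\tilde M^H\) not contained in \(M_0\); distinct \(N'\) then yield disjoint \(\tilde N'\) because \(F(\tilde N'_1\cap\tilde N'_2)\subseteq N'_1\cap N'_2=\emptyset\).

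The heart of the proof is therefore the identification of \(\tilde N'\) with the stated formula and the verification that it is a whole component; this is a local computation near \(N\). By the equivariant tubular neighbourhood theorem we may assume \(B=D(N(N,M))\) with \(H\) acting linearly on the fibres, and for \(y\in N\cap N'\subseteq M^H\) the decomposition \(T_yM=T_yN\oplus N_y(N,M)\) restricts on \(H\)-fixed vectors to \(T_yN'=(T_yM)^H=T_y(N\cap N')\oplus(N_y(N,M))^H\). Hence \(N(N\cap N',N')=(N(N,M)|_{N\cap N'})^H\) and \(N'\cap B\) is the subdisc bundle \(D(N(N\cap N',N'))\). The blow up replaces \(B\) by \(P_{\mathbb K}(N(N,M)\oplus\mathbb K)-\mathring B\), and the proper transform of this subdisc bundle is exactly \(P_{\mathbb K}(N(N\cap N',N')\oplus\mathbb K)-\mathring B\); gluing it to \(N'-\mathring B\) along \(\partial B\cap N'\) gives the formula. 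Since this space is the \(\mathbb K\)-blow up of the connected manifold \(N'\) along \(N\cap N'\), it is a connected closed submanifold contained in \(\tilde M^H\) and not in \(M_0\), so it is a component; its intersection with \(M_0\) is \(P_{\mathbb K}(N(N\cap N',N'))\), which \(F\) maps onto \(N\cap N'\) by the bundle projection, consistently with \(F(\tilde N')=N'\).

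The step I expect to be the genuine obstacle is the \emph{real} case, where \(\mathbb K=\mathbb R\) and \(M_0\) has codimension one. Then \(N'-N\) and \(\tilde N'-M_0\) may both be disconnected, so the bijection cannot be obtained by naively restricting to or deleting the exceptional sets; the components of the punctured fixed sets must be \emph{reglued} across \(M_0\), respectively \(N\), and one must know that \(F\) matches these two regluing patterns. This is precisely what the linear model above supplies: the fixed set of a linear \(\mathbb K\)-blow up is the \(\mathbb K\)-blow up of the fixed set, so the way the pieces of \(F^{-1}(N'-N)\) limit onto \(P_{\mathbb K}(N(N\cap N',N'))\) is governed by the same normal bundle that controls how \(N'-N\) limits onto \(N\cap N'\). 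Taking closures on both sides therefore produces matching components, completing the bijection.
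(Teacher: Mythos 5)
Your overall architecture is close to the paper's: both arguments reduce to the linear model over the tubular neighbourhood \(B\), and your bookkeeping with \(F\) (mutual inverses at the level of sets, density of \(N'-N\) in \(N'\), disjointness of distinct proper transforms) is fine. But there is a genuine gap at the decisive step: the claim that \(\tilde{N}'\) is an \emph{entire component} of \(\tilde{M}^H\). You justify it with the principle ``the fixed set of a linear \(\mathbb{K}\)-blow up is the \(\mathbb{K}\)-blow up of the fixed set,'' and this principle is false. Take \(H=S^1\) acting on the fibre \(N_y(N,M)=\mathbb{C}^n\) by scalar multiplication (so the fixed set of the local model is just \(N\)): the fixed set of the blow up is the whole exceptional divisor \(\mathbb{C}P^{n-1}\subset M_0\), not the blow up of \(N\). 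In general the fixed set of the blow up is the proper transform of \(M^H\) \emph{together with} extra pieces inside \(M_0\), one for each isotypic summand of \(N(N,M)|_{N^H}\) on which \(H\) acts through a one-dimensional \(\mathbb{K}\)-representation. Your fallback inference --- ``\(\tilde{N}'\) is a connected closed submanifold contained in \(\tilde{M}^H\), so it is a component'' --- is also invalid: a connected closed submanifold of \(\tilde{M}^H\) can perfectly well be a proper subset of a component, which is exactly what would happen if \(\tilde{N}'\) touched one of those exceptional fixed pieces.

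What the proof needs, and what the paper supplies, is the complete computation of \(P_{\mathbb{K}}(E)^H\). Since \(H\) is compact, decompose \(N(N,M)|_{N^H}= N(N,M)|_{N^H}^H\oplus\bigoplus_{i,j}E_{ij}\) into isotypic pieces, where the fibres of \(E_{ij}\) are sums of copies of the distinct nontrivial \(i\)-dimensional irreducible \(H\)-representations \(V_{ij}\). An \(H\)-fixed point of \(P_{\mathbb{K}}(E)\) is an \(H\)-invariant line, and an invariant line, being an irreducible subrepresentation, must lie either in the fixed part \(N(N,M)|_{N^H}^H\oplus\mathbb{K}\) or in a single summand \(E_{1j}\); hence
\begin{equation*}
  P_{\mathbb{K}}(E)^H= P_{\mathbb{K}}\bigl(N(N,M)|_{N^H}^H\oplus\mathbb{K}\bigr)\amalg\coprod_j P_{\mathbb{K}}\bigl(E_{1j}\oplus\{0\}\bigr),
\end{equation*}
a \emph{disjoint} union. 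The first piece, taken over \(N\cap N'\), is exactly \(\tilde{N}'\cap\bigl(P_{\mathbb{K}}(E)-\mathring{B}\bigr)\) (using \(N(N,M)|_{N^H}^H=N(N^H,M^H)\)), and the remaining pieces lie in \(M_0\). The disjointness is precisely what shows that \(\tilde{N}'\) is open in \(\tilde{M}^H\) at points of \(\tilde{N}'\cap M_0\) (off \(M_0\) you already have openness via \(F\)), hence a genuine component, and that every leftover component of \(\tilde{M}^H\) is contained in \(M_0\). With this computation inserted, your remaining arguments go through; without it, the real-case ``regluing'' discussion in your last paragraph has no valid input.
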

\begin{proof}
  At first we calculate the fixed point set of the \(H\)-action on \(\tilde{M}\).
  \begin{align*}
    \tilde{M}^H&=\left(\left(P_{\mathbb{K}}(E) - \mathring{B}\right)\cup_{\partial B} \left(M-\mathring{B}\right)\right)^H\\
    &=\left(P_{\mathbb{K}}(E) - \mathring{B}\right)^H\cup_{\partial B^H} \left(M-\mathring{B}\right)^H.
  \end{align*}

Because \(H\) is compact, there are pairwise distinct \(i\)-dimensional non-trivial irreducible \(H\)-re\-pre\-sen\-ta\-tions \(V_{ij}\) and \(H\)-vector bundles \(E_{ij}\) over \(N^H\) such that
\begin{equation*}
  N(N,M)|_{N^H}= N(N,M)|_{N^H}^H \oplus \bigoplus_i \bigoplus_{j} E_{ij},
\end{equation*}
and the \(H\)-representation on each fiber of \(E_{ij}\) is isomorphic to \(\mathbb{K}^{d_{ij}}\otimes_{\mathbb{K}} V_{ij}\), where \(\mathbb{K}^{d_{ij}}\) denotes the trivial \(H\)-representation of dimension \(d_{ij}\).

  Now the \(H\)-fixed points in \(P_{\mathbb{K}}(E)\) are given by
  \begin{align*}
    P_{\mathbb{K}}(E)^H &= P_{\mathbb{K}}(N(N,M)\oplus\mathbb{K})|_{N^H}^H\\
    &= P_{\mathbb{K}}(N(N,M)|_{N^H}^H \oplus \mathbb{K}) \amalg \coprod_{j} P_{\mathbb{K}}(E_{1j}\oplus \{0\}).
  \end{align*}
 Because \(N(N,M)|_{N^H}^H=N(N^H,M^H)\) we get
\begin{align*}
  \tilde{M} ^H&=\left(\left(P_{\mathbb{K}}(N(N^H,M^H)\oplus \mathbb{K}) - \mathring{B}^H\right) \cup_{\partial B^H} \left(M - \mathring{B}\right)^H\right)\\
  &\qquad\qquad\qquad\qquad\qquad\qquad\qquad\qquad  \amalg \coprod_{j} P_{\mathbb{K}}(E_{1j}\oplus \{0\})\\
  &=\coprod_{N'\subset M^H}\tilde{N}' \amalg \coprod_{j} P_{\mathbb{K}}(E_{1j}\oplus \{0\}),
\end{align*}
where \(N'\) runs through the connected components of \(M^H\) which are not contained in \(N\).
Thus the statement follows.
\end{proof}

By replacing \(H\) in Lemma~\ref{lem:proper} by an one-dimensional subtorus of \(T\), we get:
\begin{cor}
\label{sec:blowing-up-1}
  There is a bijection between the characteristic submanifolds of \(M\) and the characteristic submanifolds of \(\tilde{M}\), which are not contained in \(M_0\).
\end{cor}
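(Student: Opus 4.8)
The plan is to obtain the corollary by specialising Lemma~\ref{lem:proper} to a one-dimensional subtorus \(H=\lambda\) of \(T\) and then checking that the resulting bijection carries characteristic submanifolds to characteristic submanifolds. Recall from the proof of Lemma~\ref{lem:proper} that
\begin{equation*}
  \tilde{M}^\lambda=\coprod_{N'\subset M^\lambda}\tilde{N}'\amalg\coprod_j P_{\mathbb{K}}(E_{1j}\oplus\{0\}),
\end{equation*}
where the \(N'\) range over the components of \(M^\lambda\) not contained in \(N\). Since each extra component \(P_{\mathbb{K}}(E_{1j}\oplus\{0\})\) lies inside \(M_0=P_{\mathbb{K}}(N(N,M)\oplus\{0\})\), the components of \(\tilde{M}^\lambda\) not contained in \(M_0\) are exactly the proper transforms \(\tilde{N}'\). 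Thus Lemma~\ref{lem:proper} already yields a bijection \(N'\mapsto\tilde{N}'\); letting \(\lambda\) vary, it remains to single out among these the codimension-two components passing through a \(T\)-fixed point.

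First I would check that every characteristic submanifold of \(M\) occurs on the left, i.e.\ that no characteristic submanifold is contained in \(N\). Because \(G_1\) is elementary, \(N(N,M)\) has no trivial summand and no one-dimensional \(G_1\)-invariant subbundle; hence in the real case \(\codim N\) is odd and at least \(3\), while in the complex case \(\codim N=2\dim_{\mathbb{C}}N(N,M)\ge 4\). In either case \(\codim N>2\), so a codimension-two submanifold cannot be contained in \(N\). As the extra components \(P_{\mathbb{K}}(E_{1j}\oplus\{0\})\) all lie in \(M_0\), the only candidates for characteristic submanifolds of \(\tilde{M}\) outside \(M_0\) are the proper transforms.

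Next I would verify that \(N'\mapsto\tilde{N}'\) preserves both defining properties. Codimension is immediate: the gluing formula of Lemma~\ref{lem:proper} exhibits \(N'-\mathring{B}\) as an open dense subset of \(\tilde{N}'\), so \(\dim\tilde{N}'=\dim N'\); since \(\dim\tilde{M}=\dim M\) the codimension is unchanged, and equivalently the map \(F\) of Lemma~\ref{lem:blow1} restricts to a diffeomorphism \(\tilde{N}'-M_0\to N'-N\). Being a codimension-two component of \(\tilde{M}^\lambda\), the proper transform \(\tilde{N}'\) is fixed pointwise by exactly \(\lambda\), so \(\lambda(\tilde{N}')=\lambda=\lambda(N')\) and the correspondence respects the labelling by \(\lambda\).

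The main point, and the step I expect to require the most care, is that \(\tilde{N}'\) contains a \(T\)-fixed point if and only if \(N'\) does. Here I would use that \(F\colon\tilde{M}\to M\) is \(G\)-equivariant, hence \(T\)-equivariant, and restricts to a surjection \(\tilde{N}'\to N'\); a \(T\)-fixed point of \(\tilde{N}'\) is therefore sent to one of \(N'\). Conversely, if \(N'\) carries a \(T\)-fixed point \(p\), then either \(p\notin N\), in which case \(p\) survives in the piece \(N'-\mathring{B}\subset\tilde{N}'\) as a \(T\)-fixed point, or \(p\in N\cap N'\) is \(T\)-fixed, in which case the fibre of \(\tilde{N}'\cap M_0\) over \(p\) is a projective space acted on by \(T/\lambda\) and hence contains a \(T\)-fixed point. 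Combining the three steps, the bijection of Lemma~\ref{lem:proper} restricts to the asserted bijection between the characteristic submanifolds of \(M\) and those of \(\tilde{M}\) not contained in \(M_0\).
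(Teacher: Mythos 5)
Your proof is correct and takes essentially the same route as the paper: both specialise Lemma~\ref{lem:proper} to one-dimensional subtori of \(T\) and reduce everything to showing that the proper transform of a characteristic submanifold \(M_i\) contains a \(T\)-fixed point, which in the only non-trivial case \(p\in(M_i\cap N)^T\) is produced inside the \(T\)-invariant projective-space fibre \(P_{\mathbb{K}}(N(M_i\cap N,M_i)\oplus\{0\})|_p\). The additional checks you spell out (that \(\codim N>2\) because \(G_1\) is elementary, so no characteristic submanifold lies in \(N\), and that \(F\) carries \(T\)-fixed points of \(\tilde{N}'\) to \(T\)-fixed points of \(N'\)) are steps the paper treats as immediate.
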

\begin{proof}
  The only thing, that is to prove here, is that for a characteristic submanifold \(M_i\) of \(M\), \(\tilde{M}_i^T\) is non-empty.
  If \((M_i-N)^T\neq \emptyset\), then this is clear.
  
  If \(p\in (M_i\cap N)^T\), then \(P_\mathbb{K}(N(M_i \cap N,M_i)\oplus\{0\})|_p\) is a \(T\)-invariant submanifold of \(\tilde{M}_i\), which is diffeomorphic to \(\mathbb{C}P^k\) or \(\mathbb{R}P^{2k}\).
 Therefore it contains a \(T\)-fixed point.
\end{proof}

This bijection is compatible with the action of the Weyl-group of \(G\) on the sets of characteristic submanifolds of \(\tilde{M}\) and \(M\).

In the real case the exceptional submanifold \(M_0\) has codimension one in \(\tilde{M}\) and is \(G\)-invariant. Because there is no \(S^1\)-representation of real dimension one, \(M_0\) does not contain a characteristic submanifold of \(\tilde{M}\) in this case.

In the complex case \(M_0\) is \(G\)-invariant and may be a characteristic submanifold of \(\tilde{M}\).

Therefore there is a bijection between the non-trivial orbits of the \(W(G)\)-actions on the sets of characteristic submanifolds of \(M\) and \(\tilde{M}\).
Hence
 we get the same elementary factors for the \(G\)-actions on \(\tilde{M}\) and \(M\).

\begin{cor}
\label{cor:blowing-up}
  Let \(H\) be a closed subgroup of \(G\) and \(N'\) a component of \(M^H\) such that \(N\cap N'\) has codimension one  --in the real case--  or two --in the complex case-- in \(N'\).
  Then \(F\) induces a  \((N_GH)^0\)-equivariant diffeomorphism of \(\tilde{N}'\) and \(N'\).
\end{cor}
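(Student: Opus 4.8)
The plan is to prove that the map $F$ of Lemma~\ref{lem:blow1} restricts to a diffeomorphism $\tilde N'\to N'$, from which $(N_GH)^0$-equivariance follows immediately. By Lemma~\ref{lem:proper} we already know $F(\tilde N')=N'$, so the entire content is to upgrade this to a diffeomorphism, and the codimension hypothesis is precisely what makes this work.

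First I would use the description of $\tilde N'$ in Lemma~\ref{lem:proper} and split it into its two pieces. On $N'-\mathring B\subset \tilde M-M_0$ the map $F$ is the restriction of the diffeomorphism $\tilde M-M_0\to M-N$ of Lemma~\ref{lem:blow1} (in fact the identity on $M-B$), hence a diffeomorphism onto $N'-N$. The only critical locus is the exceptional part $\tilde N'\cap M_0=P_{\mathbb K}(N(N\cap N',N')\oplus\{0\})$, on which $F$ is the bundle projection to $N\cap N'$.

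The key point is that the codimension hypothesis forces the normal bundle $N(N\cap N',N')$ to be a $\mathbb K$-line bundle: in the real case it has real rank one, and in the complex case it carries the complex structure used in the proof of Lemma~\ref{lem:proper}, so the codimension-two assumption makes it a complex line bundle. Since the projectivization $P_{\mathbb K}$ of a $\mathbb K$-line bundle has one-point fibres, the projection $P_{\mathbb K}(N(N\cap N',N')\oplus\{0\})\to N\cap N'$ is a diffeomorphism; equivalently, using the local formula for $F$ from Lemma~\ref{lem:blow1}, in the chart $z_0\neq 0$ the map becomes $z_1\mapsto z_0\bar z_1$, an $\mathbb R$-linear isomorphism onto the corresponding fibre of $N(N\cap N',N')$. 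Hence $F|_{\tilde N'}$ has invertible differential along $M_0$ as well and is a local diffeomorphism everywhere. It is also a bijection onto $N'$: it is injective off $M_0$ and, by the point fibres, on the exceptional part, and the two images lie in $N'-N$ and in $N\cap N'$ respectively, hence are disjoint; surjectivity is Lemma~\ref{lem:proper}. A bijective local diffeomorphism is a diffeomorphism. The main obstacle is exactly this non-collapsing along the exceptional locus, and it is resolved entirely by the line-bundle observation.

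Finally, $N_GH$ acts on $M^H$ and on $\tilde M^H$ by permuting their respective components, so the connected group $(N_GH)^0$ preserves the distinguished components $N'$ and $\tilde N'$. As $F$ is $G$-equivariant it is a fortiori $(N_GH)^0$-equivariant, and restricting to $\tilde N'$ yields the required $(N_GH)^0$-equivariant diffeomorphism onto $N'$.
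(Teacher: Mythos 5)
Your proof is correct and follows essentially the same route as the paper: both arguments rest on the observation that the codimension hypothesis makes \(N(N\cap N',N')\) a \(\mathbb{K}\)-line bundle, so the projectivized (exceptional) part of \(\tilde{N}'\) does not collapse anything and \(F\) restricts there to a diffeomorphism, which is then glued to the diffeomorphism \(\tilde{M}-M_0\rightarrow M-N\) away from the exceptional set. The paper states this more compactly (the restriction of \(f\) to \(P_\mathbb{K}(N(N\cap N',N')\oplus\mathbb{K})-\mathring{B}\cap N'\rightarrow B\cap N'\) is a diffeomorphism by the dimension assumption), while you verify the same fact via the local formula and a bijective-local-diffeomorphism argument; the content is identical.
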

\begin{proof}
  Because of the dimension assumption the \((N_GH)^0\)-equivariant map
  \begin{equation*}
     f|_{P_\mathbb{K}(N(N\cap N',N')\oplus \mathbb{K})-\mathring{B}\cap N'} :P_\mathbb{K}(N(N\cap N',N')\oplus \mathbb{K})-\mathring{B}\cap N' \rightarrow B\cap N'
  \end{equation*}  
  from the proof of Lemma \ref{lem:blow1}
  is  a diffeomorphism.
  Because the restriction of \(F\) to \(\tilde{M}-M_0\) is an \(G\)-equivariant diffeomorphism the restriction \(F|_{\tilde{N}'-M_0}:\tilde{N}'-M_0\rightarrow N'-N\) is a  \((N_GH)^0\)-equivariant diffeomorphism.
  Therefore \(F|_{\tilde{N}'}:\tilde{N}'\rightarrow N'\) is a diffeomorphism.
\end{proof}

\begin{Lemma}
  In the complex case let \(\bar{E}=N(N,M)^*\oplus \C\), where \(N(N,M)^*\) is the normal bundle of \(N\) in \(M\) equipped with the dual complex structure.
  Then there is a \(G\)-equivariant diffeomorphism
  \begin{equation*}
    \tilde{M}\rightarrow P_{\mathbb{C}}(\bar{E})-\mathring{B} \cup_{\partial B}M-\mathring{B}.
  \end{equation*}
That means that the diffeomorphism type of \(\tilde{M}\) does not change if we replace the complex structure on \(N(N,M)\) by its dual.
\end{Lemma}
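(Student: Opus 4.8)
The plan is to produce the diffeomorphism explicitly by a fiberwise complex conjugation on the projectivized bundle, glued to the identity on the rest of $M$. First I would fix a $G$-invariant Hermitian metric on $N(N,M)$ (it exists by averaging over the compact group $G$); this identifies the dual bundle $N(N,M)^*$ with the conjugate bundle $\overline{N(N,M)}$, that is, the bundle with the same underlying real structure but with the complex structure $J$ replaced by $-J$. Under this identification $\bar E = N(N,M)^*\oplus\C$ becomes the conjugate bundle $\overline E$ of $E=N(N,M)\oplus\C$. The real-linear bundle map $\kappa\colon E\to\overline E$ given by the identity on the underlying real total space is fiberwise conjugate-linear, so it carries $J$-complex lines to $(-J)$-complex lines and descends to a diffeomorphism
\begin{equation*}
  c\colon P_{\mathbb{C}}(E)\to P_{\mathbb{C}}(\overline E)=P_{\mathbb{C}}(\bar E),\qquad [z_0:z_1]\mapsto[z_0:z_1].
\end{equation*}
Here I would spell out that a $J$-invariant real $2$-plane is the same thing as a $-J$-invariant one, so that $c$ is well defined independently of the chosen representative.

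Next I would verify the two properties needed to splice $c$ into a map of the blow ups. Since the underlying real $G$-action on $E$ is the same whether one uses $J$ or $-J$, the map $\kappa$, and hence $c$, is $G$-equivariant; one only has to note that the metric chosen above is $G$-invariant, so that the identification $N(N,M)^*\cong\overline{N(N,M)}$ respects the $G$-action. For the boundary, recall $B=\{(z_0:1);\ |z_0|\le 1\}$, where the norm is measured in the chosen metric. Because $\overline{N(N,M)}$ carries the same underlying real inner product, this norm is unchanged, so $c$ maps $B\subset P_{\mathbb{C}}(E)$ onto $B\subset P_{\mathbb{C}}(\bar E)$ and fixes the homogeneous coordinate $1$ of the trivial summand. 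Tracing through the identification of $\partial B$ with the boundary of the tubular neighborhood in $M$, this shows that $c|_{\partial B}$ corresponds to the identity of $\partial B\subset M$.

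With these two facts in hand the conclusion is immediate. I would glue $c$ on $P_{\mathbb{C}}(E)-\mathring B$ to the identity on $M-\mathring B$; since both restrict to the identity of $\partial B\subset M$, they patch to a well-defined smooth $G$-equivariant map
\begin{equation*}
  \tilde M=\bigl(P_{\mathbb{C}}(E)-\mathring B\bigr)\cup_{\partial B}\bigl(M-\mathring B\bigr)\longrightarrow\bigl(P_{\mathbb{C}}(\bar E)-\mathring B\bigr)\cup_{\partial B}\bigl(M-\mathring B\bigr),
\end{equation*}
which is a diffeomorphism because $c$ is one. As it equals the identity on the nonempty open set $M-\mathring B$, it is automatically orientation preserving, so it is an isomorphism of torus manifolds with $G$-action.

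I expect the only genuine subtlety to be the bookkeeping around the trivial summand $\C$ and the metric-dependent identification $\overline{N(N,M)}\cong N(N,M)^*$: one must check that the homogeneous coordinate normalising $B$ is preserved and that the Hermitian metric is chosen $G$-invariantly, so that the boundary gluing is literally the identity and no equivariance is lost. Everything else is just the observation that complex conjugation is a real-linear operation which leaves the set of real $2$-planes—and hence the projectivization—untouched at the level of the underlying real manifolds.
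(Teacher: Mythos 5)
Your construction contains one false step, and it sits exactly at the point you yourself flagged as the delicate one: the identification of $\bar E$ with the conjugate bundle of $E$. The conjugate bundle of $E=N(N,M)\oplus\C$ is $\overline{E}=\overline{N(N,M)}\oplus\overline{\C}$; conjugation reverses the complex structure on the trivial summand too. By contrast, $\bar E=N(N,M)^*\oplus\C$ keeps the \emph{standard} structure on the trivial summand, so after your metric identification $N(N,M)^*\cong\overline{N(N,M)}$ the bundles $\bar E$ and $\overline{E}$ still differ by a conjugation on the $\C$-factor: they are isomorphic, not equal. This matters for your key map. The identity $\kappa$ of the underlying real total space is conjugate-linear as a map $E\to\overline{E}$, but as a map $E\to\bar E$ it is conjugate-linear on the first summand and complex-linear on the second, and such a map does not carry complex lines to complex lines: for $z_0\neq 0\neq z_1$, the $E$-line $\{(tz_0,tz_1):t\in\C\}$ is not invariant under the structure $(-J)\oplus i$ of $\bar E$, since invariance would force $t=-i$ on the first coordinate and $t=i$ on the second. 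So $c$, as written, is a perfectly good $G$-equivariant diffeomorphism $P_{\C}(E)\to P_{\C}(\overline{E})$ (indeed the identity of the underlying real manifold), but it does not land in $P_{\C}(\bar E)$, which is the space the Lemma is about; your boundary check "fixes the homogeneous coordinate $1$" is implicitly a check in $P_{\C}(\overline{E})$, not in $P_{\C}(\bar E)$.

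The repair is short: compose $c$ with the diffeomorphism $P_{\C}(\overline{E})\to P_{\C}(\bar E)$ induced by the complex-linear bundle isomorphism $\mathrm{id}\oplus\mathrm{conj}\colon\overline{E}\to\bar E$, $(z_0,z_1)\mapsto(z_0,\bar z_1)$. Because $\bar 1=1$, this map is the identity in the homogeneous coordinates $(z_0:1)$ that define $B$, so your equivariance and boundary arguments, and the splicing with the identity on $M-\mathring{B}$, go through verbatim (the glued map is smooth because it is the identity in these coordinates on a whole neighborhood of $\partial B$, a point the paper disposes of by citation). After this correction your composite is $[(z_0:z_1)]\mapsto[(z_0:\bar z_1)]$, which is precisely the paper's map: the paper takes $E\to E$, $(z_0,z_1)\mapsto(z_0,\bar z_1)$, and observes that it intertwines the defining relation $\sim$ of $P_{\C}(E)$ with the twisted relation $\sim'$ realizing $P_{\C}(\bar E)$. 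So, modulo this fix, your argument is not a different route but the same proof, organized around conjugating the normal-bundle coordinate instead of the trivial one.
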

\begin{proof}
  We have \(P_{\mathbb{C}}(E)=E/\sim\) and \(P_{\mathbb{C}}(\bar{E})=E/\sim'\), where
  \begin{align*}
    (z_0,z_1)\sim(z_0',z_1') &\Leftrightarrow \exists t \in \mathbb{C}^* \quad (tz_0,tz_1)=(z_0',z_1'),\\
    (z_0,z_1)\sim'(z_0',z_1') &\Leftrightarrow \exists t \in \mathbb{C}^* \quad (tz_0,\bar{t}z_1)=(z_0',z_1').\\
  \end{align*}
Therefore
\begin{align*}
  E &\rightarrow E&(z_0,z_1)&\mapsto (z_0,\bar{z}_1)
\end{align*}
induces a \(G\)-equivariant diffeomorphism \(P_{\mathbb{C}}(E)-\mathring{B}\rightarrow P_{\mathbb{C}}(\bar{E})-\mathring{B}\) which is the identity on \(\partial B\).
By \cite[p. 24-25]{pre05136053} the result follows.
\end{proof}

\begin{lemma}
  \label{sec:blowing-up}
  If in the complex case \(G_1=SU(l_1+1)\) and \(\codim N=2l_1+2\) or in the real case \(G_1=SO(2l_1+1)\) and \(\codim N=2l_1+1\), then \(F:\tilde{M}\rightarrow M\) induces a homeomorphism \(\bar{F}: \tilde{M}/G_1 \rightarrow M/G_1\).
\end{lemma}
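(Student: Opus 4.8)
The plan is to exhibit $\bar F$ as a continuous bijection between compact Hausdorff spaces, which then forces it to be a homeomorphism. Since $F$ is continuous and $G_1$-equivariant (Lemma~\ref{lem:blow1}), it maps $G_1$-orbits to $G_1$-orbits and hence descends to a continuous map $\bar F\colon \tilde M/G_1\to M/G_1$. Both quotients are compact Hausdorff because $\tilde M$ and $M$ are compact manifolds and $G_1$ is a compact Lie group. Thus the entire content of the lemma is the bijectivity of $\bar F$.

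The decisive geometric input is that $G_1$ acts transitively on each fibre of the exceptional manifold $M_0=P_{\mathbb{K}}(N(N,M)\oplus\{0\})\to N$. First I would observe that, because $N$ is a component of $M^{G_1}$, the normal representation $N_p(N,M)$ at any $p\in N$ has no non-zero $G_1$-fixed vector, i.e. $N_p(N,M)^{G_1}=0$. In the complex case $N_p(N,M)$ is a complex $SU(l_1+1)$-representation of complex dimension $l_1+1$ with no trivial summand; by the lower bound $d_{\mathbb{C}}=l_1+1$ for non-trivial complex irreducibles of $SU(l_1+1)$ recorded in the proof of Lemma~\ref{lem:iso2}, it must be irreducible, hence the standard representation or its dual. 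In the real case $N_p(N,M)$ is a real $SO(2l_1+1)$-representation of dimension $2l_1+1$ with no trivial summand, and by the bound $d_{\mathbb{R}}=2l_1+1$ it is the standard representation. In either case the fibre $P_{\mathbb{K}}(N_p(N,M))$ is $\mathbb{C}P^{l_1}$ or $\mathbb{R}P^{2l_1}$ with its standard action, on which $G_1$ acts transitively; so the fibres of $M_0\to N$ are exactly the $G_1$-orbits contained in $M_0$.

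With this in hand, bijectivity is routine. Surjectivity of $\bar F$ follows from surjectivity of $F$. For injectivity, suppose $x,y\in\tilde M$ satisfy $G_1F(x)=G_1F(y)$. If neither point lies in $M_0$, then $F(x),F(y)\in M-N$; choosing $g\in G_1$ with $F(y)=gF(x)=F(gx)$ and using that $F$ restricts to a $G_1$-equivariant diffeomorphism $\tilde M-M_0\to M-N$ (Lemma~\ref{lem:blow1}) gives $y=gx$. If instead $x\in M_0$, then $F(x)\in N$ is $G_1$-fixed, so $G_1F(x)=\{F(x)\}$; hence $F(y)=F(x)\in N$, which forces $y\in M_0$ since $F^{-1}(N)=M_0$, and then $x,y$ lie in the same fibre $(F|_{M_0})^{-1}(F(x))$, which is a single $G_1$-orbit by the previous paragraph. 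In every case $G_1x=G_1y$, so $\bar F$ is injective, and therefore a homeomorphism.

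The main obstacle is the transitivity step: one must identify the normal $G_1$-representation precisely, and it is exactly the codimension hypotheses ($\codim N=2l_1+2$ in the complex case, $\codim N=2l_1+1$ in the real case) that force $N_p(N,M)$ to be a single minimal-dimensional irreducible, hence the standard representation, so that $G_1$ sweeps out each projective-space fibre as one orbit. Once this is secured, the remaining arguments are formal, depending only on Lemma~\ref{lem:blow1} and compactness.
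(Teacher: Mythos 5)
Your proof is correct and takes essentially the same approach as the paper's: both reduce the lemma to showing that \(G_1\) acts transitively on the fibres of the bundle map \(M_0=P_{\mathbb{K}}(N(N,M)\oplus\{0\})\rightarrow N\), which is established by using the codimension hypotheses and the minimal-dimension bounds on non-trivial \(G_1\)-representations to identify the normal representation as the standard one (or its dual), the rest being the formal compact-Hausdorff argument via Lemma~\ref{lem:blow1}. Your write-up merely makes two points more explicit than the paper does, namely that \(N_p(N,M)^{G_1}=0\) because \(N\) is a component of \(M^{G_1}\), and the case-by-case check of bijectivity of \(\bar{F}\).
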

\begin{proof}
  Because \(F|_{\tilde{M}-M_0}:\tilde{M}-M_0\rightarrow M-N\) is a equivariant diffeomorphism and \(\tilde{M}/G_1\),\(M/G_1\) are compact Hausdorff-spaces, 
   the only thing, that has to be checked, is that
  \begin{equation*}
    F|_{P_\mathbb{K}(N(N,M))}: P_\mathbb{K}(N(N,M)) \rightarrow N
  \end{equation*}
  induces a homeomorphism of the orbit spaces.
  But this map is just the bundle map \(P_\mathbb{K}(N(N,M)) \rightarrow N\).
  
  If \(G_1=SU(l_1+1)\), then,  because of dimension reasons \cite[p. 53-54]{0796.57001}, the \(G_1\)-representation on the fibers of \(N(N,M)\) is the standard representation of \(G_1\) or its dual.
  If \(G_1=SO(2l_1+1)\), then, by \cite[p. 53-54]{0796.57001}, the \(G_1\)-representation on the fibers of \(N(N,M)\) is the standard representation of \(G_1\).

  Thus, in both cases the \(G_1\)-action on the fibers of \(P_\mathbb{K}(N(N,M)) \rightarrow N\) is transitive.
  Therefore the statement follows.
\end{proof}

\begin{remark}
  All statements proved above also hold for non-connected groups of the form \(G\times K\) where \(K\) is a finite group and \(G\) is connected if we replace \(N\) by a  \(K\)-invariant union of components of \(M^{G_1}\).
\end{remark}

Now we want to reverse the construction of a blow up.
Let \(A\) be a closed \(G\)-manifold and \(E\rightarrow A\) be a \(G\)-vector bundle such that \(G_1\) acts trivially on \(A\).
If \(E\) is even dimensional, we assume that there is a \(g\in Z(G)\) such that \(g\) acts trivially on \(A\) and \(g^2\) acts on \(E\) as \(-\id\).
In this case we equip \(E\) with the complex structure induced by the action of \(g\).

Assume that \(\tilde{M}\) is a \(G\)-manifold and there is a \(G\)-equivariant embedding of \(P_{\mathbb{K}}(E)\hookrightarrow \tilde{M}\) such that the normal bundle of \(P_\mathbb{K}(E)\) is isomorphic to  the tautological bundle over  \(P_\mathbb{K}(E)\).

Then one may identify a closed \(G\)-equivariant tubular neighborhood \(B^c\) of \(P_{\mathbb{K}}(E)\) in \(\tilde{M}\) with
\begin{equation*}
  B^c=\{(z_0:1)\in P_{\mathbb{K}}(E\oplus \mathbb{K});|z_0|\geq 1\}\cup \{(z_0:0)\in P_{\mathbb{K}}(E\oplus \mathbb{K})\}.
\end{equation*}

By gluing the complements of the interior of \(B^c\) in \(\tilde{M}\) and \(P_{\mathbb{K}}(E\oplus \mathbb{K})\), we get a \(G\)-manifold \(M\) such that \(A\) is \(G\)-equivariantly diffeomorphic to a union  of components of \(M^{G_1}\).

We call \(M\) the \emph{blow down}  of \(\tilde{M}\) along \(P_{\mathbb{K}}(E)\).

It is easy to see that the \(G\)-equivariant diffeomorphism type of \(M\) does  not depend on the choices of a metric on \(E\) and the tubular neighborhood of \(P_{\mathbb{K}}(E)\) in \(\tilde{M}\) if \(G_1\) acts transitively on the fibers of \(P_{\mathbb{K}}(E)\rightarrow A\).

It is also easy to see that the blow up and blow down constructions are inverse to each other.
 
\section{The case $G_1 = SU(l_1+1)$}
\label{sec:su}
In this section we discuss actions of groups, which have a covering group of the form \(G_1\times G_2\), where \(G_1=SU(l_1+1)\) is elementary and \(G_2\) acts effectively on \(M\). 
It turns out that the blow up of \(M\) along \(M^{G_1}\) is a fiber bundle over \(\C P^{l_1}\).
This fact leads to our first classification result. 

The assumption on \(G_2\) is no restriction on \(G\), because one may replace any covering group \(\tilde{G}\) by the quotient \(\tilde{G}/H\) where \(H\) is a finite subgroup of \(G_2\) acting trivially on \(M\).
Following Convention~\ref{sec:g-action-m-3}, we also assume \(\#\mathfrak{F}_1=2\) or \(\#\mathfrak{F}_1=4\) in  the cases \(G_1=SU(2)\) or \(G_1=SU(4)\), respectively.
Furthermore, we assume after conjugating \(T\) with some element of \(G_1\) that \(T_1=T\cap G_1\) is the standard maximal torus of \(G_1\). 

\subsection{The $G_1$-action on $M$}
\label{sec:su:G_1onM}
We have the following lemma:
\begin{lemma}
  \label{sec:case-g_1-=}
  Let \(M\) be a torus manifold with \(G\)-action.
  Suppose \(\tilde{G}=G_1\times G_2\) with \(G_1=SU(l_1+1)\) elementary. 
  Then the \(W(S(U(l_1)\times U(1)))\)-action on \(\mathfrak{F}_1\) has an orbit \(\mathfrak{F}_1'\) with \(l_1\) elements and
  there is a component \(N_1\) of \(\bigcap_{M_i\in\mathfrak{F}_1'}M_i\), which contains a \(T\)-fixed point.
\end{lemma}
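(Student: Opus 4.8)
\emph{Plan.} The statement has two parts, and the plan is to treat them in turn: first pin down the orbit $\mathfrak{F}_1'$ by a purely group-theoretic computation with the Weyl groups, and then produce the required $T$-fixed point by invoking the local analysis of the $G_1$-action carried out in section~\ref{sec:Gact}.

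For the first part I would argue as follows. By Lemma~\ref{sec:action-weyl-group-6} and Lemma~\ref{lem:1} we have $\#\mathfrak{F}_1=l_1+1$ and the action homomorphism identifies $W(G_1)=W(SU(l_1+1))$ with the full symmetric group $S(\mathfrak{F}_1)\cong S_{l_1+1}$. Since $S(U(l_1)\times U(1))$ contains the standard maximal torus $T_1$, its Weyl group $W(S(U(l_1)\times U(1)))$ is a subgroup of $W(G_1)$; under the identification above it is the stabilizer of one element of $\mathfrak{F}_1$, i.e.\ a copy of $S_{l_1}$ permuting the remaining $l_1$ elements. Hence its orbits on $\mathfrak{F}_1$ are a single orbit $\mathfrak{F}_1'$ of size $l_1$ together with one fixed point (when $l_1=1$ both elements are fixed, and one simply takes $\mathfrak{F}_1'$ to be the singleton met by the point $x$ produced below). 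For $l_1\geq 2$ this $l_1$-element orbit is the \emph{unique} $W(S(U(l_1)\times U(1)))$-invariant subset of $\mathfrak{F}_1$ of cardinality $l_1$, a fact I will reuse in the second part.

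For the second part I would start from $M^T\neq\emptyset$, choose $x\in M^T$, and restrict the action to $G_1\times T_2$, where $T_2$ is a maximal torus of $G_2$; since $T=T_1\times T_2$ is unchanged and $\mathfrak{F}_1$ remains the $W(G_1)$-orbit, this places us exactly in the situation $\tilde G=G_1\times T^{l_0}$ of section~\ref{sec:Gact}. If $x$ is fixed by $G_1$, then Lemma~\ref{lem:iso2} gives $x\in\bigcap_{M_i\in\mathfrak{F}_1}M_i\subseteq\bigcap_{M_i\in\mathfrak{F}_1'}M_i$, and the component of this intersection through $x$ is the desired $N_1$. If $x$ is not fixed by $G_1$, then Lemma~\ref{lem:iso} identifies $G_{1x}$ with $S(U(l_1)\times U(1))$, while Lemma~\ref{sec:g-action-m} shows that $x$ lies on exactly $l_1$ characteristic submanifolds, all in $\mathfrak{F}_1$. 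Because $G_1$ commutes with $T_2$, any representative in $N_{G_{1x}}(T_1)$ of an element of $W(G_{1x})=W(S(U(l_1)\times U(1)))$ normalizes all of $T=T_1\times T_2$, hence lies in $N_GT$ and acts on $\mathfrak{F}$; since it also fixes $x$, it permutes these $l_1$ submanifolds. Thus the $l_1$-element set is $W(S(U(l_1)\times U(1)))$-invariant, so by the uniqueness noted above it coincides with $\mathfrak{F}_1'$, giving $x\in\bigcap_{M_i\in\mathfrak{F}_1'}M_i$ and the component through $x$ as $N_1$.

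I expect the main obstacle to be the final identification in the non-fixed case, namely that the $l_1$ characteristic submanifolds meeting $x$ are \emph{precisely} the orbit $\mathfrak{F}_1'$ rather than some other $l_1$-element subset of $\mathfrak{F}_1$. This hinges on two points that must be checked carefully: that $N_{G_{1x}}(T_1)$ really does normalize the full torus $T$ (so that $W(G_{1x})$ genuinely acts on $\mathfrak{F}$ and permutes the submanifolds through the fixed point $x$), and the uniqueness of the $l_1$-element orbit for $l_1\geq 2$ from the first part; the case $l_1=1$ must be dispatched by the separate remark above.
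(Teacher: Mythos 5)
Your first half (identifying $W(S(U(l_1)\times U(1)))$ with a point stabilizer $S_{l_1}\subset S_{l_1+1}=S(\mathfrak{F}_1)$) and your treatment of the case $x\in M^{G_1}$ coincide with the paper's proof. The gap is in the non-fixed case, precisely at the equality $W(G_{1x})=W(S(U(l_1)\times U(1)))$. Lemma~\ref{lem:iso}, even sharpened by Remark~\ref{sec:g-action-m-1}, only gives that $G_{1x}$ is \emph{conjugate} to the standard $S(U(l_1)\times U(1))$ by some $n\in N_{G_1}T_1$; it need not equal the standard copy. So your argument shows that the set $\mathcal{S}_x$ of the $l_1$ characteristic submanifolds through $x$ is invariant under the image of $W(G_{1x})$ in $S(\mathfrak{F}_1)$, which is the stabilizer of \emph{some} element of $\mathfrak{F}_1$ --- in general not the element fixed by $W(S(U(l_1)\times U(1)))$. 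For $l_1\geq 2$ the conclusion you draw, namely $x\in\bigcap_{M_i\in\mathfrak{F}_1'}M_i$ for the originally chosen $x$, is simply false: take $M=\C P^{l_1}$ with the standard $SU(l_1+1)$-action and $G_2$ trivial. Here $\mathfrak{F}_1=\{M_1,\dots,M_{l_1+1}\}$ with $M_i=\{z_i=0\}$, the standard $W(S(U(l_1)\times U(1)))$ fixes $M_{l_1+1}$, so $\mathfrak{F}_1'=\{M_1,\dots,M_{l_1}\}$ and $\bigcap_{M_i\in\mathfrak{F}_1'}M_i$ is the single point $[0:\dots:0:1]$; yet the $T$-fixed point $x=[1:0:\dots:0]$ lies on $\{M_2,\dots,M_{l_1+1}\}\neq\mathfrak{F}_1'$, and $W(G_{1x})$ fixes $M_1$, not $M_{l_1+1}$. (For $l_1=1$ your argument is fine, since there you choose $\mathfrak{F}_1'$ after the fact.)

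The repair is exactly the step the paper takes, and it makes the invariance-plus-uniqueness argument unnecessary: the lemma only asks for \emph{some} $T$-fixed point in $\bigcap_{M_i\in\mathfrak{F}_1'}M_i$, so one may move $x$. Since $W(G_1)\rightarrow S(\mathfrak{F}_1)$ is onto, choose $g\in N_{G_1}T_1$ whose class carries the $l_1$-element set $\mathcal{S}_x$ (provided by Lemmas~\ref{sec:g-action-m} and~\ref{lem:iso2}) onto $\mathfrak{F}_1'$. As you yourself observe, such a $g$ normalizes $T=T_1\times T_2$ (it normalizes $T_1$ and commutes with $T_2$), hence $gx\in M^T$ and $gx\in\bigcap_{M_i\in\mathfrak{F}_1'}M_i$; the component through $gx$ is the desired $N_1$. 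Alternatively you can keep your structure: first replace $x$ by $n^{-1}x$ with $n$ as in Remark~\ref{sec:g-action-m-1}, so that the stabilizer of the new fixed point \emph{is} the standard $S(U(l_1)\times U(1))$; then your invariance and uniqueness argument goes through verbatim for $l_1\geq 2$. Without one of these normalizations, the step as written fails.
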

\begin{proof}
  We know that \(W(SU(l_1+1))=S_{l_1+1}=S(\mathfrak{F}_1)\) and \(W(S(U(l_1)\times U(1)))=S_{l_1}\subset S_{l_1+1}\).
  Therefore the first statement follows.
  Let \(x\in M^T\). Then, by Lemmas \ref{sec:g-action-m} and \ref{lem:iso2}, \(x\) is contained in the intersection of \(l_1\) characteristic submanifolds of \(M\) belonging to \(\mathfrak{F_1}\).
  Because \(W(G_1)=S(\mathfrak{F}_1)\) there is a \(g\in N_{G_1}T_1\) such that \(gx \in \bigcap_{M_i\in \mathfrak{F}_1'}M_i\).
  Therefore the second statement follows.
\end{proof}

\begin{remark}
  We will see in Lemma~\ref{sec:case-g_1-=-2} that \(\bigcap_{M_i\in\mathfrak{F}_1'}M_i\) is connected.
\end{remark}

\begin{Lemma}
  \label{lem:poly}  
  Let \(M\) be a torus manifold with \(G\)-action.
  Suppose \(\tilde{G}=G_1\times G_2\) with \(G_1=SU(l_1+1)\) elementary. 
  Furthermore, let \(N_1\) as in Lemma \ref{sec:case-g_1-=}.
  Then there is a group homomorphism \(\psi_1:S(U(l_1)\times U(1))\rightarrow Z(G_2)\) such that, with
  \begin{align*}
    H_0&= SU(l_1+1)\times \image \psi_1,\\
    H_1&= S(U(l_1)\times U(1)) \times \image \psi_1,\\
    H_2&= \{(g,\psi_1(g))\in H_1; g \in S(U(l_1)\times U(1))\},
  \end{align*}
  \begin{enumerate}
  \item \( \image \psi_1\) is the projection of \(\lambda(M_i)\) to \(G_2\), for all \(M_i \in \mathfrak{F}_1\),
  \item \(N_1\) is a component of \(M^{H_2}\),
  \item \(N_1\) is invariant under the action of \(G_2\),
  \item \(M=G_1N_1=H_0N_1\).
  \end{enumerate}
\end{Lemma}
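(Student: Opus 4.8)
The plan is to read $\psi_1$ off the way the fixing circles $\lambda(M_i)$ of the characteristic submanifolds in $\mathfrak{F}_1$ sit inside $T=T_1\times T_2$, and then to deduce properties (1)--(4) from this description; here $T_1=T\cap G_1$ is the standard maximal torus of $G_1$ and $T_2$ a maximal torus of $G_2$ containing the central factor $T^{l_0}$. Write $p_2\colon T\to T_2$ for the projection onto the $G_2$-factor. The first step is to show that $p_2(\lambda(M_i))$ is one and the same subtorus $P\subset T_2$ for every $M_i\in\mathfrak{F}_1$. Since $W(G_1)$ acts transitively on $\mathfrak{F}_1$ by Lemma~\ref{lem:1}, any two elements of $\mathfrak{F}_1$ are related by some $g\in N_{G_1}T_1$, and Lemma~\ref{sec:action-weyl-group} gives $\lambda(gM_i)=g\lambda(M_i)g^{-1}$. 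As $g\in G_1$ commutes with $G_2$, conjugation by $g$ leaves the $T_2$-component untouched, so $p_2(\lambda(gM_i))=p_2(\lambda(M_i))$, and transitivity yields the common value $P$; being a quotient of a one-dimensional torus, $P$ is a circle or trivial. Next I claim $P\subset Z(G_2)$. The crux here is the weight description at a $T$-fixed point $x_0\in N_1$: by Lemma~\ref{lem:iso2} (if $x_0$ is $G$-fixed) or Lemma~\ref{lem:iso} (otherwise), the weights of the summand on which $G_1$ acts nontrivially all share a common $T_2$-part carried by the central torus factor $T^{l_0}$, so $P$ lies in the image of $T^{l_0}$ in $G_2$ and is therefore central.

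With $P\subset Z(G_2)$ in hand I construct $\psi_1$. Since $Z(G_2)$ is abelian and $S(U(l_1)\times U(1))\cong U(l_1)$ has abelianization $U(1)$ via $\det$, any homomorphism $S(U(l_1)\times U(1))\to Z(G_2)$ factors as $\rho\circ\det$ for some $\rho\colon U(1)\to Z(G_2)$. I choose $\rho$ with $\image\rho=P$ so that the graph of the induced map $T_1\to T_2$ contains the line of each $\lambda(M_i)$ with $M_i\in\mathfrak{F}_1'$. On the Lie-algebra level the $T_1$-weights of the $l_1$ submanifolds in $\mathfrak{F}_1'$ are the linearly independent weights $\epsilon_1,\dots,\epsilon_{l_1}$ of the standard representation, so the assignment $a_i\mapsto b_i$ (the $T_1$- and $T_2$-components of $\lambda(M_i)$) extends uniquely to a linear map $\mathfrak{t}_1\to\mathfrak{t}_2$. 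The key point, which uses that all $M_i\in\mathfrak{F}_1$ share the common $T_2$-weight $\nu$, is that this map factors through $d\det$; this is exactly what lets it integrate to a homomorphism on all of $S(U(l_1)\times U(1))$ with image $P$. By construction $\image\psi_1=P$, which is property (1), and the maximal torus $H_2\cap T$ of $H_2$ is precisely $S:=\langle\lambda(M_i):M_i\in\mathfrak{F}_1'\rangle$, both being $l_1$-dimensional with the former containing the latter.

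For properties (2) and (3) I argue as follows. Each $\lambda(M_i)$ has $T_1$-part in $G_1$ (commuting with $G_2$) and $T_2$-part in $P\subset Z(G_2)$, so $\lambda(M_i)$ commutes with $G_2$; hence $G_2$ preserves $M^{\lambda(M_i)}$ and, being connected, each of its components, and therefore preserves $N_1\subset\bigcap_{M_i\in\mathfrak{F}_1'}M_i$. This is (3). For (2), $N_1$ is by definition a component of $M^S$, where $S$ is the maximal torus of $H_2$. Because $H_2$ is connected and $N_1$ is a component of $M^S$, the stabilizer of $N_1$ in $H_2$ is open, hence all of $H_2$, so $H_2$ preserves $N_1$; as a maximal torus $S$ of the connected group $H_2$ acts trivially on $N_1$, all of $H_2$ acts trivially. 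Thus $N_1\subset M^{H_2}\subset M^S$, and since $N_1$ is already a component of $M^S$, it is a component of $M^{H_2}$, giving (2).

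Finally, for property (4), note $\dim N_1=2n-2l_1=2l_0$. At $x_0\in N_1\cap M^T$ the tangent space splits into the $S$-fixed directions, which span $T_{x_0}N_1$, and the complementary directions; whether or not $x_0$ is $G_1$-fixed, $SU(l_1+1)$ acting through the standard representation $W_1$ sweeps the complementary directions out of $N_1$, so $T_{x_0}(G_1N_1)=T_{x_0}M$ and $G_1\times N_1\to M$ is a submersion near $(e,x_0)$. Hence $G_1N_1$ is open; it is compact, so closed; and $M$ is connected, whence $M=G_1N_1$, and since $H_0\supset G_1$ also $M=H_0N_1$. I expect the main obstacle to be the two facts underlying the first two paragraphs, which must hold simultaneously for the entire orbit: the centrality $P\subset Z(G_2)$, and the existence of a single homomorphism $\psi_1$ whose graph accommodates all the circles $\lambda(M_i)$, $M_i\in\mathfrak{F}_1'$, at once. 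Both rest on the common-$T_2$-weight structure furnished by Lemmas~\ref{lem:iso} and~\ref{lem:iso2}, and the factorization through $d\det$ is what makes $\psi_1$ well defined rather than merely a linear map on the torus.
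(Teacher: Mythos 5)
The central gap is your centrality claim. You assert $P\subset Z(G_2)$ because, by Lemmas~\ref{lem:iso} and~\ref{lem:iso2}, the common $T_2$-part of the normal weights is ``carried by the central torus factor $T^{l_0}$''. Those lemmas do not say this: they are stated for $\tilde{G}=G_1\times T^{l_0}$, so to invoke them here one must restrict the action to $G_1\times T_2$ with $T_2=T\cap G_2$, and the torus appearing in their conclusions is then the \emph{whole} maximal torus $T_2$ of $G_2$, not the central factor of $\tilde{G}$. All you may conclude is that the normal weights share some character of $T_2$; nothing rules out that the corresponding circle $P$ projects nontrivially into the semisimple part of $G_2$, and that is precisely what has to be proved — your derivations of (1) and of (3) (via ``$\lambda(M_i)$ commutes with $G_2$'') silently assume it. The paper closes this gap with the Weyl-group input from Section~\ref{sec:charman}: $W(G_2)$ fixes every $M_i\in\mathfrak{F}_1$ and acts orientation preservingly at it (this is how the partition of $\mathfrak{F}$ was defined), so part~(\ref{item:13}) of Lemma~\ref{sec:action-weyl-group} forces conjugation by any $g\in N_{G_2}T_2$ to be the \emph{identity} on $\lambda(M_i)$; hence $H_3=\langle\lambda(M_i);\,M_i\in\mathfrak{F}_1,\,M_i\supset N_1\rangle$ and its projection to $T_2$ are pointwise $W(G_2)$-fixed, and a connected pointwise $W(G_2)$-fixed subgroup of $T_2$ has Lie algebra in $(LT_2)^{W(G_2)}=LZ(G_2)$, hence lies in $Z(G_2)$. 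Without this ingredient there is no proof of (1) or (3).

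Two further steps fail as written. For (2), you argue that since $H_2$ is connected and $N_1$ is a component of $M^S$, ``the stabilizer of $N_1$ in $H_2$ is open''. But $S$ is a maximal torus of $H_2\cong S(U(l_1)\times U(1))$, which for $l_1>1$ does not normalize $S$; an element $h\in H_2$ carries $M^{S}$ to $M^{hSh^{-1}}$, so there is no reason for elements near the identity to preserve $N_1$, and openness of the stabilizer is not a valid inference. The paper instead works at a point $x\in N_1^T$: it checks that $H_2$ fixes $x$ (using $G_{1x}=S(U(l_1)\times U(1))$ from Lemma~\ref{lem:iso}, resp.\ $x\in M^{G_1}$) and computes $(T_xM)^{H_2}=(T_xM)^{H_3}=T_xN_1$ from the weight splitting, which identifies $N_1$ with the component of $M^{H_2}$ through $x$. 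For (4), your submersion claim is false at $G_1$-fixed points: if $x_0\in M^{G_1}$, the differential of $G_1\times N_1\to M$ at $(e,x_0)$ has image $T_{x_0}N_1\oplus T_{x_0}(G_1x_0)=T_{x_0}N_1\neq T_{x_0}M$. Moreover you verify the sweeping-out only at one $T$-fixed point, whereas openness of $G_1N_1$ away from the fixed set needs transversality of $N_1$ and $H_0y$ at \emph{every} $y\in N_1$ with $H_{0y}^0\neq H_0$ (the paper gets this from Lemma~\ref{lem:lie-algebra-calc} and Lemma~\ref{sec:lie-groups-2}), after which one still has to handle $M^{H_0}$ separately, using that $M-M^{H_0}$ is connected because $\codim M^{H_0}\geq 4$. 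So the skeleton of your argument matches the paper's, but the three load-bearing points — centrality via the orientation-preserving $W(G_2)$-action, the fixed-point tangent-space computation for (2), and the treatment of the $H_0$-fixed locus in (4) — are missing or incorrect.
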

\begin{proof}
  Denote by \(T_2\) the maximal torus \(T\cap G_2\) of \(G_2\).
  Let \(x\in N_1^T\).
  If \(x\in M^{SU(l_1+1)}\), then  we have, by Lemma~\ref{lem:iso2}, the \(SU(l_1+1)\times T_2\)-representation
  \begin{equation*}
    T_xM=W\otimes_\C V_1 \oplus \bigoplus_{i=2}^{n-l_1} V_i,
  \end{equation*}
  where \(W\) is the standard complex representation of \(SU(l_1+1)\) or its dual and the \(V_i\) are one-dimensional complex representations of \(T_2\).
  Because \(G_2\) acts effectively on \(M\) the weights of the \(V_i\) form a basis of the integral lattice in \(LT_2^*\).
  From the description of the weight spaces of \(T_xM\) given in the proof of Lemma \ref{lem:iso2}, we get that \(T_xN_1\) is \(S(U(l_1)\times U(1))\)-invariant and that there is a one-dimensional complex representation \(W_1\) of \(S(U(l_1)\times U(1))\) such that
  \begin{equation*}
    T_xN_1=W_1\otimes_\C V_1 \oplus \bigoplus_{i=2}^{n-l_1}V_i.
  \end{equation*}

  Now assume that \(x\) is not fixed by \(SU(l_1+1)\).
  Because, by Lemma~\ref{lem:iso}, \(G_1x\subset M^{T_2}\) is \(G_1\)-equivariantly diffeomorphic to \(\mathbb{C}P^{l_1}\),
  we see by the definition of \(N_1\) that \(G_{1x} = S(U(l_1) \times U(1))\).

   At the point \(x\), we get a representation of \(S(U(l_1)\times U(1)) \times T_2\) of the form
\begin{equation*}
  T_x M= T_xN_1 \oplus T_xG_1x.
\end{equation*}
Since \(T_2\) acts effectively on \(M\) and trivially on \(G_1x\), there is a decomposition
\begin{equation*}
  T_xN_1 =  \bigoplus_{i=1}^{n-l_1} V_i \otimes_\C W_i,
\end{equation*}
where the \(W_i\) are one-dimensional complex \(S(U(l_1)\times U(1))\)-representations and the \(V_i\) are one-dimensional complex \(T_2\)-representations whose weights form a basis of the integral lattice in \(LT^*_2\).

Therefore, in both cases, there is a homomorphism \(\psi_1: S(U(l_1)\times U(1)) \rightarrow S^1 \rightarrow T_2\) such that, for all \(g \in  S(U(l_1)\times U(1))\), \((g,\psi_1(g))\) acts trivially on \(T_xN_1=\bigoplus_{i=1}^{n-l_1} V_i \otimes_\C W_i \).

Hence the component of the identity of the isotropy subgroup of the torus \(T\) for generic points in \(N_1\) is given by 
\begin{equation}
\label{eq:2}
  H_3=\{(t,\psi_1(t))\in T_1\times T_2\}.
\end{equation}
With Lemma~\ref{lem:torus-m2}, we see that
\begin{equation}
  \label{eq:7}
  H_3=\langle\lambda(M_i);M_i\in\mathfrak{F}_1,M_i \supset N_1\rangle.
\end{equation}

Because the Weyl-group of \(G_2\) acts trivially and orientation preserving on \(\mathfrak{F}_1\), \(\lambda(M_i)\), \(M_i\in\mathfrak{F}_1\), is pointwise fixed by the action of \(W(G_2)\) on \(T\) by Lemma~\ref{sec:action-weyl-group}.
It follows with (\ref{eq:7}) that \(H_3\) is pointwise fixed by the action of \(W(G_2)\) on \(T\).
Here \(W(G_2)\) acts on \(T\) by conjugation.
Therefore the image of \(\psi_1\) is contained in the center of \(G_2\).
Furthermore \(\image \psi_1\) is the projection of \(\lambda(M_i)\), \(M_i\in \mathfrak{F}_1\), to \(T_2\).

Because \(H_3\) commutes with \(G_2\) it follows that \(N_1\) is \(G_2\)-invariant.
So we have proved the first and the third statement.

Now we turn to the second and fourth part.

Because \(T_xN_1=(T_xM)^{H_3}=(T_xM)^{H_2}\), \(N_1\) is a component of \(M^{H_2}\).
Because, by Lemma \ref{lem:lie-algebra-calc}, \(H_1\) is the only proper closed connected subgroup of \(H_0\), which contains \(H_2\) properly, for \(y \in N_1\) there are the following possibilities
\begin{itemize}
\item \(H_{0y}^0=H_0\),
\item \(H_{0y}^0=H_1\) and  \(\dim H_0y=2l_1\),
\item \(H_{0y}^0=H_2\) and \(\dim H_0y=2l_1+1\),
\end{itemize}
where \(H_{0y}^0\) is the identity component of the stabilizer of \(y\) in \(H_0\).
If \(g \in H_0\) such that \(gy \in N_1\), then we have \(H_{0gy}^0=gH_{0y}^0g^{-1}\in \{H_0,H_1,H_2\}\). Therefore
\begin{equation*}
  g \in N_{H_0}H_{0y}^0=
  \begin{cases}
    H_0& \text{if }y \in M^{H_0}\\
    H_1& \text{if }y \not\in M^{H_0} \text{ and } l_1 > 1\\
    N_{G_1}T_1\times \image \psi_1 & \text{if } H^0_{0y}=H_1 \text{ and } l_1=1\\
    T_1 \times \image \psi_1 & \text{if } H^0_{0y}=H_2, l_1=1 \text{ and } \image \psi_1 \neq \{1\}.  
  \end{cases}
\end{equation*}

Now let \(y \in N_1\) such that \(H_{0y}^0\neq H_0\).
Because \(N_1\) is a component of \(M^{H_2}\) and \(H_0y\) is \(H_2\) invariant, \(N_1\cap H_0y\) is a union of  some components of \((H_0y)^{H_2}\).
Therefore  \(N_1\cap H_0y\) is a submanifold of \(M\).
Moreover,
\begin{equation*}
  T_yN_1\cap T_yH_0y=(T_yM)^{H_2}\cap T_yH_0y=(T_yH_0y)^{H_2}=T_y(N_1\cap H_0y).
\end{equation*}
Hence,
\begin{multline*}
  \dim T_yN_1 \cap T_yH_0y = \dim N_1\cap H_0y
  \leq \dim H_1y\\= \dim H_1/H_{0y}^0=
 \begin{cases}
  0& \text{if } H_{0y}^0=H_1\\
  1& \text{if } H_{0y}^0=H_2  \text{ and } \image \psi_1 \neq \{1\}
  \end{cases}
\end{multline*}
follows.
Therefore \(N_1\) intersects \(H_0y\) transversely in \(y\).
It follows, by Lemma~\ref{sec:lie-groups-2}, that \(GN_1-N_1^{H_0}=H_0N_1-N_1^{H_0}\) is an open subset of \(M\).

Because \(M\) is connected and \(\codim M^{H_0}\geq 4\), \(M-M^{H_0}\) is connected.
Since \((M-M^{H_0})\cap H_0N_1=H_0N_1-N_1^{H_0}\) is closed in \(M-M^{H_0}\), we have \(M-M^{H_0}=H_0N_1-N_1^{H_0}\).
Hence
\begin{align*}
  M&=\left(M-M^{H_0}\right)\amalg M^{H_0}= \left(H_0N_1-N_1^{H_0}\right) \amalg M^{H_0}\\
  &=\left(H_0N_1-N_1^{H_0}\right) \amalg \left(M^{H_0}\cap N_1\right)\amalg \left(M^{H_0}-N_1^{H_0}\right)\\
  &=H_0N_1\amalg \left(M^{H_0}-N_1^{H_0}\right).
\end{align*}

Because \(N_1\) is a component of \(M^{H_2}\), \(N_1^{H_0}\) is a union of components of \(M^{H_0}\).
Therefore \(M^{H_0}-N_1^{H_0}\) is closed in \(M\).
Because \(H_0N_1\) is closed in \(M\) it follows that \(M=GN_1=H_0N_1=G_1N_1\).
\end{proof}

The following lemma guarantees together with Lemma~\ref{sec:lie-groups} that, if \(l_1>1\), then the homomorphism \(\psi_1\) is independent of all choices made in its construction namely the choice of \(N_1\) and of \(x \in N_1^{T}\). 

\begin{lemma}
  \label{sec:case-g_1-=-6}
  In the situation of Lemma~\ref{lem:poly} let \(T'=T_2\) or \(T'=\image \psi_1\).
  Then the principal orbit type of the \(G_1\times T'\)-action on \(M\) is given by \((G_1\times T')/H_2\).
\end{lemma}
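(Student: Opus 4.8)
The plan is to compute the isotropy group $(G_1\times T')_z$ at a generic point $z\in N_1$ and show it equals $H_2$. Since $N_1$ is $G_2$-invariant by Lemma~\ref{lem:poly}, it is $T'$-invariant, so $(G_1\times T')N_1=G_1N_1=M$; hence every orbit meets $N_1$ and the principal isotropy is realised on $N_1$. Because $N_1\subset M^{H_2}$ the inclusion $H_2\subseteq(G_1\times T')_z$ is automatic for every $z\in N_1$, so the whole content is the reverse inclusion for generic $z$, and the computation will be uniform in $T'$ (only $\image\psi_1\subseteq T'\subseteq T_2$ is used).

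First I would record two facts about generic $z\in N_1$. (i) $T_2$ acts effectively on $N_1$: an element of $T_2$ fixing $N_1$ pointwise commutes with $G_1$, hence fixes $G_1N_1=M$ and is trivial; as $T_2$ is a torus and $N_1$ is connected, its principal isotropy on $N_1$ is trivial, so $(T_2)_z=\{1\}$ for generic $z$, and a fortiori $(T')_z=(\image\psi_1)_z=\{1\}$. (ii) As in the proof of Lemma~\ref{lem:poly}, the $H_0$-orbit of a generic $z$ (with $H_0=G_1\times\image\psi_1$) has maximal dimension, so $(H_0)_z^0=H_2$; intersecting with $G_1\times\{1\}$ gives $G_{1z}^0=\ker\psi_1$, so the orbit $G_1z\cong G_1/\ker\psi_1$ is $S^{2l_1+1}$ when $\image\psi_1\neq\{1\}$ and $\C P^{l_1}$ otherwise, and $\dim G_1z+\dim N_1-\dim M=1$ (resp.\ $0$). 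The defining relation $\psi_1(g')z=g'^{-1}z$ for $g'\in S(U(l_1)\times U(1))$ shows $\image\psi_1 z\subseteq G_1z\cap N_1$, and this connected one-dimensional set is then all of $G_1z\cap N_1$; it is exactly the fibre through $z$ of the $G_1$-equivariant projection $q:G_1z\to G_1/S(U(l_1)\times U(1))=\C P^{l_1}$.

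Then, for $(g,t)\in(G_1\times T')_z$ I would argue as follows. From $gtz=z$ and $t\in G_2$ we get $g^{-1}z=tz\in G_1z\cap N_1=\image\psi_1 z$, so $q(g^{-1}z)=q(z)$; equivariance of $q$ forces $g^{-1}\in S(U(l_1)\times U(1))$, so $g\in S(U(l_1)\times U(1))$ and $\psi_1(g)$ is defined. Now $tz=g^{-1}z=\psi_1(g)z$ by the definition of $\psi_1$, and since $(T_2)_z=\{1\}$ this yields $t=\psi_1(g)$, whence $(g,t)=(g,\psi_1(g))\in H_2$. Thus $(G_1\times T')_z\subseteq H_2$, and with the automatic reverse inclusion we obtain $(G_1\times T')_z=H_2$ for generic $z$, which is the claim. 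The same computation covers $T'=T_2$ and $T'=\image\psi_1$, as well as the degenerate case $\image\psi_1=\{1\}$, where the fibre $\image\psi_1 z$ is the single point $z$.

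The main obstacle is step (ii): one must see that at a \emph{generic} point of $N_1$ the $G_1$-orbit is the odd sphere $S^{2l_1+1}$ rather than $\C P^{l_1}$ (the latter occurs only at the $T$-fixed points of $N_1$, where $\image\psi_1$ collapses the Hopf fibre), and correspondingly that $G_1z$ meets $N_1$ in exactly the circle $\image\psi_1 z$ on which $\image\psi_1$ acts freely. Pinning down this intersection — both its dimension and its identification with the Hopf fibre — is what upgrades the identity-component statement $(H_0)_z^0=H_2$ furnished by Lemma~\ref{lem:poly} to the exact equality $(G_1\times T')_z=H_2$ needed here; the equivariant projection $q$ together with the defining relation $\psi_1(g')z=g'^{-1}z$ is the device that rules out extra components.
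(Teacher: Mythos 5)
Your overall strategy—push every orbit into \(N_1\) and compute the full isotropy group \((G_1\times T')_z\) at a generic \(z\in N_1\)—is a genuinely different route from the paper's proof (which is purely group-theoretic: a principal isotropy group \(H\supseteq H_2\) is shown to project injectively to \(G_1\), hence is a graph over \(\pi_1(H)\supseteq S(U(l_1)\times U(1))\), and the cases \(\pi_1(H)=SU(l_1+1)\) and, for \(l_1=1\), \(\pi_1(H)=N_{G_1}T_1\) are excluded using Lemma~\ref{sec:lie-algebra-calc} and an abelianness argument at a \(T\)-fixed point). However, as written your proof has a genuine gap at exactly the step you yourself flag as the main obstacle: the equality \(G_1z\cap N_1=\image\psi_1 z\). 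The inclusion \(\supseteq\) is correct, but the reverse inclusion is only asserted (``this connected one-dimensional set is then all of \(G_1z\cap N_1\)''). Transversality of \(G_1z=H_0z\) and \(N_1\) (proved in Lemma~\ref{lem:poly}) gives that \(G_1z\cap N_1\) is a closed one-dimensional submanifold, i.e.\ a disjoint union of circles; it does not give connectedness, so extra circles are not excluded. The projection \(q\) cannot rule them out either: first, \(q\) is only well defined once one knows \(G_{1z}\subseteq S(U(l_1)\times U(1))\), which is a statement about the \emph{full} isotropy group rather than its identity component—precisely the upgrade this lemma is about (for \(l_1=1\) and \(\psi_1\) non-trivial one has \(G^0_{1z}=\{1\}\), and nothing said so far constrains the finite group \(G_{1z}\)); second, even granting \(q\), extra components of \(G_1z\cap N_1\) would simply lie in other fibres of \(q\), and your final step derives \(g\in S(U(l_1)\times U(1))\) \emph{from} the equality \(G_1z\cap N_1=\image\psi_1 z\), so invoking \(q\) to prove that equality is circular.

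The missing ingredient is the normalizer argument from the proof of Lemma~\ref{lem:poly}. If \(g\in H_0\) and \(gz\in N_1\subseteq M^{H_2}\), then \(H_2\subseteq H^0_{0,gz}=gH_2g^{-1}\), hence \(gH_2g^{-1}=H_2\) by equality of dimensions, i.e.\ \(g\in N_{H_0}H_2\). For \(l_1>1\), projecting to \(G_1\) and using \(N_{G_1}S(U(l_1)\times U(1))=S(U(l_1)\times U(1))\) (the proof of Lemma~\ref{sec:lie-algebra-calc}) gives \(N_{H_0}H_2=H_1\); for \(l_1=1\) one must check separately that the Weyl element does not normalize \(H_2\) when \(\psi_1\) is non-trivial (it inverts \(T_1\), so it would force \(\psi_1=\psi_1^{-1}\)), and the case \(l_1=1\), \(\psi_1\) trivial needs the \(N_1\cap N_2\) argument that the paper gives in the proof of Corollary~\ref{cor:hhh}, resting on Lemma~\ref{lem:torus-m2}. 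Once one has \(G_1z\cap N_1\subseteq H_1z=\image\psi_1z\) (and, as a byproduct, \(G_{1z}\subseteq S(U(l_1)\times U(1))\)), the rest of your computation—\(t=\psi_1(g)\) from \((T_2)_z=\{1\}\), hence \((G_1\times T')_z=H_2\) at a point that is simultaneously generic in \(N_1\) and principal in \(M\)—does go through.
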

\begin{proof}
  Let \(H\subset G_1\times T'\) be a principal isotropy subgroup.
  Then, by Lemma~\ref{lem:poly}, we may assume \(H\supset H_2\).
  Consider the projection
  \begin{equation*}
    \pi_1: G_1\times T' \rightarrow G_1
  \end{equation*}
  on the first factor.

  At first we show that the restriction of \(\pi_1\) to \(H\) is injective.
  Because \((G_1\times T')_x\cap T'= T'_x\) for all \(x\in M\) and the \(T'\)-action on \(M\) is effective there is an \(x\in M\) such that
  \begin{equation*}
    (G_1\times T')_x \cap T'=\{1\}.
  \end{equation*}
  Furthermore, there is an \(g\in G_1\times T'\) such that \((G_1\times T')_x\supset gHg^{-1}\).

  Because \(T'\) is contained in the center of \(G_1 \times T'\), we get
  \begin{align*}
    gHg^{-1}\cap T' &= \{1\},\\
    H\cap g^{-1}T'g &= \{1\},\\
    H\cap T' &= \{1\}.
  \end{align*}
  Therefore the restriction of \(\pi_1\) to \(H\) is injective.

  Furthermore, \(\pi_1(H)\supset \pi_1(H_2)=S(U(l_1)\times U(1))\).
  Therefore, by Lemma~\ref{sec:lie-algebra-calc}, we have
  \begin{equation*}
    \pi_1(H)=
    \begin{cases}
      SU(l_1+1),S(U(l_1)\times U(1)) &\text{if } l_1>1\\
      SU(l_1+1),S(U(l_1)\times U(1)), N_{G_1}T_1 &\text{if } l_1=1.
    \end{cases}
  \end{equation*}
  There is a left inverse \(\phi:\pi_1(H)\rightarrow H\hookrightarrow G_1\times T'\) to \(\pi_1|_H\).
  Therefore there is a group homomorphism \(\psi':\pi_1(H)\rightarrow T'\) such that
  \begin{equation*}
    H=\phi(\pi_1(H))=\{(g,\psi'(g))\in G_1\times T';\; g\in \pi_1(H)\}.
  \end{equation*}
  Because \(H_2\) is a subgroup of \(H\), we see that \(\psi'|_{S(U(l_1)\times U(1))}=\psi_1\).

  At first we discuss the cases \(\pi_1(H)=SU(l_1+1)\) and \(\pi_1(H)=S(U(l_1)\times U(1))\).
  Because \(T'\) is abelian we have in these cases
  \begin{equation*}
    H=\phi(\pi_1(H))=
    \begin{cases}
      G_1 &\text{if } \pi_1(H)= SU(l_1+1)\\
      H_2 &\text{if } \pi_1(H)= S(U(l_1)\times U(1)).
    \end{cases}
  \end{equation*}
 The first case does not occur because \(G_1\) acts non-trivially on \(M\).

 Now we discuss the case \(l_1=1\) and \(\pi_1(H)=N_{G_1}T_1\).
 Because for \(t\in T_1\) and \(g\in N_{G_1}T_1 -T_1\) we have
 \begin{equation*}
   \psi'(t)^{-1}=\psi'(gtg^{-1})=\psi'(g)\psi'(t)\psi'(g)^{-1}=\psi'(t),
 \end{equation*}
 it follows that \(\psi_1\) is trivial in this case.

 Let \(x\in M^T\). Then it follows by the definition of \(\psi_1\) in the proof of Lemma~\ref{lem:poly} that \(x\) is not a fixed point of \(G_1\).
 By Lemma~\ref{lem:iso}, we know that
 \begin{equation*}
   G_{1x}=S(U(l_1)\times U(1))=T_1.
 \end{equation*}
 Therefore \((G_{1}\times T')_x=T_1 \times T'\) is abelian.
 But \(H\) is non-abelian if \(\pi_1(H)=N_{G_1}T_1\).
 This is a contradiction because \(H\) is conjugated to a subgroup of \((G_1\times T')_x\).
\end{proof}

  If \(l_1=1\), we have \(\#\mathfrak{F}_1=2\) and \(W(S(U(l_1)\times U(1)))=\{1\}\).
  Therefore there are two choices for \(N_1\). Denote them by \(M_1\) and \(M_2\).

\begin{lemma}
  \label{sec:case-g_1-=-7}
  In the situation described above 
  let \(\psi_i\) be the homomorphism constructed for \(M_i\), \(i=1,2\).
  Then we have \(\psi_1=\psi_2^{-1}\).
\end{lemma}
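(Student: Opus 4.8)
The plan is to identify each graph $\{(t,\psi_i(t))\}$ with the one-dimensional subtorus $\lambda(M_i)$ and then relate $\lambda(M_1)$ and $\lambda(M_2)$ through the nontrivial Weyl-group element, which interchanges $M_1$ and $M_2$. First I would recall the normal form of the construction in Lemma~\ref{lem:poly}: for the choice $N_1 = M_i$ the identity component of the generic isotropy group is $H_3 = \{(t,\psi_i(t))\in T_1\times T_2\}$ by (\ref{eq:2}), and by (\ref{eq:7}) this equals $\langle \lambda(M_j);\; M_j \in \mathfrak{F}_1,\; M_j \supset N_1\rangle$. Since $M_1$ and $M_2$ are distinct characteristic submanifolds of the same dimension, neither contains the other, so the only $M_j \in \mathfrak{F}_1$ containing $N_1 = M_i$ is $M_i$ itself. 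Hence $\lambda(M_i) = \{(t,\psi_i(t));\; t\in T_1\}$ is exactly the graph of $\psi_i$.

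Next I would exploit the Weyl-group action. Because $l_1 = 1$ we have $G_1 = SU(2)$ and $W(G_1)\cong\mathbb{Z}_2$; its nontrivial element $w$, represented by some $g\in N_{G_1}T_1$, acts transitively on $\mathfrak{F}_1 = \{M_1,M_2\}$, so that $gM_1 = M_2$. By Lemma~\ref{sec:action-weyl-group}~(\ref{item:12}) this yields $\lambda(M_2) = \lambda(gM_1) = g\lambda(M_1)g^{-1}$, so it remains only to compute the conjugation action of $g$ on $T = T_1\times T_2$.

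Finally I would observe that, since $g\in G_1$ commutes with $G_2\supset T_2$, conjugation by $g$ fixes $T_2$ pointwise, whereas the nontrivial Weyl element of $SU(2)$ acts on its standard maximal torus $T_1 = T\cap G_1$ by inversion $t\mapsto t^{-1}$. Thus $g(t,s)g^{-1} = (t^{-1},s)$, so
\[
  \lambda(M_2) = g\,\lambda(M_1)\,g^{-1} = \{(t^{-1},\psi_1(t));\; t\in T_1\} = \{(u,\psi_1(u)^{-1});\; u\in T_1\},
\]
which is precisely the graph of $\psi_1^{-1}$. Comparing with $\lambda(M_2) = \{(u,\psi_2(u));\; u\in T_1\}$ and using that both are graphs of homomorphisms $T_1\to T_2$, I conclude $\psi_2(u) = \psi_1(u)^{-1}$ for all $u$, i.e. $\psi_1 = \psi_2^{-1}$. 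The only step requiring genuine care—and the part I expect to be the main obstacle—is justifying cleanly that $w$ acts on $T_1$ by inversion and trivially on $T_2$; once this is in hand, the remainder is a one-line reparametrisation by $u = t^{-1}$ together with the homomorphism property of $\psi_1$.
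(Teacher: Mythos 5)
Your proof is correct and follows essentially the same route as the paper: identify $\lambda(M_i)$ with the graph of $\psi_i$ via (\ref{eq:2}) and (\ref{eq:7}), conjugate by a representative $g\in N_{G_1}T_1-T_1$ of the nontrivial Weyl element using Lemma~\ref{sec:action-weyl-group}, and reparametrise $t\mapsto t^{-1}$. The step you flagged as the potential obstacle is not one: $g$ inverts $T_1$ because the nontrivial Weyl element of $SU(2)$ acts by inversion on its maximal torus, and it fixes $T_2$ pointwise because $\tilde{G}=G_1\times G_2$ is a direct product, exactly as the paper implicitly uses.
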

\begin{proof}
  By (\ref{eq:2}) and (\ref{eq:7}), we have
  \begin{equation*}
    \lambda(M_i)=\{(t,\psi_i(t))\in H_1;\; t \in S(U(1)\times U(1))\}.
  \end{equation*}
  Now, with Lemma~\ref{sec:action-weyl-group}, we see
  \begin{align*}
    \lambda(M_1)&=g\lambda(M_2)g^{-1}=\{(t^{-1},\psi_2(t))\in H_1;\; t \in S(U(1)\times U(1))\}\\
    &=\{(t,\psi_2(t)^{-1})\in H_1;\; t \in S(U(1)\times U(1))\},
  \end{align*}
  where \(g\in N_{G_1}T_1-T_1\).
  Therefore the result follows.
\end{proof}

\begin{cor}
  \label{cor:hhh}
If in the situation of Lemma   \ref{lem:poly}  the \(G_1\)-action on \(M\) has no fixed point,
 then M is the total space of a \(G\)-equivariant fiber bundle over \(\mathbb{C}P^{l_1}\) with fiber some torus manifold; more precisely  \(M=H_0 \times_{H_1} N_1\). 
\end{cor}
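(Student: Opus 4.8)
The plan is to realise $M$ directly as the associated bundle $H_0\times_{H_1}N_1$ by writing down the obvious evaluation map and proving it is a $\tilde G$-equivariant diffeomorphism. First I would check that $N_1$ is $H_1$-invariant, so that the twisted product makes sense. Since $\image\psi_1$ is central in $G_2$, the subgroup $H_2$ is normal in $H_1$, so $H_1$ permutes the components of $M^{H_2}$; and because $\image\psi_1\subseteq G_2$ preserves $N_1$ by Lemma~\ref{lem:poly}, while each $s\in S(U(l_1)\times U(1))$ differs from the element $(s,\psi_1(s))\in H_2$ (which fixes $N_1$ pointwise) by an element of $\image\psi_1$, the whole of $H_1$ maps $N_1$ to itself. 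Hence $\mu\colon H_0\times_{H_1}N_1\to M$, $[h,n]\mapsto hn$, is a well-defined smooth map, and it is $\tilde G$-equivariant once one lets $G_1\subseteq H_0$ act on the left factor and $G_2$ act on $N_1$ (the two actions are compatible because $\image\psi_1\subseteq Z(G_2)$).

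Next I would establish that $\mu$ is a surjective local diffeomorphism. Surjectivity is exactly $M=H_0N_1$, which is part~(4) of Lemma~\ref{lem:poly}. For the local statement, the proof of Lemma~\ref{lem:poly} already shows that $N_1$ meets each orbit $H_0y$ transversely, i.e.\ $T_yM=T_y(H_0y)+T_yN_1$ for every $y\in N_1$. Differentiating $\mu$ at a point $[e,y]$ and identifying the tangent space of the twisted product with $(\mathfrak h_0/\mathfrak h_1)\oplus T_yN_1$, the image of $d\mu$ is $T_y(H_0y)+T_yN_1=T_yM$; since
\[
  \dim(H_0/H_1)+\dim N_1 = 2l_1 + 2(n-l_1) = 2n = \dim M,
\]
$d\mu$ is an isomorphism, and by $H_0$-equivariance $\mu$ is a local diffeomorphism everywhere.

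The heart of the argument, and the step I expect to be the main obstacle, is injectivity of $\mu$. This is equivalent to the assertion that whenever $g\in H_0$ and $y,gy\in N_1$ one has $g\in H_1$. Here the hypothesis of the corollary enters decisively: since the $G_1$-action has no fixed point, $M^{H_0}=\emptyset$ and hence $H_{0y}^0\in\{H_1,H_2\}$ for every $y\in N_1$. The normaliser computation carried out inside the proof of Lemma~\ref{lem:poly} gives $g\in N_{H_0}(H_{0y}^0)$; for $l_1>1$ this normaliser equals $H_1$ in both cases, so $g\in H_1$ and we are done. For $l_1=1$ the normaliser of $H_{0y}^0=H_1$ can be the strictly larger group $N_{G_1}T_1\times\image\psi_1$, and one must rule out the extra Weyl elements. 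This is where the standing assumption $\#\mathfrak F_1=2$ is used: by Lemma~\ref{sec:action-weyl-group} and Lemma~\ref{sec:case-g_1-=-7} the nontrivial element of $W(G_1)$ interchanges the two characteristic submanifolds $M_1,M_2$ of $\mathfrak F_1$, and since $N_1$ is a single \emph{component} of a fixed-point set (each $G_1$-orbit meeting $N_1$ transversely in essentially one point), such a Weyl element carries points of $N_1=M_1$ into $M_2$, hence out of $N_1$. Making this bookkeeping precise---in particular controlling what happens along $M_1\cap M_2$---is the delicate part of the proof.

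Granting injectivity, $\mu$ is a bijective local diffeomorphism between compact manifolds, hence a $\tilde G$-equivariant diffeomorphism. Composing with the bundle projection $H_0\times_{H_1}N_1\to H_0/H_1$ then exhibits $M$ as the total space of a $\tilde G$-equivariant fibre bundle over
\[
  H_0/H_1 = SU(l_1+1)/S(U(l_1)\times U(1)) = \C P^{l_1},
\]
with fibre $N_1$. Finally $N_1$ is itself a torus manifold: it has dimension $2(n-l_1)$, and the quotient of $T$ by the $l_1$-dimensional subtorus fixing $N_1$ acts on it almost effectively with nonempty fixed set (it contains the $T$-fixed point provided by Lemma~\ref{sec:case-g_1-=}). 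This yields the asserted description $M=H_0\times_{H_1}N_1$.
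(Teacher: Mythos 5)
Your strategy is the same as the paper's: the paper also identifies \(M=H_0N_1\) with \(H_0\times_{H_1}N_1\), by showing that the equivalence relation defining the twisted product coincides with the relation \((g_1,y_1)\sim(g_2,y_2)\Leftrightarrow g_1y_1=g_2y_2\), and the key input is exactly the normalizer computation \(g\in N_{H_0}H^0_{0y}\) from the proof of Lemma~\ref{lem:poly}. Your treatment of well-definedness (\(H_1\)-invariance of \(N_1\)), equivariance, surjectivity, the local-diffeomorphism dimension count, and the injectivity argument in the case \(l_1>1\) (where \(M^{H_0}=\emptyset\) forces \(N_{H_0}H^0_{0y}=H_1\)) is correct and parallels the paper.

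The gap is precisely the point you flagged and did not resolve: the case \(l_1=1\) with \(H^0_{0y_1}=H_1\), where the normalizer is \(N_{G_1}T_1\times\image\psi_1\supsetneq H_1\). Your justification that the extra Weyl-type elements carry points of \(N_1\) ``into \(M_2\), hence out of \(N_1\)'' is circular: lying in \(M_2=N_2\) only excludes lying in \(N_1\) if one already knows the relevant points avoid \(N_1\cap N_2\), which is exactly what must be proved. The paper closes this case by a representation-theoretic argument rather than bookkeeping: if \(h\in \left(N_{G_1}T_1-T_1\right)\times\image\psi_1\) and \(y_2=hy_1\in N_1\), then \(y_2\in N_1\cap N_2\), and since both \(N_1\) and \(N_2\) meet the orbit \(H_0y_2\) transversely,
\[
T_{y_2}N_1\oplus T_{y_2}H_0y_2 \;=\; T_{y_2}M \;=\; T_{y_2}N_2\oplus T_{y_2}H_0y_2
\]
as \(T_1\times\image\psi_1\)-representations, so \(T_{y_2}N_1\cong T_{y_2}N_2\). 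Because \(\lambda(N_1)=\{(t,\psi_1(t))\}\) acts trivially on \(T_{y_2}N_1\) and \(\lambda(N_2)=\{(t,\psi_1(t)^{-1})\}\) acts trivially on \(T_{y_2}N_2\) (Lemma~\ref{sec:case-g_1-=-7}), both circles act trivially on both tangent spaces; almost effectiveness of the torus action then forces \(\image\psi_1=\{1\}\) and \(\lambda(N_1)=\lambda(N_2)=T_1\), which contradicts Lemma~\ref{lem:torus-m2}: two distinct characteristic submanifolds with nonempty intersection must satisfy \(\dim\langle\lambda(N_1),\lambda(N_2)\rangle=2\). Without an argument of this kind your proof is incomplete for \(G_1=SU(2)\), which lies within the scope of the corollary since \(\#\mathfrak{F}_1=2\) is the standing assumption of the section.
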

\begin{proof}
  \(H_0 \times_{H_1}N_1\) is defined to be the space \(H_0 \times N_1/\sim_1\), where
  \begin{align*}
    &&(g_1,y_1)&\sim_1(g_2,y_2)\\
    &\Leftrightarrow &\exists h \in H_1  \quad g_1h^{-1}&=g_2 \text{ and } hy_1=y_2.
  \end{align*}
  By Lemma \ref{lem:poly} we have that \(M= H_0N_1= (H_0\times N_1)/\sim_2\), where
  \begin{align*}
    &&(g_1,y_1)&\sim_2 (g_2,y_2)\\
    &\Leftrightarrow &g_1y_1 &=g_2y_2.
  \end{align*}
We show that the two equivalence relations \(\sim_1,\sim_2\) are equal.

For \((g_1,y_1),(g_2,y_2)\in H_0\times N_1\) we have
    \begin{align*}
      &&g_1y_1 &=g_2y_2\\
      &\Leftrightarrow &\exists h \in N_{H_0}H_{0y_1}^0  \quad g_1h^{-1}&=g_2 \text{ and } hy_1=y_2\\
      &\Leftrightarrow &\exists h \in H_1  \quad g_1h^{-1}&=g_2 \text{ and } hy_1=y_2.
    \end{align*}
For the last equivalence we have to show the implication from the second to the third line.
If \(l_1>1\), \(N_{H_0}H_{0y_1}^0\) is equal to \(H_1\) because \(y_1\) is not a \(H_0\)-fixed point. So we have \(h\in H_1\).

If \(l_1=1\), then \(N_1\) is a characteristic submanifold of \(M\) belonging to \(\mathfrak{F}_1\).
If  \(H_{0y_1}^0=H_2\) we are done because \(N_{H_0}H_{0y_1}^0=H_1\).

Now assume that  \(H_{0y_1}^0=H_1\) and there is an \(h\in N_{G_1}T_1 \times \image \psi_1- T_1\times \image \psi_1\) such that \(y_2=hy_1\in N_1\).
Then \(y_2\in N_1\cap N_2\subset M^{T_1\times \image \psi_1}\), where \(N_2\) is the other characteristic submanifold of \(M\) belonging to \(\mathfrak{F}_1\).

As shown in the proof of Lemma  \ref{lem:poly}, \(N_1\) intersects \(H_0y_2\) transversely in \(y_2\).
Therefore one has
\begin{equation*}
  T_{y_2}N_1 \oplus T_{y_2}H_0y_2= T_{y_2}M = T_{y_2}N_2\oplus T_{y_2}H_0y_2
\end{equation*}
as \(T_1\times \image \psi_1\)-representations. 
This implies
\begin{equation*}
  T_{y_2}N_1 = T_{y_2}N_2
\end{equation*}
as \(T_1\times \image \psi_1\)-representations.
Therefore \(T_1\times \image \psi_1\) acts trivially on both \(N_1\) and \(N_2\).
Therefore we have \(\image\psi_1=\{1\}\) and \(\lambda(N_1)=\lambda(N_2)=T_1\).
Hence, we get a contradiction because the intersection of \(N_1\) and \(N_2\) is non-empty.
\end{proof}

\begin{cor}
\label{cor:10}
  In the situation of Lemma \ref{lem:poly} we have \(M^{G_1} =M^{H_0}= \bigcap_{M_i \in \mathfrak{F}_1} M_i\).
\end{cor}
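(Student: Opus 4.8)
The plan is to establish the two equalities $M^{G_1}=M^{H_0}$ and $M^{H_0}=\bigcap_{M_i\in\mathfrak{F}_1}M_i$ separately, exploiting the decomposition $M=G_1N_1=H_0N_1$ from Lemma~\ref{lem:poly}. First I would settle $M^{G_1}=M^{H_0}$. The inclusion $M^{H_0}\subseteq M^{G_1}$ is immediate since $G_1\subseteq H_0$. For the converse, $M=G_1N_1$ forces $M^{G_1}\subseteq N_1$: writing $x=gy$ with $g\in G_1$ and $y\in N_1$, the $G_1$-invariance of $M^{G_1}$ gives $y=g^{-1}x\in M^{G_1}$, whence $x=gy=y\in N_1$. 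On $N_1$ the identity component $H_3=\{(t,\psi_1(t)):t\in T_1\}$ of the generic isotropy group acts trivially (equations~(\ref{eq:2}) and~(\ref{eq:7})), so for $y\in N_1$ one has $(1,\psi_1(t))y=(t^{-1},1)y$; since $\psi_1(T_1)=\image\psi_1$, the group $\image\psi_1$ acts on $N_1$ through $T_1\subseteq G_1$. Hence a $G_1$-fixed point of $N_1$ is automatically fixed by $\image\psi_1$, giving $M^{G_1}\subseteq M^{\image\psi_1}\cap M^{G_1}=M^{H_0}$.

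Next I would prove $M^{H_0}\subseteq\bigcap_{M_i\in\mathfrak{F}_1}M_i$. By construction (Lemma~\ref{sec:case-g_1-=}) $N_1$ is a component of $\bigcap_{M_i\in\mathfrak{F}_1'}M_i$, so $M^{H_0}\subseteq N_1$ already lies in each $M_i\in\mathfrak{F}_1'$; this accounts for $l_1$ of the $l_1+1$ elements of $\mathfrak{F}_1$. For the remaining $M_{i_0}$, choose $g\in N_{G_1}T_1$ whose class $w\in W(G_1)=S(\mathfrak{F}_1)$ carries the $l_1$-subset $\mathfrak{F}_1'$ to an $l_1$-subset containing $i_0$. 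Then $gN_1$ is a component of $\bigcap_{M_i\in w\mathfrak{F}_1'}M_i\subseteq M_{i_0}$, and since $M^{H_0}=M^{G_1}$ is fixed pointwise by $g\in G_1$ we obtain $M^{H_0}=gM^{H_0}\subseteq gN_1\subseteq M_{i_0}$. Thus $M^{H_0}\subseteq\bigcap_{M_i\in\mathfrak{F}_1}M_i$.

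The reverse inclusion $\bigcap_{M_i\in\mathfrak{F}_1}M_i\subseteq M^{G_1}$ is the main obstacle, because it cannot be read off from the subtori $\lambda(M_i)$ alone: these generate only an abelian subgroup of $T$, whereas $G_1$-fixedness must be deduced. Set $Y=\bigcap_{M_i\in\mathfrak{F}_1}M_i$; the previous paragraph gives $M^{G_1}\subseteq Y$, and both are closed submanifolds. By Lemmas~\ref{lem:torus-m1} and~\ref{sec:gener-torus-manif}, every nonempty component of $Y$ contains a $T$-fixed point $p$, which then lies in all $l_1+1$ characteristic submanifolds of $\mathfrak{F}_1$. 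By Lemma~\ref{sec:g-action-m} a $T$-fixed point not fixed by $G_1$ lies in exactly $l_1$ elements of $\mathfrak{F}_1$, so $p$ must be $G_1$-fixed, i.e. $p\in M^{G_1}$.

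I would finish by comparing tangent spaces at $p$. By the isotropy description of Lemma~\ref{lem:iso2} (equation~(\ref{eq:9})) the representation $T_pM$ splits as a sum of trivial $G_1$-summands and one summand $V\otimes_\C W$, where $W$ is the standard representation of $SU(l_1+1)$; the normal spaces $N_p(M_i,M)$, $M_i\in\mathfrak{F}_1$, are exactly the $l_1+1$ distinct weight lines of $W$. These meet transversally, so $T_pY=\bigcap_iT_pM_i$ equals the sum of the trivial summands, which is $T_pM^{G_1}$; in particular $Y$ and $M^{G_1}$ have equal dimension at $p$. As $M^{G_1}\subseteq Y$ are closed submanifolds sharing a tangent space at $p$, the component of $Y$ through $p$ coincides with the component of $M^{G_1}$ through $p$, and varying over all components yields $Y=M^{G_1}$. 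The two delicate points here are guaranteeing that each component of $Y$ carries a $T$-fixed point (via the appendix results on intersections of characteristic submanifolds) and that the $l_1+1$ weights of $W$ are pairwise distinct, which makes the intersection clean; the latter is a standard fact about the standard representation of $SU(l_1+1)$.
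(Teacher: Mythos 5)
Your first two steps are sound: the equality \(M^{G_1}=M^{H_0}\) (via \(M=G_1N_1\), the triviality of \(H_2\) on \(N_1\), and \(\psi_1(T_1)=\image\psi_1\)) and the inclusion \(M^{H_0}\subseteq\bigcap_{M_i\in\mathfrak{F}_1}M_i\) (via the translates \(gN_1\)) are correct and close in spirit to the paper's argument. The proof breaks, however, at the remaining inclusion \(\bigcap_{M_i\in\mathfrak{F}_1}M_i\subseteq M^{G_1}\), exactly where you locate the main difficulty. You rest it on the claim that every nonempty component of \(Y=\bigcap_{M_i\in\mathfrak{F}_1}M_i\) contains a \(T\)-fixed point, citing Lemmas~\ref{lem:torus-m1} and~\ref{sec:gener-torus-manif}; but in both of those lemmas the existence of a \(T\)-fixed point in the relevant component is a \emph{hypothesis}, not a conclusion, so they cannot produce the point \(p\) you need. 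Worse, the claim is false in general: by the paper's own classification (Lemma~\ref{lem:class1}, Theorem~\ref{sec:case-g_1-=-1}), \(M^{H_0}\) can be prescribed to be essentially any closed invariant codimension-two submanifold \(A\subseteq N_1\) on which \(\image\psi_1\) acts trivially, and nothing forces components of \(A\) to meet \(M^T\). For instance, with \(l_1=1\) and \(G_2=T^2\) one can take for \(N_1\) a four-dimensional torus manifold of Orlik--Raymond type containing an \(\image\psi_1\)-fixed \(2\)-torus \(A\) on which the residual circle of \(T_2\) acts without fixed points (orbit space an annulus-like region, one boundary circle carrying a single isotropy label and no corners); the corresponding \(M\) has \(Y=M^{G_1}=A\) with \(Y^T=\emptyset\), and your argument establishes nothing about any component of \(Y\).

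The paper closes this inclusion by group theory rather than by fixed-point and tangent-space analysis, and this is the idea your proposal is missing. Any \(y\in Y\) lies in \(gN_1\) for every \(g\in N_{G_1}T_1\), hence is fixed by \(H_2\) and by every conjugate \(gH_2g^{-1}\). For \(l_1>1\) one can choose \(g\in N_{G_1}T_1-T_1\) with \(gH_2g^{-1}\not\subset H_1\); since \(H_1\) is the unique proper connected closed subgroup of \(H_0\) containing \(H_2\) properly (Lemma~\ref{lem:lie-algebra-calc}), it follows that \(\langle H_2,gH_2g^{-1}\rangle=H_0\), whence \(y\in M^{H_0}\) — a pointwise argument needing no fixed points. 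For \(l_1=1\) this maximality argument is unavailable (there \(S(U(1)\times U(1))=T_1\), so both \(H_2\) and \(gH_2g^{-1}\) lie in \(H_1\)), and the paper instead derives a contradiction, as in the proof of Corollary~\ref{cor:hhh}, from the equality of the \(H_1\)-representations \(T_yN_1\) and \(T_yN_2\) at a hypothetical point \(y\in M_1\cap M_2-M^{H_0}\); your uniform treatment does not address this case separately either. To repair your proof you would have to replace the fixed-point step by an argument of this pointwise kind.
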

\begin{proof}
  At first let \(l_1 >1\).
  By Lemma \ref{lem:poly}, we know \(M^{H_0}\subset M^{G_1}\subset N_1\). Therefore \(M^{G_1}\subset \bigcap_{g\in N_{G_1}T_1} gN_1 = \bigcap_{M_i \in \mathfrak{F}_1} M_i\).
  There is a \(g \in N_{G_1}T_1-T_1\)  with   \(gH_2g^{-1}\not\subset H_1\).
  Thus, the subgroup \(\langle H_2,gH_2g^{-1}\rangle\) of \(H_0\), which is generated by \(H_2\) and \(gHg^{-1}\), contains \(H_2\) as a proper subgroup.
  Therefore \( \langle H_2,gH_2g^{-1}\rangle=H_0\) follows by Lemma~\ref{lem:lie-algebra-calc}.
  Because \(H_2\) acts trivially on \(N_1\), this equation implies 
  \begin{equation*}
    M^{H_0}\supset \bigcap_{g\in N_{G_1}T_1} gN_1 = \bigcap_{M_i \in \mathfrak{F}_1} M_i.
  \end{equation*}

  Now let \(l_1=1\).
  Then \(\mathfrak{F}_1\) contains two characteristic submanifolds \(M_1\) and \(M_2\). As in the first case one can show that \(M^{H_0}\subset M^{G_1}\subset M_1\cap M_2\).

  So \(M^{H_0}\supset M_1\cap M_2\) remains to be shown.
  Assume that there is an \(y \in M_1\cap M_2 - M^{H_0}\).
  Then we also have \(y \in M^{H_1}\).
  Now the above assumption leads to a contradiction as in the proof of Corollary   \ref{cor:hhh}.
\end{proof}

\begin{cor} 
\label{cor:case-g_1-=}
  If in the situation of Lemma \ref{lem:poly} \(\psi_1\) is trivial, then \(M^{G_1}\) is empty. 
  Otherwise the normal bundle of \(M^{G_1} =M^{H_0}= \bigcap_{M_i \in \mathfrak{F}_1} M_i\) possesses a \(G\)-invariant complex structure.
  It is induced by the action of some element \(g\in \image \psi_1\).
  Furthermore, it is unique up to conjugation.
\end{cor}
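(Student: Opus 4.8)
The plan is to read everything off the $\tilde G$-representation on the normal bundle of $M^{G_1}=M^{H_0}=\bigcap_{M_i\in\mathfrak{F}_1}M_i$, the last equality being Corollary~\ref{cor:10}. Concretely, I would locate a $T$-fixed point $x\in M^{G_1}$, apply Lemma~\ref{lem:iso2} to get $T_xM=V_1\oplus V_2\otimes_\C W_1$ with $W_1$ the standard representation of $SU(l_1+1)$, and then use Lemma~\ref{lem:poly}(1), which says that the projection of each $\lambda(M_i)$, $M_i\in\mathfrak{F}_1$, to $T_2$ is exactly $\image\psi_1$. Throughout I would use Lemma~\ref{sec:gener-torus-manif} to guarantee that a nonempty $M^{G_1}$, being an intersection of characteristic submanifolds, contains a $T$-fixed point in each of its components.

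For the first assertion I argue by contradiction: assume $\psi_1$ is trivial but $M^{G_1}\neq\emptyset$, and pick a $T$-fixed $x\in M^{G_1}$, which is then fixed by $G_1$. By Lemma~\ref{lem:iso2} the $l_1+1$ members of $\mathfrak{F}_1$ through $x$ carry weights $\eta_j=\epsilon_j+v_2$, where $\epsilon_1,\dots,\epsilon_{l_1+1}$ are the weights of $W_1$ (so $\sum_j\epsilon_j=0$) and $v_2$ is the weight of $V_2$, while the weights of $V_1$ lie in $LT_2^*$. Since $\image\psi_1=\{1\}$, Lemma~\ref{lem:poly}(1) forces $\lambda(M_j)\subset T_1$; writing its direction as $(a_j,0)\in LT_1\oplus LT_2$ and using that $\lambda(M_j)$ annihilates every $\eta_k$ with $k\neq j$, I get $\epsilon_k(a_j)=0$ for $k\neq j$, whence $\epsilon_j(a_j)=-\sum_{k\neq j}\epsilon_k(a_j)=0$ as well, so $a_j=0$ because the $\epsilon_k$ span $LT_1^*$. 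This contradicts $\dim\lambda(M_j)=1$, so $M^{G_1}=\emptyset$.

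Now suppose $\psi_1$ is nontrivial; only the case $M^{G_1}\neq\emptyset$ needs treatment. Choosing a $T$-fixed $x\in M^{G_1}$ as before, the normal space is $N_x(M^{G_1},M)=V_2\otimes_\C W_1$, and $\image\psi_1\subset Z(G_2)$ acts on it only through the one-dimensional factor $V_2$, i.e.\ by the scalar character $\chi_{V_2}$. Because $\psi_1$ factors through $\det$ its image is a full circle in $Z(G_2)$, and $\chi_{V_2}|_{\image\psi_1}$ is nontrivial: otherwise $\image\psi_1$ would act trivially on both $T_xM^{G_1}=V_1$ and on the normal space, hence trivially near $x$ and, by effectiveness of $T_2$, on all of $M$, contradicting $\image\psi_1\neq\{1\}$. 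Since $\image\psi_1$ is a circle acting by a nontrivial scalar character, I can select $g\in\image\psi_1$ whose action on $N(M^{G_1},M)$ is multiplication by $i$, so that $(Dg)^2=-\id$; this $g$ lies in $Z(\tilde G)$, fixes $M^{G_1}=M^{H_0}$ pointwise, and $J:=Dg$ is a $G$-invariant complex structure, exactly the complex case of Section~\ref{sec:blow}.

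Uniqueness up to conjugation follows because any admissible $g$ acts on each normal fibre as the scalar $\pm i$, and therefore induces either $J$ or its conjugate $-J$. I expect the main obstacle to be the globalization in the third paragraph: one must pass from the single fibre over $x$ to the whole bundle and check that a single $g$ achieves $(Dg)^2=-\id$ everywhere. I would resolve this by noting that, by Lemma~\ref{lem:iso2}, the normal representation has the uniform type $V_2\otimes_\C W_1$ on every component of $M^{G_1}$, so the induced character of $\image\psi_1$ is, up to sign, the same generator throughout; the subsidiary technical point is the appeal to Lemma~\ref{sec:gener-torus-manif} to produce the $T$-fixed point inside a nonempty $M^{G_1}$ used in both parts.
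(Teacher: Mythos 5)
Your argument hinges, in both halves, on producing a \(T\)-fixed point inside \(M^{G_1}\), and that is where it breaks. Lemma~\ref{sec:gener-torus-manif} cannot supply such a point: its hypothesis is that a component of \(M^{T'}\) \emph{already} contains a \(T\)-fixed point, and its conclusion is that this component is then an intersection of characteristic submanifolds — the converse of what you need. Worse, the statement you need is false in general: a nonempty \(M^{G_1}=M^{H_0}\) need not contain any \(T\)-fixed point. In the correspondence of Theorem~\ref{sec:case-g_1-=-1} (via Lemma~\ref{lem:class1} and the blow-down), \(A=M^{H_0}\) can be \emph{any} closed codimension-two submanifold of \(N_1\) on which \(\image\psi_1\) acts trivially (with \(\ker\psi_1=SU(l_1)\)); no condition forces \(A\) to meet \(N_1^{T_2}\). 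Concretely, one can build a \(4\)-dimensional torus manifold \(N\) containing an invariant \(2\)-torus \(F\) fixed pointwise by a circle \(S\subset T_2\) but containing no \(T_2\)-fixed point: glue \(D^2\times T^2\) (with \(T^2\) rotating the disc factor and translating one circle factor) to the complement of a free orbit in \(S^4\subset\C^2\oplus\R\) along their common free boundary \(T^3\). Taking \(G_2=T^2\), \(\image\psi_1=S\) and \(A=F\) gives an admissible triple whose associated torus manifold \(M\) has \(G_1\) elementary and \(M^{G_1}=F\) without \(T\)-fixed points. On such a component Lemma~\ref{lem:iso2} says nothing, so your identification of the normal representation as \(V_2\otimes_\C W_1\), your globalization step, and also the weight computation in your first paragraph (which likewise starts from a \(T\)-fixed point in \(M^{G_1}\)) all collapse.

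This is exactly why the paper argues at an \emph{arbitrary} point \(y\in M^{H_0}\). For the first claim it combines Lemma~\ref{lem:poly}, Corollary~\ref{cor:10} and Lemma~\ref{lem:torus-m2}: if \(\bigcap_{M_i\in\mathfrak{F}_1}M_i\neq\emptyset\), the \(l_1+1\) circles \(\lambda(M_i)\) would generate an \((l_1+1)\)-dimensional subtorus, which is impossible inside the \(l_1\)-dimensional torus \(T_1\) when \(\psi_1\) is trivial — no fixed point is needed. For the second, it decomposes \(N_y(M^{H_0},M)=V_\C\oplus V_\R\) as an \(H_0\)-representation and kills \(V_\R\) using the dimension bounds for nontrivial \(SU(l_1+1)\)-representations, disposing of the exceptional case \(SU(4)=\text{Spin}(6)\) via the principal orbit type and Lemma~\ref{sec:case-g_1-=-6}. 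Your uniqueness step via Schur's lemma is sound and matches the paper's, but it too must be run at an arbitrary \(y\), after the normal fibre's representation type has been identified without recourse to a \(T\)-fixed point.
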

\begin{proof}
  If \(\psi_1\) is trivial, then \(\langle\lambda(M_i);M_i\in \mathfrak{F}_1\rangle\) is contained in the \(l_1\)-dimensional maximal torus of \(G_1\) by Lemma~\ref{lem:poly}.
  By Corollary \ref{cor:10} and Lemma \ref{lem:torus-m2}, it follows that \(M^{H_0}\) is empty.

  If \(\psi_1\) is non-trivial, then for \(y\in M^{H_0}\) we have
\begin{equation*}
  N_y(M^{H_0},M)=V_{\mathbb{C}}\oplus V_{\mathbb{R}},
\end{equation*}
where \(\image \psi_1\) acts non-trivially on the \(H_0\)-representation \(V_\C\) and trivially on the \(H_0\)-representation \(V_\R\).
Clearly \(V_\C\) has at least real dimension two and the action of \(\image\psi_1\) induces a \(H_0\)-invariant complex structure on \(V_\C\).
Because \(M^{H_0}\) has codimension \(2l_1+2\) by Corollary~\ref{cor:10} and Lemma~\ref{lem:torus-m2}, the dimension of  \(V_\mathbb{R}\) is at most \(2l_1\). So it follows from \cite[p. 53-54]{0796.57001} that \(V_\mathbb{R}\) is trivial if \(l_1\neq 3\).

If \(l_1=3\), we have \(SU(4)=\text{Spin}(6)\), and there are two possibilities:
\begin{enumerate}
\item \(V_\mathbb{R}\) is trivial.
\item \(V_\mathbb{R}\) is the standard representation of \(SO(6)\) and \(V_\mathbb{C}\) a one-dimensional complex representation of \(\image \psi_1\).
\end{enumerate}
Because the principal orbits are dense in \(M\), it follows with the slice theorem that the principal orbit types of the \(H_0\)-actions on \(N_y(M^{H_0},M)\) and \(M\) are equal.
Therefore in the second case the principal orbit type of the \(H_0\)-action on \(M\) is given by \(\text{Spin}(6)\times S^1/\text{Spin}(5)\times \{1\}\).
Therefore we see with Lemma~\ref{sec:case-g_1-=-6} that the second case does not occur.

Because of dimension reasons we get
\begin{equation*}
   N_y(M^{H_0},M)=V_{\mathbb{C}}=W\otimes_\C V,
\end{equation*}
where \(W\) is the standard complex representation of \(SU(l_1+1)\) or its dual  and \(V\) is a complex one-dimensional \(\image \psi_1\)-representation.
Because \(\image \psi_1\subset Z(G)\), we see that \(N(M^{H_0},M)\) has a \(G\)-invariant complex structure, which is induced by the action of some \(g\in \image \psi_1\).

Next we prove the uniqueness of this complex structure.
Assume that there is another \(g'\in Z(G)\cap G_y\) whose action induces a complex structure on \(N_y(M^{H_0},M)\). Then \(g'\) induces a -- with respect to the complex structure induced by \(g\) -- complex linear \(H_0\)-equivariant map
\begin{equation*}
  J: N_y(M^{H_0},M) \rightarrow N_y(M^{H_0},M)
\end{equation*}
with \(J^2 + \id = 0\).
Because \(N_y(M^{H_0},M)\) is an irreducible \(H_0\)-representation it follows by Schur's Lemma that \(J\) is multiplication with \(\pm i\).
Therefore \(g'\) induces up to conjugation the same complex structure as \(g\).
\end{proof}

\begin{cor}
\label{sec:case-g_1-=-8}
  If in the situation of Lemma \ref{lem:poly} \(M^{G_1}=M^{H_0}\neq \emptyset\), then \(\ker \psi_1 =SU(l_1)\).
\end{cor}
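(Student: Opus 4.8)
The plan is to identify \(\psi_1\) explicitly as a power of the determinant character and to show that this power is \(\pm 1\). By its construction in the proof of Lemma~\ref{lem:poly}, \(\psi_1\) factors through \(S^1\), so it is a character of \(S(U(l_1)\times U(1))\cong U(l_1)\), necessarily of the form \(\det^{k}\). Since the target \(S^1\) is abelian, \(\psi_1\) kills the commutator subgroup \(SU(l_1)\) of \(U(l_1)\), so \(SU(l_1)\subset\ker\psi_1\) automatically. Moreover \(\psi_1\) is non-trivial by Corollary~\ref{cor:case-g_1-=}, because \(M^{H_0}\neq\emptyset\). Hence it remains only to prove the reverse inclusion \(\ker\psi_1\subset SU(l_1)\), that is, \(|k|=1\).

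To this end I would work in the slice at a point \(y\in M^{H_0}=M^{G_1}\). By Corollary~\ref{cor:case-g_1-=} the normal representation is \(N_y(M^{H_0},M)=W\otimes_\C V\), where \(W\) is the standard representation of \(SU(l_1+1)\) or its dual and \(V\) carries a character \(\chi\) of \(\image\psi_1\). Because \(N_1\) is the component of \(M^{H_2}\) through \(y\) (Lemma~\ref{lem:poly}) and \(H_2\subset H_0\) fixes \(M^{H_0}\) pointwise, the slice representation theorem gives
\[
  \bigl(N_y(M^{H_0},M)\bigr)^{H_2}\;\cong\;N_y(M^{H_0},N_1),
\]
so its complex dimension is \(\tfrac12(\dim_\R N_1-\dim_\R M^{H_0})\).

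Next I would compute this dimension two ways. From the description of \(T_xN_1\) in the proof of Lemma~\ref{lem:poly}, \(N_1\) has codimension \(2l_1\) in \(M\), whereas \(M^{H_0}\) has codimension \(2l_1+2\) by Corollary~\ref{cor:10} and Lemma~\ref{lem:torus-m2}; hence the \(H_2\)-fixed space above is one complex dimensional. On the other hand, under \(S(U(l_1)\times U(1))\cong U(l_1)\) one splits \(W=\C^{l_1}\oplus\C\) into the standard module and the \(\det^{-1}\)-module, so, writing \(\det^{m}=\chi\circ\psi_1\), the group \(H_2\) acts on the two summands of \(W\otimes V\) by \(A\mapsto\det(A)^{m}A\) on \(\C^{l_1}\otimes V\) and by \(A\mapsto\det(A)^{m-1}\) on \(\C\otimes V\). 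For \(l_1\geq 2\) the first summand is a non-trivial irreducible \(U(l_1)\)-module and has no fixed vectors, so a one-dimensional fixed space forces \(\det^{m-1}=1\), i.e. \(m=1\) (and \(m=-1\) when \(W\) is the dual); the case \(l_1=1\) is the same computation on \(\C\oplus\C\) and forces \(\chi\circ\psi_1=(\cdot)^{\pm1}\). In every case \(\ker(\chi\circ\psi_1)=SU(l_1)\), and since \(\ker\psi_1\subset\ker(\chi\circ\psi_1)\), together with \(SU(l_1)\subset\ker\psi_1\) this gives \(\ker\psi_1=SU(l_1)\).

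The step I expect to be the main obstacle is the slice bookkeeping: identifying \(\bigl(N_y(M^{H_0},M)\bigr)^{H_2}\) with the normal bundle of \(M^{H_0}\) inside \(N_1\) (which uses that \(N_1\) is precisely the component of \(M^{H_2}\) through \(y\)), and then reading off the two characters by which \(H_2\) acts on \(W\otimes V\) while keeping track of the sign ambiguity between \(W\) and its dual. Once the fixed space is known to be one complex dimensional, the determination \(|k|=1\) is immediate.
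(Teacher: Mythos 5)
Your proof is correct, and it takes a genuinely different route from the paper's, even though both start from the same slice description \(N_y(M^{H_0},M)=W\otimes_\C V\) of Corollary~\ref{cor:case-g_1-=} and both in effect show that \(\chi\circ\psi_1=\det^{\pm1}\). The paper transfers information from the slice back to \(M\) through principal orbits: density of principal orbits and the slice theorem identify the principal isotropy group of the \(H_0\)-action on \(M\) with that of the linear action on \(W\otimes_\C V\), namely the graph of \(g\mapsto g_{l_1+1}^{\pm1}\), and then Lemma~\ref{sec:case-g_1-=-6} (the principal isotropy of the \(G_1\times\image\psi_1\)-action on \(M\) is \(H_2\)) together with the conjugacy classification of graphs in Lemma~\ref{sec:lie-groups} forces \(\psi_1(g)=g_{l_1+1}^{\pm 1}\), whence \(\ker\psi_1=SU(l_1)\). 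You transfer it through fixed-point sets instead: since \(N_1\) is a component of \(M^{H_2}\) (Lemma~\ref{lem:poly}) and \(H_0\) fixes \(M^{H_0}\) pointwise, one has \(T_yN_1=(T_yM)^{H_2}=T_yM^{H_0}\oplus\bigl(N_y(M^{H_0},M)\bigr)^{H_2}\), so the \(H_2\)-fixed subspace of the slice is \(N_y(M^{H_0},N_1)\), which is one-dimensional over \(\C\) because \(\codim_M N_1=2l_1\) while \(\codim_M M^{H_0}=2l_1+2\) (Corollary~\ref{cor:10}, Lemma~\ref{lem:torus-m2}); the character computation on \(\C^{l_1}\otimes V\oplus\C\otimes V\) then forces \(m=\pm1\), with the sign tied to whether \(W\) is the standard representation or its dual, and either sign gives kernel \(SU(l_1)\), so that ambiguity is harmless. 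Two things recommend your version: the observation that \(SU(l_1)\subset\ker\psi_1\) holds automatically (an abelian target kills the commutator subgroup of \(S(U(l_1)\times U(1))\cong U(l_1)\)), which reduces the whole corollary to the single equality \(|m|=1\), and the fact that you avoid the principal-orbit machinery and Lemmas~\ref{sec:case-g_1-=-6} and~\ref{sec:lie-groups} entirely, making the argument more elementary and self-contained. What the paper's route buys is economy within the larger classification: those two lemmas are established anyway and reused elsewhere (e.g.\ in the uniqueness part of Theorem~\ref{sec:case-g_1-=-1}), so invoking them here costs nothing, whereas your argument needs the additional geometric input that \(M^{H_0}\) has codimension exactly two in \(N_1\).
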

\begin{proof}
  Let \(y\in M^{H_0}\). Then by the proof of Corollary \ref{cor:case-g_1-=} we have
  \begin{equation*}
    N_y(M^{H_0},M)= W\otimes_\C V,
  \end{equation*}
where \(W\) is the standard complex \(SU(l_1+1)\)-representation or its dual and \(V\) is a one-dimensional complex \(\image \psi_1\)-representation.
Furthermore, \(\image \psi_1\) acts effectively on \(M\).

Because the principal orbits are dense in \(M\), it follows with the slice theorem that the principal orbit types of the \(H_0\)-actions on \(N_y(M^{H_0},M)\) and \(M\) are equal.
Therefore a principal isotropy subgroup of the \(H_0\)-action on \(M\) is given by
\begin{equation*}
    H =\left\{(g,g_{l+1}^{\pm 1})\in H_1; g=
\left( 
 \begin{matrix}
    A&0\\
    0&g_{l_1+1}
  \end{matrix}
\right)\in S(U(l_1)\times U(1))
\text{ with } A \in U(l_1)
\right\}.
\end{equation*}
Now the statement follows by the uniqueness of the principal orbit type and Lemmas~\ref{sec:case-g_1-=-6} and \ref{sec:lie-groups}.
\end{proof}

\begin{lemma}
  \label{sec:case-g_1-=-2}
   In the situation of Lemma~\ref{sec:case-g_1-=}, the intersection \(\bigcap_{M_i\in\mathfrak{F}_1'}M_i=N_1\) is connected.
\end{lemma}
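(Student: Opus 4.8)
The plan is to prove that $P:=\bigcap_{M_i\in\mathfrak{F}_1'}M_i$ is connected, so that it coincides with its component $N_1$. I would first settle the case where $G_1$ has no fixed point, using the fiber bundle structure, and then reduce the general case to it by a blow up and the comparison map $F$.

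\emph{The no-fixed-point case.} Assume $M^{G_1}=\emptyset$. By Corollary~\ref{cor:hhh}, $M=H_0\times_{H_1}N_1$ is the total space of a fiber bundle $\pi\colon M\to H_0/H_1=\C P^{l_1}$ whose fiber over the base point $o=eH_1$ is exactly $N_1$, and $\image\psi_1\subset Z(G_2)\subset H_1$ acts trivially on the base. I would identify $\mathfrak{F}_1$ with the $l_1+1$ characteristic submanifolds $\{z_j=0\}$ of $\C P^{l_1}$ in a way compatible with the isotropy circles (this is legitimate because $\lambda$ is $N_GT$-equivariant, Lemma~\ref{sec:action-weyl-group}(\ref{item:12}), and $W(G_1)=S_{l_1+1}$ acts the same way on both sides); then $\mathfrak{F}_1'$ corresponds to $l_1$ coordinate hyperplanes meeting in the single point $o=[0:\dots:0:1]$. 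The key step is to show $\pi(M_i)$ lies in the corresponding hyperplane for each $M_i\in\mathfrak{F}_1'$. Since $M_i$ is invariant under $\lambda(M_i)$ and $\pi$ is $G_1$-equivariant with $\image\psi_1$ acting trivially on the base, $\pi(M_i)$ is contained in the fixed point set of the projection of $\lambda(M_i)$ to $G_1$; this fixed set is the hyperplane $\{z_{j(i)}=0\}$ together with one stray isolated point $e_{j(i)}$. As $M_i$ is connected and contains the fiber $N_1=\pi^{-1}(o)$, its image is connected and contains $o\in\{z_{j(i)}=0\}$, so it cannot meet the isolated point and hence $\pi(M_i)\subseteq\{z_{j(i)}=0\}$. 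Therefore $\pi(P)\subseteq\bigcap_i\{z_{j(i)}=0\}=\{o\}$, giving $P\subseteq\pi^{-1}(o)=N_1$; combined with $N_1\subseteq P$ this yields $P=N_1$, which is connected.

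\emph{Reduction of the general case.} Now suppose $M^{G_1}=M^{H_0}=\bigcap_{M_i\in\mathfrak{F}_1}M_i\neq\emptyset$ (Corollary~\ref{cor:10}). By Corollary~\ref{cor:case-g_1-=} the normal bundle of $M^{G_1}$ carries a $G$-invariant complex structure, so we are in the complex case of section~\ref{sec:blow}; let $\hat M$ be the blow up of $M$ along $M^{G_1}$. Then $\hat M$ is again a torus manifold with $G$-action having the same elementary factors, $G_1$ has no fixed point on $\hat M$, and by Corollary~\ref{sec:blowing-up-1} the map $F\colon\hat M\to M$ of Lemma~\ref{lem:blow1} induces a $W(G)$-equivariant bijection $\hat M_i\leftrightarrow M_i$ of the characteristic submanifolds not lying in the exceptional divisor, with $F(\hat M_i)=M_i$; in particular $\mathfrak{F}_1'$ corresponds to an orbit $\hat{\mathfrak{F}}_1'$. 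Applying the previous paragraph to $\hat M$ shows that $\hat P:=\bigcap_{\hat M_i\in\hat{\mathfrak{F}}_1'}\hat M_i$ is connected.

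\emph{Transfer through $F$.} It remains to transfer connectivity. From $F(\hat M_i)=M_i$ we get $F(\hat P)\subseteq\bigcap_i M_i=P$, and $F(\hat P)$ is closed since $\hat P$ is compact. On the other hand $F$ restricts to a diffeomorphism $\hat M-M_0\to M-M^{G_1}$ matching $\hat M_i-M_0$ with $M_i-M^{G_1}$, so $F(\hat P)\supseteq P-M^{G_1}$. Finally $M^{G_1}=\bigcap_{\mathfrak{F}_1}M_i$ is the transverse intersection of $P$ with the one remaining characteristic submanifold, hence has codimension two in the closed submanifold $P=\bigcap_{\mathfrak{F}_1'}M_i$; since $\dim M^{G_1}<\dim P$, no component of $P$ can lie inside $M^{G_1}$, so $P-M^{G_1}$ is dense in $P$ and $P=\overline{P-M^{G_1}}\subseteq F(\hat P)\subseteq P$. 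Thus $P=F(\hat P)$ is the continuous image of a connected set, hence connected, and equals $N_1$. I expect the main obstacle to be the key step of the first paragraph: it is precisely the connectedness of each characteristic submanifold — which excludes the stray isolated fixed point $e_{j(i)}$ — that forces the whole intersection to collapse onto the single fiber over $o$.
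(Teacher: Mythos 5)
Your proof is correct and takes essentially the same route as the paper's: blow up along \(M^{G_1}\), use Corollary~\ref{cor:hhh} to present the result as a bundle \(H_0\times_{H_1}N_1\) over \(\C P^{l_1}\), identify \(\bigcap_{M_i\in\mathfrak{F}_1'}M_i\) upstairs with the single fiber over the common point of the corresponding \(l_1\) hyperplanes, and transport connectedness back through \(F\). The only differences are expository: the paper asserts outright what you argue in more detail, namely that the members of \(\mathfrak{F}_1\) are (contained in) the preimages of the hyperplanes of \(\C P^{l_1}\) and that \(F\) carries \(\bigcap\tilde{M}_i\) onto \(\bigcap M_i\), so your isotropy-circle/fixed-set analysis (whose claim that the fixed set is a hyperplane plus an isolated point can be justified by noting that \(M_i\) has codimension two, forcing a codimension-two component in the fixed set) and your density/compactness transfer argument just fill in steps the paper leaves implicit.
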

\begin{proof}
  Let \(\tilde{M}\) be the blow up of \(M\) along \(M^{G_1}\) and \(\tilde{N}_1\) the proper transform of \(N_1\) in \(\tilde{M}\).  
  By Corollary~\ref{cor:hhh}, we have \(\tilde{M}= H_0\times_{H_1}\tilde{N}_1\), which is a fiber bundle over \(\C P^{l_1}\).
  The characteristic submanifolds of \(\tilde{M}\), which are permuted by \(W(G_1)\), are given by the preimages of the characteristic submanifolds of \(\C P^{l_1}\) under the bundle map.
  By Corollary~\ref{sec:blowing-up-1} and the discussion following this corollary, they are also given by the proper transforms \(\tilde{M}_i\) of the characteristic submanifolds \(M_i\in\mathfrak{F}_1\) of \(M \).
  Because \(l_1\) characteristic submanifolds of \(\mathbb{C}P^{l_1}\) intersect in a single point we see \(\bigcap_{M_i\in\mathfrak{F}_1'}\tilde{M}_i=\tilde{N}_1\).
  Therefore this intersection is connected.
  Because  \(\bigcap_{M_i\in\mathfrak{F}_1'}\tilde{M}_i\) is mapped by \(F\) to  \(\bigcap_{M_i\in\mathfrak{F}_1'}M_i\), we see that \(\bigcap_{M_i\in\mathfrak{F}_1'}M_i=N_1\) is connected.
\end{proof}

\subsection{Blowing up along $M^{G_1}$}
\label{sec:blowing_M^G_1}
By blowing up a torus manifold \(M\) with \(G\)-action along \(M^{G_1}\) one gets a torus manifold \(\tilde{M}\) without \(G_1\)-fixed points.

Denote by \(\tilde{N}_1\) the proper transform of \(N_1\) as defined in Lemma~\ref{sec:case-g_1-=}.
Then by Corollary \ref{cor:blowing-up} there is a \(\langle H_1,G_2\rangle\)-equivariant diffeomorphism  \(F:\tilde{N}_1\rightarrow N_1\).

As in section \ref{sec:blow}, we denote by \(M_0=P_\mathbb{C}(N(M^{G_1},M)\oplus\{0\})\) the exceptional submanifold of \(\tilde{M}\).
Because \(M_0\cap \tilde{N}_1\) is mapped by this diffeomorphism to \(M^{G_1}=M^{H_0}=N_1^{H_0}\), \(H_1\) acts trivially on \(M_0\cap\tilde{N}_1\).
By Corollary \ref{cor:hhh} we know that \(\tilde{M}\) is diffeomorphic to \(H_0\times_{H_1}\tilde{N}_1= H_0\times_{H_1}N_1\).

A natural question arising here is:
When is a torus manifold of this form a blow up of another torus manifold with \(G\)-action?

We claim that this is the case if and only if \(N_1\) has a codimension two submanifold, which is fixed by the \(H_1\)-action and \(\ker \psi_1 =SU(l_1)\).

\begin{Lemma}
  \label{lem:class1}
Let \(N_1\) be a torus manifold with \(G_2\)-action, \(A\) a closed codimension two submanifold of \(N_1\), \(\psi_1 \in \Hom(S(U(l_1)\times U(1)), Z(G_2))\) such that \(\image \psi_1\) acts trivially on \(A\) and \(\ker \psi_1=SU(l_1)\).
Let also
\begin{align*}
  H_0&=SU(l_1+1)\times \image \psi_1,\\
  H_1&=S(U(l_1)\times U(1)) \times \image \psi_1,\\
  H_2&= \{(g,\psi_1(g)); g \in S(U(l_1)\times U(1))\}.
\end{align*}
\begin{enumerate}
\item Then \(H_1\) acts on \(N_1\) by \((g,t)x=\psi_1(g)^{-1}t x\), where \(x\in N_1\) and \((g,t)\in H_1\).
\item Assume that \(Z(G_2)\) acts effectively on \(N_1\) and let \(y \in A\) and \(V\) the one-dimensional complex \(H_1\)-representation \(N_y(A,N_1)\).
Then \(V\) extends to an \(l_1+1\)-dimensional complex representation of \(H_0\).
Therefore there is an \(l_1+1\)-dimensional complex \(G\)-vector bundle \(E'\) over \(A\) which contains \(N(A,N_1)\) as a subbundle. 
\item Then the normal bundle of \(H_0/H_1\times A\) in \(H_0\times_{H_1}N_1\) is isomorphic to the tautological bundle over \(P_\C(E'\oplus\{0\})\).
\end{enumerate}
\end{Lemma}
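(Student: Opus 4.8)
I would establish the three parts in order, the first being a formal check, the second a representation-theoretic weight computation, and the third a geometric identification of normal bundles. For (1), I would verify directly that $(g,t)\mapsto(\psi_1(g)^{-1}t)\cdot$ is a homomorphism $H_1\to\mathrm{Diff}(N_1)$: since $\psi_1(g),t\in\image\psi_1\subset Z(G_2)$ all commute, the product $\psi_1(g)^{-1}t$ is multiplicative in $(g,t)$, so the formula defines an action. The subgroup $H_2$ acts trivially because $(g,\psi_1(g))x=\psi_1(g)^{-1}\psi_1(g)x=x$, so the action factors through $H_1/H_2\cong\image\psi_1$ and is nothing but the restriction of the given $G_2$-action to $\image\psi_1$. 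In particular $A$, on which $\image\psi_1$ acts trivially, is pointwise fixed by all of $H_1$; moreover, a dimension count shows $A$ is a union of codimension-two components of $N_1^{\image\psi_1}$, and since $\image\psi_1$ is central and $G_2$ is connected, $A$ is $G_2$-invariant.

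For (2) I would first note that $H_1$ is connected ($S(U(l_1)\times U(1))$ is connected and $\image\psi_1\cong U(l_1)/SU(l_1)\cong S^1$), so, after fixing an $H_1$-invariant metric, its action on the real $2$-plane $N_y(A,N_1)$ lands in $SO(2)$; this endows $V$ with an invariant complex structure and exhibits it as a $1$-dimensional complex $H_1$-representation on which $S(U(l_1)\times U(1))$ acts through $\psi_1$, so that $\ker\psi_1=SU(l_1)$ acts trivially and the whole action factors through $\image\psi_1\cong S^1$ with some weight $m$. The crucial step is to show $|m|=1$: the subgroup $\mathbb{Z}/m\subset\image\psi_1$ would otherwise act trivially on $V$ and on $T_yA$, hence on $T_yN_1$ and so on a whole neighbourhood of $y$; since $N_1$ is connected its fixed set would be all of $N_1$, contradicting the effectiveness of $Z(G_2)$ (the same argument with $\image\psi_1$ in place of $\mathbb{Z}/m$ gives $m\neq0$). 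Once $|m|=1$ is known, the restriction of the standard representation $W_0$ of $SU(l_1+1)$, or of its dual $W_0^*$ chosen according to the sign of $m$, to $S(U(l_1)\times U(1))$ splits as $\C^{l_1}\oplus L$ with $L$ the last-coordinate character; twisting $W_0$ or $W_0^*$ by the character of $\image\psi_1$ matching the one on $V$ produces an $(l_1+1)$-dimensional complex $H_0$-representation $W$ whose distinguished $1$-dimensional $H_1$-summand is exactly $V$. Writing $N(A,N_1)=P\times_{\image\psi_1}V$ for the $G_2$-equivariant principal $\image\psi_1$-bundle $P\to A$ of unit normal vectors, I then set $E'=P\times_{\image\psi_1}W$; since $SU(l_1+1)$ acts fibrewise on $W$ commuting with $\image\psi_1$, this is an $(l_1+1)$-dimensional complex $G$-vector bundle over $A$ containing $N(A,N_1)=P\times_{\image\psi_1}V$ as a subbundle.

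For (3) I would identify the submanifold $H_0/H_1\times A=H_0\times_{H_1}A$ with $P_\C(E')$ and then match fibres. Because $W$ is standard or dual, $SU(l_1+1)$ acts transitively on $P_\C(W)$ with the line $V$ stabilised exactly by $S(U(l_1)\times U(1))$, so $P_\C(W)\cong H_0/H_1$; as scalars act trivially on a projective space, $P_\C(E')=P\times_{\image\psi_1}P_\C(W)\cong A\times(H_0/H_1)$ $G$-equivariantly, and under this the point $[h,a]$ corresponds to the line $h\cdot V\subset E'_a$. On the other hand, viewing $H_0\times_{H_1}N_1$ as a bundle over $H_0/H_1$ with fibre $N_1$, the $H_0$-orbit (horizontal) directions along $H_0\times_{H_1}A$ are tangent to it, so its normal bundle is the vertical normal bundle $H_0\times_{H_1}N(A,N_1)$, whose fibre at $[h,a]$ is the line $h\cdot V$. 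This is precisely the tautological line at the corresponding point of $P_\C(E')$, so the normal bundle is the tautological bundle over $P_\C(E')=P_\C(E'\oplus\{0\})$, as claimed.

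The main obstacle is the weight computation $|m|=1$ in (2): everything downstream — the identification of $W$ with the standard representation, the transitivity of $SU(l_1+1)$ on the $\C P^{l_1}$-fibres, and hence the rank-one tautological-bundle identification in (3) — rests on it, and it is exactly here that the hypotheses that $Z(G_2)$ act effectively and that the torus manifold $N_1$ be connected are indispensable.
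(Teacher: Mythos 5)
Your proof is correct and follows essentially the same route as the paper's: identify the $H_1$-weight on the normal line $N_y(A,N_1)$, extend it to the (possibly dualized) standard representation of $SU(l_1+1)$ twisted by the matching central character, and identify the normal bundle $H_0\times_{H_1}N(A,N_1)$ of $H_0/H_1\times A$ with the tautological bundle over $P_\C(E')$. The only differences are presentational: you construct $E'$ as the associated bundle $P\times_{\image\psi_1}W$ where the paper uses the tensor product $E''\otimes_\C W$, and you spell out the effectiveness-plus-connectedness argument showing the weight is $\pm 1$ (the paper's unproved assertion that $\mu$ is an isomorphism), which is a welcome addition rather than a deviation.
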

The lemma guarantees together with the discussion at the end of section~\ref{sec:blow} that one can remove  \(H_0/H_1\times A\) from \(H_0\times_{H_1}N_1\) and replace it by \(A\) to get a torus manifold with \(G\)-action \(M\) such that \(M^{H_0}=A\). The blow up of \(M\) along \(A\) is \(H_0\times_{H_1}N_1\).

\begin{proof}
  (1) is trivial. 

  (2)
  For \(i=1,\dots,l_1+1\) let
  \begin{equation*}
    \lambda_i: T_1 \rightarrow S^1 \quad \left(
      \begin{matrix}
        g_1& & \\
        &\ddots & \\
        & & g_{l_1+1}
      \end{matrix}
      \right)
      \mapsto g_i
  \end{equation*}
and \(\mu: \image \psi_1 \rightarrow S^1\) the character of the \(\image \psi_1\) representation \(N_y(A,N_1)\).
Then \(\mu\) is an isomorphism.

And by \cite[p. 176]{0581.22009} the character ring of the maximal torus \(T_1\times  \image \psi_1\) of \(H_1=S(U(l_1)\times U(1))\times\image \psi_1\) is given by
  \begin{equation*}
    R(T_1\times \image \psi_1)=\mathbb{Z}[\lambda_1,\dots,\lambda_{l_1+1},\mu,\mu^{-1}]/(\lambda_1\dotsm\lambda_{l_1+1}-1).
  \end{equation*}
With this notation, the character of \(V\) is given by \(\mu \lambda_{l_1+1}^{\pm 1}\).
Therefore the \(H_0\)-representation \(W\) with the character \(\mu \sum_{i=1}^{l_1+1} \lambda_{i}^{\pm 1}\)
is \(l_1+1\)-dimensional and \(V\subset W\).

 Let \(G_2=G_2'\times \image\psi_1\) and \(E''=N(A,N_1)\) equipped with the action of \(G_2'\), but without the action of \(H_1\).
  Then \(E'=E''\otimes_\C W\) is a \(G\)-vector bundle with the required features.

  Now we turn to (3).
  The normal bundle of \(H_0/H_1 \times A\) in \(H_0\times_{H_1}N_1\) is given by \(H_0\times_{H_1}N(A,N_1)\).
  
Consider the following commutative diagram
\begin{equation*}
  \xymatrix{
    H_0\times_{H_1}N(A,N_1) \ar[r]_(.47)f\ar[d]_{\pi_1} &   P_\mathbb{C}(E'\oplus \{0\}) \times  E' \ar[d]_{\pi_2}\\
    H_0/H_1 \times A   \ar[r]_(.47)g  &   P_\mathbb{C}(E'\oplus \{0\})
}
\end{equation*}
where the vertical maps are the natural projections and \(f,g\) are given by 
\begin{equation*}
  f([(h_1,h_2):m])= ([m\otimes h_2h_1e_1],m\otimes h_2h_1e_1)
\end{equation*}
 and
 \begin{equation*}
   g([h_1,h_2],q)=[m_q\otimes h_2h_1e_1],
 \end{equation*}
 where \(e_1\in W-\{0\}\) is fixed such that for all \(g' \in S(U(l_1)\times U(1))\) \(\psi_1(g')g'e_1=e_1\) and \(m_q\neq 0\) some element of the fiber of \(N(A,N_1)\) over \(q\in A\).

The map \(f\) induces an isomorphism of the normal bundle of \(H_0/H_1\times A\) in \(H_0\times_{H_1}N_1\) and the tautological bundle over \(P_\mathbb{C}(E'\oplus \{0\})\).
\end{proof}

\subsection{Admissible triples}
\label{sec:admissible_triples}
Now we are in the position to state our first classification theorem.
To do so, we need the following definition.
\begin{definition}
  Let \(\tilde{G}=G_1\times G_2\) with \(G_1=SU(l_1+1)\). Then a triple \((\psi,N,A)\) with
  \begin{itemize}
  \item \(\psi \in \Hom(S(U(l_1)\times U(1)),Z(G_2))\),
  \item \(N\) a torus manifold with \(G_2\)-action,
  \item \(A\) the empty set or a closed codimension two submanifold of \(N\), such that \(\image \psi\) acts trivially on \(A\) and \(\ker \psi=SU(l_1)\) if \(A\neq \emptyset\), 
  \end{itemize}
  is called \emph{admissible for} \((\tilde{G},G_1)\).
  We say that two admissible triples \((\psi,N,A)\), \((\psi',N',A')\) for \((\tilde{G},G_1)\) are equivalent if there is a \(G_2\)-equivariant diffeomorphism \(\phi:N\rightarrow N'\) such that \(\phi(A)=A'\) and
  \begin{equation*}
    \psi=
    \begin{cases}
      \psi' &\text{ if } l_1>1\\
      \psi'^{\pm 1} &\text{ if } l_1=1.
    \end{cases}
  \end{equation*}
\end{definition}

\begin{theorem}
\label{sec:case-g_1-=-1}
  Let \(\tilde{G}=G_1\times G_2\) with \(G_1=SU(l_1+1)\).
  There is a one-to-one-correspondence between the \(\tilde{G}\)-equivariant diffeomorphism classes of torus manifolds with \(\tilde{G}\)-action such that \(G_1\) is elementary and the equivalence classes of admissible triples for \((\tilde{G},G_1)\).
\end{theorem}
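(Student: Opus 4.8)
The plan is to construct explicit maps in both directions and then verify they are mutually inverse. First I would define the map sending a torus manifold $M$ (with $G_1$ elementary) to a triple. Lemma~\ref{lem:poly} produces a homomorphism $\psi_1\in\Hom(S(U(l_1)\times U(1)),Z(G_2))$, the subgroups $H_0,H_1,H_2$, a $G_2$-invariant submanifold $N_1$ (a component of $M^{H_2}$), and the decomposition $M=H_0N_1$. I set $\psi=\psi_1$, $N=N_1$, and $A=M^{G_1}$. By Corollary~\ref{cor:10} and Lemma~\ref{sec:case-g_1-=-2}, $A=M^{H_0}=\bigcap_{M_i\in\mathfrak{F}_1}M_i$ is connected; from the fiber bundle structure $N_1$ has codimension $2l_1$ in $M$, so $\dim N_1=2\rank G_2$ and, since $N_1$ contains a $T$-fixed point, $N_1$ is a torus manifold with $G_2$-action. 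When $A\neq\emptyset$, Corollary~\ref{cor:case-g_1-=} gives $\codim_M M^{H_0}=2l_1+2$, hence $A$ has codimension two in $N_1$, and Corollary~\ref{sec:case-g_1-=-8} gives $\ker\psi=SU(l_1)$, so the triple is admissible.

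To see this map is well defined on equivalence classes, for $l_1>1$ the remark preceding Lemma~\ref{sec:case-g_1-=-6}, together with Lemmas~\ref{sec:case-g_1-=-6} and~\ref{sec:lie-groups}, shows $\psi_1$ is independent of the choices of $N_1$ and of $x\in N_1^T$; different admissible orbits $\mathfrak{F}_1'$ yield $G_1$-translates of $N_1$, which are $G_2$-equivariantly diffeomorphic via an element of $N_{G_1}T_1$ (this commutes with $G_2$) fixing $A=M^{G_1}$ pointwise. For $l_1=1$ the two choices $M_1,M_2$ for $N_1$ are interchanged by some $g\in N_{G_1}T_1-T_1$, giving a $G_2$-diffeomorphism $M_1\to M_2$ that fixes $A=M_1\cap M_2$, while Lemma~\ref{sec:case-g_1-=-7} yields $\psi_1=\psi_2^{-1}$, which is exactly the $\psi'^{\pm1}$ clause in the equivalence of triples.

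Next I would define the inverse map. Given an admissible triple $(\psi,N,A)$, form the bundle $H_0\times_{H_1}N$ over $\C P^{l_1}=H_0/H_1$ using the $H_1$-action of Lemma~\ref{lem:class1}(1). If $A=\emptyset$, set $M=H_0\times_{H_1}N$, which has no $G_1$-fixed points. If $A\neq\emptyset$, Lemma~\ref{lem:class1}(2)--(3) identifies the normal bundle of $H_0/H_1\times A$ with the tautological bundle over a projectivization $P_\C(E'\oplus\{0\})$, so the blow-down construction of Section~\ref{sec:blow} applies and I set $M$ to be the blow down along $H_0/H_1\times A$, giving $M^{H_0}=A$. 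In both cases $M$ is a torus manifold with $\tilde{G}$-action and $G_1$ elementary, since blow up and blow down preserve the elementary factors (Section~\ref{sec:blow}); naturality in the $G_2$-diffeomorphism $\phi\colon N\to N'$ makes the map well defined on equivalence classes. Finally I would check the composites are the identity. For manifold $\to$ triple $\to$ manifold: if $M^{G_1}=\emptyset$ then Corollary~\ref{cor:hhh} already gives $M=H_0\times_{H_1}N_1$; otherwise Corollary~\ref{cor:hhh} applied to the blow up $\tilde{M}$ gives $\tilde{M}=H_0\times_{H_1}\tilde{N}_1$ with $\tilde{N}_1\cong N_1$ by Corollary~\ref{cor:blowing-up}, and blowing down along $A=M^{G_1}$ returns $M$ by the inverse property of blow up/blow down. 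For triple $\to$ manifold $\to$ triple, one re-extracts $N$ as the fiber, reads $\psi$ off the $H_1$-action of Lemma~\ref{lem:class1}(1), and recovers $A=M^{G_1}$.

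The hard part will be the well-definedness of the forward map: pinning down that $\psi_1$ and the $G_2$-diffeomorphism type of $N_1$ do not depend on the choices hidden in Lemma~\ref{lem:poly}, which reduces (via Lemma~\ref{sec:case-g_1-=-6} and the uniqueness of the complex structure in Corollary~\ref{cor:case-g_1-=}) to rigidity of the principal isotropy subgroup, and matching the $l_1=1$ sign ambiguity of $\psi$ to the equivalence relation on triples. The other delicate point is that the reconstructed blow-down datum $E'$ of Lemma~\ref{lem:class1} is only determined up to replacing the standard representation $W$ by its dual; this is harmless precisely because dualizing the complex structure on $N(N,M)$ does not change the diffeomorphism type of the blow up, as proved in Section~\ref{sec:blow}.
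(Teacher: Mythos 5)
Your proposal is correct and takes essentially the same approach as the paper: the same forward assignment \((\psi_1,N_1,M^{H_0})\) via Lemma~\ref{lem:poly} and Corollaries~\ref{cor:10},~\ref{sec:case-g_1-=-8}, the same inverse via Lemma~\ref{lem:class1} and blow down, and the same verification that the composites are the identity using Corollary~\ref{cor:hhh}, Lemma~\ref{sec:case-g_1-=-2}, Corollary~\ref{cor:blowing-up} and Lemmas~\ref{sec:case-g_1-=-6},~\ref{sec:lie-groups},~\ref{sec:case-g_1-=-7}. The only organizational difference is that you establish well-definedness of the forward map on equivalence classes up front, whereas the paper folds the choice-independence of \((\psi_1,N_1)\) into the check that the two operations are mutually inverse.
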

\begin{proof}
  Let \(M\) be a torus manifold with \(\tilde{G}\)-action such that \(G_1\) is elementary.
  Then, by Corollaries~\ref{cor:10} and~\ref{sec:case-g_1-=-8}, \((\psi_1,N_1,M^{H_0})\) is an admissible triple, where \(\psi_1\) is defined as in Lemma~\ref{lem:poly} and \(N_1\) is defined as in Lemma~\ref{sec:case-g_1-=}.
  
  Let \((\psi,N,A)\) be an admissible triple for \((\tilde{G},G_1)\). If \(A\neq \emptyset\), then, by Lemma~\ref{lem:class1}, the blow down of \(H_0\times_{H_1}N\) along \(H_0/H_1\times A\) is a torus manifold with \(\tilde{G}\)-action.
  If \(A=\emptyset\), then we have the torus manifold \(H_0\times_{H_1}N\).

  We show that these two operations are inverse to each other.
  Let \(M\) be a torus manifold with \(\tilde{G}\)-action.
  If \(M^{H_0}=\emptyset\), then, by Corollary~\ref{cor:hhh}, we have \(M=H_0\times_{H_1}N_1\).
  If \(M^{H_0}\neq\emptyset\),
  then by the discussion before Lemma~\ref{lem:class1}, \(M\) is the blow down of \(H_0\times_{H_1}N_1\) along  \(H_0/H_1\times M^{H_0}\).
  
  Now assume \(l_1>1\).
 Let \((\psi,N,A)\) be an admissible triple with \(A\neq\emptyset\) and \(M\) the blow down of  \(H_0\times_{H_1}N\) along  \(H_0/H_1\times A\).
  Then, by the remark after Lemma~\ref{lem:class1}, we have \(A=M^{H_0}\). By  Lemma~\ref{sec:case-g_1-=-2} and Corollary \ref{cor:blowing-up}, we have \(N=N_1\).
  With Lemmas~\ref{sec:case-g_1-=-6} and~\ref{sec:lie-groups},
  one sees that \(\psi=\psi_1\), where \(\psi_1\) is the homomorphism defined in Lemma~\ref{lem:poly} for \(M\).

  Now let \((\psi,N,\emptyset)\) be an admissible triple and \(M=H_0\times_{H_1}N\). Then we have \(M^{H_0}=\emptyset\). By Lemma~\ref{sec:case-g_1-=-2} we have \(N=N_1\).
  As in the first case one sees \(\psi=\psi_1\).

  Now assume \(l_1=1\). Let \((\psi,N,A)\) be an admissible triple with \(A\neq\emptyset\) and \(M\) the blow down of  \(H_0\times_{H_1}N\) along  \(H_0/H_1\times A\).
  Then, by the remark after Lemma~\ref{lem:class1}, \(A=M^{H_0}\).
  By Lemma~\ref{sec:case-g_1-=-7}, we have two choices for \(N_1\) and \(\psi=\psi_1^{\pm 1}\).
  Because the two choices for \(N_1\) lead to equivalent admissible triples we recover the equivalence class of \((\psi,N,A)\).
  In the case \(A=\emptyset\) a similar argument completes the proof of the theorem.
\end{proof}

\begin{cor}
\label{sec:case-g_1-=-9}
  Let \(\tilde{G}=G_1\times G_2\) with \(G_1=SU(l_1+1)\).
  Then the torus manifolds with \(\tilde{G}\)-action such that \(G_1\) is elementary and \(M^{G_1}\neq \emptyset\) are given by blow downs of fiber bundles over \(\C P^{l_1}\) with fiber some torus manifold with \(G_2\)-action along a submanifold of codimension two.
\end{cor}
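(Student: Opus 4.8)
The plan is to reduce the statement to the fiber-bundle description already obtained in Corollary~\ref{cor:hhh}, which covers the case of a fixed-point-free $G_1$-action, by passing to the blow up of $M$ along $M^{G_1}$ and then reversing that blow up. The starting observation is that the hypothesis $M^{G_1}\neq\emptyset$ forces the homomorphism $\psi_1$ of Lemma~\ref{lem:poly} to be non-trivial, since by Corollary~\ref{cor:case-g_1-=} a trivial $\psi_1$ yields $M^{G_1}=\emptyset$. Consequently Corollary~\ref{cor:10} identifies $M^{G_1}=M^{H_0}=\bigcap_{M_i\in\mathfrak{F}_1}M_i$, and Corollaries~\ref{cor:case-g_1-=} and~\ref{sec:case-g_1-=-8} show that the normal bundle $N(M^{G_1},M)$ carries a $G$-invariant complex structure induced by an element of $\image\psi_1\subset Z(G)$ and has complex rank $l_1+1$, i.e. real codimension $2l_1+2$. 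This places the situation squarely in the complex case of the blow up construction of section~\ref{sec:blow}.

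Next I would form the blow up $\tilde{M}$ of $M$ along $M^{G_1}$. By the construction of section~\ref{sec:blow} all $G_1$-fixed points are removed, so $\tilde{M}^{G_1}=\emptyset$, while the $W(G)$-orbit structure on the characteristic submanifolds, and hence the list of elementary factors, is unchanged (Corollary~\ref{sec:blowing-up-1} and the remarks following it). Thus $\tilde{M}$ is again a torus manifold with $\tilde{G}$-action for which $G_1$ is elementary, now acting without fixed points, so Corollary~\ref{cor:hhh} applies and gives $\tilde{M}=H_0\times_{H_1}\tilde{N}_1$, a $G$-equivariant fiber bundle over $H_0/H_1=\C P^{l_1}$. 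Its fiber $\tilde{N}_1$ is the proper transform of $N_1$, and by Corollary~\ref{cor:blowing-up} it is $G_2$-equivariantly diffeomorphic to $N_1$, hence a torus manifold with $G_2$-action. Invoking that blow up and blow down are inverse operations (end of section~\ref{sec:blow}), $M$ is recovered as the blow down of $\tilde{M}$ along the exceptional submanifold $M_0=P_\C(N(M^{G_1},M)\oplus\{0\})$; a dimension count in the complex case gives $\dim M_0=\dim\tilde{M}-2$, so $M_0$ has codimension two, which is exactly the asserted description.

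As an alternative I would note that the whole statement can be read directly off the correspondence of Theorem~\ref{sec:case-g_1-=-1}: the condition $M^{G_1}\neq\emptyset$ corresponds, via $M^{G_1}=M^{H_0}$, to an admissible triple $(\psi_1,N_1,A)$ with $A\neq\emptyset$, whose associated manifold is by construction the blow down of the fiber bundle $H_0\times_{H_1}N_1$ along $H_0/H_1\times A$. In either route the genuinely substantive inputs—the existence of the $G$-invariant complex structure on $N(M^{G_1},M)$ needed to run the complex blow up (Corollary~\ref{cor:case-g_1-=}) and the invariance of the fiber $\tilde{N}_1$ as a $G_2$-manifold under the blow up (Corollary~\ref{cor:blowing-up})—are already settled earlier. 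The only remaining point to watch is the bookkeeping of the codimension of the exceptional submanifold, and this is precisely where the complex case of the construction makes the computation routine; I therefore expect no essential obstacle beyond assembling these ingredients correctly.
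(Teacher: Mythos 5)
Your proposal is correct and follows essentially the same route as the paper: the corollary is stated without separate proof precisely because it is read off from Theorem~\ref{sec:case-g_1-=-1} (your second route), whose proof shows that $M^{H_0}=M^{G_1}\neq\emptyset$ corresponds to an admissible triple with $A\neq\emptyset$, so that $M$ is the blow down of the fiber bundle $H_0\times_{H_1}N_1$ over $\C P^{l_1}$ along the codimension-two submanifold $H_0/H_1\times M^{H_0}$. Your first route (blow up along $M^{G_1}$, apply Corollary~\ref{cor:hhh}, then invert the blow up) is exactly the machinery the paper uses to establish that theorem, so the two presentations coincide in substance.
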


Now we specialise our classification result to special classes of torus manifolds.

\begin{theorem}
\label{sec:case-g_1-=-3}
  Let \(\tilde{G}=G_1\times G_2\) with \(G_1=SU(l_1+1)\), \(M\) a torus manifold with \(\tilde{G}\)-action and \((\psi,N,A)\) the admissible triple for \((\tilde{G},G_1)\) corresponding to \(M\).
  Then \(H^*(M;\mathbb{Z})\) is generated by its degree two part if and only if \(H^{*}(N;\mathbb{Z})\) is generated by its degree two part and \(A\) is connected.
\end{theorem}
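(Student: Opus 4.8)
The plan is to reduce the statement to a comparison of orbit spaces by means of the Masuda--Panov criterion recalled in Remark~\ref{sec:action-weyl-group-3}: for a torus manifold $X$ the ring $H^*(X;\mathbb{Z})$ is generated in degree two if and only if the torus action is locally standard and $X/T$ is a homology polytope. Applying this to $M$ (with $T=T_1\times T_2$) and to $N$ (with $T_2$), the theorem becomes the claim that $M$ is locally standard with $M/T$ a homology polytope if and only if $N$ is locally standard with $N/T_2$ a homology polytope and $A$ is connected. Since a blow up is a local equivariant modification and the bundle $H_0\times_{H_1}N$ is fibrewise standard over $\C P^{l_1}$, local standardness of $M$ and of $N$ are equivalent, and I would carry this along while focusing on the orbit spaces.

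First I would describe $M/T$ explicitly. Let $\tilde M$ be the blow up of $M$ along $M^{G_1}$; as $\tilde M$ has no $G_1$-fixed points, Corollaries~\ref{cor:hhh} and~\ref{cor:blowing-up} give $\tilde M=H_0\times_{H_1}N$. Write $Q=N/T_2$. By Lemma~\ref{lem:class1}(1) the group $H_1=S(U(l_1)\times U(1))\times\image\psi_1$ acts on $N$ only through $\image\psi_1\subseteq Z(G_2)\subseteq T_2$, hence trivially on $Q$; thus $\tilde M/T_2=(H_0/H_1)\times Q=\C P^{l_1}\times Q$, with $T_1$ acting standardly on the first factor and trivially on $Q$. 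Taking the quotient by $T_1$ yields $\tilde M/T=(\C P^{l_1}/T_1)\times Q=\Delta^{l_1}\times Q$, where $\Delta^{l_1}$ denotes the $l_1$-simplex.

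Then I would descend along the blow-down map $F$ of Lemma~\ref{lem:blow1}. By Corollary~\ref{cor:10}, $A=M^{G_1}=\bigcap_{M_i\in\mathfrak{F}_1}M_i$ is a codimension-two submanifold of $N$ with $B:=A/T_2$ a union of facets of $Q$, and the same computation with $A$ in place of $N$ gives that the exceptional divisor has orbit space $M_0/T=\Delta^{l_1}\times B$. Since $F$ is a $T$-equivariant homeomorphism off $M_0$ and collapses $M_0$ onto $A$ by the bundle projection, on orbit spaces $M/T$ is obtained from $\Delta^{l_1}\times Q$ by collapsing $\Delta^{l_1}\times B$ onto $B$, i.e. $M/T=(\Delta^{l_1}\times Q)/{\sim}$ with $(\delta,b)\sim(\delta',b)$ for all $\delta,\delta'\in\Delta^{l_1}$ and $b\in B$ (when $A=\emptyset$ one has $M=\tilde M$, $M/T=\Delta^{l_1}\times Q$, and the condition on $A$ is vacuous). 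Letting $q$ be the collapse map and $P_0,\dots,P_{l_1}$ the facets of $\Delta^{l_1}$, the facets of $M/T$ are the $q(P_j\times Q)$ and the $q(\Delta^{l_1}\times B_i)$ for the facets $B_i$ of $Q$ not inside $B$, those inside $B$ being collapsed. A routine preimage computation shows that each facet intersection of $M/T$ equals $q(P_S\times Q_I)$ for a face $P_S=\bigcap_{j\in S}P_j$ of the simplex and a face $Q_I$ of $Q$; contracting the contractible factor $P_S$ gives a deformation retraction onto $Q_I$ when $P_S\neq\emptyset$, whereas for $S=\{0,\dots,l_1\}$ one obtains exactly $B\cap Q_I$. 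Hence every face of $M/T$ is homotopy equivalent to a face of $Q$ and every face of $Q$ arises this way, so all faces of $M/T$ are acyclic if and only if all faces of $Q$ are; the only additional condition is that the intersection $\bigcap_{j=0}^{l_1}q(P_j\times Q)=B$ be connected, i.e. that $A$ be connected. Combining these gives that $M/T$ is a homology polytope exactly when $N/T_2$ is and $A$ is connected.

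The hard part will be the bookkeeping in this last step: checking that the collapse introduces no faces beyond those homotopy equivalent to faces of $Q$, and that the sole new requirement --- beyond $N/T_2$ being a homology polytope --- is the connectivity of $B=A/T_2$, which enters only through the intersection of the $l_1+1$ facets coming from $\Delta^{l_1}$. A secondary point to settle carefully is that local standardness transfers between $M$ and $N$ across the bundle and the blow-down, so that Masuda--Panov is legitimately applied on both sides.
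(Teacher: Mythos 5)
Your route is genuinely different from the paper's: the paper never touches orbit spaces in this proof, but argues cohomologically — for \(A=\emptyset\) via degeneration of the Serre spectral sequence of \(N\to M\to \C P^{l_1}\), and for \(A\neq\emptyset\) via the blow-down map \(F:\tilde M\to M\), using that \(F_!\) is a section of \(F^*\), a snake-lemma diagram chase to get \(H^2(M)\cong H^2(N)\), and the identity \(F_!(t^i)=F_!(t)^i=PD\bigl(\bigcap_{k\leq i}M_k\bigr)\) to produce degree-two generators. Your plan instead runs everything through the Masuda--Panov criterion of Remark~\ref{sec:action-weyl-group-3}, and your orbit-space bookkeeping is essentially sound: \(\tilde M/T=\Delta^{l_1}\times Q\), the blow-down collapses \(\Delta^{l_1}\times B\) to \(B\), the facets of \(M/T\) are as you list them, each face is \(q(P_S\times Q_I)\simeq Q_I\) for \(S\) proper and equals \(B\cap Q_I\) for \(S\) full, so acyclicity of faces transfers and connectivity of \(B=\bigcap_j q(P_j\times Q)\) is exactly where connectedness of \(A\) enters. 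This is close in spirit to the paper's own proof of the quasitoric analogue (Theorem~\ref{sec:case-g_1-=-4}), transplanted to the homology-polytope setting.

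The genuine gap is the point you defer: the equivalence of local standardness for \(M\) and for \(N\). This is not routine, and "a blow up is a local equivariant modification" does not address it, because the blow-down is precisely \emph{not} a local diffeomorphism along \(A\); local standardness of \(M\) at points of \(A=M^{G_1}\) is the whole content. Concretely, at \(x\in A\) the \(T=T_1\times T_2\)-weights of \(T_xM\) are \((0,\alpha_j)\), \(j=1,\dots,l_2-1\), together with \((\epsilon_i,\tilde\mu)\), \(i=1,\dots,l_1+1\), where \(\epsilon_i\) are the weights of the standard \(SU(l_1+1)\)-representation (so \(\sum_i\epsilon_i=0\)) and \(\tilde\mu\) is the character by which \(T_2\) scales \(N_x(A,M)=W\otimes_\C V\) (Lemma~\ref{lem:iso2}, Corollary~\ref{cor:case-g_1-=}). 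Even assuming \(\{\alpha_j\}\cup\{\tilde\mu\}\) is a basis of the character lattice of \(T_2\), these \(n\) weights generate only an index-\((l_1+1)\) sublattice of the character lattice of \(T\): so with respect to \(T\) the action is \emph{never} locally standard at \(A\). Masuda--Panov's theorem is stated for effective actions, whereas the torus manifolds here are only almost effective; to apply it you must pass to the effective quotient \(T/K\) and prove that the above weights form a basis of its character lattice, i.e.\ identify the index-\((l_1+1)\) sublattice they span with the characters vanishing on \(K\). That in turn uses the specific structure of the kernel — the central cyclic group of elements \((z,w)\) with \(z\in Z(SU(l_1+1))\), \(w\in\image\psi_1\), \(\tilde\mu(w)\) inverse to the scalar by which \(z\) acts on \(W\) — which exists only because \(\ker\psi_1=SU(l_1)\). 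The same verification (in reverse) is needed to get local standardness of \(N\) from that of \(M\) in the "only if" direction, unless you replace it there by citing Masuda--Panov's result on intersections of characteristic submanifolds, as the paper does. Until this lattice/kernel computation is carried out, the Masuda--Panov criterion cannot legitimately be invoked on the \(M\)-side, and the proof is incomplete.
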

\begin{proof}
  To make the notation simpler we omit the coefficients of the cohomology in the proof. 
  If \(H^*(M)\) is generated by its degree two part, then \(H^*(N)\) is generated by its degree two part by \cite[p. 716]{1111.57019}.
  Moreover, \(A\) is connected by 
  \cite[p. 738]{1111.57019} and Corollary~\ref{cor:10}.

  Now assume that \(H^*(N)\) is generated by its degree two part and \(A=\emptyset\).
  Then by Poincar\'e duality \(H_{\text{odd}}(N)=0\).
  Therefore by an universal coefficient theorem \(H^*(N)= \Hom(H_*(N),\mathbb{Z})\) is torsion free.
  By Corollary~\ref{cor:hhh}, \(M\) is a fiber bundle over \(\C P^{l_1}\) with fiber \(N\).
  Because the Serre-spectral sequence of this fibration degenerates we have
  \begin{equation*}
     H^*(M)\cong H^*(\C P^{l_1})\otimes H^*(N)
  \end{equation*}
  as a \(H^*(\C P^{l_1})\)-modul. Because \(H^*(N)\) is generated by its degree two part, it follows that
  the cohomology of \(M\) is generated by its degree two part.
  
  Now we turn to the general case \(A\neq \emptyset\).
  Then, by  \cite[p. 716]{1111.57019}, \(H^*(A)\) is generated by its degree two part.
  Moreover, \(H^*(N)\rightarrow H^*(A)\) is surjective.
  Let \(\tilde{M}\) be the blow up of \(M\) along \(A\) and \(F:\tilde{M}\rightarrow M\) the map defined in section~\ref{sec:blow}.
  
  Because, by Lemma~\ref{lem:blow1}, \(F\) is the identity outside some open tubular neighborhood of \(A\times \C P^{l_1}\), the induced homomorphism \(F^*:H^*(M,A)\rightarrow H^*(\tilde{M},A\times \C P^{l_1})\) is an isomorphism by excision.
  Furthermore, the push forward \(F_!:H^*(\tilde{M})\rightarrow H^*(M)\) is a section of \(F^*:H^*(M)\rightarrow H^*(\tilde{M})\).
  Therefore \(F^*:H^*(M)\rightarrow H^*(\tilde{M})\) is injective and \(H^{\text{odd}}(M)\) vanishes.

  Because \(A\) is connected, we have the following commutative diagram with exact rows and columns:

  $$
      \xymatrix{
  && 0\ar[d]&0\ar[d]\\
  &0 \ar[d] & H^2(\tilde{M},A\times \C P^{l_1}) \ar[r]\ar[d] & H^2(N,A)\ar[d]\\
  0\ar[r]&H^2(\C P^{l_1})\ar[r]\ar[d]& H^2(\tilde{M})\ar[r]\ar[d]&H^2(N)\ar[r]\ar[d]&0\\
  0\ar[r]&H^2(\C P^{l_1})\ar[r]\ar[d]&H^2(A\times \C P^{l_1})\ar[r]\ar[d]&H^2(A)\ar[r]\ar[d]&0\\
  & 0& H^3(\tilde{M},A\times \C P^{l_1})\ar[d] \ar[r]&0\\
  &  &0
}
 $$

Now from the snake lemma it follows that
\begin{align*}
  H^2(M,A)\cong_{F^*} H^2(\tilde{M},A\times \C P^{l_1})\cong H^2(N,A)
\end{align*}
and
\begin{equation*}
   H^3(M,A)\cong_{F^*}  H^3(\tilde{M},A\times \C P^{l_1}) \cong 0.
\end{equation*}
Because \(\iota_{NM}=F\circ \iota_{N\tilde{M}}\), where \(\iota_{NM},\iota_{N\tilde{M}}\) are the inclusions of \(N\) in \(M\) and \(\tilde{M}\), 
the left arrow in the following diagram is an isomorphism.
\begin{equation*}
  \xymatrix{
    0\ar[r]&H^2(M,A)\ar[r]\ar[d]_{\iota_{NM}^*}&H^2(M)\ar[r]\ar[d]_{\iota_{NM}^*}&H^2(A)\ar[r]\ar[d]_{\id}&0\\
    0\ar[r]&H^2(N,A)\ar[r]&H^2(N)\ar[r]&H^2(A)\ar[r]&0\\
}
\end{equation*}
Therefore it follows from the five lemma that
\begin{equation*}
  H^2(M)\cong H^2(N)
\end{equation*}
and
\begin{equation*}
  H^2(\tilde{M})\cong H^2(\C P^{l_1})\oplus H^2(N)\cong H^2(\C P^{l_1})\oplus H^2(M).
\end{equation*}

Let \(t\in H^2(\C P^{l_1})\) be a generator of \(H^*(\C P^{l_1})\) and \(x \in H^*(M)\).
Then, because \(H^*(\tilde{M})\) is generated by its degree two part, there are sums of products \(x_i\in H^*(M)\) of elements of \(H^2(M)\) such that
\begin{equation*}
  x = F_!F^*(x)= F_!\left(\sum F^*(x_i) t^i\right)= \sum x_i F_!(t^i).
\end{equation*}
Therefore it remains to show that \(F_!(t^i)\) is a product of elements of \(H^2(M)\).

The \(l_1+1\) characteristic submanifolds \(\tilde{M}_1,\dots,\tilde{M}_{l_1+1}\) of \(\tilde{M}\) which are permuted by \(W(G_1)\) are the preimages of the characteristic submanifolds of \(\C P^{l_1}\) under the projection \(\tilde{M}\rightarrow \C P^{l_1}\).
Therefore they can be oriented in such a way that \(t\) is the Poincar\'e-dual of each of them.

Because \(F\) restricts to a diffeomorphism \(\tilde{M}-A\times \C P^{l_1} \rightarrow M-A\) and \(F(\tilde{M}_i)=M_i\), \(F_!(t^i)\), \(i\leq l_1\), is the Poincar\'e-dual \(PD\left(\bigcap_{1\leq k\leq i}M_k\right)\) of the intersection \(\bigcap_{1\leq k\leq i}M_k\) of characteristic submanifolds of \(M\), which belong to \(\mathfrak{F}_1\).
Therefore for \(i\leq l_1\) we have
\begin{equation*}
  F_!(t)^i=PD\left(\bigcap_{1\leq k\leq i}M_k\right)=F_!(t^i).
\end{equation*}
Because \(t^i=0\) for \(i>l_1\), the statement follows.
\end{proof}

\begin{theorem}
\label{sec:case-g_1-=-4}
  Let \(\tilde{G}=G_1\times G_2\) with \(G_1=SU(l_1+1)\), \(M\) a torus manifold with \(\tilde{G}\)-action and \((\psi,N,A)\) the admissible triple for \((\tilde{G},G_1)\) corresponding to \(M\).
  Then \(M\) is quasitoric if and only if \(N\) is quasitoric and \(A\) is connected.
\end{theorem}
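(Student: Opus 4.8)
The plan is to work entirely on orbit spaces and to reduce both implications to the fibre bundle \(\tilde{M}=H_0\times_{H_1}N\) over \(\C P^{l_1}=H_0/H_1\) supplied by Corollary~\ref{cor:hhh}. Recall from the discussion preceding Lemma~\ref{lem:class1} that \(\tilde{M}\) is the blow up of \(M\) along \(A=M^{H_0}\) when \(A\neq\emptyset\), and that \(M=\tilde{M}\) when \(A=\emptyset\). The computational core is the identity \(\tilde{M}/T\cong\Delta^{l_1}\times P_N\), where \(T=T_1\times T_2\), \(P_N=N/T_2\) and \(\Delta^{l_1}=\C P^{l_1}/T_1\) is a simplex. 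To prove it I would divide out \(T_2\) first: by Lemma~\ref{lem:class1} the structure group \(H_1\) of \(\tilde{M}\to\C P^{l_1}\) acts on the fibre \(N\) through \((g,t)\mapsto\psi_1(g)^{-1}t\), whose image is the circle \(\image\psi_1\subset T_2\); since \(T_2\) acts trivially on \(P_N\), the induced transition functions on the fibre \(P_N\) are trivial, so \(\tilde{M}/T_2\cong\C P^{l_1}\times P_N\) \(T_1\)-equivariantly, and dividing out the standard \(T_1\)-action on the base yields \(\tilde{M}/T\cong\Delta^{l_1}\times P_N\). Local standardness of \(\tilde{M}\) (and hence of \(M\)) follows from that of \(N\) together with the standard local models of the bundle and blow-up constructions.

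For the forward implication assume \(M\) is quasitoric with orbit polytope \(P\). By Corollary~\ref{cor:10}, \(A=M^{G_1}=\bigcap_{M_i\in\mathfrak{F}_1}M_i\) is the preimage of the face \(\bigcap_{M_i\in\mathfrak{F}_1}F_i\) of \(P\) and is therefore connected, a single face of a simple polytope being connected (this also follows from Theorem~\ref{sec:case-g_1-=-3}, as quasitoric manifolds have cohomology generated in degree two). For the quasitoricity of \(N\), recall from Lemma~\ref{sec:case-g_1-=-2} that \(N=N_1=\bigcap_{M_i\in\mathfrak{F}_1'}M_i\) is connected, hence is the preimage of the codimension-\(l_1\) face \(\sigma=\bigcap_{M_i\in\mathfrak{F}_1'}F_i\). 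By \cite{davis91:_convex_coxet} the preimage of a face of a quasitoric manifold is again quasitoric over that face, with respect to the quotient of \(T\) by the isotropy subtorus \(T_\sigma=\langle\lambda(M_i):M_i\in\mathfrak{F}_1'\rangle\). By (\ref{eq:7}) and (\ref{eq:2}) this subtorus is \(H_3=\{(t,\psi_1(t)):t\in T_1\}\), and since \(H_3\cap T_2=\{1\}\) the second projection identifies \(T/H_3\cong T_2\). Thus \(N\) is quasitoric as a torus manifold with \(G_2\)-action.

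For the backward implication assume \(N\) is quasitoric and \(A\) is connected. By the first paragraph \(\tilde{M}=H_0\times_{H_1}N\) is quasitoric over the simple polytope \(\Delta^{l_1}\times P_N\). If \(A=\emptyset\) then \(M=\tilde{M}\) and the claim is immediate. If \(A\neq\emptyset\), then by Corollary~\ref{cor:case-g_1-=} the normal bundle of \(A\) in \(N\) is a complex line bundle on which \(\image\psi_1\cong S^1\) acts by rotation, so \(A\) is a codimension-two component of \(N^{\image\psi_1}\); being connected and meeting the \(T_2\)-fixed point set, it is a characteristic submanifold of \(N\) lying over a single facet \(F_A\) of \(P_N\). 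The exceptional divisor of the blow up is \(M_0=\C P^{l_1}\times A\), whose orbit space is the facet \(\Delta^{l_1}\times F_A\) of \(\Delta^{l_1}\times P_N\), and \(M\) is obtained from \(\tilde{M}\) by blowing down along \(M_0\). On orbit spaces this reverses the truncation of the codimension-\((l_1+1)\) face \(\bigcap_{M_i\in\mathfrak{F}_1}F_i\); un-cutting this single face of the simple polytope \(\Delta^{l_1}\times P_N\) produces a simple polytope, which is face-preserving homeomorphic to \(M/T\). Hence \(M\) is quasitoric.

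The main obstacle is the blow-down step in the case \(A\neq\emptyset\): one must verify that reversing the truncation yields a genuine simple (in particular convex) polytope rather than merely a manifold with corners or a homology polytope in the sense of Remark~\ref{sec:action-weyl-group-3}, and it is exactly here that connectedness of \(A\) is used, guaranteeing that precisely one facet is un-cut so that the combinatorial type remains simple. A secondary point demanding care is the identification \(T/H_3\cong T_2\) in the forward direction, which ensures that the residual quasitoric structure on \(N\) is the one induced by its \(G_2\)-action.
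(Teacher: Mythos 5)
Your forward implication is fine and is essentially the paper's: the paper disposes of it in one sentence by citing that intersections of characteristic submanifolds of a quasitoric manifold are quasitoric and connected, and your identification \(T/H_3\cong T_2\) is a reasonable supplement. Your backward implication, however, has a genuine gap, and it sits exactly where you yourself flag ``the main obstacle'': you assert that collapsing the facet \(\Delta^{l_1}\times F_A\) of \(\Delta^{l_1}\times P_N\) onto \(F_A\) (``un-cutting'') \emph{produces a simple polytope}, but you give no argument. This is not a formality: the un-truncation is a priori only a manifold with corners, and not every manifold with corners with the right combinatorics admits a convex realization; moreover \(M\) is not literally \(\tilde{M}\) with \(M_0\) collapsed, but is glued from \(B\) and \(\tilde{M}-\mathring{B}'\) along a tubular-neighborhood identification, so on orbit spaces one must also control the gluing homeomorphism, which need not be the identity. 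Connectedness of \(A\) alone does not settle either point.

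The paper's proof consists precisely of filling this gap by an explicit convex-geometric construction: it realizes the candidate orbit polytope as the hyperplane section
\begin{equation*}
  \hat{P}=\bigl(P\times\Delta^{l_1+1}\bigr)\cap\{p_1=p_2\}\subset\R^{n-l_1}\times\R^{l_1+1},
\end{equation*}
with \(P=P_N\) embedded so that \(F_1=A/T_2=P\cap\{p_2=0\}\), builds face-preserving homeomorphisms \(h_1:P\times\Delta^{l_1}\to\hat{P}_1\) and \(h_2:F_1\times\Delta^{l_1+1}\to\hat{P}_2\) onto the two pieces of \(\hat{P}\), and then shows that the orbit-space gluing map \(\hat{f}(x,y)=(\hat{f}_1(x),\hat{f}_2(x,y))\) is such that \(\hat{f}^{-1}\circ h_1^{-1}\circ h_2\) extends to a face-preserving homeomorphism of \(F_1\times\Delta^{l_1+1}\), using that \(\Delta^{l_1+1}\) is the cone over \(\Delta^{l_1}\). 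Without this (or an equivalent convex realization plus gluing analysis), your proof does not establish that \(M/T\) is face-preserving homeomorphic to a simple polytope. A secondary, smaller gap: quasitoric also requires local standardness of the \(T\)-action on \(M\), in particular near the blown-down locus \(A\); you dismiss this with ``standard local models,'' whereas the paper derives it from Theorem~\ref{sec:case-g_1-=-3} together with the Masuda--Panov criterion (cohomology generated in degree two implies locally standard with homology-polytope orbit space). You should either reproduce that argument or give the local model computation explicitly.
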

\begin{proof}
  At first assume that \(M\) is quasitoric.
  Then \(N\) is quasitoric and \(A\) connected because all intersections of characteristic submanifolds of \(M\) are quasitoric and connected.
  
  Now assume that \(N\) is quasitoric and \(A\subset N\) connected.
  Then, by Theorem~\ref{sec:case-g_1-=-3} and \cite[p. 738]{1111.57019}, the \(T\)-action on \(M\) is locally standard and \(M/T\) is a homology polytope.
  We have to show that \(M/T\) is face preserving homeomorphic to a simple polytope.

\begin{figure}
  \begin{center}
    \includegraphics{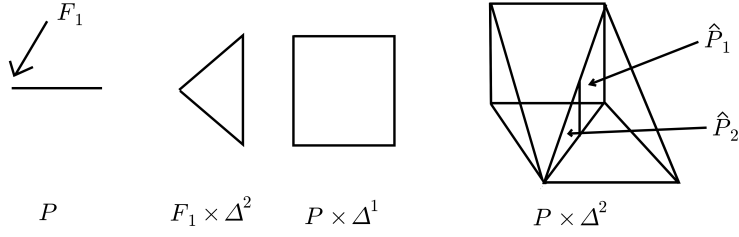}
    \caption{The orbit space of a blow down}
    \label{fig:2}
  \end{center}
\end{figure}

  Let \(T_2=T\cap G_2\).
  Then the orbit space \(N/T_2\) is face preserving homeomorphic to a simple polytope \(P\).
  Because \(A\) is connected, \(A/T_2\) is a facet \(F_1\) of \(P\).
 
  With the notation from Lemma~\ref{lem:class1} let
  \begin{equation*}
    B= \{(z_0:1)\in P_\C(E'\oplus\C); z_0\in E', |z_0|\leq 1\}.
  \end{equation*}
  Then the orbit space of the \(T\)-action on \(B\) is given by \(F_1\times \Delta^{l_1+1}\).

  Let \(B'\) be a closed \(\tilde{G}\)-invariant tubular neighborhood of \(H_0/H_1\times A\) in \(H_0\times_{H_1}N\).
  Then the bundle projection \(\partial B' \rightarrow H_0/H_1\times A\) extends to  an equivariant map
  \begin{equation*}
    H_0\times_{H_1}N -\mathring{B}' \rightarrow H_0\times_{H_1}N,
  \end{equation*}
  which induces a face preserving homeomorphism
  \begin{equation*}
    \left(H_0\times_{H_1}N-\mathring{B}'\right)/T \cong P\times \Delta^{l_1}.
  \end{equation*}

  Now \(M\) is given by gluing \(B\) and \(H_0\times_{H_1}N-\mathring{B}'\) along the boundaries \(\partial B, \partial B'\).
  The corresponding gluing of the orbit spaces is illustrated in Figure~\ref{fig:2} for the case \(\dim N=2\) and \(l_1=1\).
  Because the gluing map \(f: \partial B \rightarrow \partial B'\) is \(\tilde{G}\)-equivariant and \(G_1\) acts transitively on the fibers of \(\partial B \rightarrow A\) and  \(\partial B' \rightarrow A\), it induces a map
  \begin{equation*}
    \hat{f}: F_1\times \Delta^{l_1} = \partial B/T \rightarrow \partial B'/T = F_1\times \Delta^{l_1}, \quad\quad (x,y) \mapsto (\hat{f}_1(x),\hat{f}_2(x,y)),
  \end{equation*}
  where \(\hat{f}_1: F_1 \rightarrow F_1\) is a face preserving homeomorphism and \(\hat{f}_2: F_1\times\Delta^{l_1}\rightarrow \Delta^{l_1}\) such that, for all \(x \in F_1\), 
  \(\hat{f}_2(x,\cdot)\) is a face preserving homeomorphism of \(\Delta^{l_1}\).

  Now fix embeddings
  \begin{equation*}
    \Delta^{l_1+1}\hookrightarrow \R^{l_1+1} \text{ and } P \hookrightarrow \R^{n-l_1-1}\times [0,1[
  \end{equation*}
  such that \(\Delta^{l_1}\subset \R^{l_1}\times \{1\}\) and \(\Delta^{l_1+1}= \conv(0,\Delta^{l_1})\)  and \(P\cap \R^{n-l_1-1}\times \{0\}=F_1\).
  
  Denote by \(p_1:\R^{l_1+1}\rightarrow \R\) and \(p_2:\R^{n-l_1}\rightarrow \R\) the projections on the last coordinate.
  For \(\epsilon>0\) small enough, \(P\) and \(P\cap \{p_2\geq \epsilon\}\) are combinatorially equivalent.
  Therefore there is a face preserving homeomorphism
  \begin{equation*}
       g_1:P\rightarrow P\cap \{p_2\geq \epsilon\}
  \end{equation*}
  such that \(g_1(F_1)=P\cap \{p_2= \epsilon\}\) and \(g_1(F_i)=F_i\cap \{p_2\geq \epsilon\}\) for the other facets of \(P\).
  The map
  \begin{align*}
    g_2:F_1\times [0,1]&\rightarrow P \cap\{p_2\leq \epsilon\}\\
    (x,y)&\mapsto x(1-y)+y g_1(x) 
  \end{align*}
  is a face preserving homeomorphism with \(p_2\circ g_2(x,y)=\epsilon y\) for all \((x,y)\in F_1\times [0,1]\).
  Now let 
  \begin{align*}
    \hat{P}&=P\times \Delta^{l_1+1}\cap \{p_1=p_2\} \subset \R^{n-l_1}\times \R^{l_1+1},\\
    \hat{P}_1&=P\times \Delta^{l_1+1}\cap \{p_1=p_2\geq\epsilon\} \subset \R^{n-l_1}\times \R^{l_1+1},\\
    \hat{P}_2&=P\times \Delta^{l_1+1}\cap \{p_1=p_2\leq\epsilon\} \subset \R^{n-l_1}\times \R^{l_1+1}.
  \end{align*}
  
  Then there are face preserving homeomorphisms
  \begin{equation*}
    h_1: P\times \Delta^{l_1} \rightarrow \hat{P}_1\quad\quad (x,y)\mapsto(g_1(x),p_2(g_1(x))y)
  \end{equation*}
  and
  \begin{equation*}
    h_2: F_1\times \Delta^{l_1+1}\rightarrow \hat{P}_2 \quad\quad (x,y)\mapsto (g_2(x,p_1(y)),\epsilon y).
  \end{equation*}
  We claim that \(\hat{P}\) and \(M/T\) are face preserving homeomorphic.
  This is the case if
  \begin{equation*}
    \hat{f}^{-1}\circ h_1^{-1} \circ h_2: F_1\times \Delta^{l_1} \rightarrow F_1\times \Delta^{l_1}
  \end{equation*}
  extends to a face preserving homeomorphism of \(F_1\times \Delta^{l_1+1}\).
  Now for \((x,y)\in F_1\times \Delta^{l_1}\)
  we have
  \begin{align*}
    \hat{f}^{-1}\circ h_1^{-1}\circ h_2(x,y)&=\hat{f}^{-1}\circ h_1^{-1}(g_2(x,p_1(y)),\epsilon y)\\
    &= \hat{f}^{-1}\circ h_1^{-1}(g_2(x,1),\epsilon y)\\
    &= \hat{f}^{-1}(g_1^{-1}\circ g_2(x,1),y)\\
    &= (\hat{f}_1^{-1}(x), (\hat{f}_2(x, \cdot))^{-1}(y)).
  \end{align*}
  Because \(\Delta^{l_1+1}\) is the cone over \(\Delta^{l_1}\) this map extends to a face preserving homeomorphism of \(F_1\times \Delta^{l_1+1}\).
\end{proof}

\begin{lemma}
\label{sec:case-g_1-=-5}
    Let \(\tilde{G}=G_1\times G_2\) with \(G_1=SU(l_1+1)\), \(M\) a torus manifold with \(\tilde{G}\)-action and \((\psi,N,A)\) the admissible triple for \((\tilde{G},G_1)\) corresponding to \(M\).
  Then there is an isomorphism \(\pi_1(N)\rightarrow \pi_1(M)\).
\end{lemma}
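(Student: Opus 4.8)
The plan is to treat separately the two cases of the correspondence from Theorem~\ref{sec:case-g_1-=-1}: the case $A=\emptyset$, where by Corollary~\ref{cor:hhh} $M=H_0\times_{H_1}N$ is a fibre bundle over $\C P^{l_1}$ with fibre $N=N_1$, and the case $A\neq\emptyset$, where $M$ is the blow down of $\tilde M=H_0\times_{H_1}N$ along $H_0/H_1\times A$. In both cases the asserted isomorphism will be the map induced by the inclusion $N=N_1\hookrightarrow M$. The technical heart is an elementary observation that I would isolate first: if $N$ is any $T_2$-space with $N^{T_2}\neq\emptyset$, then for every $x_0\in N$ the orbit map $\mu\colon T_2\to N$, $t\mapsto t x_0$, induces the zero homomorphism $\pi_1(T_2)\to\pi_1(N)$. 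Indeed, picking $p\in N^{T_2}$ and a path $\sigma$ from $x_0$ to $p$, for a loop $\gamma$ in $T_2$ the map $(s,u)\mapsto\gamma(s)\sigma(u)$ is a based homotopy showing that $\mu\circ\gamma\simeq\sigma\,\mathrm{const}_p\,\bar\sigma\simeq\mathrm{const}_{x_0}$. Since a torus manifold has a $T_2$-fixed point, this applies to $N$.

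For $A=\emptyset$ I would run the long exact homotopy sequence of $N\hookrightarrow M\to\C P^{l_1}$. As $\C P^{l_1}$ is simply connected and $N$ is connected, it reduces to $\pi_2(\C P^{l_1})\xrightarrow{\partial}\pi_1(N)\to\pi_1(M)\to 0$, so it suffices to prove $\partial=0$. Since $M$ is the bundle associated to the principal bundle $H_1\hookrightarrow H_0\to H_0/H_1=\C P^{l_1}$, the connecting map factors as $\partial=\alpha_*\circ\delta$, where $\delta\colon\pi_2(\C P^{l_1})\to\pi_1(H_1)$ is the connecting map of the principal bundle and $\alpha\colon H_1\to N$, $h\mapsto h f_0$, is the orbit map at the basepoint. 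By Lemma~\ref{lem:class1}(1) one has $\alpha(g,t)=\psi_1(g)^{-1}t f_0$, so $\alpha$ factors through the homomorphism $H_1\to T_2$, $(g,t)\mapsto\psi_1(g)^{-1}t$, followed by the $T_2$-orbit map; hence $\alpha_*=0$ by the observation above, and therefore $\partial=0$.

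For $A\neq\emptyset$ I would first pass to $\tilde M=H_0\times_{H_1}N$, which by Corollary~\ref{cor:hhh} is again such a bundle, so the previous case yields $\pi_1(\tilde N_1)\cong\pi_1(\tilde M)$ for the fibre $\tilde N_1$. It then remains to compare $\pi_1(\tilde M)$ with $\pi_1(M)$ along the blow-down $F\colon\tilde M\to M$ of Lemma~\ref{lem:blow1}. By Corollaries~\ref{cor:10} and~\ref{cor:case-g_1-=} the set $A=M^{G_1}$ has codimension $2l_1+2\geq 4$ in $M$, so $M-A\hookrightarrow M$ is a $\pi_1$-isomorphism; moreover $F$ restricts to a diffeomorphism $\tilde M-M_0\to M-A$, and the exceptional divisor $M_0$ has complex codimension one, whence $\pi_1(\tilde M)$ is the quotient of $\pi_1(\tilde M-M_0)\cong\pi_1(M)$ by the meridian of $M_0$. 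That meridian is carried by $F$ into a punctured normal fibre $N(A,M)_a\setminus\{0\}\simeq S^{2l_1+1}$, which is simply connected since $2l_1+1\geq3$; hence it is already null-homotopic and $F_*\colon\pi_1(\tilde M)\to\pi_1(M)$ is an isomorphism. Finally, the identity $F\circ\iota_{\tilde N_1}=\iota_{N_1}\circ(F|_{\tilde N_1})$, together with Corollary~\ref{cor:blowing-up} (which gives $F|_{\tilde N_1}\colon\tilde N_1\xrightarrow{\cong}N_1$, as $A$ has codimension two in $N_1$), identifies $(\iota_{N_1})_*$ with a composite of isomorphisms.

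The main obstacle is the vanishing of the connecting homomorphism $\partial$, equivalently $\alpha_*=0$: this is precisely where the hypothesis that $N$ is a torus manifold enters, via the existence of a $T_2$-fixed point, and it is the only non-formal input. Once the codimension bounds of Corollaries~\ref{cor:10} and~\ref{cor:case-g_1-=} are available, the blow-down comparison in the case $A\neq\emptyset$ is a routine meridian computation, so I expect the write-up to be short.
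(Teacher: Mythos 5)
Your proof is correct and takes essentially the same route as the paper: blow up \(M\) along \(A\), identify the blow-up with the bundle \(H_0\times_{H_1}N\) over \(\C P^{l_1}\) via Corollary~\ref{cor:hhh}, and use a \(T\)-fixed point of \(N\) to kill the connecting homomorphism \(\pi_2(\C P^{l_1})\rightarrow \pi_1(N)\) (the paper gets this from the section \(gH_1\mapsto [g,p]\) determined by the fixed point, you by factoring \(\partial\) through the orbit map of \(H_1\) --- the same mechanism). The only substantive difference is that where the paper cites McDuff for \(\pi_1(\tilde{M})\cong\pi_1(M)\) under blowing down, you prove it directly via the codimension bound on \(A=M^{G_1}\) and the meridian of the exceptional divisor; this is a correct, self-contained substitute.
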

\begin{proof}
  Let \(\tilde{M}\) be the blow up of \(M\) along \(A\). Then, by \cite[p. 270]{0567.53031}, there is a isomorphism  \(\pi_1(\tilde{M})\rightarrow \pi_1(M)\).
  
  Now, by Corollary~\ref{cor:hhh}, \(\tilde{M}\) is the total space of a fiber bundle over \(\C P^{l_1}\) with fiber \(N\).
  Therefore there is an exact sequence
  \begin{equation*}
    \pi_2(\tilde{M})\rightarrow \pi_2(\C P^{l_1}) \rightarrow \pi_1(N) \rightarrow \pi_1(\tilde{M})\rightarrow 0.
  \end{equation*}
  Because the torus action on \(N\) has fixed points, there is a section in this bundle.
  Hence, \( \pi_2(\tilde{M})\rightarrow \pi_2(\C P^{l_1})\) is surjective.
\end{proof}

\section{The case $G_1=SO(2l_1)$}
\label{sec:so2l}

In this section we study torus manifolds with \(G\)-action, where \(\tilde{G}=G_1\times G_2\) and \(G_1=SO(2l_1)\) is elementary.
It turns out that the restriction of the action of \(G_1\) to \(U(l_1)\) on such a manifold has the same orbits as the action of \(SO(2l_1)\).
Therefore the results of the previous section may be applied to construct invariants for such manifolds.
For simply connected torus manifolds with \(G\)-action these invariants determine their \(\tilde{G}\)-equivariant diffeomorphism type.

Let \(\tilde{G}=G_1\times G_2\), where \(G_1= SO(2l_1)\) is elementary,  and \(M\) a torus manifold with \(G\)-action.
Then, by Lemmas \ref{lem:iso} and \ref{lem:iso2}, one sees that the principal orbit type of the \(G_1\)-action is given by \(SO(2l_1)/SO(2l_1-1)\).
Therefore the \(G_1\)-action has only three orbit types \(SO(2l_1)/SO(2l_1-1)\), \(SO(2l_1)/S(O(2l_1-1)\times O(1))\) and \(SO(2l_1)/SO(2l_1)\).
The induced action of \(U(l_1)\) has the same orbits, which are of type \(U(l_1)/U(l_1-1)\), \(U(l_1)/\langle U(l_1-1),\mathbb{Z}_2\rangle\) and  \(U(l_1)/U(l_1)\), respectively.
Here \(\langle U(l_1-1),\mathbb{Z}_2\rangle\) denotes the subgroup of \(U(l_1)\), which is generated by \(U(l_1-1)\) and the diagonal matrix with all entries equal to \(-1\).

Let \(S=S^1\). Then there is a finite covering
\begin{align*}
  SU(l_1)\times S &\rightarrow U(l_1) & (A,s)&\mapsto sA.
\end{align*}
So we may replace the factor \(G_1\) of \(\tilde{G}\) by \(SU(l_1)\) and \(G_2\) by \(S\times G_2\) to reach the situation of the previous section.

Let \(x \in M^T\) and \(T_2=T\cap G_2\). Then we may assume by Lemma~\ref{lem:iso2} that the \(G_1\times T_2\)-representation \(T_xM\) is given by
\begin{equation*}
  T_xM=V\oplus W,
\end{equation*}
where \(V\) is a complex representation of \(T_2\) and \(W\) is the standard real representation of \(G_1\).
Therefore
\begin{equation*}
  T_xM=V\oplus V_0\otimes_\C W_0
\end{equation*}
as a \(SU(l_1)\times S\times T_2\)-representation, where \(V_0\) is the standard complex one-dimensional representation of \(S\) and \(W_0\) is the standard complex representation of \(SU(l_1)\).

Therefore the group homomorphism \(\psi_1\) and the groups \(H_0,H_1,H_2\) introduced in Lemma~\ref{lem:poly} have the following form:
\begin{equation*}
  \image \psi_1=S,
\end{equation*}
and
\begin{align*}
  H_0 &=SU(l_1)\times S,\\
  H_1 &=S(U(l_1-1)\times U(1))\times S,\\
  H_2 &=\left\{(g,g_{l_1+1}^{-1})\in H_1; g=
\left( 
 \begin{matrix}
    A&0\\
    0&g_{l_1+1}
  \end{matrix}
\right)
\text{ with } A \in U(l_1-1)
\right\}.
\end{align*}

Let \(N_1\) be the intersection of \(l_1-1\) characteristic submanifolds of \(M\) belonging to \(\mathfrak{F}_1\) as defined in Lemmas~\ref{sec:case-g_1-=} and \ref{sec:case-g_1-=-2}.
Then, by Lemma~\ref{lem:poly}, we know that \(N_1\) is a component of \(M^{H_2}\) and \(M=H_0N_1\).
Therefore we have \(N_1=M^{H_2}\) if, for all \(H_0\)-orbits \(O\), \(O^{H_2}\) is connected.
Because all orbits are of type \(H_0/H_0\), \(H_0/H_2\), \(H_0/\langle H_2,\mathbb{Z}_2\rangle\) and
\begin{gather*}
  \left(H_0/H_2\right)^{H_2}=N_{H_0}H_2/H_2=H_1/H_2,\\
   \left(H_0/\langle H_2, \mathbb{Z}_2\rangle \right)^{H_2}=N_{H_0}H_2/\langle H_2, \mathbb{Z}_2\rangle=H_1/\langle H_2, \mathbb{Z}_2\rangle,    
\end{gather*}
it follows that \(N_1=M^{H_2}\).

The projection \(H_1\rightarrow H_1/H_2\) induces an isomorphism \(S\rightarrow H_1/H_2\).
Therefore \(S\) acts freely on \(\left(H_0/H_2\right)^{H_2}\).
Hence, \(S\) acts effectively on \(N_1\).

By Corollary~\ref{cor:10}, \(N_1^S=M^{H_0}\) has codimension two in \(N_1\).

After these general remarks we first discuss the case, where there are no exceptional \(SO(2l_1)\)-orbits. That means the case, where there are no orbits of type \(SO(2l_1)/S(O(2l_1-1)\times O(1))\).
Then the induced \(U(l_1)\)-action has also no exceptional orbits.
Moreover, by Corollary~\ref{cor:10}, \(M\) is a special \(SO(2l_1)\)-, \(U(l_1)\)-manifold in the sense of J\"anich \cite{0153.53703}.

At first we discuss the question under which conditions the action of \(U(l_1)\times G_2\) on a torus manifold satisfying the above conditions on the \(U(l_1)\)-orbits and having no exceptional \(U(l_1)\)-orbits extends to an action of \(SO(2l_1)\times G_2\).

Let \(X\) be the orbit space of the \(U(l_1)\)-action on \(M\).
Then, by \cite[p. 303]{0153.53703}, \(X\) is a manifold with boundary such that the interior \(\mathring{X}\) of \(X\) corresponds to orbits of type \(U(l_1)/U(l_1-1)\) and the boundary \(\partial X\) to the fixed points. The action of \(G_2\) on \(M\) induces a natural action of \(G_2\) on \(X\).

Following J\"anich \cite{0153.53703}  we may construct from \(M\) a manifold \(M\odot M^{U(l_1)}\) with boundary, on which \(U(l_1)\times G_2\) acts such that all orbits of the \(U(l_1)\)-action on 
 \(M\odot M^{U(l_1)}\) are of type \(U(l_1)/U(l_1-1)\) and \(\left(M\odot M^{U(l_1)}\right)/U(l_1)=X\).
Denote by \(P_M\) the \(G_2\)-equivariant principal \(S^1\)-bundle
\begin{equation*}
  \left(M\odot M^{U(l_1)}\right)^{U(l_1-1)}\rightarrow X.
\end{equation*}

\begin{lemma}
  Let \(M\) be a torus manifold with \(U(l_1)\times G_2\)-action such that all \(U(l_1)\)-orbits are of type \(U(l_1)/U(l_1-1)\) or  \(U(l_1)/U(l_1)\).
  Then the action of \(U(l_1)\times G_2\) on \(M\) extends to an action of \(SO(2l_1)\times G_2\) if and only if there is a \(G_2\)-equivariant \(\mathbb{Z}_2\)-principal bundle \(P_M'\) such that
  \begin{equation*}
    P_M=S^1\times_{\mathbb{Z}_2}P_M',
  \end{equation*}
where the action of \(G_2\) on \(S^1\) is trivial.
\end{lemma}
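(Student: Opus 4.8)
The plan is to reduce everything to J\"anich's equivariant classification of special manifolds and to identify the restriction operation (from $SO(2l_1)$-actions to $U(l_1)$-actions) with the associated-bundle construction sending a principal $\mathbb{Z}_2$-bundle $P'$ to $S^1\times_{\mathbb{Z}_2}P'$. Write $\bar{M}=M\odot M^{U(l_1)}$. Since the only $U(l_1)$-orbit types are $U(l_1)/U(l_1-1)$ and $U(l_1)/U(l_1)$, and a $U(l_1)$-fixed point can lie in no orbit of type $SO(2l_1)/SO(2l_1-1)=S^{2l_1-1}$ (on which $U(l_1)$ already acts transitively), one has $M^{U(l_1)}=M^{SO(2l_1)}$; thus the two $\odot$-constructions agree and $\bar M$ carries whatever $SO(2l_1)$-structure exists. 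The computation underlying everything is the pair of fixed-point sets in the fibre $S^{2l_1-1}\subset\C^{l_1}$: with $U(l_1-1)\subset SO(2l_1-1)$ acting on the first coordinates, $(S^{2l_1-1})^{U(l_1-1)}$ is the circle in the last $\C$-coordinate, while $(S^{2l_1-1})^{SO(2l_1-1)}=\{\pm e_{2l_1}\}=S^0$ sits inside it as the two points $z_{l_1}=\pm i$; the residual principal $S^1$ acts by $z_{l_1}\mapsto e^{i\theta}z_{l_1}$.

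For the forward direction, assume the action extends. Then $SO(2l_1)$ acts on $\bar M$ and, because $U(l_1-1)\subset SO(2l_1-1)$, the set $P_M':=\bar M^{SO(2l_1-1)}$ is a $G_2$-invariant subspace of $P_M=\bar M^{U(l_1-1)}$ carrying a free action of $N_{SO(2l_1)}(SO(2l_1-1))/SO(2l_1-1)\cong\mathbb{Z}_2$, making $P_M'\to X$ a $G_2$-equivariant $\mathbb{Z}_2$-bundle. The deck transformation of $P_M'$ is the restriction of the normaliser action, which on each fibre $S^0=\{z_{l_1}=\pm i\}$ is the antipodal map $v\mapsto -v$, i.e. it swaps the two points; this is exactly the action of $-1\in S^1$ on $P_M$. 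Hence $\mathbb{Z}_2\hookrightarrow S^1$ as $\{\pm1\}$, and $S^1\times_{\mathbb{Z}_2}P_M'\to P_M$, $[s,p]\mapsto s\cdot p$, is a $G_2$-equivariant isomorphism of principal $S^1$-bundles, giving $P_M=S^1\times_{\mathbb{Z}_2}P_M'$.

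For the converse, suppose such a $P_M'$ is given. I would form the special $SO(2l_1)\times G_2$-manifold with boundary $\bar M':=P_M'\times_{\mathbb{Z}_2}S^{2l_1-1}$, where $SO(2l_1)$ acts on the left factor $S^{2l_1-1}=SO(2l_1)/SO(2l_1-1)$, $G_2$ acts through $P_M'$, and the balancing $\mathbb{Z}_2$ acts on $S^{2l_1-1}$ by the deck (right-normaliser) action, which as above is the antipodal map. Taking $U(l_1-1)$-fixed points and using that the antipodal map restricts to $z_{l_1}\mapsto -z_{l_1}$, i.e. multiplication by $-1$, on the last-coordinate circle, one obtains
\[
\bar M'^{\,U(l_1-1)}=P_M'\times_{\mathbb{Z}_2}(S^{2l_1-1})^{U(l_1-1)}=P_M'\times_{\mathbb{Z}_2}S^1\cong S^1\times_{\mathbb{Z}_2}P_M'\cong P_M,
\]
the last step by hypothesis. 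Since $\bar M'|_{U(l_1)\times G_2}$ and $\bar M$ are special $U(l_1)\times G_2$-manifolds over the same orbit space $X$ with isomorphic principal $S^1$-bundles, the equivariant form of J\"anich's classification yields a $U(l_1)\times G_2$-equivariant diffeomorphism $\bar M'|_{U(l_1)}\cong\bar M$. Reversing the $\odot$-construction (collapsing the boundary along the fixed-point normal data, a disk bundle in both cases) then produces a closed $SO(2l_1)\times G_2$-manifold $M'$ with $M'|_{U(l_1)\times G_2}\cong M$, the desired extension.

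The hard part is the fibrewise bookkeeping: one must verify precisely that the principal $\mathbb{Z}_2$-structure on $\bar M^{SO(2l_1-1)}$, and the balancing $\mathbb{Z}_2$ used to build $\bar M'$, correspond under the inclusion of fixed-point sets to the subgroup $\{\pm1\}\subset S^1$ --- equivalently, that the normaliser/deck action is the antipodal map rather than some reflection --- since this is exactly what equates the associated-bundle construction with restriction of the action. The remaining care is to invoke J\"anich's classification in its $G_2$-equivariant form and to check that the collapse reconstructing $M$ from $\bar M$ is compatible with both group actions.
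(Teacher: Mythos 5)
Your proof is correct and follows essentially the same route as the paper: the forward direction is identical (take \(SO(2l_1-1)\)-fixed points of \(M\odot M^{U(l_1)}\) and identify the deck action with \(-1\in S^1\)), and your converse simply unfolds the existence half of the \(G_2\)-equivariant J\"anich Klassifikationssatz --- which the paper cites directly to produce a closed special \(SO(2l_1)\times G_2\)-manifold with data \((X,P_M')\) --- into the explicit associated bundle \(P_M'\times_{\mathbb{Z}_2}S^{2l_1-1}\) followed by the boundary collapse, before invoking the uniqueness half to conclude that the result and \(M\) are \(U(l_1)\times G_2\)-equivariantly diffeomorphic. The fibrewise bookkeeping you flag (deck transformation \(=\) antipodal map \(=\) the action of \(-1\in S^1\)) is exactly right and is what the paper leaves implicit.
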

\begin{proof}
  If the action extends to a \(SO(2l_1) \times G_2\)-action, then  \(SO(2l_1) \times G_2\) acts on  \(M\odot M^{U(l_1)}\).
  Therefore \(P_M'=\left(M\odot M^{U(l_1)}\right)^{SO(2l_1-1)}\rightarrow X\) is such a \(G_2\)-equivariant  \(\mathbb{Z}_2\)-principal bundle.
  
  If there is such a \(G_2\)-equivariant \(\mathbb{Z}_2\)-bundle \(P_M'\), then by a \(G_2\)-equivariant version of J\"anich's Klassifikationssatz \cite{0153.53703} there is a torus manifold  \(M'\) with \(SO(2l_1)\times G_2\)-action with \(M'/U(l_1)=X\) and \(P_M=S^1\times_{\mathbb{Z}_2}P_M'=P_{M'}\).
  Therefore \(M'\) and \(M\) are \(U(l_1)\times G_2\)-equivariantly diffeomorphic.
\end{proof}

\begin{lemma}
\label{sec:case-g_1=so2l_1-1}
  Let \(M,M'\) be torus manifolds with \(SO(2l_1)\times G_2\)-action such that there are no exceptional \(SO(2l_1)\)-orbits and \(H_1(M;\mathbb{Z})\) and \(H_1(M';\mathbb{Z})\) are torsion.
  If there is a \(U(l_1)\times G_2\)-equivariant diffeomorphism \(f:M\rightarrow M'\), then there is a \(SO(2l_1)\times G_2\)-equivariant diffeomorphism \(g:M\rightarrow M'\).
Moreover, \(g\) and \(f\) induce the same map on \(M/U(l_1)-B\), where \(B\) is a collar of \(\partial (M/U(l_1))\).
\end{lemma}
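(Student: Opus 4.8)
The plan is to phrase everything through J\"anich's classification of special $U(l_1)$- and $SO(2l_1)$-manifolds, which the preceding lemma already makes available $G_2$-equivariantly, and to reduce the statement to the uniqueness of a $\mathbb{Z}_2$-square root of a circle bundle.

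First I would record the data attached to $f$. Write $X = M/U(l_1)$ and $X' = M'/U(l_1)$. Since there are no exceptional orbits, both the $U(l_1)\times G_2$- and the $SO(2l_1)\times G_2$-actions are special in the sense of J\"anich~\cite{0153.53703}, and by the preceding lemma the $SO$-structures on $M,M'$ are encoded by $G_2$-equivariant principal $\mathbb{Z}_2$-bundles $P'_M\to X$ and $P'_{M'}\to X'$ with $P_M = S^1\times_{\mathbb{Z}_2}P'_M$ and $P_{M'}=S^1\times_{\mathbb{Z}_2}P'_{M'}$. The $U(l_1)\times G_2$-diffeomorphism $f$ induces a $G_2$-equivariant diffeomorphism $\bar f\colon X\to X'$ together with an isomorphism $P_M\xrightarrow{\ \sim\ }\bar f^*P_{M'}$ of $G_2$-equivariant circle bundles. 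By the $G_2$-equivariant version of J\"anich's Klassifikationssatz used before, producing $g$ is equivalent to producing an isomorphism $P'_M\xrightarrow{\ \sim\ }\bar f^*P'_{M'}$ of $G_2$-equivariant $\mathbb{Z}_2$-bundles over $X$; the diffeomorphism $g$ is then reconstructed from the pair $(\bar f,\text{this isomorphism})$.

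Both $P'_M$ and $\bar f^*P'_{M'}$ are $\mathbb{Z}_2$-square roots of $P_M$, since $S^1\times_{\mathbb{Z}_2}\bar f^*P'_{M'}=\bar f^*P_{M'}\cong P_M$. Thus the key step is the uniqueness of such a square root. Under the identifications of $G_2$-equivariant $\mathbb{Z}_2$-bundles with $H^1_{G_2}(X;\mathbb{Z}_2)$ and of $G_2$-equivariant circle bundles with $H^2_{G_2}(X;\mathbb{Z})$, the assignment $P'\mapsto S^1\times_{\mathbb{Z}_2}P'$ is the Bockstein $\beta$ of $0\to\mathbb{Z}\xrightarrow{2}\mathbb{Z}\to\mathbb{Z}_2\to0$, whose kernel is the image of the reduction $H^1_{G_2}(X;\mathbb{Z})\to H^1_{G_2}(X;\mathbb{Z}_2)$. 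Hence it suffices to show $H^1_{G_2}(X;\mathbb{Z})=0$. Here the torsion hypothesis enters: for the connected group $U(l_1)$ acting on the connected manifold $M$, the orbit map induces a surjection $\pi_1(M)\twoheadrightarrow\pi_1(X)$, so $H_1(M;\mathbb{Z})\twoheadrightarrow H_1(X;\mathbb{Z})$; as $H_1(M;\mathbb{Z})$ is torsion, $H_1(X;\mathbb{Z})$ is torsion and $H^1(X;\mathbb{Z})=0$. Feeding this into the Serre spectral sequence of $X\to X_{G_2}\to BG_2$, and using that $G_2$ is connected so that $BG_2$ is simply connected and $H^1(BG_2;\mathbb{Z})=0$, gives $E_2^{1,0}=E_2^{0,1}=0$ and hence $H^1_{G_2}(X;\mathbb{Z})=0$. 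Therefore $\beta$ is injective, the square root is unique, and $P'_M\cong\bar f^*P'_{M'}$, which yields $g$.

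For the final assertion, note that over $X-B$ all orbits are principal and the $SO(2l_1)$- and $U(l_1)$-orbits coincide as submanifolds of $M$; there the reconstruction of $g$ from $(\bar f,\cdot)$ can be arranged to agree with $f$, the collar $B$ around $\partial X=M^{U(l_1)}$ being the only place where closing up the sphere bundle over the fixed set may force a modification. I expect the main obstacle to be exactly the square-root uniqueness: obtaining it $G_2$-equivariantly, rather than merely as bundles over the bare orbit space $X$, is what forces the passage to the Borel construction, and verifying that the torsion hypothesis kills the full equivariant group $H^1_{G_2}(X;\mathbb{Z})$ -- and not just $H^1(X;\mathbb{Z})$ -- is the delicate point, handled above through the connectedness of $G_2$.
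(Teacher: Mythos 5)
Your proof is correct, and its skeleton is the same as the paper's: both arguments reduce, via the \(G_2\)-equivariant form of J\"anich's Klassifikationssatz, to showing that the principal \(S^1\)-bundle \(P_M\rightarrow X\) admits at most one restriction of structure group to \(\mathbb{Z}_2\) up to (equivariant) isomorphism, and both extract this from injectivity of a Bockstein homomorphism, which in turn follows from \(H^1(X;\mathbb{Z})=0\), a consequence of the torsion hypothesis via surjectivity of \(\pi_1(M)\rightarrow\pi_1(X)\). Where you genuinely diverge is in how the \(G_2\)-equivariance is secured. The paper stays in ordinary cohomology: it recalls \(c_1(S^1\times_{\mathbb{Z}_2}P)=\delta w_1(P)\), concludes that \(\delta\colon H^1(X;\mathbb{Z}_2)\rightarrow H^2(X;\mathbb{Z})\) is injective, hence that the two reductions are isomorphic as bare \(\mathbb{Z}_2\)-bundles, and then asserts that they agree up to \(G_2\)-equivariant isomorphism -- a compressed step which implicitly rests on connectedness of \(G_2\) (two invariant reductions of \(P_M\) differ by a \(G_2\)-invariant map \(u\colon X\rightarrow S^1\), which, since \(H^1(X;\mathbb{Z})=0\) and \(G_2\) and \(X\) are connected, admits a \(G_2\)-invariant square root). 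You instead move the whole computation into Borel equivariant cohomology, where equivariance of the resulting bundle isomorphism is automatic; the cost is that you must invoke classification results for equivariant bundles -- the identification of \(G_2\)-equivariant \(\mathbb{Z}_2\)-bundles with \(H^1_{G_2}(X;\mathbb{Z}_2)\), which is legitimate for discrete fibers and connected \(G_2\), and a Hattori--Yoshida-type identification of equivariant circle bundles with \(H^2_{G_2}(X;\mathbb{Z})\) -- together with the extra (correct) spectral-sequence step giving \(H^1_{G_2}(X;\mathbb{Z})=0\) from \(E_2^{1,0}=E_2^{0,1}=0\). Note that you do not actually need the full circle-bundle classification: only injectivity of the \(\mathbb{Z}_2\)-bundle classification, functoriality of the equivariant Chern class, and the Bockstein relation applied to Borel constructions. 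So your route makes explicit, at the price of heavier input, exactly the equivariance upgrade that the paper leaves implicit, while the paper's argument is more elementary in what it quotes.
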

\begin{proof}
 The map \(f\) induces a \(G_2\)-equivariant diffeomorphism \(\hat{f}:X=M/SO(2l_1)\rightarrow M'/SO(2l_1)\). We use this map to identify these spaces.
  It follows from \cite[p. 91]{0246.57017} and the equality \(H_1(X;\mathbb{Z})=\pi_1(X)/[\pi_1(X),\pi_1(X)]\) that \(H_1(X;\mathbb{Z})\) is torsion.
  Hence, \(H^1(X;\mathbb{Z})=0\).

  Recall that for the universal principal \(\mathbb{Z}_2\)-bundle \(P\rightarrow \R P^\infty\), the first Chern-class of the principal \(S^1\)-bundle \(S^1\times_{\mathbb{Z}_2}P\rightarrow \R P^\infty\) is given by \(\delta w_1(P)\), where \(\delta:H^1(\R P^\infty;\mathbb{Z}_2)\rightarrow H^2(\R P^\infty;\mathbb{Z})\) is the Bockstein-homomorphism and \(w_1(P)\) is the first Stiefel-Whitney-class of \(P\). 
  By naturallity, this relation also holds for any principal \(\mathbb{Z}_2\)-bundle over \(X\).
  Because \(H^1(X;\mathbb{Z})=0\), the Bockstein-homomorphism \(\delta: H^1(X;\mathbb{Z}_2)\rightarrow H^2(X;\mathbb{Z})\) is injective.

  Hence, the principal \(S^1\)-bundle \(P_M\rightarrow X\) has up to isomorphism at most one restriction of structure group to \(\mathbb{Z}_2\).
  Therefore the two restrictions of the structure group induced by the \(SO(2l_1)\)-actions on \(M,M'\) are the same up to \(G_2\)-equivariant isomorphism.

  Therefore, by the proof of J\"anich's Klassifikationssatz, there is a \(SO(2l_1)\times G_2\)-equivariant diffeomorphism \(g:M\rightarrow M'\), which induces the same map as \(f\) outside a neighbourhood of \(\partial X\).
\end{proof}

Now we turn to the case where there are exceptional \(SO(2l_1)\)-orbits.
Then we have:

\begin{theorem}
\label{sec:case-g_1=so2l_1-2}
Let \(M,M'\) be two simply connected torus manifolds with \(SO(2l_1)\times G_2\)-action. Then \(M\) and \(M'\) are \(SO(2l_1)\times G_2\)-equivariantly diffeomorphic if and only if they are \(U(l_1)\times G_2\)-equivariantly diffeomorphic.
\end{theorem}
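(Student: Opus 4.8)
The \emph{only if} direction is immediate: restricting an $SO(2l_1)\times G_2$-equivariant diffeomorphism to the subgroup $U(l_1)\times G_2$ gives a $U(l_1)\times G_2$-equivariant one. The content is the converse, so suppose $f\colon M\to M'$ is a $U(l_1)\times G_2$-equivariant diffeomorphism. Since $f$ preserves $U(l_1)$-orbit types, it carries the three strata (principal $U(l_1)/U(l_1-1)$, exceptional $U(l_1)/\langle U(l_1-1),\mathbb{Z}_2\rangle$, fixed $U(l_1)/U(l_1)$) of $M$ to the corresponding strata of $M'$ and descends to a $G_2$-equivariant diffeomorphism $\bar f\colon X=M/U(l_1)\to X'=M'/U(l_1)$ matching the fixed boundary $\partial_{\mathrm f}X$ and the exceptional boundary $\partial_{\mathrm e}X$ with their primed counterparts. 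Using $\bar f$ to identify $X=X'$, the problem becomes to compare two $SO(2l_1)\times G_2$-structures refining one and the same $U(l_1)\times G_2$-manifold. When $\partial_{\mathrm e}X=\emptyset$ this is precisely Lemma~\ref{sec:case-g_1=so2l_1-1} (note $\pi_1(M)=1$ makes $H_1(M;\mathbb{Z})$ torsion), so the real work is to incorporate the exceptional orbits. For $l_1=1$ there is nothing to prove, since $SO(2)=U(1)$; thus assume $l_1\ge 2$.

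The plan is to extend the equivariant version of J\"anich's Klassifikationssatz used in Lemma~\ref{sec:case-g_1=so2l_1-1} so as to allow the exceptional stratum. As there, an $SO(2l_1)\times G_2$-structure refining the given $U(l_1)\times G_2$-structure amounts to a $G_2$-equivariant reduction of the structure group of the circle bundle $P_M\to X$ from $S^1$ to $\mathbb{Z}_2$, i.e. a $G_2$-equivariant $\mathbb{Z}_2$-bundle $P'_M$ with $P_M=S^1\times_{\mathbb{Z}_2}P'_M$. The new ingredient is the behaviour over $\partial_{\mathrm e}X$: an exceptional orbit is $SO(2l_1)/S(O(2l_1-1)\times O(1))=\mathbb{R}P^{2l_1-1}=S^{2l_1-1}/\mathbb{Z}_2$, so the $\mathbb{Z}_2$ is already visible in the $U(l_1)$-manifold and the reduction $P'_M$ over $\partial_{\mathrm e}X$ is \emph{determined by the $U(l_1)$-structure alone}. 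Consequently the $SO(2l_1)$-structures on $M$ and, via $f$, on $M'$ furnish two reductions $P'_M,P'_{M'}$ of the single circle bundle $P_M$ that agree over $\partial_{\mathrm e}X$, and it suffices to prove them $G_2$-equivariantly isomorphic rel $\partial_{\mathrm e}X$. J\"anich's classification then produces the desired $SO(2l_1)\times G_2$-equivariant diffeomorphism $g$, which by the relative form of the theorem agrees with $f$ away from a collar of $\partial X$.

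Two such reductions differ by a class in $H^1(X,\partial_{\mathrm e}X;\mathbb{Z}_2)$, and — exactly as in Lemma~\ref{sec:case-g_1=so2l_1-1}, using $c_1(S^1\times_{\mathbb{Z}_2}P')=\delta\,w_1(P')$ with $\delta$ the Bockstein — two reductions of the \emph{same} circle bundle have difference class in $\ker\delta$, where now $\delta\colon H^1(X,\partial_{\mathrm e}X;\mathbb{Z}_2)\to H^2(X,\partial_{\mathrm e}X;\mathbb{Z})$. Hence uniqueness of the reduction rel $\partial_{\mathrm e}X$ follows once $H^1(X,\partial_{\mathrm e}X;\mathbb{Z})=0$, i.e. once this relative group is torsion. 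I would deduce this from $\pi_1(M)=1$ by analysing the orbit map $M\to X$: over the principal locus the fibre is the simply connected sphere $S^{2l_1-1}$, while over $\partial_{\mathrm e}X$ it is $\mathbb{R}P^{2l_1-1}$, and the resulting homotopy and Gysin sequences relate $\pi_1(M)$ to the fundamental group of $X$ doubled along $\partial_{\mathrm e}X$, forcing the relevant relative $H^1$ to vanish.

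The hard part will be precisely this cohomological input, together with the correct formulation of the classification across the exceptional stratum: first, checking that the equivariant $\odot$-construction and the bundle $P_M$ extend over $\partial_{\mathrm e}X$ with the reduction there pinned down by the $\mathbb{R}P^{2l_1-1}$-orbits, so that the only remaining freedom is a reduction \emph{relative} to $\partial_{\mathrm e}X$; and second, the translation of $\pi_1(M)=1$ into $H^1(X,\partial_{\mathrm e}X;\mathbb{Z})=0$. Once these are in place the argument closes exactly as in the no-exceptional-orbit case of Lemma~\ref{sec:case-g_1=so2l_1-1}.
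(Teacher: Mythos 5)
Your overall strategy (restrict attention to the converse, pass to the orbit space, and settle the question via a J\"anich-type classification plus uniqueness of the $\mathbb{Z}_2$-reduction detected by a Bockstein) is the right family of ideas, and your reduction to Lemma~\ref{sec:case-g_1=so2l_1-1} when there are no exceptional orbits is correct. But the two steps you yourself flag as "the hard part" are genuinely missing, and the first rests on an incorrect geometric picture.

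First, the exceptional orbits do not form a boundary piece $\partial_{\mathrm e}X$ of the orbit space. An exceptional orbit $SO(2l_1)/S(O(2l_1-1)\times O(1))=\mathbb{R}P^{2l_1-1}$ has the same dimension as a principal orbit, and its slice representation is $\R^2$ with the stabilizer acting through $\mathbb{Z}_2=\{\pm\id\}$; hence the exceptional stratum maps to a \emph{codimension-two} subset of the interior of $X=M/U(l_1)$ (locally $X$ looks like $\R^2/\{\pm 1\}\times\R^{\dim X-2}$), while $\partial X$ consists of fixed points only. Consequently the circle bundle $P_M$ of the $\odot$-construction does not extend over the exceptional stratum as an $S^1$-bundle: over exceptional points the would-be fibre degenerates to $S^1/\mathbb{Z}_2$. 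So the framework "two reductions of the same circle bundle agreeing over $\partial_{\mathrm e}X$, classified rel $\partial_{\mathrm e}X$ by $H^1(X,\partial_{\mathrm e}X;\mathbb{Z}_2)$" does not exist as stated. The paper instead excises an invariant tubular neighbourhood $B$ of the exceptional set $A$, applies the Lemma~\ref{sec:case-g_1=so2l_1-1} argument to $M-\mathring{B}$ (where all orbits are principal or fixed), and then must --- and does --- supply a separate regluing step: the induced map on $\partial D\rightarrow\partial D'$ is shown to be fibrewise linear on the sphere bundle over $A$, hence extends over $B$. Your proposal contains no counterpart of this extension step.

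Second, the cohomological input is not a formal consequence of $\pi_1(M)=1$. After excising $B$, what is needed is that $H_1(M-\mathring{B};\mathbb{Z})$ is torsion; since $A$ has codimension-two components, removing it can a priori create \emph{free} classes in $H_1$ (meridians), and simple connectivity of $M$ alone does not kill them. What kills them is the specific geometry of exceptional orbits: the normal bundle of a codimension-two component $A_i$ restricted to an orbit $O$ is the twisted bundle $SO(2l_1)/SO(2l_1-1)\times_{\mathbb{Z}_2}\R^2$, so the Poincar\'e dual $\tau_i$ restricts nontrivially to $H^2(\mathbb{R}P^{2l_1-1};\mathbb{Z})=\mathbb{Z}_2$, and the paper runs a divisibility-by-two argument on $\sum a_i\tau_i=0$ (each $a_i$ is divisible by arbitrarily high powers of $2$, hence zero) to get linear independence of the $\tau_i$ and thence that $H^{2n-1}(M,A)$, and so $H_1(M-\mathring{B})$, is torsion. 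This is the actual core of the paper's proof; your appeal to "homotopy and Gysin sequences \ldots forcing the relevant relative $H^1$ to vanish" replaces it with an assertion, so the essential step is absent.
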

\begin{proof}
  In this proof we take all cohomology groups with coefficients in \(\mathbb{Z}\).
  Let \(f:M\rightarrow M'\) be a \(U(l_1)\times G_2\)-equivariant diffeomorphism.
  Moreover, let \(A,A'\) be the union of the exceptional \(U(l_1)\)-orbits in \(M,M'\), respectively.
  Because the \(U(l_1)\)-representation \(N_x(M^{U(l_1)},M)\) is the standard representation for all \(x\in M^{U(l_1)}\), there  are invariant neighbourhoods of \(M^{U(l_1)}\) and \(M'^{U(l_1)}\) which do not contain any exceptional orbit. Hence, \(A,A'\) are closed submanifolds of \(M,M'\).

Denote by \(D,D'\) the unit disc bundle in \(N(A,M)\) and \(N(A',M')\), respectively. Let \(h:D\rightarrow B\subset M\) and \(h':D'\rightarrow B'\subset M'\) be \(SO(2l_1)\times G_2\)-equivariant tubular neighbourhoods of \(A\) and \(A'\).

Then, by Theorems 4.6 and 8.3 of \cite[p. 10,19]{pre05136053} , we may assume that \(f(B)=B'\) and that \(h'^{-1}\circ f\circ h\) is a linear map.

It is sufficient to show the following two things:
\begin{enumerate}
\item\label{item:14} There is a \(SO(2l_1)\times G_2\)-equivariant diffeomorphism \(g:M-\mathring{B}\rightarrow M'-\mathring{B}'\) such that \(g\) and \(f\) induce the same maps on \((\partial B)/U(l_1)\).
\item\label{item:16} The map \(g\) extends to an \(SO(2l_1)\times G_2\)-equivariant diffeomorphism \(M\rightarrow M'\).
\end{enumerate}

If \(H_1(M-\mathring{B})\) is torsion, we may apply the arguments from the proof of Lemma \ref{sec:case-g_1=so2l_1-1} to show (\ref{item:14}).
Therefore we show that \(H_1(M-\mathring{B})\) is torsion.

Let \(A_1,\dots,A_k\) be the orientable components of \(A\) of codimension two in \(M\).
We fix orientations for each of these components and for \(M\).
Let \(\tau_1,\dots,\tau_k\in H^2(M)\) be the Poincar\'e duals for \(A_1,\dots,A_k\).
Because \(H_1(M)=0\), it follows from an universal coefficient theorem and Poincar\'e-duality that
\begin{equation*}
H^2(M)\cong \Hom(H_2(M),\mathbb{Z}) \cong \Hom(H^{2n-2}(M),\mathbb{Z}),
\end{equation*}
where an isomorphism is given by
\begin{equation*}
  \alpha \mapsto (\beta \mapsto \langle \beta \alpha,[M]\rangle). 
\end{equation*}
Here we have \(\dim M=2n\). In particular, \(H^2(M)\) is torsion free.

We claim that the \(\tau_1,\dots,\tau_k\) are linear independent.
Let \(a_1,\dots,a_k\in \mathbb{Z}\) such that
\begin{equation}
  \label{eq:16}
  0=\sum_{i=1}^k a_i \tau_i.
\end{equation}
Then we have \(0=a_i\iota_{A_i}^*\tau_i\) where \(\iota_{A_i}:A_i\rightarrow M\) is the inclusion.
By restricting to an orbit \(O\) contained in \(A_i\), we get
\begin{equation*}
  0=a_i\iota_O^*\iota_{A_i}^*\tau_i \in H^2(SO(2l_1)/S(O(2l_1-1)\times O(1)))=\mathbb{Z}_2.
\end{equation*}
 Because \(N(A_i,M)|_O= SO(2l_1)/SO(2l_1-1)\times_{\mathbb{Z}_2}\R^2\) with \(\mathbb{Z}_2\) acting on \(\R^2\) by multiplication with \(-1\), it follows that \(\iota_O^*\iota_{A_i}^*\tau_i\neq 0\).
Therefore \(a_i\) is divisible by two.

Hence, we may replace \(a_i\mapsto \frac{1}{2}a_i\) in (\ref{eq:16}).
Since the above arguments then hold for the new \(a_i\), we see that the original \(a_i\) are divisible by arbitrary high powers of two.
Therefore they must vanish.

There is an exact sequence
\begin{equation*}
  H^{2n-2}(M)\rightarrow H^{2n-2}(A)\rightarrow H^{2n-1}(M,A)\rightarrow 0.
\end{equation*}
Because, by \cite[p. 185]{0246.57017}, there are no components of \(A\), which have codimension one in \(M\), there is an isomorphism
\begin{equation*}
  H^{2n-2}(A)\cong\mathbb{Z}^k\oplus (\mathbb{Z}_2)^{k_1},
\end{equation*}
where \(k_1\) is the number of non-orientable components of codimension two of \(A\).
Let
\begin{align*}
  \phi:H^{2n-2}(A)&\rightarrow \mathbb{Z}^k\\
  \alpha&\mapsto (\langle\alpha,[A_1]\rangle,\dots,\langle\alpha,[A_k]\rangle).
\end{align*}
Because the \(\tau_1,\dots,\tau_k\) are linear independent, it follows that
\(\phi\circ \iota^*: H^{2n-2}(M)\rightarrow \mathbb{Z}^k\) has rank \(k\).

Therefore, from the exactness of the above sequence, it follows that \(H^{2n-1}(M,A)\) is torsion.
By Poincar\'e-duality and excision, it follows that \(H_1(M-\mathring{B})\) is torsion.
Hence we have proven (\ref{item:14}).

Now we prove (\ref{item:16}). 
By Theorem 9.4 of \cite[p. 24]{pre05136053}, it is sufficient to show that
\begin{equation*}
  k=h'^{-1}\circ g \circ h: \partial D\rightarrow \partial D'
\end{equation*}
extends to an \(SO(2l_1)\times G_2\)-equivariant diffeomorphism \(D\rightarrow D'\).

Let \(O\) be an \(SO(2l_1)\)-orbit in \(A\) and \(S\rightarrow O\) be the restriction of the sphere bundle \(\partial D\rightarrow A\) to \(O\).
Because \(f\) and \(g\) induce the same maps on the orbit space \((\partial B)/U(l_1)\) and \(S\) is \(SO(2l_1)\)-invariant, we have \(k(S)=h'^{-1}\circ f \circ h(S)=S'\).
Because \(h'^{-1}\circ f\circ h:D\rightarrow D'\) is a linear map, we see that \(S'\) is the restriction of the sphere bundle \(\partial D'\rightarrow A'\) to an \(SO(2l_1)\)-orbit \(O'\).

We may choose \(SO(2l_1)\)-equivariant bundle isomorphisms \(k_1:SO(2l_1)/SO(2l_1-1)\times_{\mathbb{Z}_2}S^m \rightarrow S\) and \(k_1':SO(2l_1)/SO(2l_1-1)\times_{\mathbb{Z}_2}S^m \rightarrow S'\).
Because  \(f\) and \(g\) induce the same maps on the orbit space \(S/SO(2l_1)=S^m/\mathbb{Z}_2=\R P^m\) and \(h'^{-1}\circ f\circ h\) is a linear map, it follows that \(k_1'^{-1}\circ g \circ k_1\) is of the form
\begin{equation*}
  [gSO(2l_1-1),x]\mapsto [gzSO(2l_1-1),\pm Ax]= [gSO(2l_1-1), \pm Ax],
\end{equation*}
where \(z\in S(O(2l_1-1)\times O(1))/SO(2l_1-1)=\mathbb{Z}_2\) and \(A\in O(m+1)\).
Therefore \(k\) is linear on each fiber. Hence, it extends to an \(SO(2l_1)\times G_2\)-equivariant diffeomorphism \(D\rightarrow D'\).
\end{proof}

Let \(M\) be a simply connected torus manifold with \(SO(2l_1)\times G_2\)-action.
By Theorem~\ref{sec:case-g_1-=-1}, there is an admissible triple \((\psi,N,A)\) corresponding to \(M\) equipped with the action of \(SU(l_1)\times S\times G_2\) as above.
The admissible triple \((\psi,N,A)\) determines the \(SU(l_1)\times S\times G_2\)-equivariant diffeomorphism type of \(M\).
With Theorem~\ref{sec:case-g_1=so2l_1-2} we see that the \(SO(2l_1)\times G_2\)-equivariant diffeomorphism type of \(M\) is determined by \((\psi,N,A)\).

\begin{lemma}
\label{sec:case-g_1=so2l_1}
  Let \(M\) be a torus manifold with \(G_1\times G_2\)-action, where \(G_1=SO(2l_1)\) is elementary and \(G_2\) is a not necessary connected Lie-group.
  If \(M^{SO(2l_1)}\) is connected then \(G_2\) acts orientation preserving on \(N(M^{SO(2l_1)},M)\).
  Therefore \(G_2\) acts orientation preserving on \(M\) if and only if it acts orientation preserving on \(M^{SO(2l_1)}\).
\end{lemma}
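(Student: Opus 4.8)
The plan is to reduce everything to a single linear-algebra fact about the standard representation and then to assemble orientations along the fixed-point set. First I would recall, from the orbit-type analysis at the beginning of Section~\ref{sec:so2l} together with Lemmas~\ref{lem:iso} and~\ref{lem:iso2}, that the slice representation at every point $x\in M^{SO(2l_1)}$ is the standard real $SO(2l_1)$-representation $W$; in particular $N_x(M^{SO(2l_1)},M)\cong W$ is even-dimensional and carries the canonical $SO(2l_1)$-invariant orientation. Since the normal bundle of a fixed-point set is equivariant and may be trivialised equivariantly, its transition functions are $SO(2l_1)$-equivariant automorphisms of $W$. Thus the canonical fibrewise orientations are compatible and endow $N=N(M^{SO(2l_1)},M)$ with a canonical orientation as a vector bundle.

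The crux is the observation that \emph{every} $SO(2l_1)$-equivariant linear isomorphism between two copies of $W$ has positive determinant. Indeed, by Schur's lemma the algebra of $SO(2l_1)$-equivariant endomorphisms of $W$ is a real division algebra $D\in\{\mathbb{R},\mathbb{C},\mathbb{H}\}$, and viewing $W$ as a $D$-module, an invertible element acts on the even-dimensional real space $W$ with positive determinant ($c^{2l_1}$, $|z|^{2l_1}$ or $|q|^{2l_1}$, respectively); the even dimension of $W$ is exactly what forces positivity. Now, since $G_2$ commutes with $G_1=SO(2l_1)$ it preserves $M^{G_1}=M^{SO(2l_1)}$ and acts on $N$ by bundle automorphisms covering its action on the base, and for $g\in G_2$ the differential $Dg_x\colon N_x\to N_{gx}$ is $SO(2l_1)$-equivariant. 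Hence $Dg_x$ has positive determinant relative to the canonical orientations on $N_x$ and $N_{gx}$, so $G_2$ preserves the canonical orientation of $N$; this is the first assertion.

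For the ``therefore'', I would use that $M$ is oriented and $N$ is an oriented bundle, so along $M^{SO(2l_1)}$ the splitting $TM|_{M^{SO(2l_1)}}=TM^{SO(2l_1)}\oplus N$ induces an orientation on $M^{SO(2l_1)}$ characterised by $\mathrm{or}(M)=\mathrm{or}(M^{SO(2l_1)})\cdot\mathrm{or}(N)$; connectedness of $M^{SO(2l_1)}$ makes this a single global orientation. Since $g\in G_2$ maps $M^{SO(2l_1)}$ to itself, $Dg_x$ is block-diagonal with respect to this splitting, so its sign relative to $\mathrm{or}(M)$ is the product of its signs on $TM^{SO(2l_1)}$ and on $N$. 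By the first part the latter is always $+1$, whence $g$ preserves $\mathrm{or}(M)$ if and only if it preserves $\mathrm{or}(M^{SO(2l_1)})$. The only genuinely delicate point is the determinant computation of the previous paragraph; once it is in hand, the remainder is routine bookkeeping with the normal splitting.
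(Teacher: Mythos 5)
Your proof is correct and takes essentially the same route as the paper: both reduce the first assertion to the fact that, by Schur's lemma, an \(SO(2l_1)\)-equivariant automorphism of the standard real representation \(W\) is a scalar multiplication whose determinant on the even-dimensional space \(W\) is positive, and both then deduce the second assertion from the splitting \(TM|_{M^{SO(2l_1)}} = TM^{SO(2l_1)}\oplus N(M^{SO(2l_1)},M)\). The paper extracts the scalar by complexifying \(W\) and applying complex Schur, comparing the fibres at \(x\) and \(gx\) via an invariant isomorphism obtained from connectedness, whereas you use the real division-algebra form of Schur's lemma and package the conclusion as a canonical bundle orientation; this is only a cosmetic difference.
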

\begin{proof}
  Let \(g\in G_2\), \(x\in M^{SO(2l_1)}\) and \(y=gx\in M^{SO(2l_1)}\).
  Because \(M^{SO(2l_1)}\) is connected there is a orientation preserving \(SO(2l_1)\)-invariant isomorphism
  \begin{equation*}
    N_x(M^{SO(2l_1)},M)\cong N_y(M^{SO(2l_1)},M).
  \end{equation*}
  Therefore \(g: N_x(M^{SO(2l_1)},M)\rightarrow N_y(M^{SO(2l_1)},M)\) induces an automorphism \(\phi\) of the \(SO(2l_1)\)-representation \(N_x(M^{SO(2l_1)},M)\) which is orientation preserving if and only if \(g\) is orientation preserving.

  Because, by Lemma~\ref{lem:iso2}, \(N_x(M^{SO(2l_1)},M)\) is just the standard real representation of \(SO(2l_1)\), its complexification \(N_x(M^{SO(2l_1)},M)\otimes_\R \C\) is an irreducible complex representation.
  Therefore, by Schur's Lemma, there is a \(\lambda \in \C -\{0\}\) such that for all \(a\in N_x(M^{SO(2l_1)},M)\)
  \begin{equation*}
    \phi(a)\otimes 1= \phi_\C(a\otimes 1)= a\otimes \lambda.
  \end{equation*}
This equation implies that \(\lambda \in \R-\{0\}\) and \(\phi(a)=\lambda a\).
Therefore \(\phi\) is orientation preserving.
\end{proof}

\section{The case $G_1=SO(2l_1 +1)$}
\label{sec:so}

In this section we discuss actions of groups, which have a covering group, whose action on \(M\) factors through \(\tilde{G}=G_1\times G_2\) with \(G_1=SO(2l_1+1)\) elementary.
In the case \(G_1=SO(3)\) we also assume \(\#\mathfrak{F}_1=1\) or that the principal orbit type of the \(SO(3)\)-action on \(M\) is given by \(SO(3)/SO(2)\).

It is shown that a torus manifold with  \(\tilde{G}\)-action is a product of a sphere and a torus manifold with \(G_2\)-action or the blow up along the fixed points of \(G_1\) is a fiber bundle over a real projective space.

We assume that \(T_1=T\cap G_1\) is the standard maximal torus of \(G_1\).

\subsection{The $G_1$-action on $M$}
\label{sec:so:g_1onM}

\begin{Lemma}
  \label{lem:so1}
  Let \(\tilde{G}=G_1\times G_2\) with \(G_1=SO(2l_1+1)\), \(M\) a torus manifold with \(G\)-action such that \(G_1\) is elementary.
  If \(l_1>1\) there is, by Lemma~\ref{sec:g-action-m}, a component \(N_1\) of \(\bigcap_{M_i\in \mathfrak{F}_1}M_i\) with \(N_1^T\neq \emptyset\).
  If \(l_1=1\) let \(N_1\) be a characteristic submanifold belonging to \(\mathfrak{F}_1\).
Then 
\begin{enumerate}  
\item \(N_1\) is a component of \(M^{SO(2l_1)}\).
\item  \(M=G_1N_1\).
\end{enumerate}
\end{Lemma}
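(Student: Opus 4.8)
The plan is to follow the pattern of the proof of Lemma~\ref{lem:poly}, with the stabilizer $SO(2l_1)=G_{1x}$ playing the role of $H_2$ and $G_1=SO(2l_1+1)$ the role of $H_0$; both assertions will rest on a single tangent-space computation at a $T$-fixed point of $N_1$. First I would fix $x\in N_1^T$, which exists by hypothesis (for $l_1=1$) or by Lemma~\ref{sec:g-action-m} (for $l_1>1$). Since $G_1=SO(2l_1+1)$ has no $G_1$-fixed $T$-fixed point by Lemma~\ref{lem:iso2}, Lemma~\ref{lem:iso} applies: after conjugating $T$ by an element of $G_1$ I may assume that $G_{1x}=SO(2l_1)$ is standard and that $G_1x\cong S^{2l_1}$ is a whole component of $M^{T^{l_0}}$. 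Hence $N_x(G_1x,M)=N_x(M^{T^{l_0}},M)$ carries a $T^{l_0}$-invariant complex structure and splits into $l_0$ pairwise distinct complex $T^{l_0}$-weight lines. As $SO(2l_1)$ commutes with $T^{l_0}$ it preserves each of these lines, and for $l_1>1$ the group $SO(2l_1)$ admits no nontrivial complex representation of dimension at most one; therefore $SO(2l_1)$ acts trivially on $N_x(G_1x,M)$. Since the tangent representation $T_x(G_1x)\cong\R^{2l_1}$ is the standard representation of $SO(2l_1)$ and so has no nonzero fixed vector, I obtain $(T_xM)^{SO(2l_1)}=N_x(G_1x,M)$. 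On the other hand, identifying the weight spaces $N_x(M_i,M)$ for $M_i\in\mathfrak{F}_1$ with $T_x(G_1x)$ and the remaining $l_0$ weight spaces with the normal space, the weight-space decomposition of $T_xM$ gives $T_xN_1=\bigcap_{M_i\in\mathfrak{F}_1}T_xM_i=N_x(G_1x,M)$, so that $T_xN_1=(T_xM)^{SO(2l_1)}$.

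Next, to prove (1), I let $P$ denote the component of $M^{SO(2l_1)}$ containing $x$. For each $M_i\in\mathfrak{F}_1$ we have $\lambda(M_i)\subset T_1\subset SO(2l_1)$, whence $P\subset M^{\lambda(M_i)}$; since $P$ is connected and contains $x\in M_i$, and $M_i$ is precisely the component of $M^{\lambda(M_i)}$ through $x$, it follows that $P\subset M_i$. Thus $P\subset\bigcap_{M_i\in\mathfrak{F}_1}M_i$, and connectedness forces $P\subset N_1$. By the tangent identity above $\dim P=\dim N_1=2l_0$, and $P$ is closed in $M$; hence $P$ is open and closed in $N_1$, so $P=N_1$ by connectedness of $N_1$. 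This shows $N_1$ is a component of $M^{SO(2l_1)}$. For $l_1=1$ one argues more directly: $N_1$ is a characteristic submanifold in $\mathfrak{F}_1$ with $\lambda(N_1)=T_1=SO(2)=SO(2l_1)$ by the discussion in section~\ref{sec:Gact}, so it is a component of $M^{SO(2l_1)}$ by definition.

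For (2) I would mirror the concluding argument of Lemma~\ref{lem:poly}. Writing $N_1^{G_1}=N_1\cap M^{G_1}$, the identification of $N_1$ with a component of $M^{SO(2l_1)}$ gives $T_yN_1=(T_yM)^{SO(2l_1)}$ for every $y\in N_1$; for $y\in N_1-N_1^{G_1}$ we have $G_{1y}^0=SO(2l_1)$ by Lemma~\ref{sec:lie-groups-1}, so $T_y(G_1y)\cong\R^{2l_1}$ is the standard representation and $T_yM=T_yN_1\oplus T_y(G_1y)$; that is, $N_1$ meets the orbit $G_1y$ transversely. By Lemma~\ref{sec:lie-groups-2} the set $G_1N_1-N_1^{G_1}=G_1(N_1-N_1^{G_1})$ is open in $M$, while $G_1N_1$ is compact and hence closed. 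Since the normal representation of $G_1=SO(2l_1+1)$ at any $G_1$-fixed point is nontrivial of real dimension at least $2l_1+1\geq 3$, we have $\codim M^{G_1}\geq 3$, so $M-M^{G_1}$ is connected. Therefore the nonempty open and closed subset $G_1N_1-N_1^{G_1}$ equals $M-M^{G_1}$, which gives $M=G_1N_1\amalg(M^{G_1}-N_1^{G_1})$. As $N_1^{G_1}$ is a union of components of $M^{G_1}\subset M^{SO(2l_1)}$, both $G_1N_1$ and $M^{G_1}-N_1^{G_1}$ are closed; since $M$ is connected and $G_1N_1\neq\emptyset$, the second summand must be empty, so $M=G_1N_1$.

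The main obstacle is the representation-theoretic heart of (1): showing that $SO(2l_1)$ acts trivially on the $T^{l_0}$-normal bundle of the orbit $G_1x$. This is exactly where the hypothesis $l_1>1$ enters, since $SO(2)$ does have nontrivial one-dimensional complex representations, which is why the case $l_1=1$ must be treated separately. Once the resulting identity $T_xN_1=(T_xM)^{SO(2l_1)}$ is available, the local-to-global matching in (1) and the transversality-plus-connectedness bookkeeping in (2) are routine adaptations of the arguments already carried out for the $SU$ case in Lemma~\ref{lem:poly}.
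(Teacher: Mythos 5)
For \(l_1>1\), and for assertion (2) in all cases, your argument is essentially the paper's own proof: the same decomposition of the \(SO(2l_1)\times T_2\)-representation \(T_xM\) at a \(T\)-fixed point (triviality of the \(SO(2l_1)\)-action on \(N_x(G_1x,M)\) coming from the fact that \(SO(2l_1)\), \(l_1>1\), has no nontrivial one-dimensional complex representations), followed by the same transversality-plus-connectedness bookkeeping via Lemma~\ref{sec:lie-groups-2} and \(\codim M^{G_1}\geq 3\). Your route to (1) through the containments \(P\subset M_i\) is a cosmetic variant of the paper's route through \(M^{T_1}\); note only that your unproved assertion \(\lambda(M_i)\subset T_1\) for \(M_i\in\mathfrak{F}_1\) needs the observation that \(T_2\) acts almost effectively on \(N_x(G_1x,M)\) (so that the \(T_2\)-projection of \(\lambda(M_i)\) is a connected subgroup of a finite group, hence trivial); this is exactly the paper's explicit ``dimension reasons'' step and is available from your setup.

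The genuine gap is your treatment of \(l_1=1\) in assertion (1). You dispose of it by claiming \(\lambda(N_1)=T_1\) ``by the discussion in section~\ref{sec:Gact}'', but that discussion establishes this only in the subcase \(\#\mathfrak{F}_1=1\). The lemma also covers \(\#\mathfrak{F}_1=2\), where the standing hypothesis of section~\ref{sec:so} (equivalently Convention~\ref{sec:g-action-m-3}) is that the principal orbit type of the \(SO(3)\)-action is \(SO(3)/SO(2)\) --- a hypothesis your proof never invokes, and which cannot be dispensed with, since \(\lambda(N_1)=T_1\) is precisely the dividing line between \(SO(3)\)-type and \(SU(2)\)-type elementary factors of rank one. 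Concretely, for the \(SU(2)\times S^1\)-action on \(\C P^2\) given by \((A,s)\cdot[z_0:z_1:z_2]=[A(z_0,z_1):sz_2]\), the factor \(SU(2)\) is elementary with \(\#\mathfrak{F}_1=2\), but the two characteristic submanifolds \(\{z_0=0\},\{z_1=0\}\in\mathfrak{F}_1\) have \(\lambda=\{(\diag(t,t^{-1}),t^{\mp1})\}\not\subset T_1\), and neither is a component of \(M^{T_1}\) (which consists of three points); so the analogue of conclusion (1) fails there. Hence in the subcase \(\#\mathfrak{F}_1=2\) you must argue as the paper does: since \(SO(3)/SO(2)\) is the principal orbit type, the slice theorem forces \(G_{1x}=SO(2)\) to act trivially on the slice \(N_x(G_1x,M)\) (a nontrivial slice representation would produce principal isotropy groups properly contained in \(SO(2)\), cf.\ \cite[p. 181]{0246.57017}); only with this input does \(T_xM\) split as \(\bigoplus_i V_i\oplus W\), giving \(\lambda(N_1)=T_1\), after which the rest of your argument goes through.
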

\begin{proof}
  Let \(x \in N_1^T\). Then, by Lemmas \ref{lem:iso},~\ref{lem:iso2} and Remark \ref{sec:g-action-m-1},  \(G_{1x}=SO(2l_1)\).
  Let \(T_2\) be the maximal torus \(T\cap G_2\) of \(G_2\).
  On the tangent space of \(M\) in \(x\) we have the \(SO(2l_1)\times T_2\)-representation
  \begin{equation*}
    T_xM=N_x(G_1x,M)\oplus T_xG_1x.
  \end{equation*}
  By Lemma~\ref{lem:iso}, \(T_2\) acts trivially on \(G_1x\).
  Moreover, \(T_2\) acts almost effectively on \(N_x(G_1x,M)\).
  Therefore it follows by dimension reasons that
 \(N_x(G_1x,M)\) splits as a sum of complex one dimensional  \(SO(2l_1)\times T_2\)-representations.
  If \(l_1>1\), \(SO(2l_1)\) has no non-trivial one-dimensional complex representation.
  Therefore we have
  \begin{equation}
    \label{eq:3}
    T_xM=\bigoplus_i V_i \oplus W,
  \end{equation}
  where the \(V_i\) are one-dimensional complex representations of \(T_2\) and \(W\) is the standard real representation of \(SO(2l_1)\).

  If \(l_1=1\) and \(\#\mathfrak{F}_1=2\), then \(SO(2l_1)\) acts trivially on \(N_x(G_1x,M)\) because \(SO(3)/SO(2)\) is the principal orbit type of the \(SO(3)\)-action on \(M\) \cite[p. 181]{0246.57017}.

  If \(l_1=1\) and \(\#\mathfrak{F}_1=1\), then, by the discussion leading to Convention~\ref{sec:g-action-m-3}, \(SO(2)\) acts trivially on \(N_x(G_1x,M)\).
  Therefore in these cases \(T_xM\) splits as in~(\ref{eq:3}).

  Because \(N_x(G_1x,M)\) is the tangent space of \(N_1\) in \(x\) the maximal torus \(T_1\) of \(G_1\) acts trivially on \(N_1\).
  Therefore \(N_1\) is the component of \(M^{T_1}\), which contains \(x\).
  Because \(T_xN_1=(T_xM)^{T_1}=(T_xM)^{SO(2l_1)}\), \(N_1\) is a component of \(M^{SO(2l_1)}\).

  Now we prove (2).
  Let \(y \in N_1\). Then there are the following possibilities:
  \begin{itemize}
  \item \(G_{1y}=G_1\).
  \item \(G_{1y}=S(O(2l_1)\times O(1))\) and  \(\dim G_1y=2l_1\).
  \item \(G_{1y}=SO(2l_1)\) and \(\dim G_1y=2l_1\).
  \end{itemize}
  If \(g \in G_1\) such that \(gy\in N_1\), then 
  \begin{equation*}
    gG_{1y}g^{-1}=G_{1gy}\in \{S(O(2l_1)\times O(1)),SO(2l_1),G_1\}
  \end{equation*}
  and
  \begin{equation*}
    g \in N_{G_1}G_{1y}=
    \begin{cases}
      G_1& \text{if }y \in M^{G_1}\\
      S(O(2l_1)\times O(1))& \text{if }y \not\in M^{G_1}.\\
    \end{cases}
  \end{equation*}
  Therefore \(G_1y\cap N_1\subset S(O(2l_1)\times O(1))y\) contains at most two elements. 
  If \(y\) is not fixed by \(G_1\), then one sees as in the proof of Lemma~\ref{lem:poly} that \(G_1y\) and \(N_1\) intersect transversely in \(y\).

  Therefore \(G_1(N_1-N_1^{G_1})\) is open in \(M-M^{G_1}\) by Lemma~\ref{sec:lie-groups-2}.
  Because \(M^{G_1}\) has codimension at least three, \(M-M^{G_1}\) is connected.
  But 
  \begin{equation*}
    G_1\left(N_1-N_1^{G_1}\right)=G_1N_1\cap \left(M-M^{G_1}\right)
  \end{equation*}
  is also closed in \(M-M^{G_1}\).
  Hence,
  \begin{equation*}
  M-M^{G_1}= G_1\left(N_1-N_1^{G_1}\right) = G_1N_1 -N_1^{G_1}.
\end{equation*}
Therefore one sees as in the proof of Lemma~\ref{lem:poly} that
\begin{equation*}
  M = G_1N_1\amalg \left(M^{G_1}-N_1^{G_1}\right).
\end{equation*}
Because \(G_1N_1\) and \(M^{G_1}-N_1^{G_1}\) are closed in \(M\) the statement follows.
\end{proof}

\begin{cor}
  \label{cor:so3}
  If in the situation of Lemma   \ref{lem:so1} the \(G_1\)-action on \(M\) has no fixed point in \(M\), then \(M= SO(2l_1+1)/SO(2l_1) \times N_1\) or \(M=SO(2l_1+1)/SO(2l_1) \times_{\mathbb{Z}_2} N_1\),
where \(\mathbb{Z}_2= S(O(2l_1)\times O(1)) / SO(2l_1)\).

In the second case the \(\mathbb{Z}_2\)-action on \(N_1\) is orientation reversing.

If \(l_1=1\) and \(\#\mathfrak{F}_1=1\), then we have \(M=SO(2l_1+1)/SO(2l_1) \times_{\mathbb{Z}_2} N_1\).
If \(l_1=1\) and \(\#\mathfrak{F}_1=2\), then we have \(M=SO(2l_1+1)/SO(2l_1) \times N_1\).
\end{cor}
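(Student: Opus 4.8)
The plan is to realize \(M\) as a twisted product over \(N_1\) and to read off the two cases from the behaviour of the normalizer of \(SO(2l_1)\) on the fixed component \(N_1\). By Lemma~\ref{lem:so1} we have \(M=G_1N_1\), and \(N_1\) is a component of \(M^{SO(2l_1)}\); moreover the proof of that lemma shows that for \(y\in N_1\) with \(y\notin M^{G_1}\) one has \(N_{G_1}(G_{1y})=S(O(2l_1)\times O(1))\) and that the orbit \(G_1y\) meets \(N_1\) transversally at \(y\). Fix a representative \(n\) of the nontrivial element of \(\mathbb{Z}_2=S(O(2l_1)\times O(1))/SO(2l_1)=N_{G_1}(SO(2l_1))/SO(2l_1)\). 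Since \(n\) normalizes \(SO(2l_1)\) it preserves \(M^{SO(2l_1)}\), hence either \(nN_1=N_1\) or \(nN_1\) is a component of \(M^{SO(2l_1)}\) disjoint from \(N_1\). These two alternatives will give the two cases of the statement.

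First I would set \(K=\{g\in N_{G_1}(SO(2l_1)):gN_1=N_1\}\) and consider the \(G_1\)-equivariant map \(\Phi:G_1\times_K N_1\to M\), \([g,y]\mapsto gy\). It is surjective because \(M=G_1N_1\). For injectivity, if \(g_1y_1=g_2y_2\) with \(y_i\in N_1\), then \(g=g_2^{-1}g_1\) satisfies \(gy_1=y_2\in N_1\), so \(g\in N_{G_1}(G_{1y_1})\subseteq S(O(2l_1)\times O(1))\) by the computation recalled above; if \(g\notin SO(2l_1)\) then \(y_2\in N_1\cap nN_1\), which is empty in the case \(nN_1\neq N_1\) and forces \(g\in K\) in the case \(nN_1=N_1\), so in both cases \([g_1,y_1]=[g_2,y_2]\). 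Finally \(\Phi\) is a local diffeomorphism: by equivariance it suffices to check at \([e,y]\), where transversality of \(G_1y\) and \(N_1\) together with the dimension count \(\dim G_1y+\dim N_1=2l_1+(\dim M-2l_1)=\dim M\) shows the differential is an isomorphism. Hence \(\Phi\) is an equivariant diffeomorphism. When \(nN_1\neq N_1\) we get \(K=SO(2l_1)\), which acts trivially on \(N_1\subseteq M^{SO(2l_1)}\), so \(M\cong (G_1/SO(2l_1))\times N_1=S^{2l_1}\times N_1\); when \(nN_1=N_1\) we get \(K=S(O(2l_1)\times O(1))\) acting through \(\mathbb{Z}_2\), so \(M\cong(S^{2l_1}\times N_1)/\mathbb{Z}_2=S^{2l_1}\times_{\mathbb{Z}_2}N_1\), where \(\mathbb{Z}_2\) acts nontrivially on \(N_1\) since the principal \(G_1\)-orbit type is \(S^{2l_1}\).

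It remains to identify the orientation behaviour and the low-rank cases. Right translation by \(n\) on \(SO(2l_1+1)/SO(2l_1)\cong S^{2l_1}\subseteq\mathbb{R}^{2l_1+1}\) is the antipodal map \(v\mapsto -v\), which is orientation reversing because \(2l_1\) is even. In the second case \(S^{2l_1}\times N_1\to M\) is a double cover of the orientable manifold \(M\) (and \(N_1\) is orientable, being an intersection of characteristic submanifolds), so its deck transformation, which is the antipodal map on the sphere and the involution \(\sigma\) on \(N_1\), preserves orientation; since the sphere factor reverses it, \(\sigma\) must reverse the orientation of \(N_1\). For \(l_1=1\), \(N_1\) is the characteristic submanifold \(M_1\in\mathfrak{F}_1\) and \(n\) represents the nontrivial element \(w\) of \(W(G_1)\); thus \(nN_1=N_1\) iff \(w(M_1)=M_1\) iff \(\#\mathfrak{F}_1=1\), giving \(M=S^{2}\times_{\mathbb{Z}_2}N_1\), whereas \(\#\mathfrak{F}_1=2\) means \(w(M_1)=M_2\neq M_1\), i.e.\ \(nN_1\neq N_1\), giving \(M=S^2\times N_1\).

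I expect the main obstacle to be the rigorous identification \(M\cong G_1\times_K N_1\): one must combine surjectivity from \(M=G_1N_1\), the injectivity bookkeeping that distinguishes \(nN_1=N_1\) from \(nN_1\neq N_1\) (using disjointness of distinct components of \(M^{SO(2l_1)}\)), and the transversality from Lemma~\ref{lem:so1} to upgrade the equivariant continuous bijection to a diffeomorphism. The orientation computation and the \(\mathfrak{F}_1\) dichotomy are then comparatively routine.
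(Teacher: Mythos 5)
Your proof is correct and follows essentially the same route as the paper: the dichotomy according to whether the nontrivial class of \(S(O(2l_1)\times O(1))/SO(2l_1)\) preserves the component \(N_1\) or moves it to a disjoint component, the identification of \(M\) with \(S^{2l_1}\times N_1\) or \(S^{2l_1}\times_{\mathbb{Z}_2}N_1\) using \(M=G_1N_1\) and the stabilizer computation from Lemma~\ref{lem:so1}, the orientation-reversing claim via orientability of \(M\), and the \(l_1=1\) cases via the \(W(G_1)\)-action on \(\mathfrak{F}_1\). The only difference is presentational: you verify explicitly that \(G_1\times_K N_1\to M\) is a diffeomorphism (injectivity bookkeeping plus transversality for the differential), whereas the paper establishes the same identification by matching equivalence relations and referring to the argument of Corollary~\ref{cor:hhh}.
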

\begin{proof}
  Let \(g\in  S(O(2l_1)\times O(1))=N_{G_1}SO(2l_1)\).
  Then \(gN_1\)
  is a component of \(M^{SO(2l_1)}\).
  Because \(N_1\subset M^{SO(2l_1)}\), \(gN_1\) only depends on the class 
  \begin{equation*}
    gSO(2l_1)\in S(O(2l_1)\times O(1))/SO(2l_1)=\mathbb{Z}_2.
  \end{equation*}
  Therefore there are two cases
  \begin{enumerate}
  \item\label{item:10} There is a \(g\in S(O(2l_1)\times O(1))\) such that \(gN_1\neq N_1\).
  \item\label{item:11} The submanifold \(N_1\) is \(S(O(2l_1)\times O(1))\)-invariant, i.e.  \(gN_1=N_1\) for all  \(g\in S(O(2l_1)\times O(1))\).
  \end{enumerate}

  If \(l_1=1\) and \(\#\mathfrak{F}_1=1 \), then \(N_1\) is the only characteristic submanifold of \(M\) belonging to \(\mathfrak{F}_1\).
  Therefore only the second case occurs.

  If \(l_1=1\) and \(\#\mathfrak{F}_1=2\), then there is a \(g_1\in N_{G_1}T_1\) such that \(N_1\neq g_1N_1\).
Therefore we are in the first case.

  In general we have \(M=G_1\times N_1/\sim\) with
  \begin{align*}
    &&(g_1,y_1)&\sim(g_2,y_2)\\
    &\Leftrightarrow& g_1y_1&=g_2y_2\\
    &\Leftrightarrow& g_2^{-1}g_1y_1&=y_2\\
    &\Leftrightarrow& g_2^{-1}g_1\in S(O(2l_1)\times O(1)) &\text{ and } g_2^{-1}g_1y_1=y_2
  \end{align*}
In  case~(\ref{item:10}) the last statement is equivalent to 
\begin{equation*}
  g_2^{-1}g_1\in SO(2l_1) \text{ and } g_2^{-1}g_1y_1=y_2.
\end{equation*}
Therefore we get \(M=SO(2l_1+1)/SO(2l_1)\times N_1\).

In case~(\ref{item:11}) we have as in the proof of Corollary \ref{cor:hhh} 
\begin{equation*}
  M=SO(2l_1+1)\times_{S(O(2l_1)\times O(1))}N_1 = SO(2l_1+1)/SO(2l_1)\times_{\mathbb{Z}_2}N_1.
\end{equation*}
This equation implies that \(M\) is the orbit space of a diagonal \(\mathbb{Z}_2\)-action on \(SO(2l_1+1)/SO(2l_1)\times N_1\).
Because \(M\) is orientable this action has to be orientation preserving.
But the \(\mathbb{Z}_2\)-action on  \(SO(2l_1+1)/SO(2l_1)\) is orientation reversing.
Therefore the \(\mathbb{Z}_2\)-action on \(N_1\) is also orientation reversing.
\end{proof}

\begin{cor}
  \label{cor:so2}
  In the situation of Lemma \ref{lem:so1}, \(M^{G_1}\subset N_1\) is empty or has codimension one in \(N_1\).
\end{cor}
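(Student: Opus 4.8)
The plan is to work locally at a point of $M^{G_1}$ via the slice theorem, so that the whole statement becomes a computation with representations of $G_1=SO(2l_1+1)$. First I would record the containment $M^{G_1}\subseteq N_1$: since $M=G_1N_1$ by Lemma~\ref{lem:so1} and $M^{G_1}$ is $G_1$-invariant, any $x\in M^{G_1}$ can be written $x=gy$ with $g\in G_1$, $y\in N_1$, and then $y=g^{-1}x=x$ because $x$ is $G_1$-fixed. It then remains to compute the codimension of $M^{G_1}$ in $N_1$, which I do pointwise: for $y\in M^{G_1}$ this codimension equals $\dim(T_yM)^{SO(2l_1)}-\dim(T_yM)^{G_1}$, since $T_yN_1=(T_yM)^{SO(2l_1)}$ (as $N_1$ is a component of $M^{SO(2l_1)}$) and $T_yM^{G_1}=(T_yM)^{G_1}$.

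Next I would pin down the normal representation of $N_1$. As $SO(2l_1)$ fixes the connected manifold $N_1$ pointwise, it acts fibrewise on the normal bundle $N(N_1,M)$, and the isomorphism type of this fibre representation is constant along $N_1$; at a generic point $x\in N_1^T$ one has $N_x(N_1,M)=T_x(G_1x)=W$, the standard real $SO(2l_1)$-representation, which is exactly the splitting $(\ref{eq:3})$ from the proof of Lemma~\ref{lem:so1}. Hence $N_y(N_1,M)\cong W$ for every $y\in N_1$, and at $y\in M^{G_1}$ the restriction of the $G_1$-representation $T_yM$ to $SO(2l_1)$ is $(\text{trivial})\oplus W$, with $W$ the \emph{entire} non-trivial part.

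The heart of the argument is to identify the non-trivial part of $T_yM$ as a $G_1$-representation. Writing $T_yM=U\oplus V'$ with $U$ the $G_1$-trivial summand and $V'$ the sum of the non-trivial $G_1$-irreducibles, I use that the normal closure of $SO(2l_1)$ in $SO(2l_1+1)$ is the whole group, so no non-trivial $G_1$-irreducible is $SO(2l_1)$-trivial. Therefore the non-trivial $SO(2l_1)$-content of $V'$ equals that of $T_yM$, namely the single irreducible $W$; since $W$ occurs with multiplicity one, $V'$ must be a single non-trivial $G_1$-irreducible $V_1$ with $V_1|_{SO(2l_1)}=W\oplus(\text{trivial})$.

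The main obstacle I anticipate is showing that $V_1$ is forced to be the standard representation $\R^{2l_1+1}$, equivalently that exactly one trivial summand appears in $V_1|_{SO(2l_1)}$ rather than several. I expect to settle this with the branching rule for $SO(2l_1+1)\downarrow SO(2l_1)$: an irreducible whose restriction contains the vector representation and otherwise only trivial summands is forced to have highest weight $(1,0,\dots,0)$, i.e.\ to be the standard representation. Granting this, $(V_1)^{SO(2l_1)}=\R$ and $(V_1)^{G_1}=0$, so $\dim(T_yM)^{SO(2l_1)}-\dim(T_yM)^{G_1}=(\dim U+1)-\dim U=1$. Thus $M^{G_1}$ has codimension one in $N_1$ at each of its points, and so is either empty or a codimension-one submanifold of $N_1$. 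The low-rank cases $l_1=1$ run identically, using $SO(2)\subset SO(3)$ and the orbit-type information collected in the proof of Lemma~\ref{lem:so1}.
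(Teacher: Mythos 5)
Your proof is correct and follows the same overall strategy as the paper: both reduce the statement to identifying the $G_1$-representation $T_yM$ at a point $y\in M^{G_1}$, starting from the fact that its restriction to $SO(2l_1)$ is $(\text{trivial})\oplus W$ --- constant along the connected manifold $N_1$ and computed at a $T$-fixed point via the splitting (\ref{eq:3}) --- and then concluding that $(T_yM)^{SO(2l_1)}$ and $(T_yM)^{G_1}$ differ in dimension by exactly one. The only divergence is how the known $SO(2l_1)$-restriction is promoted to a statement about the full $SO(2l_1+1)$-representation. The paper does this in one line: since $T_1\subset SO(2l_1)$ is simultaneously a maximal torus of $SO(2l_1+1)$, the weights of $T_yM$ are those of $(\text{trivial})\oplus(\text{standard})$, and a representation of a compact connected Lie group is determined by its weights (the cited page of Br\"ocker--tom Dieck), so $T_yM$ must be trivial plus the standard $SO(2l_1+1)$-representation. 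You instead argue via simplicity of $SO(2l_1+1)$ (no non-trivial irreducible restricts trivially to $SO(2l_1)$, since the kernel would be a normal subgroup containing $SO(2l_1)$), which pins the non-trivial part down to a single irreducible $V_1$ with $V_1|_{SO(2l_1)}=W\oplus(\text{trivial})$, and then invoke the $SO(2l_1+1)\downarrow SO(2l_1)$ branching rule to force $V_1$ to be the vector representation; this is sound (interlacing with only trivial and vector constituents in the restriction forces highest weight $(1,0,\dots,0)$), just a slightly longer route to the same identification. What the paper's weight argument buys is brevity; what yours buys is independence from the ``weights determine the representation'' citation, at the cost of quoting the branching rule.
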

\begin{proof}
  By Lemma \ref{lem:so1}, it is clear that \(M^{G_1}\subset N_1\).
  For \(y \in M^{G_1}\) consider the \(G_1\) representation \(T_yM\).
  Because \(N_1\) is a component of \(M^{SO(2l_1)}\), the restriction of \(T_yM\) to \(SO(2l_1)\) equals the \(SO(2l_1)\)-representation \(T_xM\), where \(x\in N_1^T\).

  Because, by Lemma~\ref{lem:iso2}, \(T_xM\) is a direct sum of a trivial representation and the standard real representation of \(SO(2l_1)\) and \(T_1\subset SO(2l_1)\),
  \(T_yM\) is a sum of a trivial and the standard real representation of \(SO(2l_1+1)\) by \cite[p. 167]{0581.22009}.
  Therefore
  \(M^{G_1}\subset N_1\) has codimension one.
\end{proof}

\subsection{Blowing up along $M^{G_1}$}
\label{sec:so:blow_along}
As in section \ref{sec:su} we discuss the question when a manifold of the form given in Corollary \ref{cor:so3} is a blow up.

If \(\tilde{M}\) is the blow up of \(M\) along \(M^{G_1}\), then there is an equivariant embedding of \(P_{\mathbb{R}}(N(M^{G_1},M))\) into \(\tilde{M}\).
Therefore the \(G_1\)-action on \(\tilde{M}\) has an orbit of type \(SO(2l_1+1)/S(O(2l_1)\times O(1))\).
This fact shows that \(\tilde{M}\) is of the form \(SO(2l_1+1)/SO(2l_1) \times_{\mathbb{Z}_2} \tilde{N}_1\) where \(\tilde{N}_1\) is the proper transform of \(N_1\).
By Lemma \ref{cor:blowing-up}, \(\tilde{N}_1\) and \(N_1\) are \(G_2\)-equivariantly diffeomorphic.
Because \(M^{G_1}\) has codimension one in \(N_1\),  the \(\mathbb{Z}_2\)-action on \(N_1\) has a fixed point component of codimension one.

The following Lemma shows that these two conditions are sufficient.

\begin{Lemma}
\label{sec:case-g_1=so2l_1-+1-1}
Let \(N_1\) be a torus manifold with \(G_2\)-action.
Assume that there are a non-trivial orientation reversing action of \(\mathbb{Z}_2=S((O(2l_1)\times O(1))/SO(2l_1))\) on \(N_1\), which commutes with the action of \(G_2\), and a closed codimension one submanifold \(A\) of \(N_1\), on which \(\mathbb{Z}_2\) acts trivially.

Let \(E'=N(A,N_1)\) equipped with the action of \(G_2\) induced from the action on \(N_1\) and the trivial action of \(\mathbb{Z}_2\).
Denote by \(W\) the standard real representation of \(SO(2l_1+1)\).
Then:
\begin{enumerate}
\item  \(SO(2l_1+1)/SO(2l_1)\times_{\mathbb{Z}_2}N_1\) is orientable.
\item  The normal bundle of \(SO(2l_1+1)/S(O(2l_1)\times O(1))\times A\) in \(SO(2l_1+1)/SO(2l_1)\times_{\mathbb{Z}_2}N_1\) is isomorphic to the tautological bundle over \(P_\R(E'\otimes W\oplus \{0\})\).
\end{enumerate}
\end{Lemma}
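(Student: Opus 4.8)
The plan is to work throughout with the concrete model \(M = SO(2l_1+1)/SO(2l_1)\times_{\mathbb{Z}_2}N_1\). Identifying \(SO(2l_1+1)/SO(2l_1)\) with the unit sphere \(S^{2l_1}=S(W)\) of the standard representation \(W\), the right action of the generator of \(\mathbb{Z}_2 = S(O(2l_1)\times O(1))/SO(2l_1)\) becomes the antipodal map \(s\mapsto -s\); hence \(M = (S^{2l_1}\times N_1)/\mathbb{Z}_2\), where \(\mathbb{Z}_2\) acts diagonally by the antipodal map on \(S^{2l_1}\) and by the given involution \(\iota\) on \(N_1\). This action is free (already so on the first factor), so \(M\) is a manifold. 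For (1) I would simply count orientation signs: the antipodal map on the even-dimensional sphere \(S^{2l_1}\) has degree \((-1)^{2l_1+1}=-1\), hence is orientation reversing, and \(\iota\) is orientation reversing by hypothesis; therefore the diagonal involution on the oriented manifold \(S^{2l_1}\times N_1\) is orientation preserving, and the free quotient \(M\) is orientable.

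For (2), note first that \(S^{2l_1}\times A\) is invariant (because \(\iota|_A=\mathrm{id}\)) and that its image in \(M\) is \((S^{2l_1}\times A)/\mathbb{Z}_2 = \mathbb{R}P^{2l_1}\times A = SO(2l_1+1)/S(O(2l_1)\times O(1))\times A\). Since \(A\) has codimension one in \(N_1\) and is a component of the fixed set of the nontrivial involution \(\iota\), the derivative of \(\iota\) acts on the line bundle \(N(A,N_1)\) by \(-1\) (it cannot act trivially, as then \(\iota\) would be the identity near \(A\)). The normal bundle of \(S^{2l_1}\times A\) in \(S^{2l_1}\times N_1\) is the pullback \(\mathrm{pr}_A^*N(A,N_1)\), with \(\mathbb{Z}_2\) acting by the antipodal map on the \(S^{2l_1}\)-base and by \(-1\) on the fibres; passing to the quotient exhibits the normal bundle of \(\mathbb{R}P^{2l_1}\times A\) in \(M\) as \((S^{2l_1}\times N(A,N_1))/\mathbb{Z}_2\), where \((s,v)\sim(-s,-v)\).

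Then I would identify the target. Because \(E'=N(A,N_1)\) is a real line bundle over \(A\) and tensoring by a line bundle does not change a projectivization, there is a canonical equivariant identification \(P_{\mathbb{R}}(E'\otimes W\oplus\{0\}) = P_{\mathbb{R}}(E'\otimes W)\cong A\times\mathbb{R}P(W)=A\times\mathbb{R}P^{2l_1}\), sending a line \(\ell\subset E'_a\otimes W\) to the unique \(\ell_W\subset W\) with \(\ell = E'_a\otimes\ell_W\); under it \(SO(2l_1+1)\) acts on the \(\mathbb{R}P(W)\)-factor through \(W\), i.e.\ in the standard way, matching the description of the submanifold. The tautological bundle over \(P_{\mathbb{R}}(E'\otimes W)\) thereby becomes the external tensor product \(E'\boxtimes\gamma_W\), where \(\gamma_W\) is the tautological line bundle over \(\mathbb{R}P(W)\). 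Finally I would exhibit the \(SO(2l_1+1)\times G_2\)-equivariant bundle isomorphism
\begin{equation*}
  (S^{2l_1}\times N(A,N_1))/\mathbb{Z}_2 \longrightarrow E'\boxtimes\gamma_W, \qquad [(s,v)]\mapsto v\otimes s,
\end{equation*}
which is well defined precisely because the two sign changes cancel, \((-v)\otimes(-s)=v\otimes s\). It is fibrewise linear, bijective and equivariant, which completes the proof.

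The main obstacle, and the point needing the most care, is the bookkeeping of the two independent \(\mathbb{Z}_2\)-twists — the antipodal twist on \(S^{2l_1}\) and the \(-1\)-twist on the normal line bundle \(N(A,N_1)\) — together with reconciling them against the convention that \(E'\) carries the \emph{trivial} \(\mathbb{Z}_2\)-action. The content of (2) is exactly that these two twists cancel in the quotient, which is what makes \(v\otimes s\) descend to a well-defined isomorphism; the companion point to verify is that the standard \(SO(2l_1+1)\)-action on \(\mathbb{R}P^{2l_1}\) is genuinely recovered, so that the submanifold is indeed \(SO(2l_1+1)/S(O(2l_1)\times O(1))\times A\).
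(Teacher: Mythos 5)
Your proof is correct and takes essentially the same approach as the paper: part (1) is the same orientation-sign argument, and for part (2) the paper likewise identifies the normal bundle with \(SO(2l_1+1)/SO(2l_1)\times_{\mathbb{Z}_2}N(A,N_1)\) and then exhibits the map \(f([hSO(2l_1):m])=([m\otimes he_1],\,m\otimes he_1)\) into the tautological bundle, which under the identification \(hSO(2l_1)\mapsto s=he_1\) is exactly your \([(s,v)]\mapsto v\otimes s\). The extra bookkeeping you supply (that \(\mathbb{Z}_2\) acts by \(-1\) on the fibres of \(N(A,N_1)\), and that the two sign twists cancel so the map descends) is precisely what the paper leaves implicit.
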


The lemma guarantees together with the discussion at the end of section~\ref{sec:blow} that one may remove \(SO(2l_1+1)/S(O(2l_1)\times O(1))\times A\) from  \(SO(2l_1+1)/SO(2l_1)\times_{\mathbb{Z}_2}N_1\) and replace it by \(A\) to get a torus manifold with \(G\)-action \(M\) such that \(M^{SO(2l_1+1)}=A\).
The blow up of \(M\) along \(A\) is \(SO(2l_1+1)/S(O(2l_1))\times_{\mathbb{Z}_2}N_1\).

\begin{proof}
  The diagonal \(\mathbb{Z}_2\)-action on \(SO(2l_1+1)/SO(2l_1)\times N_1\) is orientation preserving.
  Therefore \(SO(2l_1+1)/SO(2l_1)\times_{\mathbb{Z}_2}N_1\) is orientable.

  The normal bundle of  \(SO(2l_1+1)/S(O(2l_1)\times O(1)) \times A\) in \(SO(2l_1+1)/SO(2l_1)\times_{\mathbb{Z}_2}N_1\) is given by \(SO(2l_1+1)/SO(2l_1)\times_{\mathbb{Z}_2}N(A,N_1)\).
 
Consider the following commutative diagram
\begin{equation*}
  \xymatrix{
    SO(2l_1+1)/SO(2l_1)\times_{\mathbb{Z}_2}N(A,N_1) \ar[r]_(.58)f\ar[d]_{\pi_1} &   P_{\mathbb{R}}(E'\otimes W)\times E'\otimes W \ar[d]_{\pi_1}\\
    SO(2l_1+1)/S(O(2l_1)\times O(1)) \times A   \ar[r]_(.66)g  &   P_{\mathbb{R}}(E'\otimes W)
}
\end{equation*}
where the vertical maps are the natural projections and  \(f,g\) are given by
\begin{equation*}
  f([hSO(2l_1):m])=([m\otimes he_1],m\otimes he_1)
\end{equation*}
 and
 \begin{equation*}
   g(hS(O(2l_1)\times O(1)),q)=[m_q\otimes he_1],
 \end{equation*}
 where \(e_1\in W-\{0\}\) is fixed such that for all \(g' \in SO(2l_1)\), \(g'e_1=e_1\) and \(m_q\neq 0\) some element of the fiber of \(E'\) over \(q\).

The map \(f\) induces an isomorphism of the normal bundle of \(SO(2l_1)/S(O(2l_1)\times O(1))\times A\) in \(SO(2l_1)/SO(2l_1)\times_{\mathbb{Z}_2} N_1\) and the tautological bundle over  \(P_\R(E'\otimes W\oplus \{0\})\).
\end{proof}

\begin{lemma}
\label{sec:case-g_1=so2l_1-+1}
  If \(l_1>1\), in the situation of Lemma~\ref{lem:so1}, then \(\bigcap_{M_i\in \mathfrak{F}_1}M_i=M^{SO(2l_1)}\) has at most two components.
  It has two components if and only if \(M=S^{2l_1}\times N_1\).
\end{lemma}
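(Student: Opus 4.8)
The plan is to show that both sides of the asserted identity equal $N_1\cup\sigma N_1$, where $\sigma$ represents the nontrivial coset in $N_{G_1}(SO(2l_1))/SO(2l_1)=S(O(2l_1)\times O(1))/SO(2l_1)=\mathbb{Z}_2$, and then to read off the component count and the product criterion from this description.

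The single Lie-theoretic input driving everything is the inclusion $N_{G_1}(T_1)\subseteq S(O(2l_1)\times O(1))=N_{G_1}(SO(2l_1))$. Indeed, in the standard representation of $G_1=SO(2l_1+1)$ the $T_1$-fixed vectors form exactly the last coordinate line, and $SO(2l_1)$ is its stabiliser; since any $w\in N_{G_1}(T_1)$ carries $T_1$-fixed vectors to $T_1$-fixed vectors, it preserves that line, hence lies in $S(O(2l_1)\times O(1))$. In particular every $N_{G_1}(T_1)$-conjugate of $SO(2l_1)$ equals $SO(2l_1)$. Using this I would first prove $M^{SO(2l_1)}=N_1\cup\sigma N_1$: by Lemma~\ref{lem:so1} we have $M=G_1N_1$, so any $x'\in M^{SO(2l_1)}$ is $x'=gy$ with $y\in N_1$, and the constraint $g^{-1}SO(2l_1)g\subseteq G_{1y}\in\{G_1,S(O(2l_1)\times O(1)),SO(2l_1)\}$ (isotropy types from the proof of Lemma~\ref{lem:so1}) forces, on comparing identity components, $g^{-1}SO(2l_1)g=SO(2l_1)$, i.e. $g\in N_{G_1}(SO(2l_1))$. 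As $SO(2l_1)$ fixes $N_1$ pointwise, $gy$ depends only on the coset $gSO(2l_1)$, so $x'\in N_1\cup\sigma N_1$; the reverse inclusion is clear. Since $N_1$ is connected this already yields at most two components.

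Next I would establish the equality $\bigcap_{M_i\in\mathfrak F_1}M_i=M^{SO(2l_1)}$ by showing both are unions of connected components of $M^{T_1}$ with the same $T$-fixed points. For $p\in M^{SO(2l_1)}$ the tangent space is $(T_pM)^{SO(2l_1)}$, which by the weight description (trivial summands plus copies of the standard $SO(2l_1)$-representation, as in Lemma~\ref{lem:iso2} and the proof of Lemma~\ref{lem:so1}) agrees with $(T_pM)^{T_1}$; hence $M^{SO(2l_1)}$ is open and closed in $M^{T_1}$. For $\bigcap_i M_i$, each $M_i$ is a component of $M^{\lambda(M_i)}$ and the circles $\lambda(M_i)$, $M_i\in\mathfrak F_1$, realise the $l_1$ coordinate characters of the standard representation at $x\in N_1^T$ and so generate $T_1$; therefore $T_p\bigcap_i M_i=\bigcap_i(T_pM)^{\lambda(M_i)}=(T_pM)^{T_1}=T_pM^{T_1}$, and $\bigcap_i M_i$ is open and closed in $M^{T_1}$ as well. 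A $T$-fixed point of $M^{SO(2l_1)}$ lies on exactly the $l_1=\#\mathfrak F_1$ characteristic submanifolds of $\mathfrak F_1$ (again from Lemma~\ref{lem:iso2} and Lemma~\ref{sec:action-weyl-group}), hence lies in $\bigcap_i M_i$; conversely a $T$-fixed point $z$ of $\bigcap_i M_i$ has $G_{1z}$ equal to $G_1$ or an $N_{G_1}(T_1)$-conjugate of $SO(2l_1)$ (Lemma~\ref{lem:iso}, Remark~\ref{sec:g-action-m-1}), which by the inclusion above is $SO(2l_1)$ itself, so $z\in M^{SO(2l_1)}$. Since every component of $M^{T_1}$ occurring in either set contains a $T$-fixed point (it is a component of an intersection of characteristic submanifolds, Lemma~\ref{sec:gener-torus-manif}), the two sets consist of the same components and thus coincide.

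Finally, for the product criterion: two components means $\sigma N_1\neq N_1$, i.e. $N_1\cap\sigma N_1=\emptyset$; were $M^{G_1}\neq\emptyset$, then $M^{G_1}$, being $G_1$-invariant and contained in $N_1$, would also lie in $\sigma N_1$, a contradiction, so $M^{G_1}=\emptyset$ and we are in case~(1) of Corollary~\ref{cor:so3} (with $g=\sigma$), whence $M=SO(2l_1+1)/SO(2l_1)\times N_1=S^{2l_1}\times N_1$. Conversely $M=S^{2l_1}\times N_1$ gives $M^{SO(2l_1)}=(S^{2l_1})^{SO(2l_1)}\times N_1$, two disjoint copies of the connected $N_1$ indexed by the two poles. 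The main obstacle I anticipate is the reverse inclusion $\bigcap_i M_i\subseteq M^{SO(2l_1)}$, that is, excluding spurious extra components of $M^{T_1}$; the crux there is precisely the algebraic fact $N_{G_1}(T_1)\subseteq N_{G_1}(SO(2l_1))$, which guarantees that every $T$-fixed point forced into the intersection is genuinely $SO(2l_1)$-fixed, together with the bookkeeping that both sides are unions of components of $M^{T_1}$.
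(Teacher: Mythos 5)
Your first step (the isotropy argument giving \(M^{SO(2l_1)}=N_1\cup\sigma N_1\), where \(\sigma\) generates \(S(O(2l_1)\times O(1))/SO(2l_1)\)) and your last step (the product criterion via disjointness of \(N_1\) and \(\sigma N_1\) together with Corollary~\ref{cor:so3}) are correct, and they follow a genuinely different route from the paper, which instead passes to the blow up of \(M\) along \(M^{G_1}\), identifies it with the bundle \(S^{2l_1}\times_{\mathbb{Z}_2}N_1\rightarrow \R P^{2l_1}\), computes the intersection of the proper transforms \(\tilde{M}_i\) explicitly as the fiber over \((0:\dots:0:1)\), and descends via Lemma~\ref{lem:proper} and Corollary~\ref{cor:so2}. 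However, your middle step, the equality \(\bigcap_{M_i\in\mathfrak{F}_1}M_i=M^{SO(2l_1)}\), has a genuine gap at its final move. You reduce it to the claim that every component of \(M^{T_1}\) occurring in either set contains a \(T\)-fixed point. For \(M^{SO(2l_1)}=N_1\cup\sigma N_1\) this is fine (choosing \(\sigma\in N_{G_1}(T_1)\), which maps \(N_1^T\) into \((\sigma N_1)^T\)), but for \(\bigcap_{M_i\in\mathfrak{F}_1}M_i\) it is precisely the hard point, and your justification is circular: Lemma~\ref{sec:gener-torus-manif} takes the existence of a \(T\)-fixed point as a hypothesis and concludes that the component is a component of an intersection of characteristic submanifolds, not the converse. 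Nothing in your argument excludes a component of \(\bigcap_{M_i\in\mathfrak{F}_1}M_i\) (equivalently, of \(M^{T_1}\)) on which \(T\) acts without fixed points; ruling out such stray components is exactly the content of the connectedness assertion in the non-product case.

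The gap can be closed with the very technique of your first step, applied to \(T_1\) in place of \(SO(2l_1)\). Let \(p\in M^{T_1}\) and write \(p=gy\) with \(y\in N_1\), using \(M=G_1N_1\) from Lemma~\ref{lem:so1}. If \(y\in M^{G_1}\), then \(p=y\in N_1\). Otherwise \(G_{1p}^0=gSO(2l_1)g^{-1}\) contains the connected group \(T_1\); since \(T_1\) and \(gT_1g^{-1}\) are both maximal tori of \(G_{1p}^0\), there is \(h\in G_{1p}^0\) with \((hg)T_1(hg)^{-1}=T_1\), so \(hg\in N_{G_1}(T_1)\subseteq S(O(2l_1)\times O(1))\) by your key inclusion, and, as \(hp=p\), we get \(p=(hg)y\in N_1\cup\sigma N_1\). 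Hence \(M^{T_1}=N_1\cup\sigma N_1=M^{SO(2l_1)}\), and then
\begin{equation*}
  \bigcap_{M_i\in\mathfrak{F}_1}M_i\subseteq M^{T_1}=M^{SO(2l_1)}\subseteq \bigcap_{M_i\in\mathfrak{F}_1}M_i,
\end{equation*}
where the first inclusion holds because \(\langle\lambda(M_i);M_i\in\mathfrak{F}_1\rangle=T_1\) (your weight argument), and the last because \(N_1\subseteq\bigcap_{M_i\in\mathfrak{F}_1}M_i\) by definition while \(\sigma\), representing an element of \(W(G_1)\), permutes \(\mathfrak{F}_1\) and therefore preserves the intersection. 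With this replacement your argument becomes a complete, self-contained alternative to the paper's blow-up computation.
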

\begin{proof}
  If \(M=S^{2l_1}\times N_1\), then \(\bigcap_{M_i\in \mathfrak{F}_1}M_i=\{N,S\}\times N_1\), where \(N,S\) are the north and the south pole of the sphere, respectively.
  Otherwise the blow up of \(M\) along \(M^{SO(2l_1+1)}\) is given by \(S^{2l_1}\times_{\mathbb{Z}_2}N_1\), which is a fiber bundle over \(\R P^{2l_1}\).
  The characteristic submanifolds of \(S^{2l_1}\times_{\mathbb{Z}_2}N_1\), which are permuted by \(W(G_1)\), are given by the preimages of the following submanifolds of \(\R P^{2l_1}\):
  \begin{equation*}
    \R P^{2l_1-2}_i=\{(x_1:x_2:\dots:x_{2i-2}:0:0:x_{2i+1}:\dots:x_{2l_1+1})\in \R P^{2l_1}\},\;\;i=1,\dots,l_1.
  \end{equation*}
  These characteristic submanifolds are also given by the proper transforms \(\tilde{M}_i\) of the characteristic submanifolds \(M_i\in \mathfrak{F}_1\) of \(M\). 
  Because
  \begin{equation*}
    \bigcap_{i=1}^{l_1} \R P^{2l_1-2}_i=\{(0:0:\dots:0:1)\},
  \end{equation*}
  it follows that
  \begin{equation*}
    \bigcap_{M_i\in \mathfrak{F}_1}\tilde{M}_i=\tilde{N}_1=\tilde{M}^{SO(2l_1)}.
  \end{equation*}
  Therefore, with Lemma~\ref{lem:proper} and Corollary~\ref{cor:so2},
  \begin{equation*}
    \bigcap_{M_i\in \mathfrak{F}_1}M_i=N_1=M^{SO(2l_1)}
  \end{equation*}
follows.
In particular \(\bigcap_{M_i\in \mathfrak{F}_1}M_i\) is connected.
\end{proof}

\begin{lemma}
  \label{sec:case-g_1=so2l_1-+1-3}
  If \(l_1=1\), in the situation of Lemma~\ref{lem:so1}, then the following statements are equivalent:
  \begin{itemize}
  \item \(M^{SO(2)}\) has two components.
  \item \(\#\mathfrak{F}_1=2\).
  \item \(M=S^2\times N_1\).
  \end{itemize}
If \(l_1=1\) and \(\#\mathfrak{F}_1=1\), then \(M^{SO(2)}\) is connected.
\end{lemma}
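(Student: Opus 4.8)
The plan is to compute $M^{SO(2)}$ explicitly, where $SO(2)=T_1$ is the isotropy group of a generic point of $N_1$, and then read off its number of components. Write $\bar g$ for a representative of the nontrivial class of $N_{G_1}(SO(2))/SO(2)=S(O(2)\times O(1))/SO(2)=\mathbb{Z}_2$; since $T_1=SO(2)$ is the maximal torus of $G_1=SO(3)$, the element $\bar g\in N_{G_1}(T_1)\setminus T_1$ represents the nontrivial element of $W(G_1)=\mathbb{Z}_2$, and $\bar g$ normalises $T=T_1\times T_2$, so it permutes $\mathfrak{F}$.

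First I would show $M^{SO(2)}=N_1\cup\bar g N_1$. Both $N_1$ and $\bar g N_1$ are components of $M^{SO(2)}$ (as in the proof of Corollary~\ref{cor:so3}, $\bar g N_1$ is again a component because $SO(2)$ fixes $M^{SO(2)}$ pointwise and $\bar g$ normalises $SO(2)$). Conversely, let $y\in M^{SO(2)}$. By Lemma~\ref{lem:so1} we have $M=G_1N_1$, so $y=gy'$ with $g\in G_1$ and $y'\in N_1$, and then $g^{-1}SO(2)g\subseteq G_{1y'}$. The isotropy classification in the proof of Lemma~\ref{lem:so1} gives $G_{1y'}\in\{SO(2),\,S(O(2)\times O(1)),\,G_1\}$. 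In the first two cases the identity component is $SO(2)$, so the connected group $g^{-1}SO(2)g$ equals $SO(2)$, forcing $g\in N_{G_1}(SO(2))=S(O(2)\times O(1))$ and hence $y\in S(O(2)\times O(1))N_1=N_1\cup\bar g N_1$; in the third case $y'\in M^{G_1}\subseteq N_1$ (Corollary~\ref{cor:so2}) is $G_1$-fixed, so $y=y'\in N_1$. Thus $M^{SO(2)}=N_1\cup\bar g N_1$. As $N_1$ is a connected characteristic submanifold and $\bar g N_1$ is diffeomorphic to it, and two components of a manifold are disjoint or equal, $M^{SO(2)}$ has exactly one or two components, with two if and only if $\bar g N_1\neq N_1$.

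Next I would match this with $\#\mathfrak{F}_1$. By Lemma~\ref{lem:1}, $W(G_1)=\mathbb{Z}_2$ acts transitively on $\mathfrak{F}_1$, so $\mathfrak{F}_1$ is the orbit $\{N_1,\bar g N_1\}$; hence $\bar g N_1\neq N_1$ exactly when $\#\mathfrak{F}_1=2$, and $\bar g N_1=N_1$ exactly when $\#\mathfrak{F}_1=1$. This gives the equivalence of ``$M^{SO(2)}$ has two components'' with ``$\#\mathfrak{F}_1=2$'', and shows directly that $\#\mathfrak{F}_1=1$ forces $M^{SO(2)}=N_1$ to be connected, which is the final assertion.

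Finally, for the equivalence with $M=S^2\times N_1$: if $M=S^2\times N_1$ then $M^{SO(2)}=(S^2)^{SO(2)}\times N_1=\{N,S\}\times N_1$ has two components, so $\#\mathfrak{F}_1=2$ by the previous step. Conversely, suppose $\#\mathfrak{F}_1=2$; then $N_1$ and $\bar g N_1$ are distinct, hence disjoint, components of $M^{SO(2)}$. Any $y\in M^{G_1}$ lies in $M^{SO(2)}$ and is fixed by $\bar g\in G_1$, so $y=\bar g y\in N_1\cap\bar g N_1=\emptyset$; therefore $M^{G_1}=\emptyset$, and Corollary~\ref{cor:so3} yields $M=S^2\times N_1$. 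I expect the main obstacle to be the first step, namely excluding components of $M^{SO(2)}$ other than $N_1$ and $\bar g N_1$; this is exactly where the global decomposition $M=G_1N_1$ and the isotropy list from Lemma~\ref{lem:so1} carry the weight, and it is what makes the direct argument preferable to the blow-up method used for $l_1>1$ in Lemma~\ref{sec:case-g_1=so2l_1-+1}.
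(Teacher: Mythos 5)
Your proof is correct, and its first half coincides with the paper's: the paper likewise uses \(M=G_1N_1\) together with the isotropy/normalizer analysis from Lemma~\ref{lem:so1} to show that every point of \(M^{SO(2)}\) lies in \(gN_1\) for some \(g\in N_{G_1}SO(2)=S(O(2)\times O(1))\), so that the components of \(M^{SO(2)}\) are exactly the characteristic submanifolds in \(\mathfrak{F}_1\) (your write-up just makes explicit the isotropy case distinction that the paper leaves implicit). The genuine difference is in the implication \(\#\mathfrak{F}_1=2\Rightarrow M=S^2\times N_1\). The paper rules out \(G_1\)-fixed points by a blow-up argument: if \(M^{G_1}\neq\emptyset\), the blow-up of \(M\) along \(M^{G_1}\) contains an orbit of type \(SO(3)/S(O(2)\times O(1))\), so Corollary~\ref{cor:so3} applied to the blow-up forces \(\#\mathfrak{F}_1=1\) (the sets \(\mathfrak{F}_1\) of \(M\) and of its blow-up being identified), a contradiction. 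You instead observe directly that a \(G_1\)-fixed point is \(SO(2)\)-fixed and \(\bar{g}\)-fixed, hence would lie in \(N_1\cap\bar{g}N_1\), which is empty because \(N_1\) and \(\bar{g}N_1\) are distinct, hence disjoint, components of \(M^{SO(2)}\) when \(\#\mathfrak{F}_1=2\). Your argument is more elementary and self-contained: it stays entirely within the fixed-point computation already carried out and needs neither the blow-up construction nor its compatibility with \(\mathfrak{F}_1\); the paper's version has the merit of running parallel to the technique it uses for \(l_1>1\) in Lemma~\ref{sec:case-g_1=so2l_1-+1}. Both routes then conclude with Corollary~\ref{cor:so3}.
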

\begin{proof}
  At first we prove that all components of \(M^{SO(2)}\) are characteristic submanifolds of \(M\) belonging to \(\mathfrak{F}_1\).
  By Lemma~\ref{lem:so1}, \(N_1\) is a characteristic submanifold of \(M\) and a component of \(M^{SO(2)}\) such that \(G_1N_1=M\).
  Therefore, if \(x\in M^{SO(2)}\), then there is a \(g\in N_{G_1}SO(2)\) such that \(g^{-1}x\in N_1\).
  This implies \(x\in gN_1\).
  Because \(gN_1\) is a characteristic submanifold belonging to \(\mathfrak{F}_1\) and a component of \(M^{SO(2)}\) it follows that \(M^{SO(2)}\) is a union of characteristic submanifolds of \(M\) belonging to \(\mathfrak{F}_1\).

  Now assume that \(\#\mathfrak{F}_1=1\).
  Then we have \(M^{SO(2)}=N_1\). 
  Therefore \(M^{SO(2)}\) is connected.

  Now assume that \(M=SO(3)/SO(2)\times N_1\).
  Then it is clear that \(M^{SO(2)}\) has two components.

  Now assume that \(M^{SO(2)}\) has two components.
  Because these components are characteristic submanifolds belonging to \(\mathfrak{F}_1\) it follows that \(\#\mathfrak{F}_1=2\).

  Now assume that \(\#\mathfrak{F}_1=2\).
  If there is no \(G_1\)-fixed point then it follows from Corollary~\ref{cor:so3} that \(M=SO(3)/SO(2)\times N_1\).
  Assume that there is a \(G_1\)-fixed point in \(M\). Then the blow up of \(M\) along \(M^{G_1}\) contains an orbit of type \(SO(3)/S(O(2)\times O(1))\).
  Now Corollary~\ref{cor:so3} implies \(\#\mathfrak{F}_1=1\).
  Therefore there is no \(G_1\)-fixed point if \(\#\mathfrak{F}_1=2\).
\end{proof}

\subsection{Admissible pairs}
\label{sec:admissible_pairs}
We are now in the position to state another classification theorem.
To do so, we use the following definition.
\begin{definition}
  Let \(\tilde{G}=G_1\times G_2\) with \(G_1=SO(2l_1+1)\). Then a pair \((N,A)\) with
  \begin{itemize}
  \item \(N\) a torus manifold with \(G_2\times \mathbb{Z}_2\)-action such that the \(\mathbb{Z}_2\)-action is orientation-reversing or trivial,
  \item \(A\subset N\) the empty set or a closed \(G_2\times \mathbb{Z}_2\)-invariant submanifold of codimension one, on which \(\mathbb{Z}_2\) acts trivially, such that if \(A\neq \emptyset\), then \(\mathbb{Z}_2\) acts non-trivially on \(N\),
  \end{itemize}
is called \emph{admissible for} \((\tilde{G},G_1)\).

We say that two admissible pairs \((N,A)\), \((N',A')\) are equivalent if there is a \(G_2\times \mathbb{Z}_2\)-equivariant diffeomorphism \(\phi:N\rightarrow N'\) such that \(\phi(A)=A'\).
\end{definition}

\begin{theorem}
\label{sec:case-g_1=so2l_1-+1-2}
  Let \(\tilde{G}=G_1\times G_2\) with \(G_1=SO(2l_1+1)\).
  There is a one-to-one correspondence between the \(\tilde{G}\)-equivariant diffeomorphism classes of torus manifolds with \(\tilde{G}\)-actions such that \(G_1\) is elementary and equivalence classes of admissible pairs for \((\tilde{G},G_1)\).
\end{theorem}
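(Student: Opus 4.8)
The plan is to follow the template of Theorem~\ref{sec:case-g_1-=-1}, constructing explicit maps in both directions between $\tilde{G}$-equivariant diffeomorphism classes and equivalence classes of admissible pairs, and then checking that the two assignments are mutually inverse. All the geometric input has already been assembled in Lemmas~\ref{lem:so1}, \ref{sec:case-g_1=so2l_1-+1-1}, \ref{sec:case-g_1=so2l_1-+1}, \ref{sec:case-g_1=so2l_1-+1-3} and Corollaries~\ref{cor:so3}, \ref{cor:so2}, so the theorem is essentially a matter of packaging.

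First I would construct the forward map, sending a torus manifold $M$ with $\tilde{G}$-action (with $G_1$ elementary) to an admissible pair $(N,A)$. By Lemma~\ref{lem:so1} there is a submanifold $N_1$ which is a component of $M^{SO(2l_1)}$ with $M=G_1N_1$, and by Corollary~\ref{cor:so2} the fixed point set $M^{G_1}$ is contained in $N_1$ and is empty or of codimension one. The normalizer $N_{G_1}SO(2l_1)=S(O(2l_1)\times O(1))$ acts on $M^{SO(2l_1)}$, inducing an action of $\mathbb{Z}_2=S(O(2l_1)\times O(1))/SO(2l_1)$. By Lemmas~\ref{sec:case-g_1=so2l_1-+1} and~\ref{sec:case-g_1=so2l_1-+1-3}, $M^{SO(2l_1)}$ is either connected or has exactly two components, the latter occurring precisely when $M=S^{2l_1}\times N_1$. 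If $M^{SO(2l_1)}$ has two components, then $\mathbb{Z}_2$ interchanges them and I set $N=N_1$ with the trivial $\mathbb{Z}_2$-action and $A=\emptyset$; if $M^{SO(2l_1)}=N_1$ is connected, I set $N=N_1$ with the induced (orientation-reversing, by Corollary~\ref{cor:so3}) $\mathbb{Z}_2$-action and $A=M^{G_1}$. In either case $(N,A)$ is admissible.

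Next I would construct the backward map. Given an admissible pair $(N,A)$, there are three cases. If $A=\emptyset$ and $\mathbb{Z}_2$ acts trivially, set $M=S^{2l_1}\times N=SO(2l_1+1)/SO(2l_1)\times N$. If $A=\emptyset$ and $\mathbb{Z}_2$ acts by an orientation-reversing diffeomorphism, set $M=SO(2l_1+1)/SO(2l_1)\times_{\mathbb{Z}_2}N$, which is an orientable torus manifold with $\tilde{G}$-action by Lemma~\ref{sec:case-g_1=so2l_1-+1-1}. If $A\neq\emptyset$, form $\tilde{M}=SO(2l_1+1)/SO(2l_1)\times_{\mathbb{Z}_2}N$ and use Lemma~\ref{sec:case-g_1=so2l_1-+1-1} together with the blow-down construction of Section~\ref{sec:blow} to blow down the embedded copy of $SO(2l_1+1)/S(O(2l_1)\times O(1))\times A$, obtaining a torus manifold $M$ with $M^{SO(2l_1+1)}=A$. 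That $G_1$ remains elementary follows from the bijection, established in Section~\ref{sec:blow}, between the non-trivial orbits of the $W(G)$-action on the characteristic submanifolds of $M$ and of $\tilde{M}$, together with the transitivity of $W(G_1)$ on $\mathfrak{F}_1$ coming from the $SO(2l_1+1)$-orbit structure. I would then verify the two assignments are inverse: starting from $M$, Corollary~\ref{cor:so3} settles the case $A=\emptyset$ directly, while for $A\neq\emptyset$ the reconstructed manifold is the blow down of the blow up of $M$ along $M^{G_1}$, hence $\tilde{G}$-equivariantly diffeomorphic to $M$; conversely, starting from $(N,A)$, the remark following Lemma~\ref{sec:case-g_1=so2l_1-+1-1} gives $M^{G_1}=A$, while Corollary~\ref{cor:blowing-up} identifies the proper transform $\tilde{N}_1$ $G_2\times\mathbb{Z}_2$-equivariantly with $N$.

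The main obstacle I expect is bookkeeping the $\mathbb{Z}_2$-action cleanly and showing that the equivalence relation on pairs corresponds exactly to $\tilde{G}$-equivariant diffeomorphism. Concretely, one must show that any $\tilde{G}$-equivariant diffeomorphism $M\rightarrow M'$ restricts to a $G_2\times\mathbb{Z}_2$-equivariant diffeomorphism $N_1\rightarrow N_1'$ carrying $A$ to $A'$, and conversely that such a diffeomorphism of pairs extends to a $\tilde{G}$-equivariant diffeomorphism of the reconstructed manifolds. The delicate point is reconciling the product case, where the two components of $M^{SO(2l_1)}$ are interchanged by $\mathbb{Z}_2$ and the associated $\mathbb{Z}_2$-action on $N$ is declared trivial, with the twisted case, where $M^{SO(2l_1)}$ is connected and carries a genuine $\mathbb{Z}_2$-action; these two situations must be distinguished purely in terms of the admissible pair, and one must check that the assignment of the $\mathbb{Z}_2$-action to $N_1$ is invariant under the allowed equivalences.
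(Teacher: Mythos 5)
Your proposal is correct and takes essentially the same approach as the paper's proof: the same forward assignment (send $M$ to $N_1=M^{SO(2l_1)}$ with the induced $\mathbb{Z}_2$-action and $A=M^{G_1}$, declaring the action trivial in the disconnected case), the same backward assignment (product, twisted product, or blow-down of $SO(2l_1+1)/SO(2l_1)\times_{\mathbb{Z}_2}N$ along $SO(2l_1+1)/S(O(2l_1)\times O(1))\times A$), and the same verification via Lemmas~\ref{sec:case-g_1=so2l_1-+1-1}, \ref{sec:case-g_1=so2l_1-+1}, \ref{sec:case-g_1=so2l_1-+1-3} and the blow-up/blow-down inversion that the two maps are mutually inverse. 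The only difference is cosmetic: the paper phrases the case split via the number of components of $\bigcap_{M_i\in\mathfrak{F}_1}M_i$ (resp.\ $\#\mathfrak{F}_1$ when $l_1=1$), which is equivalent to your formulation in terms of $M^{SO(2l_1)}$.
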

\begin{proof}
  Let \(M\) be a torus manifold with \(\tilde{G}\)-action.
  If \(\bigcap_{M_i\in\mathfrak{F}_1}M_i\) has two components and \(l_1>1\) or \(\#\mathfrak{F}_1=2\) and \(l_1=1\), then we assign to \(M\) the admissible pair \(\Phi(M)=(N_1,\emptyset)\), where \(N_1\) is a component of \(\bigcap_{M_i\in\mathfrak{F}_1}M_i\) or a characteristic submanifold belonging to \(\mathfrak{F}_1\) in the case \(l_1=1\).
  The action of \(\mathbb{Z}_2\) is trivial in this case.

  If \(\bigcap_{M_i\in\mathfrak{F}_1}M_i\) is connected and \(l_1>1\) or \(\#\mathfrak{F}_1=1\) and \(l_1=1\), then we assign to \(M\) the pair 
  \begin{equation*}
    \Phi(M)=\left(\bigcap_{M_i\in\mathfrak{F}_1}M_i,M^{SO(2l_1+1)}\right).
  \end{equation*}
  Because \(\bigcap_{M_i\in\mathfrak{F}_1}M_i=M^{SO(2l_1)}\) there is a non-trivial action of 
  \begin{equation*}
    \mathbb{Z}_2=S(O(2l_1)\times O(1))/SO(2l_1)
  \end{equation*}
 on \(\bigcap_{M_i\in\mathfrak{F}_1}M_i\).

  Now let \((N,A)\) be a admissible pair for \((\tilde{G},G_1)\).
  If the \(\mathbb{Z}_2\)-action on \(N\) is trivial, we have \(A= \emptyset\) and we assign to \((N,\emptyset)\) the torus manifold with \(\tilde{G}\)-action \(\Psi((N,\emptyset))=S^{2l_1}\times N\).
  
  If the \(\mathbb{Z}_2\)-action on \(N\) is non-trivial, we assign to \((N,A)\) the blow down \(\Psi((N,A))\) of \(SO(2l_1+1)/SO(2l_1)\times_{\mathbb{Z}_2}N\) along \(SO(2l_1+1)/S(O(2l_1)\times O(1))\times A\).

  By Lemma~\ref{sec:case-g_1=so2l_1-+1}, it is clear that this construction  gives a one-to-one correspondence between torus manifolds with \(\tilde{G}\)-action such that \(\bigcap_{M_i\in\mathfrak{F}_1}M_i\) has two components and \(l_1>1\) and admissible pairs with trivial \(\mathbb{Z}_2\)-action.
  With Lemma~\ref{sec:case-g_1=so2l_1-+1-3}, we see that an analogous statement holds for \(l_1=1\) and \(\#\mathfrak{F}_1=2\).

 Now let \((N,A)\) be an admissible pair such that \(\mathbb{Z}_2\) acts non-trivially on \(N\).
 Then the discussion after Lemma~\ref{sec:case-g_1=so2l_1-+1-1} shows that \(\Phi(\Psi((N,A)))\) is equivalent to \((N,A)\).

 If \(M\) is a torus manifold with \(G_1\times G_2\)-action such that \(G_1\) is elementary and \(N_1=\bigcap_{M_i\in\mathfrak{F}_1}M_i\) is connected the blow up of \(M\) along \(M^{SO(2l_1+1)}\) is given by
 \begin{equation*}
   SO(2l_1+1)/SO(2l_1)\times_{\mathbb{Z}_2}N_1.
 \end{equation*}
Therefore we find that \(\Psi(\Phi(M))\) is equivariantly diffeomorphic to \(M\).
\end{proof}

\section{Classification}
\label{sec:classif}

Here we use the results of the previous sections to state a classification of torus manifolds with \(G\)-action.
We do not consider actions of groups, which have \(SO(2l_1)\) as an elementary factor, because as explained in section~\ref{sec:so2l} these factors may be replaced by \(SU(l_1)\times S^1\).
We get the classification by iterating the constructions given in Theorem~\ref{sec:case-g_1-=-1} and Theorem~\ref{sec:case-g_1=so2l_1-+1-2}.

We illustrate this iteration in the case that all elementary factors of \(G\) are isomorphic to \(SU(l_i+1)\).
Let \(\tilde{G}=\prod_{i=1}^k G_i\times T^{l_0}\) and \(M\) a torus manifold with \(\tilde{G}\)-action such that all \(G_i\) are elementary and isomorphic to \(SU(l_i+1)\).

In Theorem~\ref{sec:case-g_1-=-1} we constructed a triple \((\psi_1,N_1,A_1)\), which determines the \(\tilde{G}\)-equivariant diffeomorphism type of \(M\).
Here \(N_1\) is a  torus manifold with \(\prod_{i=2}^k G_i \times T^{l_0}\)-action.
Therefore there is a triple \((\psi_2,N_2,A_2)\) which determines the \(\prod_{i=2}^kG_i \times T^{l_0}\)-equivariant diffeomorphism type of \(N_1\).
Because \(N_2\subset N_1\) such that \(G_2N_2=N_1\) and \(A_1\) is \(G_2\)-invariant we have \(G_2(A_1\cap N_2)=A_1\).
Therefore the  \(G\)-equivariant diffeomorphism type of \(M\) is determined by
\begin{equation*}
  (\psi_1\times \psi_2,N_2,A_1\cap N_2,A_2).
\end{equation*}
Continuing in this manner leads to a triple
\begin{equation*}
  (\psi,N,(A_1,\dots,A_k)),
\end{equation*}
where \(\psi\in \Hom\left(\prod_{i=1}^kS(U(l_i)\times U(1)),T^{l_0}\right)\), \(N\) is a \(2l_0\)-dimensional torus manifold and the \(A_i\) are codimension two submanifolds of \(N\) or empty.

The iteration becomes more complicated if there are more than one elementary factors of \(\tilde{G}\) isomorphic to \(SO(2l_i+1)\).
To illustrate what happens here, we discuss the case \(\tilde{G}=G_1\times G_2\times T^{l_0}\), where the \(G_i\) are elementary and isomorphic to \(SO(2l_i+1)\).

Then, by Theorem~\ref{sec:case-g_1=so2l_1-+1-2}, there is an admissible pair \((N_1,B_1)\) for \((\tilde{G},G_1)\) corresponding to \(M\), where \(N_1\) is a torus manifold with \(G_2\times T^{l_0}\times (\mathbb{Z}_2)_1\)-action.
By Lemmas~\ref{sec:case-g_1=so2l_1-+1} and~\ref{sec:case-g_1=so2l_1-+1-3}, we have two cases
\begin{enumerate}
\item \(N_1^{SO(2l_2)}\) has two components.
\item \(N_1^{SO(2l_2)}\) is connected.
\end{enumerate}

In the first case we have
\begin{equation*}
  N_1=SO(2l_2+1)/SO(2l_2)\times N_2,
\end{equation*}
where \(N_2\) is a \(2l_0\)-dimensional torus manifold.
The action of \((\mathbb{Z}_2)_1\) on \(N_1\) commutes with the action of \(G_2\times T^{l_0}\).
Therefore the action of \((\mathbb{Z}_2)_1\) on  \(N_1\) splits as an product of an action on \(SO(2l_2+1)/SO(2l_2)\) and an action on \(N_2\).
Because there is only one non-trivial action of \(\mathbb{Z}_2\) on \(SO(2l_2+1)/SO(2l_2)\) which commutes with the action of \(SO(2l_2+1)\), the \(G_2\times T^{l_0}\times (\mathbb{Z}_2)_1\)-equivariant diffeomorphism type of \(N_1\) is completely determined by a pair \((N_2,a_{12})\), where \(N_2\) is equipped with the action of \(T^{l_0}\times(\mathbb{Z}_2)_1\) and \(a_{12}\in \{0,1\}\) is non-zero if and only if the \((\mathbb{Z}_2)_1\)-action on  \(SO(2l_2+1)/SO(2l_2)\) is non-trivial.

In the second case the \(G_2\times T^{l_0}\)-equivariant diffeomorphism type of \(N_1\) is determined by a pair \((N_2,B_2)\), where \(N_2=N_1^{SO(2l_2)}\).
Because  \(N_2\) is \((\mathbb{Z}_2)_1\)-invariant in this case, \(N_2\) is a torus manifold with \(T^{l_0}\times (\mathbb{Z}_2)_1\times (\mathbb{Z}_2)_2\)-action, where \((\mathbb{Z}_2)_2=S(O(2l_2)\times O(1))/SO(2l_2)\).
We put \(a_{12}=0\) in this case.

As in the case where there are only elementary factors isomorphic to \(SU(l_i+1)\), one sees that the \(G_1\times G_2\times T^{l_0}\)-equivariant diffeomorphism type of \(M\) is determined by
\begin{equation*}
  (N_2,(N_2\cap B_1,B_2),a_{12}).
\end{equation*}

There are some relations between \(a_{12}\) and \(B_1\).
For example, if \(a_{12}=1\), then there are no \((\mathbb{Z}_2)_1\)-fixed points in \(N_1\).
Therefore \(B_1\) has to be empty.

If there are more than two elementary factors of \(\tilde{G}\) isomorphic to \(SO(2l_i+1)\), we have to introduce more numbers \(a_{ij}\).
There are some relations between the \(a_{ij}\) coming from the fact that \(M\) is required to be orientable.
This will be explained in the proof of Lemma~\ref{sec:classification}.

\subsection{Admissible 5-tuples}
\label{sec:class:ad5tup}
We use the following definition to make the above constructions more formal.

\begin{definition}
\label{sec:classification-2}
  Let \(\tilde{G}=\prod_{i=1}^k G_i \times G'\) with
  \begin{equation*}
    G_i=
    \begin{cases}
      SU(l_i+1)&\text{if } i \leq k_0\\
      SO(2l_i+1)& \text{if } i > k_0
    \end{cases}
  \end{equation*}
  and \(k_0\in \{0,\dots,k\}\). Then a \(5\)-tuple 
  \begin{equation*}
    (\psi,N,(A_i)_{i=1,\dots,k_0},(B_i)_{i=k_0+1,\dots,k},(a_{ij})_{k_0+1\leq i < j\leq k})
  \end{equation*}
with
  \begin{enumerate}
  \item \(\psi\in \Hom(\prod_{i=1}^{k_0}S(U(l_i)\times U(1)),Z(G'))\) and \(\psi_i=\psi|_{S(U(l_i)\times U(1))}\),
  \item \(N\) a torus manifold with \(G'\times\prod_{i=k_0+1}^k(\mathbb{Z}_2)_i\)-action,
  \item \label{item:9}\(A_i \subset N \) the empty set or a \(G'\times\prod_{i=k_0+1}^k(\mathbb{Z}_2)_i\)-invariant closed submanifold of codimension two, on which \(\image \psi_i\) acts trivially, such that if \(A_i \neq \emptyset\), then \(\ker \psi_i=SU(l_i)\),
  \item \label{item:8} \(B_i \subset N\) the empty set or a \(G'\times\prod_{i=k_0+1}^k(\mathbb{Z}_2)_i\)-invariant closed submanifold of codimension one, on which \((\mathbb{Z}_2)_i\) acts trivially, such that if \(B_i\neq \emptyset\), then the action of \((\mathbb{Z}_2)_i\) on \(N\) is non-trivial,
  \item \label{item:1} \(a_{ij}\in \{0,1\}\) such that 
    \begin{enumerate}
    \item \label{item:2}if \(a_{ij}=1\), then
      \begin{enumerate}
      \item \label{item:3}the action of \((\mathbb{Z}_2)_j\) on \(N\) is trivial,
      \item \label{item:4}\(a_{jk}=0\) for \(k>j\),
      \item \label{item:5}\(B_i = \emptyset\),
      \end{enumerate}
    \item \label{item:6}if the action of \((\mathbb{Z}_2)_i\) on \(N\) is non-trivial, then it is orientation preserving if and only if \(\sum_{j>i}a_{ij}\) is odd,
    \item \label{item:7}if the action of \((\mathbb{Z}_2)_i\) on \(N\) is trivial, then  \(\sum_{j>i}a_{ij}\) is odd or zero,
    \end{enumerate}   
  \end{enumerate}
is called \emph{admissible for} \((\tilde{G},\prod_{i=1}^k G_i)\) if the \(A_i\) and \(B_i\) intersect pairwise transversely.

If \(G'\) is a torus we also say that a \(5\)-tuple is admissible for \(\tilde{G}\) instead of \((\tilde{G},\prod_{i=1}^k G_i)\).

We say that two admissible \(5\)-tuples
\begin{gather*}
  (\psi,N,(A_i)_{i=1,\dots,k_0},(B_i)_{i=k_0+1,\dots,k},(a_{ij})_{k_0+1\leq i < j\leq k})\\\intertext{and}(\psi',N',(A_i')_{i=1,\dots,k_0},(B_i')_{i=k_0+1,\dots,k},(a_{ij}')_{k_0+1\leq i < j\leq k})
\end{gather*}
 are equivalent if 
\begin{itemize}
\item \(\psi_i=\psi_i'\) if \(l_i >1\) and  \(\psi_i=\psi_i'^{\pm 1}\) if \(l_i=1\),
\item \(a_{ij}=a_{ij}'\),
\item there is a  \(G'\times\prod_{i=k_0+1}^k(\mathbb{Z}_2)_i\)-equivariant diffeomorphism \(\phi:N\rightarrow N'\) such that
  \(\phi(A_i)=A_i'\) and \(\phi(B_i)=B_i'\).
\end{itemize}
\end{definition}

\begin{remark}
  By Lemma~\ref{lem:torus-m2}, two submanifolds \(A_1,A_2\) of \(N\) satisfying the condition (3) intersect transversely if and only if no component of \(A_1\) is a component of \(A_2\).
  
  By Lemma~\ref{sec:gener-torus-manif-1}, two submanifolds \(A_1,B_1\) of \(N\) satisfying the conditions (3) and (4), respectively, intersect always transversely.

  By Lemma~\ref{sec:gener-torus-manif-2}, two submanifolds \(B_1,B_2\) of \(N\) satisfying the condition (4) intersect transversely if and only if no component of \(B_1\) is a component of \(B_2\).
\end{remark}

\begin{lemma}
\label{sec:classification}
  Let \(\tilde{G}\) as above. Then there is a one-to-one correspondence between the equivalence classes of admissible \(5\)-tuples
  \begin{equation*}
    (\psi,N,(A_i)_{i=1,\dots,k_0},(B_i)_{i=k_0+1,\dots,k},(a_{ij})_{k_0+1\leq i < j\leq k})
  \end{equation*}
 for \((\tilde{G},\prod_{i=1}^k G_i)\) and the equivalence classes of admissible \(5\)-tuples 
  \begin{equation*}
    (\psi',N',(A'_i)_{i=1,\dots,k_0},(B'_i)_{i=k_0+1,\dots,k-1},(a'_{ij})_{k_0+1\leq i < j\leq k-1})
  \end{equation*}
for \((\tilde{G},\prod_{i=1}^{k-1} G_i)\) such that \(G_k\) is elementary for the \(G_k\times G'\)-action on \(N'\).
\end{lemma}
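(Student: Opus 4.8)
The plan is to peel off the last factor $G_k=SO(2l_k+1)$ using the classification of section~\ref{sec:so}. First I would regard $\tilde{G}=\prod_{i=1}^{k-1}G_i\times(G_k\times G')$, so that an admissible $5$-tuple $(\psi',N',(A_i')_i,(B_i')_i,(a_{ij}')_{i<j\le k-1})$ for $(\tilde{G},\prod_{i=1}^{k-1}G_i)$ has a manifold $N'$ carrying a $(G_k\times G')\times\prod_{i=k_0+1}^{k-1}(\mathbb{Z}_2)_i$-action in which, by the hypothesis of the lemma, $G_k$ is elementary. Applying Theorem~\ref{sec:case-g_1=so2l_1-+1-2} to this $G_k$-action produces an admissible pair $(N,B_k)$, where $N$ is a torus manifold carrying the residual $G'\times(\mathbb{Z}_2)_k\times\prod_{i=k_0+1}^{k-1}(\mathbb{Z}_2)_i$-action and $(\mathbb{Z}_2)_k=S(O(2l_k)\times O(1))/SO(2l_k)$. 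This supplies exactly the extra $(\mathbb{Z}_2)_k$-factor and the codimension-one submanifold $B_k$ demanded by a $5$-tuple for $(\tilde{G},\prod_{i=1}^k G_i)$, and since the correspondence of Theorem~\ref{sec:case-g_1=so2l_1-+1-2} is a bijection, $N'$ is recovered from $(N,B_k)$.

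Next I would transfer the remaining data $(A_i'),(B_i'),(a_{ij}')$ along the inclusion $N\hookrightarrow N'$, splitting into the two cases of Lemmas~\ref{sec:case-g_1=so2l_1-+1} and~\ref{sec:case-g_1=so2l_1-+1-3}. In the connected case $N=N'^{SO(2l_k)}$ by Lemma~\ref{lem:so1}, and since each $A_i',B_i'$ is $G_k$-invariant one has $A_i'=G_kA_i$, $B_i'=G_kB_i$ with $A_i=A_i'\cap N$, $B_i=B_i'\cap N$; here I set $a_{ik}=0$. In the two-component case $N'=SO(2l_k+1)/SO(2l_k)\times N$ with $B_k=\emptyset$, each $(\mathbb{Z}_2)_i$-action for $i<k$ commutes with $G_k\times G'$ and hence splits as a product, and I define $a_{ik}=1$ precisely when the factor acting on $SO(2l_k+1)/SO(2l_k)$ is nontrivial, again taking $A_i,B_i$ to be the corresponding factors inside $N$. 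Because these constructions are inverse to the sweeping operation $X\mapsto G_kX$, the passage is reversible in both directions; the transversality clause of Definition~\ref{sec:classification-2} is preserved since, by the remarks following that definition, transversality of the $A_i,B_i$ is governed only by which components they share.

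The main obstacle is verifying the orientation clauses~(5)(b) and~(5)(c), together with the compatibility clause~(5)(a), which together encode orientability of the ambient manifold. The key computation is the change in the orientation behaviour of each $(\mathbb{Z}_2)_i$ upon restriction from $N'$ to $N$. In the connected case the normal fibre $N(N,N')$ is the standard real $SO(2l_k)$-representation, so by the Schur-type argument of Lemma~\ref{sec:case-g_1=so2l_1} every element of $\prod_{i<k}(\mathbb{Z}_2)_i$ commuting with $G_k$ acts orientation-preservingly on it; hence the orientation behaviour of $(\mathbb{Z}_2)_i$ on $N$ agrees with that on $N'$, consistent with $a_{ik}=0$. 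In the two-component case the nontrivial involution of $SO(2l_k+1)/SO(2l_k)$ is orientation-reversing by Corollary~\ref{cor:so3}, so the orientation behaviour of $(\mathbb{Z}_2)_i$ on $N'$ equals $(-1)^{a_{ik}}$ times its behaviour on $N$. Combining either case with the identity
\begin{equation*}
  \sum_{i<j\le k}a_{ij}=\sum_{i<j\le k-1}a_{ij}'+a_{ik}
\end{equation*}
shows that the parity conditions for the two tuples are equivalent, where one must also keep track of whether the restricted action of $(\mathbb{Z}_2)_i$ stays nontrivial, so as to land in clause~(5)(b) rather than~(5)(c). Tracing through when $a_{ik}=1$ can occur---only in the two-component case, where $(\mathbb{Z}_2)_k$ is trivial on $N$ and a nonempty $B_i$ would force an incompatible orientation reversal---yields clauses~(5)(a)(i) and~(5)(a)(iii), while clause~(5)(a)(ii) is vacuous since $k$ is maximal. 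Finally I would check that a $(G_k\times G')\times\prod_{i<k}(\mathbb{Z}_2)_i$-equivariant diffeomorphism realising an equivalence of $(k-1)$-tuples restricts to a $G'\times\prod_{i\le k}(\mathbb{Z}_2)_i$-equivariant diffeomorphism of the associated $N$'s, and conversely extends by $G_k$-sweeping, so that the bijection descends to equivalence classes.
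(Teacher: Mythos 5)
There is a genuine gap: your proof only treats the case \(G_k=SO(2l_k+1)\), but the lemma also covers the case \(G_k=SU(l_k+1)\), which occurs precisely when \(k_0=k\) in Definition~\ref{sec:classification-2}. Since the factors are ordered with the special unitary ones first, the induction in Theorem~\ref{thm:class1} eventually peels off \(SU\)-factors, so this case cannot be discarded. It requires different machinery from the one you use: instead of Theorem~\ref{sec:case-g_1=so2l_1-+1-2} and admissible pairs, one applies Theorem~\ref{sec:case-g_1-=-1} to the \(G_k\times G'\)-action on \(N'\) to obtain an admissible triple \((\psi_k,N_k,A_k)\); the new \(5\)-tuple is then
\begin{equation*}
  (\psi\times\psi_k,\;N_k,\;(A_1'\cap N_k,\dots,A_{k-1}'\cap N_k,A_k),\;\emptyset,\;\emptyset),
\end{equation*}
i.e.\ the homomorphism \(\psi\) gets extended by \(\psi_k\) and a new codimension-two submanifold \(A_k\) is appended, while no \((\mathbb{Z}_2)\)-data or \(a_{ij}\)'s are involved at all. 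None of this appears in your proposal, and it cannot be recovered from your \(SO\)-argument, since for \(SU(l_k+1)\) the relevant subgroup data \((\psi_k,\ker\psi_k=SU(l_k))\) and the orientation bookkeeping are of a completely different nature.

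Within the \(SO(2l_k+1)\) case your argument follows the paper's proof closely (same two subcases, same use of Lemma~\ref{sec:case-g_1=so2l_1} and of the orientation-reversing involution on \(SO(2l_k+1)/SO(2l_k)\), same parity identity), with one inaccuracy: clause~(5)(a)(ii) is \emph{not} vacuous for the new tuple. Although \(a_{k k'}\) with \(k'>k\) does not exist, the clause applied to an old pair \((j,i)\) with \(a_{ji}'=1\) demands \(a_{ik}=0\) for the newly defined entry \(a_{ik}\). This does hold — by clause~(5)(a)(i) for the old tuple, \(a_{ji}'=1\) forces \((\mathbb{Z}_2)_i\) to act trivially on \(N'\), hence trivially on both factors in the split case, so \(a_{ik}=0\) by your definition — but it is a condition that must be checked, not dismissed.
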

\begin{proof}
  At first assume that \(G_k=SU(l_k+1)\).
  Let \((\psi,N,(A_i)_{i=1,\dots,k-1},\emptyset,\emptyset)\) be an admissible \(5\)-tuple for \((\tilde{G},\prod_{i=1}^{k-1} G_i)\) such that \(G_k\) is elementary for the \(G_k\times G'\)-action on \(N\).

  Let \((\psi_k,N_k,A_k)\) be the admissible triple for \((G_k\times G',G_k)\), which corresponds to \(N\) under the correspondence given in  
  Theorem~\ref{sec:case-g_1-=-1}. Then \(N_k\) is a submanifold of \(N\).
  By Lemma~\ref{lem:torus-m2}, \(A_i\), \(i=1,\dots,k-1\), intersects \(N_k\) transversely.
  Therefore \(N_k\cap A_i\) has codimension \(2\) in \(N_k\).
  Because \(A_i=G_k(N_k \cap A_i)\),  \(N_k\cap A_i\) has no component, which is contained in \(A_k\) or \(N_k\cap A_j\), \(j\neq i\).
  Therefore by 
  \begin{equation*}
    (\psi\times\psi_k,N_k,(A_1\cap N_k,\dots,A_{k-1}\cap N_k,A_k),\emptyset,\emptyset)
  \end{equation*}
 an admissible \(5\)-tuple for \((\tilde{G},\prod_{i=1}^k G_i)\) is given.

Now let
  \begin{equation*}
    (\psi\times\psi_k,N_k,(A_1,\dots,A_k),\emptyset,\emptyset)
  \end{equation*}
  be an admissible \(5\)-tuple for \((\tilde{G},\prod_{i=1}^kG_i)\).
  Let \(H_0=G_k\times \image \psi_k\) and \(H_1=S(U(l_k)\times U(1))\times \image\psi_k\).
  Then, by Lemma~\ref{lem:class1}, the blow down \(N\)  of \(\tilde{N}=H_0\times_{H_1}N_k\) along \(H_0/H_1\times A_k\) is a torus manifold with \(G_k\times G'\)-action.
  By Lemma~\ref{lem:proper}, \(F(H_0\times_{H_1}A_i)=G_kF(A_i)\), \(i<k\), are submanifolds of \(N\) satisfying the condition (3) of Definition~\ref{sec:classification-2}.
  Because \(F(A_i)\) and \(F(A_j)\), \(i<j<k\),  have no components in common, \(G_kF(A_i)\) and \(G_kF(A_j)\) intersect transversely.
  Therefore by
  \begin{equation*}
       (\psi,N,(G_k F(A_1),\dots,G_k F(A_{k-1})),\emptyset,\emptyset)
  \end{equation*}
  an admissible triple for \((\tilde{G},\prod_{i=1}^{k-1}G_i)\) is given.

  As in the proof of Theorem~\ref{sec:case-g_1-=-1} one sees that this construction leads to a one-to-one-correspondence.
  
  Now assume that \(G_k=SO(2l_k+1)\).
  Let 
  \begin{equation}
    \label{eq:4}
    (\psi,N,(A_i)_{i=1,\dots,k_0},(B_i)_{i=k_0+1,\dots,k-1},(a_{ij})_{k_0+1\leq i < j\leq k-1})
  \end{equation}
  be an admissible \(5\)-tuple for \((\tilde{G},\prod_{i=1}^{k-1} G_i)\) such that \(G_k\) is elementary for the \(G_k\times G'\)-action on \(N\).

  At first assume that, for the \(G_k\)-action on \(N\), \(N^{SO(2l_k)}\) is connected.
  Let \((N_k,B_k)\) be the admissible pair for \((G_k\times G',G_k)\)  which corresponds to \(N\) under the correspondence given in  
  Theorem~\ref{sec:case-g_1=so2l_1-+1-2}.
  Then \(N_k\) is a submanifold of \(N\) which is invariant under the action of \(G'\times\prod_{i=k_0+1}^{k}(\mathbb{Z}_2)_i\), where \((\mathbb{Z}_2)_k=S(O(2l_k)\times O(1))/SO(2l_k)\).
  For \(i<k\), let \(a_{ik}=0\).
  
  We claim that by
  \begin{equation}
    \label{eq:5}
    (\psi,N_k,(A_1\cap N_k,\dots,A_{k_0}\cap N_k),(B_{k_0+1}\cap N_k,\dots,B_{k-1}\cap N_k,B_k),(a_{ij}))
  \end{equation}
  an admissible \(5\)-tuple for \((\tilde{G},\prod_{i=1}^k G_i)\) is given.

  At first note that, for \(i=1,\dots,k-1\), the \(A_i\) and \(B_i\) intersect \(N_k\) transversely by Lemmas~\ref{lem:torus-m2} and~\ref{sec:gener-torus-manif-1}.
  Therefore \(A_i\cap N_k\) and \(B_i\cap N_k\) has codimension two or one, respectively, in \(N_k\).

  One sees as in the case \(G_k=SU(l_k+1)\) that the \(N_k\cap A_i\) and \(N_k\cap B_i\) intersect pairwise transversely.

  Now we verify the condition (\ref{item:1}) of Definition~\ref{sec:classification-2} for the \(5\)-tuple~(\ref{eq:5}).
  By Lemma~\ref{sec:case-g_1=so2l_1}, \((\mathbb{Z}_2)_i\), \(i<k\), acts orientation preserving on \(N\) if and only if it acts orientation preserving on \(N_k\). This proves (\ref{item:6}) because (\ref{eq:4}) is an admissible \(5\)-tuple and \(a_{ik}=0\).

  Because, by Lemma~\ref{lem:so1}, \(G_kN_k=N\), \((\mathbb{Z}_2)_i\), \(i<k\), acts trivially on \(N_k\) if and only if it acts trivially on \(N\).
  This proves (\ref{item:7}) and (\ref{item:3}) because (\ref{item:7}) and (\ref{item:3}) hold for the admissible \(5\)-tuple (\ref{eq:4}) and \(a_{ik}=0\).

  Because \(a_{ik}=0\), (\ref{item:4}) and (\ref{item:5}) are clear.

  Now assume that  \(N^{SO(2l_k)}\) is non-connected.
  Then, by Lemmas~\ref{sec:case-g_1=so2l_1-+1} and~\ref{sec:case-g_1=so2l_1-+1-3}, we have
  \begin{equation*}
    N=SO(2l_k+1)/SO(2l_k)\times N_k.
  \end{equation*}
  In this case the \((\mathbb{Z}_2)_i\)-action, \(i<k\), on \(N\) commutes with the action of \(SO(2l_k+1)\).
  Therefore it splits in a product of an action on \(SO(2l_k+1)/SO(2l_k)\) and an action on \(N_k\).
  We put \(a_{ik}=1\) if the \((\mathbb{Z}_2)_i\)-action on  \(SO(2l_k+1)/SO(2l_k)\) is non-trivial and \(a_{ik}=0\) otherwise.
  Because there is only one non-trivial action of \(\mathbb{Z}_2\) on  \(SO(2l_k+1)/SO(2l_k)\), which commutes with the action of \(SO(2l_k+1)\), we may recover the action of \((\mathbb{Z}_2)_i\) on \(N\) from the action on \(N_k\) and \(a_{ik}\).

 We identify \(SO(2l_k)/SO(2l_k)\times N_k\) with \(N_k\) and equip it with the trivial action of \((\mathbb{Z}_2)_k=S(O(2l_k)\times O(1))/SO(2l_k)\).
  We claim that by
  \begin{equation}
    \label{eq:6}
      (\psi,N_k,(A_1\cap N_k,\dots,A_{k_0}\cap N_k),
      (B_{k_0+1}\cap N_k,\dots,B_{k-1}\cap N_k,\emptyset),(a_{ij}))
  \end{equation}
  an admissible \(5\)-tuple for \((\tilde{G},\prod_{i=1}^k G_i)\) is given.
  
  The conditions~(\ref{item:9}) and~(\ref{item:8}) of Definition~\ref{sec:classification-2} and the transversality condition are verified as in the previous cases.

  Therefore we only have to verify condition~(\ref{item:1}).
  Because the non-trivial \(\mathbb{Z}_2\)-action on \(SO(2l_k+1)/SO(2l_k)\) is orientation reversing, the \((\mathbb{Z}_2)_i\)-action, \(i<k\) on \(N_k\) has the same orientation behavior as the action on \(N\) if and only if the \((\mathbb{Z}_2)_i\)-action on \(SO(2l_k+1)/SO(2l_k)\) is trivial.
  By the definition of \(a_{ik}\), this is the case if and only if \(a_{ik}=0\).
  Therefore (\ref{item:6}) follows because (\ref{eq:4}) is an admissible \(5\)-tuple and \((\mathbb{Z}_2)_k\) acts trivially on \(N_k\).

  If the \((\mathbb{Z}_2)_i\)-action on \(N_k\) is trivial and non-trivial on \(SO(2l_k+1)/SO(2l_k)\), then the  \((\mathbb{Z}_2)_i\)-action on \(N\) is orientation reversing.
  Therefore \(\sum_{j>i}a_{ij}\) is odd.

  The \((\mathbb{Z}_2)_i\)-actions on \(N_k\) and  \(SO(2l_k+1)/SO(2l_k)\) are trivial if and only if the  \((\mathbb{Z}_2)_i\)-action on \(N\) is trivial.
  Therefore \(\sum_{j>i}a_{ij}\) is odd or trivial. This verifies (\ref{item:7}).

  If there is a \(j<i\) such that \(a_{ji}=1\), then \((\mathbb{Z}_2)_i\) acts trivially on \(N\) because the admissible \(5\)-tuple (\ref{eq:4}) satisfies (\ref{item:3}).
  Therefore \(a_{ik}=0\). This proves~(\ref{item:4}).

  If the \((\mathbb{Z}_2)_i\)-action on  \(SO(2l_k+1)/SO(2l_k)\) is non-trivial the action on \(N\) has no fixed points.
  Therefore \(B_i=\emptyset\). This proves~(\ref{item:5}). The property (\ref{item:3}) is clear.

  Now let 
 \begin{equation*}
    (\psi,N_k,(A_1,\dots,A_{k_0}),(B_{k_0+1},\dots,B_{k}),(a_{ij}))
  \end{equation*}
  be an admissible \(5\)-tuple for \((\tilde{G},\prod_{i=1}^k G_i)\).
  At first assume that \((\mathbb{Z}_2)_k\) acts non-trivially on \(N_k\).
  Then the blow down \(N\) of \(\tilde{N}=SO(2l_k+1)/SO(2l_k)\times_{(\mathbb{Z}_2)_k}N_k\) along \(SO(2l_k+1)/SO(2l_k)\times_{(\mathbb{Z}_2)_k}B_k\) is a torus manifold with \(G_k\times G' \times \prod_{i=k_0+1}^{k-1}(\mathbb{Z}_2)_i\)-action.
  As in the case \(G_k=SU(l_k+1)\) one sees that
  \begin{equation*}
    (\psi,N,(G_kF(A_1),\dots,G_kF(A_{k_0})),(G_kF(B_{k_0+1}),\dots,G_{k-1}F(B_{k-1})),(a_{ij}))
  \end{equation*}
  is an admissible \(5\)-tuple for  \((\tilde{G},\prod_{i=1}^{k-1} G_i)\).

  If  \((\mathbb{Z}_2)_k\) acts trivially on \(N_k\), then put
  \begin{equation*}
    N=SO(2l_k+1)/SO(2l_k)\times N_k.
  \end{equation*}
  Here \((\mathbb{Z}_2)_i\), \(i<k\), acts by the product action of the non-trivial \(\mathbb{Z}_2\)-action on \(SO(2l_k+1)/SO(2l_k)\) and the action on \(N_k\) if \(a_{ik}=1\).
  Otherwise  \((\mathbb{Z}_2)_i\) acts by the product action of the trivial action on \(SO(2l_k+1)/SO(2l_k)\) and the action on \(N_k\).
  Now by
  \begin{multline*}
    (\psi,N,(SO(2l_k+1)/SO(2l_k)\times A_1),\dots,SO(2l_k+1)/SO(2l_k)\times A_{k_0}),\\
    (SO(2l_k+1)/SO(2l_k)\times B_{k_0+1},\dots, SO(2l_k+1)/SO(2l_k)\times B_{k-1}),(a_{ij}))
  \end{multline*}
  an admissible  \(5\)-tuple for  \((\tilde{G},\prod_{i=1}^{k-1} G_i)\) is given.

  As in the proof of Theorem~\ref{sec:case-g_1=so2l_1-+1-2} one sees that this construction leads to a one-to-one-correspondence.
\end{proof}

Let \(\tilde{G}=\prod_i G_i \times T^{l_0}\) and 
\begin{equation*}
  (\psi,M,(A_i),(B_i),(a_{ij}))
\end{equation*}
be an admissible \(5\)-tuple for \((\tilde{G},\prod_{i=1}^{k-1}G_i)\) such that \(G_k\) is an elementary factor of \(\prod_{i\geq k}G_i\times T^{l_0}\) for the action on \(M\).
Furthermore, let
\begin{equation*}
   (\psi',N,(A_i'),(B_i'),(a_{ij}'))
\end{equation*}
be the admissible \(5\)-tuple for \((\tilde{G},\prod_{i=1}^{k}G_i)\) corresponding to  \((\psi,M,(A_i),(B_i),(a_{ij}))\).
Then the following lemma shows that \(G_i\), \(i>k\), is an elementary factor of \(\prod_{i\geq k} G_i\times T^{l_0}\) for the action on \(M\) if and only if it is an elementary factor of \(\prod_{i\geq k+1} G_i\times T^{l_0}\) for the action on \(N\).

\begin{lemma}
  \label{sec:action-weyl-group-2}
  Let \(\tilde{G}=G_1\times G' \times G''\), \(M\) a torus manifold with \(\tilde{G}\)-action and \(N\) a component of an intersection of characteristic submanifolds of \(M\), which is \(G_1\times G'\)-invariant and contains a \(T\)-fixed point \(x\) such that \(G_1\) acts non-trivially on \(N\).
  Furthermore, assume that \(G''\) is a product of elementary factors for the action on \(M\).

  Then \(N\) is a torus manifold with \(G_1\times G'\times T^{l_0}\)-action for some \(l_0\geq 0\) and \(G_1\) is an elementary factor of \(\tilde{G}\), with respect to the action on \(M\), if and only if it is an elementary factor of \(G_1\times G'\times T^{l_0}\), with respect to the action on \(N\).
\end{lemma}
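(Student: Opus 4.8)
The plan is to prove the two assertions of the statement in turn: first that $N$ is again a torus manifold with $G_1\times G'\times T^{l_0}$-action, and then the equivalence of the elementary-factor property; the latter is the substantial part. For the first assertion I would invoke the appendix. A component $N$ of an intersection of characteristic submanifolds that contains a $T$-fixed point is a torus manifold for the induced action of the quotient torus $T_N=T/T_S$, where $T_S=\langle\lambda(M_j);\,M_j\supseteq N\rangle$ is the subtorus fixing $N$ pointwise. Since $N$ is $G_1\times G'$-invariant, the collection $\{M_j;\,M_j\supseteq N\}$ is permuted by $W(G_1\times G')$, so $T_S$ is a $W(G_1\times G')$-stable subtorus and the action descends to $T_N$. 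Because $G_1$ is simple and acts non-trivially, its maximal torus $T_1$, together with $T'$, maps with finite kernel into $T_N$ (one uses that, by Lemma~\ref{lem:torus-m2}, no characteristic submanifold of $N$ has trivial isotropy circle, so the relevant circles project non-trivially). Padding $T_1\times T'$ by a complementary subtorus $T^{l_0}\subseteq T_N$ of the correct rank exhibits $N$ as a torus manifold with $G_1\times G'\times T^{l_0}$-action, having $x$ as a $T$-fixed point.

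The heart of the matter is the equivalence, and the first step I would carry out is that \emph{no} characteristic submanifold of $M$ moved by $W(G_1)$ can contain $N$. Indeed, if $M_i\supseteq N$ lies in the $W(G_1)$-orbit $\mathfrak{F}_1$, then, since $N$ is $W(G_1)$-invariant, $N\subseteq\bigcap_{w\in W(G_1)}wM_i=\bigcap_{M_j\in\mathfrak{F}_1}M_j$; using Corollary~\ref{cor:10} (respectively its $SO(2l_1+1)$-analogue in Lemma~\ref{sec:case-g_1=so2l_1-+1}) together with the $G_1$-invariance of $N$, this forces $N\subseteq M^{G_1}$, contradicting that $G_1$ acts non-trivially on $N$. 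Consequently every characteristic submanifold moved by $W(G_1)$ meets $N$ properly and restricts to characteristic submanifolds of $N$; conversely, by Lemmas~\ref{lem:torus-m1} and~\ref{sec:gener-torus-manif} the characteristic submanifolds of $N$ are precisely the components of $M_i\cap N$ for $M_i\not\supseteq N$. This provides a $W(G_1)$-equivariant correspondence between the characteristic submanifolds of $M$ not containing $N$ and those of $N$.

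It then remains to check that $W(G_1)$ acts on $\mathfrak{F}(N)$ with the same combinatorics as on $\mathfrak{F}(M)$. Here I would use Lemma~\ref{sec:action-weyl-group}: the assignment $\lambda$ is $N_GT$-equivariant, and the isotropy circle of a characteristic submanifold of $N$ coming from $M_i$ is $\pi(\lambda(M_i))$, where $\pi:T\to T_N$ is the $W(G_1)$-equivariant projection. Since $\pi$ is an isogeny onto its image on each such circle (again by Lemma~\ref{lem:torus-m2}, $\lambda(M_i)\not\subseteq\ker\pi=T_S$), both the permutation behaviour and the orientation behaviour of $W(G_1)$ — the latter governed in Lemma~\ref{sec:action-weyl-group} by conjugation on $\lambda$ — transfer faithfully from $M$ to $N$. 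Hence $W(G_1)$ acts non-trivially, through a single transitive orbit $\mathfrak{F}_1$, on $\mathfrak{F}(N)$ if and only if it does so on $\mathfrak{F}(M)$; by Lemma~\ref{lem:1} this is exactly the statement that $G_1$ is an elementary factor for the action on $N$ if and only if it is one for the action on $M$.

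The main obstacle I anticipate is the faithful transfer of the $W(G_1)$-action through $\pi:T\to T_N$. One must exclude that two characteristic submanifolds occupying different $W(G_1)$-positions in $M$ collapse to one in $N$, and that an orientation-reversing reflection — the $\#\mathfrak{F}_1=1$ situation occurring for $SO(2l_1+1)$ — becomes orientation-preserving after restriction to $N$. Both phenomena are controlled by the non-degeneracy of $\pi$ on the isotropy circles $\lambda(M_i)$, and this non-degeneracy is exactly what the hypothesis that $G_1$ acts non-trivially on $N$ secures, via the fixed-point argument of the second paragraph; with that in hand the equivalence closes.
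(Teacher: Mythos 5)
Your reduction of the problem to transferring the $W(G_1)$-action on characteristic submanifolds from $M$ to $N$ rests on a false characterization of elementarity. You conclude that ``$W(G_1)$ acts non-trivially, through a single transitive orbit, on $\mathfrak{F}(N)$ if and only if it does so on $\mathfrak{F}(M)$'', and that by Lemma~\ref{lem:1} this is exactly the statement that $G_1$ is elementary for both actions. But Lemma~\ref{lem:1} only lists properties that elementary factors have; it does not characterize them. Whether $G_1$ is an elementary factor is a statement about the full decomposition $W(\tilde{G})=\prod W_i$ of section~\ref{sec:charman}: $G_1$ is elementary iff $W(G_1)$ equals the \emph{whole} subgroup $W_i$ generated by all reflections of $W(\tilde{G})$ acting non-trivially on the corresponding $V_i$, and this depends on how reflections of the \emph{other} simple factors act --- something your argument never tracks. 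Concretely, if $G_1=SU(2)$ is one of the two $SU(2)$-factors of an elementary factor $\mathrm{Spin}(4)$, then $W(G_1)=\{1,w_1\}$ still acts transitively on the corresponding two-element orbit and trivially on all other orbits --- exactly the same permutation and orientation behaviour as an honest elementary $SU(2)$ with $\#\mathfrak{F}_1=2$, since by Lemma~\ref{sec:action-weyl-group-5} both reflections of $W(SO(4))$ swap the same two characteristic submanifolds. So the invariant you propagate through $\pi:T\to T_N$ cannot distinguish the two situations.

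This is not a hypothetical defect: it is precisely the content of the lemma. For simple $G_1\not\cong SU(2)$ the statement is essentially automatic (such a factor can never be a proper constituent of an elementary factor), so the entire difficulty lies in the cases $G_1=SU(2)$ and $G_1=\mathrm{Spin}(4)$, where one must rule out that an $SU(2)$ which is elementary for the action on $M$ fuses with another $SU(2)\subset G'$ into a $\mathrm{Spin}(4)$ elementary factor for the action on $N$, or conversely. The paper handles this by passing to a $G_1$-fixed point $x\in N^{G_1}$ (the case where $x$ is not $G_1$-fixed being settled by Lemma~\ref{lem:iso}), using the Weyl-group-equivariant bijection $\mathfrak{F}_{xM}\to\mathfrak{F}_{xN}\amalg\mathfrak{F}_N^\perp$, and comparing the $W(G_1\times G_2)$-orbit structures on both sides to obtain a contradiction when the fusion behaviour differs. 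Note also that your proof never uses the hypothesis that $G''$ is a product of elementary factors; the paper needs exactly this hypothesis in the direction ``elementary for $N$ $\Rightarrow$ elementary for $M$'', to ensure that a hypothetical larger elementary factor of $M$ containing $G_1$ lies inside $G_1\times G'$, so the forward direction can be applied to it. An unused hypothesis of this kind is a reliable sign that the fusion phenomenon has slipped through your argument.
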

\begin{proof}
  Assume that \(G_1\) is an elementary factor for one of the two actions on \(M\) and \(N\).
  Then \(G_1\) is isomorphic to a simple group or \(\text{Spin}(4)\).
  If \(G_1\) is simple and not isomorphic to \(SU(2)\) then the statement is clear.

  Therefore there are two cases \(G_1=SU(2),\text{Spin}(4)\).
  
  If \(x\) is not fixed by \(G_1\), then \(G_1=SU(2)\) is elementary for both actions on \(N\) and \(M\) by Lemma~\ref{lem:iso}.
  Therefore we may assume that \(x\in N^{G_1}\subset M^{G_1}\).
  Then there is a bijection
  \begin{equation*}
    \mathfrak{F}_{xM} \rightarrow \mathfrak{F}_{xN}\amalg \mathfrak{F}_N^\perp,
  \end{equation*}
  where
  \begin{align*}
    \mathfrak{F}_{xM}&=\{\text{characteristic submanifolds of } M \text{ containing }x\},\\
    \mathfrak{F}_{xN}&=\{\text{characteristic submanifolds of } N \text{ containing }x\},\\
    \mathfrak{F}_N^\perp&=\{\text{characteristic submanifolds of } M \text{ containing }N\}.
  \end{align*}
  This bijection is compatible with the actions of the Weyl-group of \(G_x\).

  At first assume that \(G_1=SU(2)\) is elementary for the action on \(M\) but not for the action on \(N\).
  Then there is another simple factor \(G_2=SU(2)\) of \(G_1\times G' \times T^{l_0}\) such that \(G_1\times G_2\) is elementary for the action on \(N\).
  At first assume that \(G_2\) is elementary for the action on \(M\).

  Let \(w_i\in W(G_i)\), \(i=1,2\), be generators.
  Then there are two non-trivial \(W(G_1\times G_2)\)-orbits \(\mathfrak{F}_{1},\mathfrak{F}_2\) in \(\mathfrak{F}_{xM}\).
  We have:
  \begin{itemize}
  \item \(\#\mathfrak{F}_i = 2\), \(i=1,2\),
  \item \(w_i\), \(i=1,2\), acts non-trivially on \(\mathfrak{F}_i\) and trivially on the other orbit.
  \end{itemize}

  But because, \(G_1\times G_2\) is elementary for the action on \(N\), there is exactly one non-trivial \(W(G_1\times G_2)\)-orbit \(\mathfrak{F}_1'\) in \(\mathfrak{F}_{xN}\).
  We have:
  \begin{itemize}
  \item \(\#\mathfrak{F}_1' = 2\),
  \item \(w_i\), \(i=1,2\), acts non-trivially on \(\mathfrak{F}_1'\).
  \end{itemize}
  This is a contradiction.

  If \(G_2\) is not elementary, then \(G_2\) is a simple factor of an elementary factor.
  In this case the action of \(W(G_1\times G_2)\) on \(\mathfrak{F}_{xM}\) behaves as in the  first case.
  Therefore we also get a contradiction in this case.

  Under the assumption that \(G_1=\text{Spin}(4)\) is elementary for the action on \(M\) a similar argument shows that \(G_1\) is elementary for the action on \(N\).
  
  Therefore \(G_1\) is elementary for the action on \(N\) if it is elementary for the action on \(M\).
  
  If \(G_1\) is elementary for the action on \(N\) but not elementary for the action on \(M\), then it is a simple factor of an elementary factor \(G_1'\neq G_1\) of \(\tilde{G}\) or a product  \(G_2'\times G_3'\) of elementary factors \(G_2'\) and \(G_3'\) of \(\tilde{G}\).
  But because \(G''\) is a product of elementary factors, it contains all elementary factors of \(\tilde{G}\) which have non-trivial intersection with \(G''\).
 Because \(G_1\) is not contained in \(G''\), it follows that \(G_1',G_2'\) and \(G_3'\) are subgroups of \(G_1\times G'\).
  Therefore, by the above argument, \(G_1'\) or \(G_2'\) and \(G_3'\) are elementary for the action on \(N\).
  Because elementary factors can not contain each other we get a contradiction to the assumption that \(G_1\) is elementary for the action on \(N\).
\end{proof}

Recall from section~\ref{sec:Gact} that if \(M\) is a torus manifold with \(G\)-action, then we may assume that all elementary factors of \(G\) are isomorphic to \(SU(l_i+1)\), \(SO(2l_i+1)\) or \(SO(2l_i)\).
That means \(\tilde{G}=\prod SU(l_i+1)\times \prod SO(2l_i+1)\times \prod SO(2l_i) \times T^{l_0}\).
Because, as described in section~\ref{sec:so2l}, we may replace elementary factors isomorphic to \(SO(2l_i)\) by \(SU(l_i)\times S^1\), the following theorem may be used to construct invariants of torus manifolds with \(\tilde{G}\)-action.
By Theorem~\ref{sec:case-g_1=so2l_1-2} these invariants determine the \(\tilde{G}\)-equivariant diffeomorphism type of simply connected torus manifolds with \(\tilde{G}\)-action.

\begin{theorem}
\label{thm:class1}
   Let \(\tilde{G}=\prod_{i=1}^k G_i \times T^{l_0}\) with
  \begin{equation*}
    G_i=
    \begin{cases}
      SU(l_i+1)&\text{if } i \leq k_0\\
      SO(2l_i+1)& \text{if } i > k_0
    \end{cases}
  \end{equation*}
  and \(k_0\in \{0,\dots,k\}\). Then there is a one-to-one correspondence between the equivalence classes of admissible \(5\)-tuples for \(\tilde{G}\) and the \(\tilde{G}\)-equivariant diffeomorphism classes of torus manifolds with \(\tilde{G}\)-action such that all \(G_i\) are elementary.
\end{theorem}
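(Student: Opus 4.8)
The plan is to prove the theorem by induction on $k$, using Lemma~\ref{sec:classification} as the engine that converts one factor at a time into combinatorial data, and Lemma~\ref{sec:action-weyl-group-2} as the device that carries the elementarity hypothesis back and forth between a manifold and the intersection of characteristic submanifolds on which its reduced data lives. It is convenient to read an admissible $5$-tuple for $(\tilde{G},\prod_{i=1}^{j}G_i)$ as a ``$j$-reduced'' object: the factors $G_1,\dots,G_j$ have been encoded in $\psi$, the $A_i$, the $B_i$ and the $a_{ij}$, while $G_{j+1},\dots,G_k$ still act on the base $N$. With this reading the two extremes are the ones we care about: a $0$-reduced tuple for $(\tilde{G},\{1\})$ carries no combinatorial data and is simply the torus manifold $N=M$ itself with its full $\tilde{G}$-action, whereas a $k$-reduced tuple is an admissible $5$-tuple for $\tilde{G}$ in the sense of the statement. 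The base case $k=0$ is then trivial: $\tilde{G}=T^{l_0}$, the only tuple is $(\psi,N,\emptyset,\emptyset,\emptyset)$ with $\psi$ the trivial homomorphism, and the asserted correspondence $M\leftrightarrow N=M$ is the identity.

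For the inductive step I would start from a torus manifold $M$ with $\tilde{G}$-action in which all $G_i$ are elementary, regard it as a $0$-reduced tuple, and peel off the factors successively $G_1,G_2,\dots,G_k$. To pass from the $(j-1)$-reduced description (base $N_{j-1}$, carrying the $\prod_{i\ge j}G_i\times T^{l_0}$-action together with the finite factors accumulated so far, and with $G_j,\dots,G_k$ all elementary on $N_{j-1}$) to the $j$-reduced description, I invoke Lemma~\ref{sec:classification} with its ``$k$'' taken to be $j$ and its ``$G'$'' taken to be $\prod_{i>j}G_i\times T^{l_0}$. The hypothesis of that lemma, namely that $G_j$ be elementary for the $G_j\times G'$-action on $N_{j-1}$, is exactly the part of the running invariant we have kept; the lemma then produces the bijection on equivalence classes that removes $G_j$ by the projective-bundle / blow-down constructions of Theorems~\ref{sec:case-g_1-=-1} and~\ref{sec:case-g_1=so2l_1-+1-2}, whose smaller base $N_j$ is a component of an intersection of characteristic submanifolds of $N_{j-1}$ with $N_{j-1}=G_jN_j$ by Lemma~\ref{lem:poly} (or Lemma~\ref{lem:so1} in the orthogonal case). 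Composing these $k$ bijections and using that Lemma~\ref{sec:classification} is phrased on equivalence classes (so that compatibility of the equivalence of $5$-tuples with $\tilde{G}$-equivariant diffeomorphism of the $M$'s is automatic) yields the stated one-to-one correspondence.

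The step that carries the real content, and which I expect to be the \emph{main obstacle}, is verifying that each remaining factor stays elementary after a reduction, so that the next application of Lemma~\ref{sec:classification} is legitimate; this is precisely the role of Lemma~\ref{sec:action-weyl-group-2}. When $G_j$ is removed, I apply that lemma with its distinguished factor set to $G_i$ for each $i>j$, its ``$M$'' set to $N_{j-1}$, its ``$N$'' set to $N_j$, and its ``$G''$'' set to $G_j$, which is a product of elementary factors for the action on $N_{j-1}$ by the running invariant. Its remaining hypothesis, that $G_i$ act non-trivially on $N_j$, I would check directly: if $G_i$ fixed $N_j$ pointwise then, since $G_i$ commutes with $G_j$ inside the direct product and $N_{j-1}=G_jN_j$, it would fix all of $N_{j-1}$, contradicting that $G_i$ is elementary (hence non-trivial) there. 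Lemma~\ref{sec:action-weyl-group-2} then gives that $G_i$ is elementary on $N_{j-1}$ if and only if it is elementary on $N_j$, which propagates the invariant forward and closes the induction.
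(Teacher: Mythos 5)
Your proposal is correct and takes essentially the same route as the paper, whose entire proof of Theorem~\ref{thm:class1} is the single sentence ``This follows from Lemma~\ref{sec:classification} and Lemma~\ref{sec:action-weyl-group-2} by induction'' --- that is, exactly your induction that encodes one elementary factor per step via Lemma~\ref{sec:classification} and uses Lemma~\ref{sec:action-weyl-group-2} to carry elementarity of the remaining factors across each reduction. Your explicit check of the non-triviality hypothesis of Lemma~\ref{sec:action-weyl-group-2} (via \(N_{j-1}=G_jN_j\) and commutativity of the factors) and the identification of \(N_j\) as a component of an intersection of characteristic submanifolds supply details the paper leaves implicit, but they do not alter the argument.
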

\begin{proof}
  This follows from Lemma~\ref{sec:classification} and Lemma~\ref{sec:action-weyl-group-2} by induction.
\end{proof}

Using Lemma~\ref{sec:action-weyl-group-1} and Theorem \ref{sec:case-g_1-=-4} we get the following result for quasitoric manifolds.

\begin{theorem}
     Let \(\tilde{G}=\prod_{i=1}^k G_i \times T^{l_0}\) with
    \(G_i= SU(l_i+1)\).
   Then there is a one-to-one correspondence between the equivalence classes of admissible \(5\)-tuples for \(\tilde{G}\) of the form
   \begin{equation*}
     (\psi,N,(A_i)_{1\leq i \leq k},\emptyset, \emptyset)
   \end{equation*}
   with \(N\) quasitoric and \(A_i\), \(1\leq i \leq k\), connected
 and the \(\tilde{G}\)-equivariant diffeomorphism classes of quasitoric manifolds with \(\tilde{G}\)-action.
\end{theorem}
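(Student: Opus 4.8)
The plan is to cut the general classification of Theorem~\ref{thm:class1} down to the quasitoric locus, using Lemma~\ref{sec:action-weyl-group-1} to pin down the shape of the group and Theorem~\ref{sec:case-g_1-=-4} to control quasitoricity one factor at a time. First I would observe that the hypotheses place us in the case $k_0=k$ of Definition~\ref{sec:classification-2}: by Lemma~\ref{sec:action-weyl-group-1} every elementary factor of the covering group of a quasitoric manifold is of the form $SU(l_i+1)$, so no factor $SO(2l_i+1)$ occurs, the index ranges for the $B_i$ and the $a_{ij}$ are empty, and every admissible $5$-tuple for $\tilde G$ is automatically of the form $(\psi,N,(A_i)_{1\le i\le k},\emptyset,\emptyset)$. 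Thus Theorem~\ref{thm:class1} already furnishes a bijection between such $5$-tuples (with $N$ an arbitrary torus manifold and the $A_i$ arbitrary admissible codimension-two submanifolds, possibly empty) and $\tilde G$-equivariant diffeomorphism classes of torus manifolds with $\tilde G$-action all of whose $G_i$ are elementary. It then suffices to prove, compatibly with this bijection, the equivalence
\[
M \text{ is quasitoric} \iff N \text{ is quasitoric and each } A_i \text{ is empty or connected.}
\]

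I would establish this by induction on $k$, peeling off $G_1=SU(l_1+1)$. The case $k=0$ is trivial, since then $M=N$. For the step, let $(\psi_1,N_1,A_1)$ be the admissible triple for $(G_1\times G',G_1)$ attached to $M$, where $G'=\prod_{i\ge 2}G_i\times T^{l_0}$; by the iteration described in Section~\ref{sec:classif} the $5$-tuple of $N_1$ is $(\hat\psi,N,(\hat A_i)_{2\le i\le k},\emptyset,\emptyset)$ and that of $M$ is $(\psi_1\times\hat\psi,N,(A_1\cap N,\hat A_2,\dots,\hat A_k),\emptyset,\emptyset)$. Theorem~\ref{sec:case-g_1-=-4} gives ``$M$ quasitoric $\iff$ $N_1$ quasitoric and $A_1$ connected'', and the induction hypothesis gives ``$N_1$ quasitoric $\iff$ $N$ quasitoric and $\hat A_2,\dots,\hat A_k$ connected''. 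Chaining these two statements, the only remaining point is that, \emph{under the assumption that $N_1$ is quasitoric}, the submanifold $A_1\subset N_1$ is connected if and only if its image $A_1\cap N\subset N$ in the final base is connected; once this is known the two characterisations of ``$M$ quasitoric'' coincide and the induction closes.

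For that last point I would argue in two directions. Since $A_1$ is $\prod_{i\ge 2}G_i$-invariant and $\prod_{i\ge 2}G_i\cdot N=N_1$, one has $A_1=\prod_{i\ge 2}G_i\cdot(A_1\cap N)$; as the groups $G_i$ are connected, connectedness of $A_1\cap N$ forces connectedness of $A_1$, and this direction needs no quasitoricity. Conversely, a connected $A_1$ is a characteristic submanifold of $N_1$: by Corollary~\ref{cor:10} it equals $N_1\cap M_{i_0}$ for the single remaining characteristic submanifold $M_{i_0}\in\mathfrak F_1\setminus\mathfrak F_1'$, hence has codimension two in $N_1$, is fixed by a subtorus, and contains a $T$-fixed point. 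When $N_1$ is quasitoric it is therefore the preimage of a facet of the quotient polytope, while $N$, being a component of an intersection of characteristic submanifolds of $N_1$, is the preimage of a face; their intersection $A_1\cap N$ is then the preimage of the intersection of a facet and a face of a simple polytope, which is again a connected face (and nonempty as soon as $A_1$ is).

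The step I expect to be the main obstacle is exactly this transfer of the connectedness condition between the intermediate manifold $N_1$ and the final base $N$. The condition ``$A_1$ connected'' is genuinely nontrivial: $M_{i_0}$ can meet $N_1$ in several components even when $N_1$ is quasitoric, in which case $M$ fails to be quasitoric, so one must verify that the successive intersections carrying $N_1$ down to $N$ neither create nor destroy this connectedness. The clean mechanism controlling this is the dual picture on the orbit space—intersections of faces and facets of a simple polytope are again faces—which is precisely the place where quasitoricity of each intermediate manifold enters the argument.
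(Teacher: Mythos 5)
Your proposal is correct and takes essentially the same route as the paper: the paper's own proof is precisely the one-line observation that Lemma~\ref{sec:action-weyl-group-1} (all elementary factors of a quasitoric manifold are $SU(l_i+1)$'s, so $k_0=k$ and the $B_i$, $a_{ij}$ are vacuous) combines with Theorem~\ref{sec:case-g_1-=-4} and the iteration underlying Theorem~\ref{thm:class1}. Your extra verification that, when $N_1$ is quasitoric, connectedness of $A_1\subset N_1$ is equivalent to connectedness of $A_1\cap N\subset N$ (via $A_1=\prod_{i\ge 2}G_i\cdot(A_1\cap N)$ in one direction, and the fact that a facet of a simple polytope meets a face in a face in the other) fills in a detail the paper leaves implicit rather than constituting a different approach.
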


\begin{remark}
  Remark~\ref{sec:action-weyl-group-3} and Theorem~\ref{sec:case-g_1-=-3} lead to a similar result for torus manifolds with \(G\)-actions whose cohomologies are generated by their degree two parts.
\end{remark}

\begin{cor}
\label{sec:classification-3}
  Let \(\tilde{G}=\prod_{i=1}^{k_1} G_i \times T^{l_0}\) with \(G_i\) elementary and \(M\) a torus manifold with \(G\)-action. Then \(M/G\) has dimension \(l_0+\#\{G_i ;\; G_i=SO(2l_i)\}\).
\end{cor}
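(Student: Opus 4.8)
The plan is to reduce the computation to a pure torus action and then read off $\dim M/G$ by an induction that peels off one elementary factor at a time. First, since $\tilde G$ is a finite covering group of $G$ acting on $M$ through $G$, the $\tilde G$- and $G$-orbits coincide and $M/G = M/\tilde G$. Next, by the discussion in section~\ref{sec:so2l}, restricting an elementary factor $SO(2l_i)$ to $SU(l_i)\times S^1$ leaves the orbits unchanged. Carrying out this replacement for every factor isomorphic to $SO(2l_i)$ produces a group $\tilde G''=\prod SU(l_i+1)\times\prod SO(2l_i+1)\times T^{l_0''}$ with $l_0''=l_0+\#\{i:G_i=SO(2l_i)\}$ acting on $M$ with the same orbits, so $M/G=M/\tilde G''$. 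Hence it suffices to prove the reduced claim: if $\tilde G=\prod_{i=1}^k G_i\times T^{l_0}$ with every $G_i$ elementary of type $SU(l_i+1)$ or $SO(2l_i+1)$, then $\dim M/\tilde G=l_0$.

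I would prove the reduced claim by induction on the number $k$ of non-abelian factors. For $k=0$ the action of $T^{l_0}$ on the $2l_0$-dimensional torus manifold $M$ is almost effective and has a fixed point, near which it is the standard action on $\C^{l_0}$; thus the principal orbits are $l_0$-dimensional and $\dim M/T^{l_0}=2l_0-l_0=l_0$. For the inductive step write $\tilde G=G_1\times G_2$ with $G_2=\prod_{i\geq 2}G_i\times T^{l_0}$. Using Corollary~\ref{cor:hhh} (if $G_1=SU(l_1+1)$) or Corollary~\ref{cor:so3} (if $G_1=SO(2l_1+1)$), and after possibly blowing up along $M^{G_1}$, I would present $M$ as a fiber bundle $H_0\times_{H_1}N_1$ over $\C P^{l_1}$, resp. as $SO(2l_1+1)/SO(2l_1)\times_{\mathbb{Z}_2}N_1$ over $\R P^{2l_1}$ or a product $S^{2l_1}\times N_1$. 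Its fiber $N_1$ is the intersection of characteristic submanifolds of Lemma~\ref{sec:case-g_1-=}, resp. the component $M^{SO(2l_1)}$ of Lemma~\ref{lem:so1}, and is a torus manifold with $G_2$-action of dimension $\dim M-2l_1=2\rank G_2$; by Lemma~\ref{sec:action-weyl-group-2} the factors $G_2,\dots,G_k$ remain elementary for this action, so the inductive hypothesis applies to $N_1$.

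The heart of the argument, which I expect to be the main obstacle, is the orbit-space identity $M/\tilde G\cong N_1/G_2$ up to a finite group that does not affect the dimension. By Lemma~\ref{lem:poly}, resp. Lemma~\ref{lem:so1}, one has $M=G_1N_1$ with $N_1$ a $G_2$-invariant submanifold, so every $\tilde G$-orbit meets $N_1$ and the natural map $N_1\to M/\tilde G$ is onto. I would then show that two points of $N_1$ lie in the same $\tilde G$-orbit exactly when they lie in the same $G_2$-orbit: if some $g_1\in G_1$ carries a point of $N_1$ to another point of $N_1$, the transversality analysis in the proof of Corollary~\ref{cor:hhh}, resp. Lemma~\ref{lem:so1}, forces $g_1\in S(U(l_1)\times U(1))$, resp. $g_1\in S(O(2l_1)\times O(1))$, which acts on $N_1$ through $\image\psi_1\subset Z(G_2)$, resp. through $\mathbb{Z}_2=S(O(2l_1)\times O(1))/SO(2l_1)$. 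This gives $M/\tilde G\cong N_1/G_2$, resp. $N_1/(G_2\times\mathbb{Z}_2)$, and in either case $\dim M/\tilde G=\dim N_1/G_2$.

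When $M^{G_1}\neq\emptyset$ the manifold $M$ is only a blow down of the fiber bundle above, so there I would invoke Lemma~\ref{sec:blowing-up}, which supplies a $G_2$-equivariant homeomorphism between the $G_1$-quotients of the blow up and of $M$, together with Corollary~\ref{cor:blowing-up} identifying the proper transform $\tilde N_1$ with $N_1$; this reduces the fixed-point case to the bundle case already treated. Combining the orbit-space identity with the inductive hypothesis $\dim N_1/G_2=l_0$ yields $\dim M/\tilde G=l_0$, which completes the induction, establishes the reduced claim, and hence, via the reduction of the first paragraph, gives $\dim M/G=l_0+\#\{i:G_i=SO(2l_i)\}$.
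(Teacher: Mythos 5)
Your proposal is correct and follows essentially the same route as the paper: replace each $SO(2l_i)$ factor by $SU(l_i)\times S^1$ using section~\ref{sec:so2l} (the paper does this reduction last, by induction on the number of such factors, rather than first, but this is immaterial), then peel off the $SU(l_i+1)$ and $SO(2l_i+1)$ factors one at a time via Lemma~\ref{sec:blowing-up} and Corollaries~\ref{cor:hhh} and~\ref{cor:so3}, identifying $M/\tilde{G}$ with $N_1/G_2$ up to finite quotients. Your explicit justification of the orbit-space identity and of the base case $\dim M/T^{l_0}=l_0$ only spells out what the paper's chain of equalities "up to finite coverings" and its appeal to the admissible $5$-tuple leave implicit.
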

\begin{proof}
 At first we discuss the case, where  all elementary factors of \(\tilde{G}\) are isomorphic to \(SO(2l_i+1)\) or \(SU(l_i+1)\), i.e. \(\#\{G_i ;\; G_i=SO(2l_i)\}=0\).
  By Lemma \ref{sec:blowing-up},
  replacing \(M\) by the blow up \(\tilde{M}\) of \(M\) along the fixed points of \(G_1\) does not change the orbit space.
  Therefore, by Corollaries~\ref{cor:hhh} and \ref{cor:so3}, we have up to finite coverings
  \begin{align*}
    M/G &=(M/G_1)/(\prod_{i\geq2}G_i \times T^{l_0})= (\tilde{M}/G_1)/(\prod_{i\geq2}G_i \times T^{l_0})\\
    & =((H_0\times_{H_1}N_1)/G_1)/(\prod_{i\geq2}G_i \times T^{l_0}) = N_1/(\prod_{i\geq2}G_i \times T^{l_0}),
  \end{align*}
where \(N_1\) is the \(\prod_{i\geq 2}G_i\times T^{l_0}\)-manifold from the admissible \(5\)-tuple for \((\tilde{G},G_1)\) corresponding to \(M\).
Here \(H_0,H_1\) are defined as in Lemma~\ref{lem:poly} if \(G_1=SU(l_1+1)\).
If \(G_1=SO(2l_1+1)\), we have \(H_0=SO(2l_1+1)\) and \(H_1=S(O(2l_1)\times O(1))\).

 By iterating this argument we find that \(M/G=N/T^{l_0}\) up to finite coverings, where \(N\) is the \(T^{l_0}\)-manifold from the admissible \(5\)-tuple for \(\tilde{G}\) corresponding to \(M\).

Now we study the case \(l_0'=\#\{G_i ;\; G_i=SO(2l_i)\}\neq 0\).
As discussed in section~\ref{sec:so2l}, the orbits of the \(G\)-action on \(M\) do not change if we replace an elementary factor isomorphic to \(SO(2l_i)\) by \(SU(l_i)\times S^1\).
Therefore this replacement does not change the dimension of the orbit space.
But it increases \(l_0\) by one, and decreases \(l_0'\) by one.
Therefore the statement follows by induction on \(l_0'\).
\end{proof}

\subsection{Applications}
\label{sec:class:appli}

Now we apply our classification results to special cases.
We first discuss the case, where \(M\) is a torus manifold with \(G\)-action such that \(G\) is semi-simple and \(H^*(M;\mathbb{Z})\) is generated by its degree two part.

\begin{cor}
\label{sec:classification-5}
  If \(G\) is semi-simple and \(M\) is a torus manifold with \(G\)-action such that \(H^*(M;\mathbb{Z})\) is generated by its degree two part, then
  \begin{equation*}
    \tilde{G}=\prod_{i=1}^k SU(l_i+1)
  \end{equation*}
  and
  \begin{equation*}
    M=\prod_{i=1}^k \C P^{l_i},
  \end{equation*}
  where each \(SU(l_i+1)\) acts in the usual way on \(\C P^{l_i}\) and trivially on \(\C P^{l_j}\), \(j \neq i \).
\end{cor}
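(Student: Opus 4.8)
The plan is to combine the structural results of Section~\ref{sec:charman} with the classification of Section~\ref{sec:su}, running an induction on the number $k$ of simple factors of $\tilde{G}$.

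First I would pin down $\tilde{G}$. Since $G$ is semi-simple, its covering group carries no toral factor, so $\tilde{G}=\prod_{i=1}^k G_i$ with each $G_i$ elementary. Because $H^*(M;\mathbb{Z})$ is generated by its degree-two part, Remark~\ref{sec:action-weyl-group-3} guarantees that every $T$-fixed point is the transverse intersection of $n$ characteristic submanifolds; hence the argument of Lemma~\ref{sec:action-weyl-group-1} applies unchanged and forces every elementary factor to be of the form $SU(l_i+1)$. Thus $\tilde{G}=\prod_{i=1}^k SU(l_i+1)$.

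For the geometric statement I would induct on $k$, the case $k=0$ giving a $0$-dimensional torus manifold (a point), in agreement with the empty product. For the inductive step write $G_1=SU(l_1+1)$ and $G_2=\prod_{i=2}^k SU(l_i+1)$, and let $(\psi_1,N_1,A_1)$ be the admissible triple for $(\tilde{G},G_1)$ attached to $M$ by Theorem~\ref{sec:case-g_1-=-1}, where $A_1=M^{H_0}$. The key point is that $\psi_1$ is trivial: by definition $\psi_1\in\Hom(S(U(l_1)\times U(1)),Z(G_2))$, the source $S(U(l_1)\times U(1))\cong U(l_1)$ is connected, while $G_2$ is semi-simple so $Z(G_2)$ is finite, and a continuous homomorphism from a connected group to a finite group is trivial. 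By Corollary~\ref{cor:case-g_1-=}, triviality of $\psi_1$ gives $A_1=M^{H_0}=M^{G_1}=\emptyset$. Moreover, with $\image\psi_1=\{1\}$ we have $H_0=SU(l_1+1)$ and $H_1=H_2=S(U(l_1)\times U(1))$, so by Lemma~\ref{lem:poly}(2) the subgroup $H_1$ fixes $N_1$ pointwise. Corollary~\ref{cor:hhh} then yields
\begin{equation*}
  M=H_0\times_{H_1}N_1=\bigl(SU(l_1+1)/S(U(l_1)\times U(1))\bigr)\times N_1=\C P^{l_1}\times N_1,
\end{equation*}
with $SU(l_1+1)$ acting in the standard way on the first factor and trivially on $N_1$.

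Finally I would apply the inductive hypothesis to $N_1$. Since $A_1=\emptyset$, Theorem~\ref{sec:case-g_1-=-3} shows that $H^*(N_1;\mathbb{Z})$ is again generated by its degree-two part; and $N_1$ is a torus manifold with $G_2$-action, $G_2$ semi-simple. The induction gives $N_1=\prod_{i=2}^k \C P^{l_i}$ with each $SU(l_i+1)$ acting standardly on the $i$-th factor and trivially on the others. As $G_1$ commutes with $G_2$ and acts trivially on $N_1$, the two descriptions assemble to $M=\prod_{i=1}^k \C P^{l_i}$ with the asserted factorwise action. The only real obstacle is the vanishing of $\psi_1$: it is precisely semi-simplicity (finiteness of $Z(G_2)$) that rules out a nontrivial $\psi_1$, and with it every nontrivial blow-down or twisting of the bundle $H_0\times_{H_1}N_1$; once this is in hand the product structure and the induction follow immediately.
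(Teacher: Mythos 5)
Your proof is correct, and it reaches the result by a route that differs from the paper's in its key citations. The paper's own proof is essentially two lines: after the identical first step (Lemma~\ref{sec:action-weyl-group-1} together with Remark~\ref{sec:action-weyl-group-3} force all elementary factors to be \(SU(l_i+1)\)), it invokes the global classification by admissible \(5\)-tuples (Theorem~\ref{thm:class1}) and observes that semi-simplicity leaves exactly one admissible \(5\)-tuple, \((\text{const},\text{pt},\emptyset,\emptyset,\emptyset)\), whose associated manifold is the product of projective spaces. You instead unwind the induction by hand: at each stage you use the admissible-triple correspondence (Theorem~\ref{sec:case-g_1-=-1}) for a single factor, kill \(\psi_1\) by the observation that \(S(U(l_1)\times U(1))\) is connected while \(Z(G_2)\) is finite, deduce \(M^{G_1}=\emptyset\) from Corollary~\ref{cor:case-g_1-=} and \(M=\C P^{l_1}\times N_1\) from Corollary~\ref{cor:hhh}, and then propagate the cohomological hypothesis to \(N_1\) via Theorem~\ref{sec:case-g_1-=-3}. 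Your finite-center argument is precisely the fact that, inside the paper's formalism, forces the \(5\)-tuple's homomorphism to land in \(Z(T^{l_0})=\{1\}\); making it explicit is a genuine economy, since you bypass Theorem~\ref{thm:class1} and its supporting Lemmas~\ref{sec:classification} and~\ref{sec:action-weyl-group-2} entirely. The trade-offs: you need the degree-two generation hypothesis at every stage of the induction (the paper uses it only once, to pin down the isomorphism types of the elementary factors), and your last step deserves one more sentence -- the inductive hypothesis applied to \((G_2,N_1)\) produces elementary factors isomorphic to groups \(SU(m_j+1)\), which are simple, and since every elementary factor is a product of simple factors of the covering group, these must coincide with the given factors \(SU(l_i+1)\), \(i\geq 2\); this identification is the role that Lemma~\ref{sec:action-weyl-group-2} plays in the paper's induction, and it is what lets you assemble the factorwise description of the action on \(M\).
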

\begin{proof}
 By Lemma~\ref{sec:action-weyl-group-1} and Remark~\ref{sec:action-weyl-group-3}, all elementary factors of \(\tilde{G} \) are isomorphic to \(SU(l_i+1)\).
 Because \(G\) is semi-simple, there is only one admissible \(5\)-tuple for \(\tilde{G}\), namely \((\text{const},\text{pt},\emptyset,\emptyset,\emptyset)\).
 It corresponds to a product of complex projective spaces.
\end{proof}

Next we discuss torus manifolds \(M\) with \(G\)-action such that \(\dim M/G\leq 1\).
With Theorem \ref{thm:class1}, we recover the following two results of S. Kuroki~\cite{kuroki_pre_1_2009,kuroki_pre_2_2009}:

\begin{cor}
\label{sec:classification-4}
  Let \(M\) be a simply connected torus manifold with \(G\)-action such that \(M\) is a homogeneous \(G\)-manifold.
  Then \(M\) is a product of even-dimensional spheres and complex projective spaces.
\end{cor}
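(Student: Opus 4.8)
The plan is to run the classification of Theorem~\ref{thm:class1} backwards and show that homogeneity plus simple-connectivity forces the admissible $5$-tuple of $M$ to be trivial. First I would use homogeneity to pin down $\tilde G$: since $G$ acts transitively, $\dim M/G = 0$, and by Corollary~\ref{sec:classification-3} we have $\dim M/G = l_0 + \#\{G_i : G_i = SO(2l_i)\}$. Hence $l_0 = 0$ and $\tilde G$ has no elementary factor isomorphic to an even orthogonal group, so $\tilde G = \prod_{i=1}^{k_0} SU(l_i+1) \times \prod_{i=k_0+1}^{k} SO(2l_i+1)$, which is exactly the shape to which Theorem~\ref{thm:class1} applies.

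Next I would apply Theorem~\ref{thm:class1} to produce the admissible $5$-tuple $(\psi, N, (A_i), (B_i), (a_{ij}))$ of $M$. Because $G' = T^{l_0}$ is trivial, $N$ is a connected torus manifold of dimension $2l_0 = 0$, i.e. $N = \mathrm{pt}$. Consequently $\psi$ takes values in $Z(G') = \{1\}$ and is trivial; and since $N$ is a single point it contains no submanifold of codimension one or two, so every $A_i$ and every $B_i$ is empty. The tuple therefore collapses to $(\mathrm{const}, \mathrm{pt}, \emptyset, \emptyset, (a_{ij}))$, and the only surviving data are the gluing numbers $a_{ij} \in \{0,1\}$ (constrained by items~(\ref{item:6})--(\ref{item:7}), which for a point only require $\sum_{j>i} a_{ij}$ to be $0$ or odd).

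I would then reconstruct $M$ from this tuple by the inductive procedure of Lemma~\ref{sec:classification}. Since $\psi$ is trivial and all $A_i, B_i$ vanish, each $SU(l_i+1)$-step contributes an honest product factor: by Corollary~\ref{cor:case-g_1-=} the triviality of $\psi_i$ gives $M^{G_i} = \emptyset$, and in Corollary~\ref{cor:hhh} the group $H_1$ acts trivially on the fibre (Lemma~\ref{lem:class1}(1) with trivial $\psi_i$), so $H_0 \times_{H_1} N_1 = \mathbb{C}P^{l_i} \times N_1$. Each $SO(2l_i+1)$-step contributes, by Corollary~\ref{cor:so3}, either a product $S^{2l_i} \times N_k$ or a twisted product $S^{2l_i} \times_{\mathbb{Z}_2} N_k$, according to whether the relevant $\mathbb{Z}_2$-action is trivial or not, with the latter recorded by the $a_{ij}$. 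Thus $M$ is built as an iterated product/twisted-product of the spaces $\mathbb{C}P^{l_i}$ and $S^{2l_i}$.

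The main obstacle, and the only place the simple-connectivity hypothesis enters, is to force every $a_{ij}$ to vanish. The key observation is that the $\mathbb{Z}_2 = S(O(2l)\times O(1))/SO(2l)$ action on $S^{2l} = SO(2l+1)/SO(2l)$ is \emph{free}: its non-trivial coset is represented by an element lying outside $SO(2l)$, which therefore fixes no coset. Hence any twisted product $S^{2l_i} \times_{\mathbb{Z}_2} N_k$ with a non-trivial twist is a free $\mathbb{Z}_2$-quotient whose defining product is a connected double cover. I would argue by induction along the reconstruction: so long as all earlier twists are trivial the partial product is a product of simply connected pieces (the spheres $S^{2l}$ with $l \geq 1$ and the spaces $\mathbb{C}P^l$ are simply connected), hence itself simply connected, so a single non-trivial twist produces $\pi_1 = \mathbb{Z}_2$; and once $\pi_1$ is non-trivial, subsequent products and further twists can only enlarge it. Therefore a non-zero $a_{ij}$ would give $\pi_1(M) \neq 0$, contradicting the hypothesis. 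It follows that all $a_{ij} = 0$, every step is an untwisted product, and $M = \prod_{i \leq k_0} \mathbb{C}P^{l_i} \times \prod_{i > k_0} S^{2l_i}$, a product of even-dimensional spheres and complex projective spaces, as claimed.
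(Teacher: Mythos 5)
Your proposal is correct and follows essentially the same route as the paper: Corollary~\ref{sec:classification-3} forces \(l_0=0\) and excludes \(SO(2l_i)\) factors, the admissible \(5\)-tuple collapses to \((\text{const},\text{pt},\emptyset,\emptyset,(a_{ij}))\), and \(M\) is then reassembled via Corollaries~\ref{cor:hhh} and~\ref{cor:so3}. The only difference is that you make explicit the step the paper leaves implicit, namely that simple connectivity forces all \(a_{ij}=0\) because the \(\mathbb{Z}_2=S(O(2l)\times O(1))/SO(2l)\)-action on \(S^{2l}\) is free (it is the antipodal map), so any twisted product would have non-trivial fundamental group; this is a genuine improvement in rigor but not a different approach.
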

\begin{proof}
  By Corollary~\ref{sec:classification-3}, the center of \(G\) is zero-dimensional.
  Moreover, all elementary factors of \(G\) are isomorphic to \(SU(l_i+1)\) or \(SO(2l_i+1)\).
  Therefore the admissible \(5\)-tuple corresponding to \(M\) is given by
  \begin{equation*}
    \left(\text{const},\text{pt},\emptyset,\emptyset,\left(a_{ij}\right)\right),
  \end{equation*}
  where the \(a_{ij}\in\{0,1\}\) are unknown.
  In particular, no elementary factor of \(G\) has a fixed point in \(M\).
  Therefore, by Corollaries~\ref{cor:hhh} and~\ref{cor:so3}, \(M\) splits into a direct product of complex projective spaces and even dimensional spheres.
\end{proof}

\begin{cor}
\label{sec:classification-8}
  If the \(G\)-action on the simply connected torus manifold \(M\) has an orbit of codimension one, then \(M\) is the projectivication of a complex vector bundle or a sphere bundle over a product of complex projective spaces and even-dimensional spheres.
\end{cor}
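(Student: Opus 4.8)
The plan is to read the structure of $M$ off the admissible $5$-tuple attached to it by Theorem~\ref{thm:class1}, after using the dimension count of Corollary~\ref{sec:classification-3} to exploit how restrictive the hypothesis is. Having an orbit of codimension one means $\dim M/G = 1$, so Corollary~\ref{sec:classification-3} gives $l_0 + \#\{G_i : G_i = SO(2l_i)\} = 1$. Hence exactly one of two situations occurs: either $l_0 = 1$ and every elementary factor is of type $SU(l_i+1)$ or $SO(2l_i+1)$, or $l_0 = 0$ and precisely one elementary factor is of type $SO(2l_j)$. As explained in section~\ref{sec:so2l}, in the second case I may replace the factor $SO(2l_j)$ by $SU(l_j) \times S^1$ without altering the orbits; this reduces everything to the situation $l_0 = 1$ with all other factors of type $SU$ or $SO(2l_i+1)$, at the cost of remembering that the distinguished $S^1$ together with $SU(l_j)$ reassembles the original $SO(2l_j)$-action.

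First I would pin down the base manifold $N$ of the admissible $5$-tuple $(\psi, N, (A_i), (B_i), (a_{ij}))$ corresponding to $M$. By construction $\dim N = 2l_0 = 2$, and a $2$-dimensional torus manifold is necessarily $S^2$, so $N = S^2 = \C P^1$ with $T^{l_0} = S^1$ acting by rotation. Consequently each $A_i \subset N$ is a (possibly empty) subset of the two poles and each $B_i \subset N$ is empty or the fixed circle of an orientation-reversing involution.

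Next I would reverse the iteration of Lemma~\ref{sec:classification}, reconstructing $M$ factor by factor. For an elementary factor acting without a fixed point the relevant corollary produces a genuine bundle or product: a fixed-point-free $SU(l_i+1)$ contributes a fibration over $\C P^{l_i}$ (Corollary~\ref{cor:hhh}), while a fixed-point-free $SO(2l_i+1)$ contributes a product with the even sphere $S^{2l_i}$ or a twisted product $S^{2l_i}\times_{\mathbb{Z}_2}(\,\cdot\,)$ (Corollary~\ref{cor:so3}), the two alternatives being distinguished by Lemmas~\ref{sec:case-g_1=so2l_1-+1} and~\ref{sec:case-g_1=so2l_1-+1-3}. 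For a factor with a fixed point, Corollary~\ref{sec:case-g_1-=-9} and Theorem~\ref{sec:case-g_1=so2l_1-+1-2} describe $M$ as a blow-down; the crucial point is that such a blow-down along a pole of $N = \C P^1$ merges the collapsed $\C P^{l_i}$ with the fibre $\C P^1$ into a single projective space, exactly as $\C P^2 = P(\C^3)$ arises from blowing down a $\C P^1$-bundle over $\C P^1$. Collecting the fixed-point-free factors into a base $B = \prod \C P^{l_i} \times \prod S^{2l_i}$ and the remaining data into one fibre, I obtain $M = P(E)$ with $E$ a complex bundle over $B$ when $l_0 = 1$, and, after undoing the replacement, $M = S(E)$ with $E$ real of rank $2l_j+1$ and fibre $S^{2l_j}$ in the $SO(2l_j)$-case (using that the $SO(2l_j)$-slice is $S^{2l_j}$ by Corollary~\ref{sec:g-action-m-2}). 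Since the two regimes $l_0 = 1$ and $l_0 = 0$ are mutually exclusive, these are exactly the two alternatives in the statement.

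The main obstacle is the passage from the iterated (``towered'') bundle produced by successively peeling factors to a single bundle over the product $B$, and in particular the bookkeeping of the involutions $(\mathbb{Z}_2)_i$ encoded by the integers $a_{ij}$. Because the commuting factors $G_i$ act through homogeneous bases $G_i/K_i$, the $\prod G_i$-equivariant projection $M \to \prod_i G_i/K_i$ is a fibre bundle; but verifying that its structure group reduces so that the total space is precisely a projectivization or a sphere bundle, and that no real projective space survives in the base, requires the orientability conditions on the $a_{ij}$ in Definition~\ref{sec:classification-2}. These force the orientation-reversing twists to cancel in pairs, turning each would-be $S^{2l_i}\times_{\mathbb{Z}_2}(\,\cdot\,)$ either into an honest product or into part of the single distinguished sphere fibre $S^{2l_j}$. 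Once this combinatorial step is carried through, the complex and real cases yield the projectivization and sphere-bundle descriptions asserted in the corollary.
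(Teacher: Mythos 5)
Your overall strategy mirrors the paper's (dimension count via Corollary~\ref{sec:classification-3}, replacement of an \(SO(2l_j)\) factor by \(SU(l_j)\times S^1\), splitting the elementary factors into the fixed-point-free ones, which build the base, and the ones with fixed points, which get absorbed into a fibre governed by a \(5\)-tuple over \(N=S^2\)). But there is a genuine gap at the decisive step: you claim that the orientability conditions on the \(a_{ij}\) in Definition~\ref{sec:classification-2} ``force the orientation-reversing twists to cancel in pairs,'' so that no twisted product \(S^{2l_i}\times_{\mathbb{Z}_2}(\,\cdot\,)\) and no \(\R P^{2l_i}\) survives in the base. That is false, and your proposal never invokes the hypothesis that \(M\) is simply connected, which is what actually does this work. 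The conditions of Definition~\ref{sec:classification-2} are perfectly compatible with genuine twists: for \(\tilde{G}=SO(3)\times S^1\) the \(5\)-tuple \((\emptyset,S_1^2,\emptyset,\emptyset,\emptyset)\) (antipodal \(\mathbb{Z}_2\)-action on \(N=S^2\); condition (5b) holds because the empty sum \(\sum_{j>i}a_{ij}\) is even and the antipodal map reverses orientation) is admissible and appears in the paper's own tables; it yields \(M=S_1^2\times_{\mathbb{Z}_2}S_1^2\), which has a codimension-one orbit but fibres over \(\R P^2\), has \(\pi_1(M)=\mathbb{Z}_2\), and is not a projectivization or sphere bundle over a product of complex projective spaces and even-dimensional spheres. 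What rules such examples out is \(\pi_1(M)=0\): a factor \(SO(2l_i+1)\) without fixed points gives, by Corollary~\ref{cor:so3}, either \(S^{2l_i}\times N_1\) or \(S^{2l_i}\times_{\mathbb{Z}_2}N_1\), and the latter is excluded because it has the connected double cover \(S^{2l_i}\times N_1\). This is the argument (implicit in the paper's proof, where only honest products \(\prod S^{2l_i}\times(\cdots)\) are written down) that you need and do not make.

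A second, smaller error: you align the dichotomy ``projectivization versus sphere bundle'' with the dichotomy \(l_0=1\) versus \(l_0=0\). That is not correct. Sphere fibres also occur when \(l_0=1\): the admissible \(5\)-tuples \((\phi_1^{\pm1},S^2,\{N,S\},\emptyset,\emptyset)\) for \(S^1\times SU(l_1+1)\) and \((\emptyset,S^2,\emptyset,S^1,\emptyset)\) for \(S^1\times SO(2l_1+1)\) both give fibre \(S^{2l_1+2}\) (blowing down both poles, respectively using a reflection with fixed circle). The correct source of the dichotomy is the case analysis of the fibre's \(5\)-tuple over \(N'=S^2\): at most two \(SU\)-factors can have fixed points (only two poles), at most one \(SO(2l_i+1)\)-factor can (only one reflection, which swaps the poles), and tabulating the possibilities shows the fibre is \(\C P^m\) or \(S^{2m}\). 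Your blow-down heuristic points in the right direction, but the proposal replaces this verification by an incorrect bookkeeping principle, so as written it does not establish the corollary.
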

\begin{proof}
  By Corollary~\ref{sec:classification-3}, we may assume that there is a covering group  \(\tilde{G}=S^1\times \prod_i G_i\) of \(G\) with \(G_i\) elementary and \(G_i= SU(l_i+1)\) or \(G_i= SO(2l_i+1)\).
  We assume that the \(G_i\) are sorted in such a way that
  \begin{itemize}
  \item \(G_i=SO(2l_i+1)\) and \(G_i\) has no fixed point in \(M\) if \(i\leq k_0\),
  \item \(G_i=SU(l_i+1)\) and \(G_i\) has no fixed point in \(M\) if \(k_0+1\leq i\leq k_1\),
  \item \(G_i=SU(l_i+1),SO(2l_i+1)\) and \(G_i\) has fixed points in \(M\) if \(i\geq k_1+1\),
  \end{itemize}
  where \(k_0\leq k_1\) are some constants.

  By Corollaries \ref{cor:hhh} and \ref{cor:so3}, we know that \(M\) is of the form
  \begin{equation*}
    M=\prod_{i=1}^{k_0}S^{2l_i}\times H_{0k_0+1}\times_{H_1{k_0+1}}\left(H_{0k_0+2}\times_{H_{1k_0+2}}\left(\dots \left(H_{0k_1}\times_{H_{1k_1}}M'\right)\dots\right)\right),
  \end{equation*}
  where
  \begin{align*}
    H_{0i}&=SU(l_i+1)\times \image \psi_i,\\
    H_{1i}&=S(U(l_i+1)\times U(1))\times \image \psi_i,
  \end{align*}
  for \(i=k_0+1,\dots,k_1\), and \(M'\) is a torus manifold with \(\tilde{G}'\)-action, where \(\tilde{G}'=\prod_{i\geq k_1+1}G_i\times S^1\).

  Because the action of \(H_{1i}\) on \(H_{0j}\), \(j>i\), is trivial and the actions of the \(H_{1i}\) on \(M'\) commute, \(M\) may be written as
  \begin{equation*}
    M=\prod_{i=1}^{k_0}S^{2l_i}\times \left(\prod_{i=k_0+1}^{k_1}H_{0i}\times_{\prod H_{1i}}M'\right).
  \end{equation*}

  Therefore \(M\) is a fiber bundle over a product of even dimensional spheres and complex projective spaces with fiber \(M'\).

  Let \((\psi,N',(A_i),(B_i),(a_{ij}))\) be the admissible \(5\)-tuple for \(\tilde{G}'\) corresponding to \(M'\). 
  Because \(\dim N'=2\) and all \(G_i\), \(i>k_1\), have fixed points in \(M'\), we have
  \begin{align*}
    N'&=S^2,&A_i&\neq \emptyset,&B_i&\neq \emptyset.
  \end{align*}
  Because the \(S^1\)-action on \(S^2\) has only two fixed points, \(N\) and \(S\), there are at most two elementary factors isomorphic to \(SU(l_i+1)\).
  The orientation reversing involutions of \(S^2\) which commute with the \(S^1\)-action and have fixed points are given by ``reflections'' at \(S^1\)-orbits.
  Therefore there is at most one elementary factor isomorphic to \(SO(2l_i+1)\).
  If there is such a factor then there is at most one \(G_i\) isomorphic to \(SU(l_i+1)\)  because \(N\) is mapped to \(S\) by such a reflection.
  Let
  \begin{align*}
    \phi_i:S(U(l_i)\times U(1))&\rightarrow U(1) & 
    \left(
    \begin{matrix}
      A & 0\\
      0 & g
    \end{matrix}
    \right)
    &\mapsto g & (A\in U(l_i), g\in U(1)).
  \end{align*}
Then we have the following admissible \(5\)-tuples:
\begin{center}
  \begin{tabular}{|c|c|c|}
    \(\tilde{G}'\)                            & \(5\)-tuple                                                              &\(M'\)\\\hline\hline
    \(S^1\)                                  & \((\emptyset,S^2,\emptyset,\emptyset,\emptyset)\)                        & \(S^2\)\\\hline
    \(S^1\times SU(l_1+1)\)                  & \((\phi_1^{\pm 1},S^2,\{N\},\emptyset,\emptyset)\)                        & \(\C P^{l_1+1}\)\\
                                             & \((\phi_1^{\pm 1},S^2,\{N,S\},\emptyset,\emptyset)\)                      & \(S^{2l_1+2}\)\\\hline
    \(S^1\times SO(2l_1+1)\)                 & \((\emptyset,S^2,\emptyset,S^1,\emptyset)\)                              & \(S^{2l_1+2}\)\\\hline
    \(S^1\times SU(l_1+1)\times SU(l_2+1)\)  & \((\phi_1^{\pm 1}\phi_2^{\pm 1}, S^2, (\{N\},\{S\}), \emptyset,\emptyset)\) & \(\C P^{l_1+l_2+1}\)\\\hline
    \(S^1\times SU(l_1+1)\times SO(2l_2+1)\) & \((\phi_1^{\pm 1},S^2,\{N,S\},S^1,\emptyset)\)                             & \(S^{2l_1+2l_2+2}\)\\
  \end{tabular}
\end{center}
Therefore the statement follows.
\end{proof}

Now we turn to the case, where \(M\) is a torus manifold with \(G\)-action such that \(G\) is semi-simple and has exactly two elementary factors \(G_1,G_2\).
We start with a discussion of the case, where \(G_1\times G_2\neq SO(2l_1)\times SO(2l_2)\).

\begin{cor}
\label{sec:classification-7}
  Let \(\tilde{G}=G_1\times G_2\neq SO(2l_1)\times SO(2l_2)\) with \(G_1\) and \(G_2\) elementary of rank \(l_1,l_2\), respectively, and \(M\) a torus manifold with \(G\)-action. Then \(M\) is one of the following:
  \begin{equation*}
    \C P^{l_1}\times \C P^{l_2}, \C P^{l_1}\times S^{2l_2}, S^{2l_1}\times S^{2l_2},  S_1^{2l_1}\times_{\mathbb{Z}_2} S_1^{2l_2},S_1^{2l_1}\times_{\mathbb{Z}_2} S_2^{2l_2},  S^{2l_1+2l_2}.
  \end{equation*}
Here \(S_1^l\) denotes the \(l\)-sphere together with the \(\mathbb{Z}_2\)-action generated by the antipodal map and \(S_2^l\) the \(l\)-sphere together with the \(\mathbb{Z}_2\)-action generated by a reflection at a hyperplane.

Furthermore, the \(\tilde{G}\)-actions on these spaces is unique up to equivariant diffeomorphism.
\end{cor}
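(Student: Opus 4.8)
The plan is to strip $M$ down to the smaller $G_2$-manifold $N_1$, identify $N_1$ via the elementary case, and then reconstruct $M$ with the structure theorems of Sections~\ref{sec:su} and~\ref{sec:so}, running a case analysis on the isomorphism types of $G_1$ and $G_2$. First I would normalise the labelling: since $\tilde G\neq SO(2l_1)\times SO(2l_2)$, at least one factor is of type $SU$ or $SO(\text{odd})$; if some factor is isomorphic to an $SU(l_i+1)$ I take it to be $G_1$, and otherwise I take $G_1=SO(2l_1+1)$. By Lemmas~\ref{lem:poly} and~\ref{lem:so1} we have $M=G_1N_1$, where $N_1$ (the intersection of Lemmas~\ref{sec:case-g_1-=},~\ref{sec:case-g_1-=-2}, respectively a component of $M^{SO(2l_1)}$) is $G_2$-invariant and, by Lemma~\ref{sec:action-weyl-group-2}, a torus manifold with $G_2$-action for which $G_2$ is elementary and $\dim N_1=2l_2=2\rank G_2$. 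Hence Corollary~\ref{sec:g-action-m-2} gives $N_1=\C P^{l_2}$ when $G_2=SU(l_2+1)$, and $N_1=S^{2l_2}$ when $G_2=SO(2l_2)$ or $SO(2l_2+1)$.

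The observation that collapses the $SU$-case is that the homomorphism $\psi_1$ of Lemma~\ref{lem:poly} takes values in $Z(G_2)$, which is finite (a semisimple $SU(l+1)$ or an $SO(2l)$ with $l\geq2$ has finite centre). Since $\image\psi_1$ is the continuous image of the connected group $S(U(l_1)\times U(1))$, it is trivial. Therefore, when $G_1=SU(l_1+1)$, Corollary~\ref{cor:case-g_1-=} gives $M^{G_1}=\emptyset$ and Corollary~\ref{cor:hhh} realises $M=H_0\times_{H_1}N_1$ as the \emph{trivial} bundle $\C P^{l_1}\times N_1$. This produces exactly $\C P^{l_1}\times\C P^{l_2}$ and $\C P^{l_1}\times S^{2l_2}$, disposing of every case in which some factor is an $SU$.

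In the remaining case both factors are orthogonal, with $G_1=SO(2l_1+1)$ and $N_1=S^{2l_2}$. Here I would apply Corollary~\ref{cor:so3}: $M$ is $S^{2l_1}\times N_1$ or $S^{2l_1}\times_{\mathbb{Z}_2}N_1$ with $\mathbb{Z}_2=S(O(2l_1)\times O(1))/SO(2l_1)$ acting orientation-reversingly on $N_1$, unless $G_1$ has a fixed point, in which case (Corollaries~\ref{sec:case-g_1-=-9} and~\ref{cor:so2}) $M$ is a blow-down. The decisive step is to classify the orientation-reversing $G_2$-equivariant involutions of $N_1=S^{2l_2}$. If $G_2=SO(2l_2+1)$ acts transitively on $N_1$, the only non-trivial equivariant involution is the antipodal map, which is fixed-point free; hence $M^{G_1}=\emptyset$ and $M$ is $S^{2l_1}\times S^{2l_2}$ or $S_1^{2l_1}\times_{\mathbb{Z}_2}S_1^{2l_2}$ (here the $\mathbb{Z}_2$ on $SO(2l_1+1)/SO(2l_1)$ is itself antipodal). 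If $G_2=SO(2l_2)$, the standard action on $N_1=S^{2l_2}\subset\R^{2l_2}\oplus\R$ has two fixed poles, and the orientation-reversing equivariant involution is the reflection at the equatorial hyperplane, fixing a codimension-one sphere $S^{2l_2-1}$; this yields $S^{2l_1}\times S^{2l_2}$, $S_1^{2l_1}\times_{\mathbb{Z}_2}S_2^{2l_2}$, and, when $G_1$ does acquire a fixed point along that equator, the blow-down, which I would identify with $S^{2l_1+2l_2}$ by exhibiting the model $SO(2l_1+1)\times SO(2l_2)$-action on $S^{2l_1+2l_2}\subset\R^{2l_1+1}\oplus\R^{2l_2}$ and checking that its $SO(2l_1+1)$-fixed set is exactly $S^{2l_2-1}$, so that its blow-up reproduces $S_1^{2l_1}\times_{\mathbb{Z}_2}S_2^{2l_2}$. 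Uniqueness of the action on each space is then immediate: every one of the six spaces arises from completely determined admissible data (trivial $\psi$, the base $N_1$, the prescribed $\mathbb{Z}_2$-action, and an empty or equatorial fixed submanifold), so the one-to-one correspondences of Theorems~\ref{sec:case-g_1-=-1} and~\ref{sec:case-g_1=so2l_1-+1-2} force the $\tilde G$-action to be unique up to equivariant diffeomorphism.

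The hard part will be this last case analysis of the $G_2$-equivariant involutions of $N_1$, together with the explicit identification of the blow-down as the round sphere rather than as some nontrivial bundle; the cleanest way through is to write down the model actions above and verify directly that blowing up the twisted product recovers them, which simultaneously explains why no blow-down (hence no new space) can occur when $G_2=SO(2l_2+1)$, where the relevant involution is fixed-point free.
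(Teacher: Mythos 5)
Your overall strategy coincides with the paper's: reduce to the \(G_2\)-manifold \(N_1\), identify it as \(\C P^{l_2}\) or \(S^{2l_2}\) via Corollary~\ref{sec:g-action-m-2}, and reconstruct \(M\) through the admissible-triple and admissible-pair correspondences. Your unified treatment of all cases with an \(SU(l_1+1)\)-factor (finiteness of \(Z(G_2)\) forces \(\psi_1\) to be trivial, hence \(M^{G_1}=\emptyset\) by Corollary~\ref{cor:case-g_1-=} and \(M=\C P^{l_1}\times N_1\) by Corollary~\ref{cor:hhh}) is a clean repackaging of what the paper does separately with \(5\)-tuples and with the triple \((\text{const},S^{2l_2},\emptyset)\); that part, and your analysis of the \(SO(2l_1+1)\times SO(2l_2+1)\) case, are correct.

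The gap is in the case \(\tilde G=SO(2l_1+1)\times SO(2l_2)\): your claim that the only orientation-reversing \(SO(2l_2)\)-equivariant involution of \(N_1=S^{2l_2}\) is the reflection at the equatorial hyperplane is wrong. The antipodal map of \(S^{2l_2}\) commutes with all of \(O(2l_2+1)\), in particular with \(SO(2l_2)\); it is orientation-reversing because the sphere is even-dimensional, and it is fixed-point free. So besides your three admissible pairs (trivial involution; reflection with \(B=\emptyset\); reflection with \(B=S^{2l_2-1}\)) there is a fourth one, \((S_1^{2l_2},\emptyset)\), which under the correspondence of Theorem~\ref{sec:case-g_1=so2l_1-+1-2} produces the manifold \(S_1^{2l_1}\times_{\mathbb{Z}_2}S_1^{2l_2}\) --- the paper's proof lists exactly these four possibilities for this group. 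Because of the omission, your enumeration does not cover every torus manifold with \(SO(2l_1+1)\times SO(2l_2)\)-action, so as written the proof of ``\(M\) is one of the following'' is incomplete for this group; the missed manifolds happen to lie in the stated list only because \(S_1^{2l_1}\times_{\mathbb{Z}_2}S_1^{2l_2}\) also arises for \(SO(2l_1+1)\times SO(2l_2+1)\). Likewise the uniqueness assertion for this space with this group is not addressed by your argument, since you never see that it carries such an action at all. The repair is local: add the antipodal involution to the case analysis and note that, being fixed-point free, it forces \(B=\emptyset\) and admits no blow-down, so it contributes exactly the one additional manifold above.
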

\begin{proof}
  First assume that \(G_1,G_2\neq SO(2l)\).
  Then we have the following possibilities for the admissible \(5\)-tuple of \(M\):
  \begin{center}
     \begin{tabular}{|c|c|c|c|}
       \(G_1\)       &\(G_2\)        &\(5\)-tuple                                               &\(M\)\\\hline\hline
       \(SU(l_1+1)\) &\(SU(l_2+1)\)  &\((\text{const},\text{pt},\emptyset,\emptyset,\emptyset)\)& \(\C P^{l_1}\times \C P^{l_2}\)\\\hline
       \(SU(l_1+1)\) &\(SO(2l_2+1)\) &\((\text{const},\text{pt},\emptyset,\emptyset,\emptyset)\)& \(\C P^{l_1}\times S^{2l_2}\)\\\hline
       \(SO(2l_1+1)\)&\(SO(2l_2+1)\) &\((\emptyset,\text{pt},\emptyset,\emptyset,a_{12}=0)\)     & \(S^{2l_1}\times S^{2l_2}\)\\
                     &               &\((\emptyset,\text{pt},\emptyset,\emptyset,a_{12}=1)\)     & \(S_1^{2l_1}\times_{\mathbb{Z}_2} S_1^{2l_2}\)\\ 
     \end{tabular}
  \end{center}
  
  If \(G_1=SU(l_1+1)\) and \(G_2=SO(2l_2)\), then, by Corollary~\ref{sec:g-action-m-2}, there is one admissible triple for \((G,G_1)\) namely \((\text{const},S^{2l_2},\emptyset)\).
  It corresponds to \(\C P^{l_1}\times S^{2l_2}\).

  Now assume that  \(G_1=SO(2l_1+1)\) and \(G_2=SO(2l_2)\). Let \((N,B)\) be the admissible pair for \((G,G_1)\) corresponding to \(M\).
  Then, by Corollary~\ref{sec:g-action-m-2}, we have \(N=S^{2l_2}\).
  Up to equivariant diffeomorphism there are two orientation reversing involutions on \(S^{2l_2}\) which commute with the action of \(G_2\), the anti-podal map and a reflection at an hyperplane in \(\R^{2l_2+1}\).
  Therefore we have four possibilities for \(M\):
  \begin{equation*}
    S^{2l_1}\times S^{2l_2}, S^{2l_1+2l_2},  S_1^{2l_1}\times_{\mathbb{Z}_2} S^{2l_2}_1,  S_1^{2l_1}\times_{\mathbb{Z}_2} S^{2l_2}_2.
  \end{equation*}
\end{proof}

For the discussion of the case \(G_1\times G_2= SO(2l_1)\times SO(2l_2)\) we need the following lemma.

\begin{lemma}
\label{sec:classification-1}
  Let \(\tilde{G}=SO(2l_1)\times S^1\) and \(M\) a simply connected torus manifold with \(G\)-action such that \(SO(2l_1)\) is an elementary factor of \(\tilde{G}\) and \(S^1\) acts effectively on \(M\) and \(M^{S^1}\) has codimension two in \(M\).
  
  Then \(M\) is equivariantly diffeomorphic to \(\#_i (S^2\times S^{2l_1})_i\) or \(S^{2l_1+2}\).

  Here the action of \(\tilde{G}\) on  \(S^{2l_1+2}\) is given by the restriction of the usual \(SO(2l_1+3)\)-action to \(\tilde{G}\).
  The action of \(\tilde{G}\) on \(S^2\times S^{2l_1}\) is the product action of the usual action of \(S^1\) and \(SO(2l_1)\) on \(S^2\) and \(S^{2l_1}\), respectively.
  Moreover, the connected sums are equivariant.
\end{lemma}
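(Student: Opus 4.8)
The plan is to reduce the problem to the admissible-triple machinery of Section~\ref{sec:su} by exploiting the reduction of Section~\ref{sec:so2l}. Since $SO(2l_1)$ is elementary, I would first replace it by $SU(l_1)\times S$ (a finite cover, $S=S^1$), so that up to finite covering $\tilde G$ acts through $SU(l_1)\times(S\times S^1)$ with $G_1=SU(l_1)$ elementary and $G_2=S\times S^1\cong T^2$. By Theorem~\ref{sec:case-g_1-=-1} this action is encoded by an admissible triple $(\psi,N,A)$ for $(SU(l_1)\times T^2,SU(l_1))$, where $N$ is a torus manifold with $T^2$-action. A dimension count gives $\dim N=2l_1+2-2(l_1-1)=4$, and by Lemma~\ref{sec:case-g_1-=-5} the isomorphism $\pi_1(N)\cong\pi_1(M)=0$ shows that $N$ is simply connected.

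The heart of the argument is to pin down $N$ together with $A$. By the reduction of Section~\ref{sec:so2l} we have $\image\psi=S$, and by Corollary~\ref{cor:10} the set $A=M^{SO(2l_1)}=N^{S}$ is a closed submanifold of codimension two. The hypothesis that $M^{S^1}$ has codimension two translates (using that $N$ meets the $SU(l_1)$-orbits transversely) into $N^{S^1}$ having codimension two in $N$ for the second circle factor. Thus $N$ is a simply connected $4$-dimensional torus manifold on which the two coordinate circles $S,S^1$ of $T^2$ each fix a pure codimension-two submanifold with complex normal bundle. Using the classification of simply connected $4$-dimensional torus manifolds (equivalently, a direct analysis of the orbit space $N/T^2$), I would show that the only possibilities are $N\cong S^4$ and $N\cong\#_k(S^2\times S^2)$: in each case $N^{S}$ and $N^{S^1}$ are disjoint unions of $2$-spheres, and the requirement that they be pure of codimension two rules out the $\C P^2$-summands, for which a coordinate circle has an isolated fixed point in addition to a $\C P^1$.

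With $N$ determined I would reconstruct $M$. Since $\image\psi=S$ is nontrivial, Corollary~\ref{cor:10} together with Section~\ref{sec:so2l} shows $A=M^{SO(2l_1)}=N^{S}\neq\emptyset$, so by the discussion preceding Lemma~\ref{lem:class1} the manifold $M$ is the blow down of $H_0\times_{H_1}N$ along $H_0/H_1\times A$, the hypotheses of Lemma~\ref{lem:class1} (in particular $\ker\psi=SU(l_1-1)$) being supplied by Corollary~\ref{sec:case-g_1-=-8}. To read off the genuine $SO(2l_1)\times S^1$-equivariant diffeomorphism type I would invoke Theorem~\ref{sec:case-g_1=so2l_1-2}, by which the $SU(l_1)\times T^2$-type (equivalently the $U(l_1)\times S^1$-type) already determines the $SO(2l_1)\times S^1$-type. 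Finally I would exhibit the two model families and check that they realise the two possibilities for $N$: $S^{2l_1+2}=S(\R^{2l_1}\oplus\R^2\oplus\R)$ with $SO(2l_1)\times S^1\subset SO(2l_1)\times SO(2)$ acting linearly, for which $N\cong S^4$ and $A$ is a single $2$-sphere; and $\#_k(S^2\times S^{2l_1})$ with the product $SO(2l_1)\times S^1$-action and equivariant connected sums performed at $T^{l_1+1}$-fixed points, for which $N\cong\#_k(S^2\times S^2)$ and $A$ is a disjoint union of $2$-spheres. In each case one verifies directly that $M^{S^1}$ has codimension two and that the associated triple matches.

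The main obstacle will be the second step: classifying the simply connected $4$-dimensional base $N$ under the constraint that two prescribed coordinate circles have pure codimension-two fixed sets, and then verifying that the blow-down reconstruction of $M$ reproduces precisely the two model families with the stated actions. In particular, care is needed to show that the equivariant connected sums are forced rather than more general attachments, that the involution and gluing data carried by $A$ and $\psi$ are exactly those realised by the models, and that no exotic gluings survive the passage through Theorem~\ref{sec:case-g_1=so2l_1-2} from the $U(l_1)$- to the $SO(2l_1)$-equivariant category.
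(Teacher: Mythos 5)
Your proposal is correct and follows essentially the same route as the paper: replace \(SO(2l_1)\) by \(SU(l_1)\times S\), pass to the admissible triple \((\psi,N,A)\) with \(N\) a simply connected (by Lemma~\ref{sec:case-g_1-=-5}) four-dimensional torus manifold, invoke the Orlik--Raymond classification of simply connected \(4\)-dimensional \(T^2\)-manifolds under the pure codimension-two fixed-set condition, return to the \(SO(2l_1)\times S^1\)-category via Theorem~\ref{sec:case-g_1=so2l_1-2}, and match against the model actions on \(S^{2l_1+2}\) and \(\#_k(S^2\times S^{2l_1})\). The only cosmetic difference is bookkeeping: the paper pins down the triple by the counting identity \(\#M^T=\#N^{S\times S^1}\) (obtained from the blow-up fibering over \(\C P^{l_1-1}\)) and determination by \(\chi(M)\), whereas you identify \(N\) and \(A=N^S\) explicitly and reconstruct \(M\) by blow-down, which amounts to the same argument.
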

\begin{proof}
  As described in section~\ref{sec:so2l}, we may replace \(\tilde{G}\) by \(SU(l_1)\times S\times S^1\).
  Let \((\psi,N,A)\) be the admissible triple corresponding to \(M\).
  Then \(\psi\) is completely determined by the discussion in section \ref{sec:so2l} and \(A=N^S=M^{SU(l_1)}\).
  Furthermore \(S\) and \(S^1\) act effectively on \(N\).
  All components of \(N^S\) and \(N^{S^1}\) have codimension two in \(N\).

  By Lemma~\ref{sec:case-g_1-=-5}, \(N\) is simply connected.

  Denote by \(\tilde{M}\) the blow up of \(M\) along \(A\).
  Because all \(T\)-fixed points of \(M\) are contained in \(A\) we have \(l_1\#M^T=\#\tilde{M}^T\).
  On the other hand, \(\tilde{M}\) is a fiber bundle with fiber \(N\) over \(\C P^{l_1-1}\).
  Therefore we have \(l_1\#N^{S\times S^1}=\#\tilde{M}^T\).

  From this \(\#M^T=\# N^{S\times S^1}\) follows.

  Because \(S\) and \(S^1\) act both effectively on \(N\) such that their fixed point sets have codimension two, it follows from the classification of simply connected four-dimensional \(T^2\)-manifolds given in \cite[p. 547,549]{0216.20202} that the \(T\)-equivariant diffeomorphism type of \(N\) is determined by \(\#M^T\) and that \(\#M^T\) is even.

  Therefore the \(S\times S^1\times SU(l_1)\)-equivariant diffeomorphism type of \(M\) is uniquely determined by \(\# M^T=\chi(M)\).
  It follows from Theorem~\ref{sec:case-g_1=so2l_1-2} that the \(SO(2l_1)\times S^1\)-equivariant diffeomorphism type of \(M\) is uniquely determined by \(\chi(M)\).
  Because
  \begin{equation*}
    M_k=
    \begin{cases}
      \#_{i=1}^k (S^2\times S^{2l_1})_i & \text{if } k \geq 1\\
      S^{2l_1+2} & \text{if } k=0
    \end{cases}
  \end{equation*}
  possesses an action of \(\tilde{G}\) and \(\chi(M_k)=2k+2\), the statement follows.
\end{proof}

\begin{cor}
\label{sec:classification-6}
    Let \(\tilde{G}=SO(2l_1)\times SO(2l_2)\) and \(M\) a simply connected torus manifold with \(G\)-action such that \(SO(2l_1)\),   \(SO(2l_2)\)  are elementary factors of \(\tilde{G}\).
  
  Then \(M\) is equivariantly diffeomorphic to \(\#_i (S^{2l_1}\times S^{2l_2})_i\) or \(M=S^{2l_1+2l_2}\).

  Here the action of \(\tilde{G}\) on  \(S^{2l_1+2}\) is given by the restriction of the usual \(SO(2l_1+2l_2+1)\)-action to \(\tilde{G}\).
  The action of \(\tilde{G}\) on \(S^{2l_1}\times S^{2l_2}\) is the product action of the usual action of \(SO(2l_1)\) and \(SO(2l_2)\) on \(S^{2l_1}\) and \(S^{2l_2}\), respectively.
  Moreover, the connected sums are equivariant.
\end{cor}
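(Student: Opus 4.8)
The plan is to reduce the two-factor $SO(2l_1)\times SO(2l_2)$ problem to the mixed case $SO(2l_1)\times S^1$ already settled in Lemma~\ref{sec:classification-1}, using the blow-up machinery of Section~\ref{sec:so} exactly as in Corollary~\ref{sec:classification-7}. First I would apply Lemma~\ref{lem:so1} to the elementary factor $G_1=SO(2l_1)$—after replacing it by $SU(l_1)\times S^1$ if necessary as in Section~\ref{sec:so2l}, but it is cleaner to keep $G_1=SO(2l_1)$ and use the admissible pair description. By Lemma~\ref{lem:so1} there is a component $N_1$ of $\bigcap_{M_i\in\mathfrak{F}_1}M_i=M^{SO(2l_1-1)}$ with $M=G_1N_1$, and $N_1$ carries an effective action of $G_2\times S^1$ where the extra $S^1$ comes from $H_1/H_2$ as in Section~\ref{sec:so2l}. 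The key point is that $N_1$ is a simply connected torus manifold with $SO(2l_2)\times S^1$-action whose $S^1$-fixed-point set $N_1^{S^1}=M^{H_0}$ has codimension two by Corollary~\ref{cor:10}, so Lemma~\ref{sec:classification-1} applies verbatim and gives $N_1\cong \#_i(S^2\times S^{2l_2})_i$ or $N_1\cong S^{2l_2+2}$.

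Next I would feed this back through the $SO(2l_1)$-equivariant structure. Since $SO(2l_1)\times S^1$ coincides with the covering picture of $SO(2l_1)$ via $SU(l_1)\times S^1\to U(l_1)$ used in Section~\ref{sec:so2l}, Theorem~\ref{sec:case-g_1=so2l_1-2} tells me that the $SO(2l_1)\times SO(2l_2)$-equivariant diffeomorphism type of $M$ is completely determined by the $U(l_1)\times SO(2l_2)$-equivariant type, hence by the admissible triple $(\psi,N_1,A)$ with $A=N_1^S=M^{SO(2l_1)}$. Thus I only need to enumerate the possible $N_1$ together with the $S^1$-action and the codimension-two fixed submanifold $A$, and identify the resulting blow-down. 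When $N_1=S^{2l_2+2}$ the total manifold $M$ is obtained by the $SO(2l_1)$ fiber construction over $N_1$, yielding $S^{2l_1+2l_2}$ with the restricted $SO(2l_1+2l_2+1)$-action; when $N_1=\#_i(S^2\times S^{2l_2})_i$ one obtains the equivariant connected sums $\#_i(S^{2l_1}\times S^{2l_2})_i$, with the product $SO(2l_1)\times SO(2l_2)$-actions. The uniqueness of the $\tilde G$-action on each model follows because Lemma~\ref{sec:classification-1} already supplies uniqueness for $N_1$ and Theorem~\ref{sec:case-g_1=so2l_1-2} propagates it upward.

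To nail down the correspondence between the models for $N_1$ and the models for $M$, I would compute the Euler characteristic, which is both a $\tilde G$-invariant and, via $\chi(M)=\#M^T$, directly tied to the number of $T$-fixed points. The blow-up/blow-down relations of Section~\ref{sec:blow}, specifically the fiber-bundle description over $\mathbb{R}P^{2l_1-1}$ arising from Corollary~\ref{cor:so3} and the proper-transform bookkeeping of Lemma~\ref{lem:proper}, let me relate $\#M^T$ to $\#N_1^{S\times S^1}$ just as in the proof of Lemma~\ref{sec:classification-1}; this gives $\chi(M)=\chi(N_1)$ up to the predictable factor and confirms that the two families $\#_i(S^{2l_1}\times S^{2l_2})_i$ (parametrized by the summand count) and the single sphere $S^{2l_1+2l_2}$ exhaust all cases. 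The existence half is immediate: the listed spaces plainly admit the stated $\tilde G$-actions with $SO(2l_1),SO(2l_2)$ elementary, and the connected sums can be performed equivariantly along free-orbit tubes.

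The main obstacle I anticipate is verifying that the $S^1$-action on $N_1$ produced from the $SO(2l_1)$-reduction genuinely satisfies the hypotheses of Lemma~\ref{sec:classification-1}—in particular that $N_1$ is simply connected and that $N_1^{S^1}$ has codimension exactly two and not higher. Simple connectivity should follow from Lemma~\ref{sec:case-g_1-=-5} applied to the $SU(l_1)$-picture, and the codimension-two claim from Corollary~\ref{cor:10}, but one must be careful that the auxiliary $S^1=H_1/H_2$ acts \emph{effectively} on $N_1$, which was checked in Section~\ref{sec:so2l}. Threading these identifications together—keeping track of which circle is which across the $SU(l_1)\times S^1\to U(l_1)\to SO(2l_1)$ substitutions—is the delicate part; the rest is an application of the already-established classification and an Euler-characteristic count.
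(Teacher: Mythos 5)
Your proposal is correct and takes essentially the same route as the paper's proof: replace $SO(2l_1)$ by $SU(l_1)\times S$ as in Section~\ref{sec:so2l}, pass to the admissible triple $(\psi,N_1,A)$ with $N_1$ simply connected (Lemma~\ref{sec:case-g_1-=-5}) and carrying an effective $S$-action whose fixed set has codimension two (Corollary~\ref{cor:10}), apply Lemma~\ref{sec:classification-1} to the $SO(2l_2)\times S$-action on $N_1$, propagate uniqueness through Theorem~\ref{sec:case-g_1=so2l_1-2}, and conclude with the Euler-characteristic count $\chi(M)=\chi(N_1)$ against the model spaces. The only blemish is citational rather than structural: Lemma~\ref{lem:so1} and the admissible-pair formalism pertain to $SO(2l_1+1)$, not $SO(2l_1)$, and $N_1$ is a component of $M^{H_2}$ (with $H_2\cong U(l_1-1)$), i.e.\ of the intersection of $l_1-1$ members of $\mathfrak{F}_1$, rather than $\bigcap_{M_i\in\mathfrak{F}_1}M_i=M^{SO(2l_1)}$ or $M^{SO(2l_1-1)}$ — but the properties you actually invoke afterwards (effectiveness of $S=H_1/H_2$, the codimension-two fixed set, simple connectivity) are exactly the correct ones established in Section~\ref{sec:so2l}.
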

\begin{proof}
  As described in section~\ref{sec:so2l}, we may replace \(\tilde{G}\) by \(SU(l_1)\times S\times SO(2l_2)\).
  Let \((\psi,N,A)\) be the admissible triple for \((SU(l_1)\times S\times SO(2l_2), SU(l_1))\) corresponding to \(M\).
  Then \(\psi\) is completely determined by the discussion in section \ref{sec:so2l} and \(A=N^S\).
  Furthermore, \(S\) acts effectively on \(N\) such that \(N^{S}\) has codimension two.

  By Lemma~\ref{sec:case-g_1-=-5}, \(N\) is simply connected. Therefore, by Lemma~\ref{sec:classification-1}, the equivariant diffeomorphism-type of \(N\) is uniquely determined by \(\chi(N)\in 2\mathbb{Z}\).
  Because all other parts of the triple \((\psi,N,A)\) are determined by the discussion in section~\ref{sec:so2l} and the equivariant diffeomorphism type of \(N\), it follows that the equivariant diffeomorphism type of \(M\) is determined by \(\chi(N)\).
Let \(T_2\) be the maximal torus \(T\cap SO(2l_2)\) of \(SO(2l_2)\).
Then as in the proof of Lemma~\ref{sec:classification-1} one sees that
\begin{equation*}
  \chi(M)=\#M^T=\#N^{S\times T_2}=\chi(N).
\end{equation*}

Therefore the equivariant diffeomorphism type of \(M\) is uniquely determined by \(\chi(M)\in 2\mathbb{Z}\).
 Because
  \begin{equation*}
    M_k=
    \begin{cases}
      \#_{i=1}^k (S^{2l_1}\times S^{2l_2})_i & \text{if } k \geq 1\\
      S^{2l_1+2l_2} & \text{if } k=0
    \end{cases}
  \end{equation*}
  possesses an action of \(\tilde{G}\) and \(\chi(M_k)=2k+2\), the statement follows.
\end{proof}

At the end of this section we give a classification of four dimensional torus manifolds with \(G\)-action.

\begin{cor}
  Let \(M\) be a four dimensional torus manifold with \(G\)-action, \(G\) a non-abelian Lie-group of rank two.
  Then \(M\) is one of the following
  \begin{equation*}
    \C P^2,\;\C P^1 \times \C P^1,\; S^4,\; S_1^2\times_{\mathbb{Z}_2}S_1^2,\;S_1^2\times_{\mathbb{Z}_2}S_2^2
  \end{equation*}
or a \(S^2\)-bundle over \(\C P^1\).
Here \(S_1^2\) denotes the two-sphere together with the \(\mathbb{Z}_2\)-action generated by the antipodal map and \(S_2^2\) the two-sphere together with the \(\mathbb{Z}_2\)-action generated by a reflection at a hyperplane.
\end{cor}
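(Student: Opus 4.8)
The plan is to reduce the statement to the case analyses already carried out, by first listing all rank-two non-abelian covering groups $\tilde G$ and then invoking the appropriate classification result for each. By Lemma~\ref{lem:1} the elementary factors of $\tilde G$ are non-exceptional, so writing $\tilde G=\prod_i G_i\times T^{l_0}$ with $\rank\tilde G=2$ and $\tilde G$ non-abelian leaves only the following possibilities: $\tilde G$ is a single elementary factor of rank two, namely $SU(3)$, $SO(5)$ or $\mathrm{Spin}(4)$ (equivalently $SO(4)$); or $\tilde G=G_1\times G_2$ is a product of two rank-one elementary factors $G_i\in\{SU(2),SO(3)\}$; or $\tilde G=G_1\times S^1$ with $G_1\in\{SU(2),SO(3)\}$ and $l_0=1$. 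I would treat these three families in turn.

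For the single elementary factor case, Corollary~\ref{sec:g-action-m-2} applies directly: $SU(3)$ yields $M=\C P^2$ and $SO(5)$ yields $M=S^4$. For $\mathrm{Spin}(4)$ one first notes, using Lemma~\ref{lem:iso2} (whose proof shows that at a $G$-fixed point $T_xM$ is forced to be the standard real $SO(4)$-representation, while Lemma~\ref{lem:iso} excludes the absence of such a fixed point), that the action factors through $SO(4)$; Corollary~\ref{sec:g-action-m-2} then gives $M=S^4$ once more. For the two-factor case, since neither rank-one factor can be an even orthogonal group, $G_1\times G_2\neq SO(2l_1)\times SO(2l_2)$, and Corollary~\ref{sec:classification-7} with $l_1=l_2=1$ produces exactly $\C P^1\times\C P^1$ (from $SU(2)\times SU(2)$), $\C P^1\times S^2\cong\C P^1\times\C P^1$ (from $SU(2)\times SO(3)$), and $S^2\times S^2\cong\C P^1\times\C P^1$ or $S_1^2\times_{\mathbb{Z}_2}S_1^2$ (from $SO(3)\times SO(3)$).

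It remains to analyse $\tilde G=G_1\times S^1$. When $G_1=SU(2)$, the fibre $N_1$ of the associated admissible triple is a two-dimensional torus manifold with $S^1$-action, hence $N_1=S^2$; by Corollary~\ref{cor:hhh} (if $M^{G_1}=\emptyset$) or Corollary~\ref{sec:case-g_1-=-9} (if $M^{G_1}\neq\emptyset$), $M$ is either the total space of an $S^2$-bundle over $\C P^1$ or a blow down of such a bundle, and in dimension four the latter is $\C P^2$. When $G_1=SO(3)$, Theorem~\ref{sec:case-g_1=so2l_1-+1-2} assigns to $M$ an admissible pair $(N,A)$ with $N=S^2$ carrying an $S^1\times\mathbb{Z}_2$-action; here $SO(3)/SO(2)$ carries the free antipodal $\mathbb{Z}_2$-action, so on the quotient side it is identified with $S_1^2$, and I would enumerate the involutions of $S^2$ commuting with $S^1$. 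The trivial one gives the product $M=S^2\times S^2\cong\C P^1\times\C P^1$; the antipodal one (forcing $A=\emptyset$) gives $M=S_1^2\times_{\mathbb{Z}_2}S_1^2$; and the reflection gives $M=S_1^2\times_{\mathbb{Z}_2}S_2^2$ when $A=\emptyset$, while blowing down the fixed circle $A=M^{SO(3)}$ yields $M=S^4$.

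The main obstacle is the bookkeeping in the $SO(3)\times S^1$ case: one must verify precisely which orientation-reversing involutions of $S^2$ arise, that the free antipodal involution identifies $SO(3)/SO(2)$ with $S_1^2$, and that each admissible pair yields an orientable torus manifold with non-empty $T$-fixed point set matching the stated quotients $S_1^2\times_{\mathbb{Z}_2}S_i^2$. The remaining observations — that $S^2\cong\C P^1$ collapses several nominally distinct outputs into $\C P^1\times\C P^1$ and into the family of $S^2$-bundles over $\C P^1$, and that $\mathrm{Spin}(4)$ contributes nothing beyond $S^4$ — are routine given the earlier results.
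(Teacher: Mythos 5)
Your overall strategy --- enumerate the possible rank-two covering groups $\tilde G$ and feed each into the classification results already proved --- is exactly the paper's strategy; its proof is precisely such an enumeration together with a table of admissible $5$-tuples. Your handling of the single-factor and two-factor cases and of $SO(3)\times S^1$ agrees with the paper's table, and in two respects you are actually more careful: you include $SO(5)$ (which the paper's list of covering groups omits, though it only contributes $S^4$), and you dispose of $\mathrm{Spin}(4)$ directly via Lemmas~\ref{lem:iso}, \ref{lem:iso2} and Corollary~\ref{sec:g-action-m-2}, whereas the paper replaces $\mathrm{Spin}(4)$ by $SU(2)\times S^1$ and lets that row of the table absorb it.

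There is, however, a concrete error in your $SU(2)\times S^1$ case. The submanifold $A$ in an admissible triple $(\psi,S^2,A)$ is an arbitrary closed codimension-two submanifold of $S^2$ on which $\image\psi=S^1$ acts trivially, i.e.\ an arbitrary subset of the two $S^1$-fixed points $\{N,S\}$. Besides $A=\emptyset$ (giving an $S^2$-bundle over $\C P^1$) and $A=\{N\}$ (blow down $=\C P^2$), there is the case $A=\{N,S\}$, whose blow down is $S^4$; this is the entry $(\psi,S^2,\{N,S\},\emptyset,\emptyset)\mapsto S^4$ in the paper's table. So your assertion that ``in dimension four the latter is $\C P^2$'' is false: a blow down of an $S^2$-bundle over $\C P^1$ along \emph{two} disjoint invariant sections is $S^4$, not $\C P^2$. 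The final list in the corollary is unharmed, since $S^4$ appears in it anyway (and you obtain it from $SO(5)$, $\mathrm{Spin}(4)$ and $SO(3)\times S^1$), but as written this step of your case analysis is incorrect and should be repaired by enumerating all four possibilities $A\subseteq\{N,S\}$.
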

\begin{proof}
  Let \(\tilde{G}\) be a covering group of \(G\). Then there are the following possibilities using Convention~\ref{sec:g-action-m-3}:
  \begin{align*}
    \tilde{G}=&\;SU(3),\; SU(2)\times SU(2),\;SU(2)\times S^1,\\
    &SU(2)\times SO(3),\;SO(3)\times SO(3),\; SO(3)\times S^1,\;\text{Spin}(4).
  \end{align*}
If \(\tilde{G}=\text{Spin}(4)\), we replace it by \(SU(2)\times S^1\) as before.

Then we have the following admissible \(5\)-tuples:
\begin{center}
     \begin{tabular}{|c|c|c|}
       \(\tilde{G}\)               &\(5\)-tuple                                               &\(M\)\\\hline\hline
       \(SU(3)\)                   &\((\text{const},\text{pt},\emptyset,\emptyset,\emptyset)\)&\(\C P^2\)\\\hline
       \(SU(2)\times SU(2)\)       &\((\text{const},\text{pt},\emptyset,\emptyset,\emptyset)\)&\(\C P^1\times \C P^1\)\\\hline
       \(SU(2)\times S^1\)        &\((\psi,S^2,\emptyset,\emptyset,\emptyset)\)              &\(S^2\)-bundle over \( \C P^1\)\\
                                   &\((\psi,S^2,N,\emptyset,\emptyset)\)                      & \( \C P^2\)\\
                                   &\((\psi,S^2,\{N,S\},\emptyset,\emptyset)\)                &\(S^4\)\\\hline
       \(SU(2)\times SO(3)\)       &\((\text{const},\text{pt},\emptyset,\emptyset,\emptyset)\)& \(\C P^1 \times S^2\)\\\hline
       \(SO(3)\times SO(3)\)       &\((\emptyset,\text{pt},\emptyset,\emptyset,a_{12}=1)\)     & \(S^2_1 \times_{\mathbb{Z}_2} S^2_1\)\\
                                   &\((\emptyset,\text{pt},\emptyset,\emptyset,a_{12}=0)\)     & \(S^2 \times S^2\)\\\hline
       \(SO(3)\times S^1\)         &\((\emptyset,S^2,\emptyset,\emptyset,\emptyset)\)         & \(S^2 \times S^2\)\\
                                   &\((\emptyset,S^2_1,\emptyset,\emptyset,\emptyset)\)       & \(S_1^2\times_{\mathbb{Z}_2} S_1^2\)\\
                                   &\((\emptyset,S^2_2,\emptyset,\emptyset,\emptyset)\)       & \(S_1^2\times_{\mathbb{Z}_2} S_2^2\)\\
                                   &\((\emptyset,S^2_2,\emptyset,S^1,\emptyset)\)             & \(S^4\)\\
     \end{tabular}
  \end{center}
Here \(\psi\) is a group homomorphism \(S(U(1)\times U(1))\rightarrow S^1\). 
\end{proof}

\appendix

\section{Lie-groups}
\label{sec:lie-alg_cal}

\begin{lemma}
\label{sec:lie-algebra-calc}
  Let \(l>1\). Then \(S(U(l)\times U(1))\) is a maximal subgroup of \(SU(l+1)\).
\end{lemma}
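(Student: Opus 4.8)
The plan is to realise $H=S(U(l)\times U(1))$ as the isotropy group of the transitive action of $G=SU(l+1)$ on $\C P^l$ and then to exploit irreducibility of the isotropy representation together with a normaliser computation. Concretely, let $G$ act on $\C^{l+1}=\C^l\oplus\C$; then $H$ is exactly the stabiliser of the line $\langle e_{l+1}\rangle$, so $H\cong U(l)$ and $G/H\cong\C P^l$. Writing $\mathfrak{g}=\mathfrak{su}(l+1)$ in block form, the subalgebra $\mathfrak{h}=LH$ consists of the block-diagonal elements, and an $\operatorname{Ad}(H)$-invariant complement $\mathfrak{m}$ (the off-diagonal blocks) is identified with $\C^l$, on which $H\cong U(l)$ acts through the standard representation twisted by a power of $\det$; in particular a suitable central element of $H$ acts on $\mathfrak{m}\cong\C^l$ as multiplication by $i$.

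First I would show that $\mathfrak{m}$ is \emph{irreducible as a real $H$-module}. Since some element of $H$ acts on $\mathfrak{m}$ as the complex structure $J=i\cdot\id$, every real $H$-invariant subspace of $\mathfrak{m}$ is automatically $J$-invariant, hence a complex subspace; as $U(l)$ acts $\C$-irreducibly on $\C^l$, the only such subspaces are $0$ and $\mathfrak{m}$. From this I deduce that $\mathfrak{h}$ is a \emph{maximal subalgebra} of $\mathfrak{g}$: if $\mathfrak{h}\subseteq\mathfrak{k}\subseteq\mathfrak{g}$ is a subalgebra, then $\mathfrak{k}$ is $\operatorname{Ad}(H)$-invariant (because $H$ is connected and $[\mathfrak{h},\mathfrak{k}]\subseteq\mathfrak{k}$), so it splits as $\mathfrak{k}=\mathfrak{h}\oplus(\mathfrak{k}\cap\mathfrak{m})$ with $\mathfrak{k}\cap\mathfrak{m}$ an $H$-submodule of the irreducible $\mathfrak{m}$; hence $\mathfrak{k}=\mathfrak{h}$ or $\mathfrak{k}=\mathfrak{g}$.

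Next I would pass from subalgebras to subgroups. Let $K$ be a closed subgroup with $H\subseteq K\subsetneq G$. Its identity component satisfies $\mathfrak{h}\subseteq LK^0\subseteq\mathfrak{g}$, so by maximality $LK^0$ equals $\mathfrak{h}$ or $\mathfrak{g}$; the latter would force $K^0=G$ (both connected) and hence $K=G$, contradicting properness. Therefore $LK^0=\mathfrak{h}$, and since $H$ is connected this gives $K^0=H$. Consequently $K$ normalises $H$, i.e. $K\subseteq N_G(H)$, and it remains only to identify this normaliser.

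The main work, and the only place where the hypothesis $l>1$ is genuinely used, is the equality $N_G(H)=H$. As an $H$-module, $\C^{l+1}$ decomposes into the two non-isomorphic isotypic components $W_1=\C^l$ (the standard representation, of dimension $l$) and $W_2=\C$ (the character $\det^{-1}$, of dimension $1$). Any $g\in N_G(H)$ sends $H$-invariant subspaces to $H$-invariant subspaces of the same dimension; since $l>1$ makes $\dim W_1=l\neq 1=\dim W_2$, the element $g$ must preserve each of $W_1$ and $W_2$, hence is block-diagonal and so lies in $H$. Combining $H\subseteq K\subseteq N_G(H)=H$ yields $K=H$, proving maximality. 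I expect this normaliser step to be the delicate point: it is precisely what fails for $l=1$, where $H$ is a maximal torus of $SU(2)$ and $N_G(H)/H$ is the nontrivial Weyl group, so that $H$ is not maximal there.
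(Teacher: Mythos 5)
Your proof is correct. The overall skeleton is the same as the paper's: given an intermediate closed subgroup $K$ with $H=S(U(l)\times U(1))\subseteq K\subsetneq SU(l+1)$, show $K^0=H$, conclude $K\subseteq N_G(H)$, and then show the normaliser is $H$ itself when $l>1$. The differences lie in how the two inputs are handled. The paper simply \emph{quotes} the fact that $S(U(l)\times U(1))$ is a maximal connected subgroup of $SU(l+1)$ (it is available from the Borel--de Siebenthal classification of maximal rank subgroups cited elsewhere in the paper), whereas you prove it from scratch: the isotropy representation $\mathfrak{m}\cong\C^l$ carries a complex structure coming from a central element of $H$, so real invariant subspaces are complex, and $\C$-irreducibility of the standard $U(l)$-representation gives $\R$-irreducibility of $\mathfrak{m}$, hence maximality of $\mathfrak{h}$; your passage from the subalgebra statement to the subgroup statement is also airtight. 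For the normaliser, the paper computes $N_G(H)/H=\bigl(\C P^l\bigr)^H$ and observes this is a single point, while you argue that any $g\in N_G(H)$ must preserve the two non-isomorphic $H$-isotypic components $\C^l$ and $\C$ of $\C^{l+1}$ (their dimensions $l$ and $1$ differ precisely because $l>1$), forcing $g$ to be block diagonal and hence in $H$. These two computations are really the same fact in different clothing: the paper's fixed-point set being a point is exactly the statement that the only $H$-invariant line in $\C^{l+1}$ is the one-dimensional isotypic component. What your version buys is self-containedness (no appeal to the classification of maximal rank subgroups) and an explicit identification of where $l>1$ enters; what the paper's version buys is brevity.
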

\begin{proof}
  Let \(H\) be a subgroup of \(SU(l+1)\) with \(S(U(l)\times U(1))\subset H\subsetneq SU(l+1)\).

  Because \(S(U(l)\times U(1))\) is a maximal connected subgroup of \(SU(l+1)\) the identity component of \(H\) has to be
  \(S(U(l)\times U(1))\).
  Therefore \(H\) is contained in the normalizer of \(S(U(l)\times U(1))\).
  Because \(l>1\),
  \begin{multline*}
    N_{SU(l+1)}S(U(l)\times U(1))/S(U(l)\times U(1))\\= \left(SU(l+1)/S(U(l)\times U(1))\right)^{S(U(l)\times U(1))}= \left(\C P^l\right)^{S(U(l)\times U(1))}
  \end{multline*}
  is just one point.
  Therefore \(H=S(U(l)\times U(1))\) follows.
\end{proof}

\begin{Lemma}
  \label{lem:lie-algebra-calc}
  Let \(\psi:S(U(l)\times U(1))\rightarrow S^1\) be a non-trivial  group homomorphism and
  \begin{align*}
    H_0&=SU(l+1)\times S^1,\\
    H_1&=S(U(l)\times U(1)) \times S^1,\\
    H_2&= \{(g,\psi(g)), g\in S(U(l)\times U(1))\}.
  \end{align*}
  Then \(H_1\) is the only connected proper closed subgroup of \(H_0\), which contains \(H_2\) properly.
\end{Lemma}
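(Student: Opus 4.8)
The plan is to take an arbitrary connected proper closed subgroup $K$ of $H_0$ with $H_2 \subsetneq K$ and show $K = H_1$; together with the easy observation that $H_1$ itself is a connected proper closed subgroup containing $H_2$ properly, this gives the claim. Write $p_1 \colon H_0 \to SU(l+1)$ and $p_2 \colon H_0 \to S^1$ for the two projections. Since $K$ is closed in the compact group $H_0$ it is compact and connected, so $p_1(K)$ is a connected closed subgroup of $SU(l+1)$; moreover $p_1(K) \supseteq p_1(H_2) = S(U(l)\times U(1))$. Because $S(U(l)\times U(1))$ is a maximal connected subgroup of $SU(l+1)$ (Lemma~\ref{sec:lie-algebra-calc}), there are only two possibilities, $p_1(K) = S(U(l)\times U(1))$ or $p_1(K) = SU(l+1)$.

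In the first case $K \subseteq S(U(l)\times U(1)) \times S^1 = H_1$. Here I would argue by dimension: since $\dim H_2 = \dim S(U(l)\times U(1)) = \dim H_1 - 1$ and $H_2 \subsetneq K$ with both connected, the Lie algebra of $K$ must strictly contain that of $H_2$ (otherwise their identity components, hence the groups themselves, would coincide), so $\dim K \ge \dim H_1$; combined with $K \subseteq H_1$ and connectedness this forces $K = H_1$.

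The substance of the argument, and the place where the hypothesis that $\psi$ is non-trivial enters, is the second case $p_1(K) = SU(l+1)$, which I expect to be the main obstacle. Set $N = K \cap (\{1\}\times S^1)$, a closed central subgroup of $K$; then $p_1|_K \colon K \to SU(l+1)$ is surjective with kernel $N$, so $N$ is either all of $\{1\}\times S^1$ or finite. If $N = \{1\}\times S^1$, then $\{1\}\times S^1 \subseteq K$ and surjectivity of $p_1|_K$ immediately give $SU(l+1)\times\{1\} \subseteq K$, hence $K = H_0$, contradicting that $K$ is proper. If $N$ is finite, then $p_1|_K$ is a finite covering of the simply connected group $SU(l+1)$, hence an isomorphism, so $K$ is the graph of a homomorphism $SU(l+1) \to S^1$; as $SU(l+1)$ is perfect this homomorphism is trivial and $K = SU(l+1)\times\{1\}$. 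But then $H_2 \subseteq K$ would force $\psi(g) = 1$ for all $g \in S(U(l)\times U(1))$, contradicting the non-triviality of $\psi$. Thus the second case cannot occur, and the first case yields $K = H_1$, completing the proof.
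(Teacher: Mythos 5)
Your proof is correct, but it takes a genuinely different route from the paper's. The paper argues by rank: any intermediate connected closed subgroup $H$ has rank $l+1$ or $l$, since $\rank H_2=\rank H_0-1$. In the full-rank case it invokes a structure theorem for maximal rank subgroups of a product (giving $H=H'\times S^1$ with $H'$ of maximal rank in $SU(l+1)$) and then maximality of $S(U(l)\times U(1))$; in the corank-one case it uses the non-triviality of $\psi$ to split $H$ locally as $H'\times S^1$ and then appeals to the Borel--de Siebenthal classification of groups containing $SU(l)$ as a maximal rank subgroup, which forces it to eliminate the exceptional candidates $E_7$, $E_8$ by dimension counts and $G_2$ by the absence of $G_2$-representations of dimension less than seven. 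You instead project to the $SU(l+1)$ factor: maximality of $S(U(l)\times U(1))$ as a \emph{connected} subgroup (the same input as Lemma~\ref{sec:lie-algebra-calc}, which the paper also quotes at this point) gives the dichotomy $p_1(K)=S(U(l)\times U(1))$ or $p_1(K)=SU(l+1)$; the first case is settled by your dimension/Lie-algebra argument, and the second is excluded by analyzing $\ker(p_1|_K)$, using that $SU(l+1)$ is simply connected (so a surjection onto it with finite kernel is an isomorphism) and perfect (so the resulting graph homomorphism into $S^1$ is trivial), after which non-triviality of $\psi$ gives the contradiction. Your route is more elementary and self-contained: it avoids both the structure theorem for maximal rank subgroups and Borel--de Siebenthal, hence all bookkeeping with exceptional groups, and it isolates exactly where the hypothesis $\psi\neq 1$ enters; the paper's route, by contrast, runs on the same classification machinery it uses elsewhere, at the cost of the exceptional-group case analysis. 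One shared caveat: Lemma~\ref{sec:lie-algebra-calc} is stated only for $l>1$, so for $l=1$ both you and the paper implicitly use that a maximal torus is a maximal connected subgroup of $SU(2)$, which is true but strictly outside that lemma's statement.
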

\begin{proof}
 Let \(H_2\subset H\subset H_0\) be a closed connected subgroup.
 Then we have
 \begin{equation*}
   \rank H_0 \geq \rank H \geq \rank H_2 = \rank H_0 -1.
 \end{equation*}
At first assume that \(\rank H = \rank H_0\).
Then we have by \cite[p. 297]{mimura91:_topol_of_lie_group_i_and_ii}
\begin{equation*}
  H=H'\times S^1,
\end{equation*}
where \(H'\) is a connected subgroup of maximal rank of  \(SU(l+1)\).
Let \(\pi_1:H_0\rightarrow SU(l+1)\) the projection to the first factor.
Because \(H'=\pi_1(H)\supset \pi_1(H_2)=S(U(l)\times U(1))\) and \(S(U(l)\times U(1))\) is a maximal connected subgroup of \(SU(l+1)\), we have by Lemma \ref{sec:lie-algebra-calc} that \(H=H_1\) or \(H=H_0\).

Now assume that \(\rank H=\rank H_2\).
Then there is a non-trivial group homomorphism \(H\rightarrow S^1\).
Therefore locally \(H\) is a product \(H'\times S^1\), where \(H'\) is a simple group which contains \(SU(l)\) as a maximal rank subgroup.
By \cite[p. 219]{0034.30701}, we have
\begin{equation*}
  H' = E_7,E_8,G_2,SU(l).
\end{equation*}
If \(H'=SU(l)\), then we have \(H=H_2\). Therefore we have to show that the other cases do not occur.
 \begin{center}
     \begin{tabular}{|c|c|c|c|}
       \(l\)&\(\dim H_0\)&\(\dim H'\times S^1\)\\\hline\hline
       \(8\)&\(81\)&\(\dim E_7\times S^1=134\)\\\hline
       \(9\)&\(100\)&\(\dim E_8\times S^1=249\)\\\hline
       \(3\)&\(16\)&\(\dim G_2\times S^1=15\)
     \end{tabular}
  \end{center}
Therefore the first two cases do not occur.
Because there is no \(G_2\)-representation of dimension less than seven, the third case does not occur.
\end{proof}

\begin{lemma}
\label{sec:lie-groups}
  Let \(T\) be a torus and \(\psi_1,\psi_2: S(U(l)\times U(1)) \rightarrow T\) be two group homomorphisms.
  Furthermore, let, for \(i=1,2\),
  \begin{align*}
    H_i=\{(g,\psi_i(g))\in SU(l+1)\times T; g \in  S(U(l)\times U(1))\}
  \end{align*}
  be the graph of \(\psi_i\).
  \begin{enumerate}
  \item If \(l>1\), then \(H_1\) and \(H_2\) are conjugated in \(SU(l+1)\times T\) if and only if \(\psi_1=\psi_2\).
  \item If \(l=1\), then \(H_1\) and \(H_2\) are conjugated in \(SU(l+1)\times T\) if and only if \(\psi_1=\psi_2^{\pm 1}\).
  \end{enumerate}
\end{lemma}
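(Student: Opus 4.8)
The plan is to unwind the conjugation relation into a condition on the first coordinate alone. Suppose $(a,b)\in SU(l+1)\times T$ satisfies $(a,b)H_1(a,b)^{-1}=H_2$. Since $T$ is abelian, the second coordinate $b$ contributes nothing: conjugation sends $(g,\psi_1(g))$ to $(aga^{-1},\psi_1(g))$. Projecting the set equality $(a,b)H_1(a,b)^{-1}=H_2$ to the first factor gives $aS(U(l)\times U(1))a^{-1}=S(U(l)\times U(1))$, so $a\in N_{SU(l+1)}(S(U(l)\times U(1)))$; and requiring each image element to lie in $H_2$ forces $\psi_1(g)=\psi_2(aga^{-1})$ for all $g$, i.e. $\psi_1=\psi_2\circ c_a$, where $c_a$ denotes conjugation by $a$. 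Thus the lemma reduces to understanding how the normalizer of $S(U(l)\times U(1))$ acts, by conjugation, on homomorphisms into the abelian torus $T$.

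The key observation is that any homomorphism $\psi$ into $T$ is unchanged by conjugation by elements of $S(U(l)\times U(1))$ itself: for $a\in S(U(l)\times U(1))$ one has $\psi(aga^{-1})=\psi(a)\psi(g)\psi(a)^{-1}=\psi(g)$, as $T$ is abelian. Hence only the class of $a$ in $N_{SU(l+1)}(S(U(l)\times U(1)))/S(U(l)\times U(1))$ matters. For the converse (the ``if'' direction) I would simply exhibit the conjugating element: taking $a=1$ shows $H_1=H_2$ when $\psi_1=\psi_2$, while for $l=1$, choosing $a\in N_{SU(2)}(T_1)-T_1$, whose conjugation action on $T_1=S(U(1)\times U(1))$ is inversion $t\mapsto t^{-1}$, conjugates the graph of $\psi_2$ to the graph of $\psi_2^{-1}$ (since $\psi_2(t^{-1})=\psi_2(t)^{-1}$).

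It then remains to compute the relevant normalizer quotient in the ``only if'' direction. For $l>1$ this is exactly the content of Lemma~\ref{sec:lie-algebra-calc}: its proof shows that $N_{SU(l+1)}(S(U(l)\times U(1)))/S(U(l)\times U(1))=(\C P^l)^{S(U(l)\times U(1))}$ is a single point, so the normalizer equals $S(U(l)\times U(1))$ and $c_a$ is always inner; combined with the key observation this forces $\psi_1=\psi_2\circ c_a=\psi_2$. For $l=1$ the subgroup $S(U(1)\times U(1))$ is the maximal torus $T_1$ of $SU(2)$, whose normalizer quotient is the Weyl group $\mathbb{Z}_2$ acting by inversion; so $c_a$ is either trivial or inversion, giving $\psi_1=\psi_2$ or $\psi_1=\psi_2^{-1}$ respectively, i.e. $\psi_1=\psi_2^{\pm 1}$.

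The only substantive input is the normalizer computation, and for $l>1$ this is already supplied by Lemma~\ref{sec:lie-algebra-calc}, while for $l=1$ it is the classical description of the Weyl group of $SU(2)$; so I do not expect a genuine obstacle here. The one point to handle with mild care is that the passage from the abstract conjugacy to the statement ``$a$ normalizes $S(U(l)\times U(1))$ and $\psi_1=\psi_2\circ c_a$'' must use the full set equality of the two graphs (not merely an inclusion), which is exactly what the first-coordinate projection provides; after that the argument is purely formal.
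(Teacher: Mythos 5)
Your proof is correct and follows essentially the same route as the paper: reduce the conjugating element to its $SU(l+1)$-component (since $T$ is central), project the graph equality to the first factor to land in $N_{SU(l+1)}(S(U(l)\times U(1)))$, identify that normalizer via Lemma~\ref{sec:lie-algebra-calc}, and use that homomorphisms into the abelian $T$ are insensitive to inner conjugation. The only (harmless) difference is that you spell out the converse direction with explicit conjugating elements, which the paper dismisses as trivial.
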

\begin{proof}
  At first assume that \(H_1\) and \(H_2\) are conjugated in \(SU(l+1)\times T\).
  Let \(g'\in SU(l+1)\times T\) such that
  \begin{equation*}
    H_1=g' H_2 g'^{-1}.
  \end{equation*}
  Because \(T\) is contained in the center of \(SU(l+1)\times T\), we may assume that \(g'=(g,1) \in SU(l+1)\times\{1\}\).
  Let \(\pi_1:SU(l+1)\times T \rightarrow SU(l+1)\) be the projection on the first factor.
  Then:
  \begin{equation*}
    S(U(l)\times U(1)) = \pi_1(H_1) = g \pi_1(H_2) g^{-1} = g S(U(l)\times U(1)) g^{-1}.
  \end{equation*}
By Lemma~\ref{sec:lie-algebra-calc}, it follows that
\begin{equation*}
  g \in N_{SU(l+1)}S(U(l)\times U(1))=
  \begin{cases}
    S(U(l)\times U(1)) &\text{if } l>1\\
     N_{SU(2)}S(U(1)\times U(1)) &\text{if } l=1.
  \end{cases}
\end{equation*}
Now for \(h\in S(U(l)\times U(1))\) we have
\begin{equation*}
  (h,\psi_1(h))= g'(g^{-1}hg,\psi_1(h))g'^{-1}.
\end{equation*}
Now \((g^{-1}hg,\psi_1(h))\) lies in \(H_2\). Therefore we may write:
\begin{equation*}
  g'(g^{-1}hg,\psi_1(h))g'^{-1}=g'(g^{-1}hg,\psi_2(g^{-1}hg))g'^{-1}=(h,\psi_2(g^{-1}hg))
\end{equation*}
If \(l>1\) we have 
\begin{equation*}
  \psi_2(g^{-1}hg)=\psi_2(g)^{-1}\psi_2(h)\psi_2(g)=\psi_2(h).
\end{equation*}
Otherwise we have
\begin{equation*}
  \psi_2(g^{-1}hg)=\psi_2(h^{\pm 1})=\psi_2(h)^{\pm 1}.
\end{equation*}

The other implications are trivial.
Therefore the statement follows.
\end{proof}

\begin{lemma}
  \label{sec:lie-groups-1}
  Let \(l\geq 1\).
  \(\text{Spin}(2l)\) is a maximal connected subgroup of \(\text{Spin}(2l+1)\).
  Its normalizer consists out of two components.
\end{lemma}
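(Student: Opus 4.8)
The plan is to prove both assertions by working with the standard embedding $\text{Spin}(2l)=\pi^{-1}(SO(2l))$, where $\pi\colon \text{Spin}(2l+1)\to SO(2l+1)$ is the double covering with $\ker\pi\subseteq\text{Spin}(2l)$, and to transfer statements between $\text{Spin}$ and $SO$ along $\pi$.

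For maximality I would use the symmetric-space decomposition $\mathfrak{so}(2l+1)=\mathfrak{so}(2l)\oplus\mathfrak{m}$, where $\mathfrak{m}$ is the isotropy representation of the sphere $S^{2l}=\text{Spin}(2l+1)/\text{Spin}(2l)=SO(2l+1)/SO(2l)$ and is isomorphic, as a $\text{Spin}(2l)$-module, to the standard real representation $\R^{2l}$ of $SO(2l)$. For every $l\geq 1$ this representation is irreducible over $\R$ (for $l=1$ because $SO(2)$ fixes no line in $\R^2$). If $H$ is a connected subgroup with $\text{Spin}(2l)\subseteq H\subseteq\text{Spin}(2l+1)$, then its Lie algebra $\mathfrak{h}$ contains $\mathfrak{so}(2l)$ and hence is invariant under $\text{Ad}(\text{Spin}(2l))$; therefore $\mathfrak{h}\cap\mathfrak{m}$ is a submodule of $\mathfrak{m}$, which by irreducibility is $0$ or $\mathfrak{m}$. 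Thus $\mathfrak{h}=\mathfrak{so}(2l)$ or $\mathfrak{h}=\mathfrak{so}(2l+1)$, and connectedness of $H$ gives $H=\text{Spin}(2l)$ or $H=\text{Spin}(2l+1)$. (Alternatively this is a case of Borel--de Siebenthal \cite{0034.30701}.)

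For the normalizer I would first compute it downstairs. The fixed subspace of $SO(2l)$ in $\R^{2l+1}$ is exactly the line spanned by $e_{2l+1}$, so any $g\in SO(2l+1)$ normalizing $SO(2l)$ must preserve this line and its orthogonal complement; a determinant count then shows $N_{SO(2l+1)}(SO(2l))=S(O(2l)\times O(1))$, which has two components, the identity component being $SO(2l)$. Next I would lift along $\pi$: since $\ker\pi\subseteq\text{Spin}(2l)$, an element $g$ normalizes $\text{Spin}(2l)=\pi^{-1}(SO(2l))$ if and only if $\pi(g)$ normalizes $SO(2l)$, so $N:=N_{\text{Spin}(2l+1)}(\text{Spin}(2l))=\pi^{-1}(S(O(2l)\times O(1)))$.

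Finally I would count components. The covering $\pi$ induces an isomorphism $N/\text{Spin}(2l)\xrightarrow{\sim}S(O(2l)\times O(1))/SO(2l)\cong\Z/2\Z$, so $N$ consists of exactly two cosets of the connected group $\text{Spin}(2l)$. Since $S(O(2l)\times O(1))\neq SO(2l+1)$ we have $N\neq\text{Spin}(2l+1)$, and the maximality just proved forces the identity component $N^0$ to equal $\text{Spin}(2l)$; hence $N$ has exactly two connected components. The point requiring the most care is precisely this interplay between $\text{Spin}$ and $SO$: one must check that passing to the double cover neither merges nor splits the two cosets, which is guaranteed by $\ker\pi\subseteq\text{Spin}(2l)$ together with the connectedness of $\text{Spin}(2l)$, and that $N^0=\text{Spin}(2l)$, which is where the maximality statement feeds back into the component count.
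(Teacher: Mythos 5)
Your proof is correct, but it takes a genuinely different route from the paper's in both halves. For maximality the paper simply cites Borel--de Siebenthal \cite{0034.30701}; you prove it directly from the irreducibility of the isotropy representation \(\mathfrak{m}\cong\R^{2l}\) of \(S^{2l}=\text{Spin}(2l+1)/\text{Spin}(2l)\), which makes the lemma self-contained (the only point left implicit is that complete reducibility, via an invariant inner product, gives \(\mathfrak{h}=\mathfrak{so}(2l)\oplus(\mathfrak{h}\cap\mathfrak{m})\); this is standard). For the normalizer the paper uses the identity \(N_G(H)/H=(G/H)^H\) and observes that \(\left(S^{2l}\right)^{\text{Spin}(2l)}\) consists of the two poles, so \(N/\text{Spin}(2l)\) has exactly two elements; you instead compute \(N_{SO(2l+1)}(SO(2l))=S(O(2l)\times O(1))\) downstairs by the fixed-line and determinant argument and then lift through the covering \(\pi\), using \(\ker\pi\subseteq\text{Spin}(2l)\) to see that nothing merges or splits. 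These are the same geometric fact in two guises---the fixed line of \(SO(2l)\) in \(\R^{2l+1}\) is precisely what cuts out the two poles---but your route avoids invoking the identity \(N_G(H)/H=(G/H)^H\) at the price of covering-space bookkeeping, whereas the paper's argument is two lines once that identity is granted. One small simplification: your final appeal to maximality to identify \(N^0\) is unnecessary, since \(N\) is the disjoint union of two cosets of the closed connected subgroup \(\text{Spin}(2l)\), and these two compact connected sets are automatically the connected components of \(N\).
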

\begin{proof}
  By \cite[p. 219]{0034.30701}, \(\text{Spin}(2l)\) is a maximal connected subgroup of \(\text{Spin}(2l+1)\).
  \begin{equation*}
    N_{\text{Spin}(2l+1)}\text{Spin}(2l)/\text{Spin}(2l)= \left(\text{Spin}(2l+1)/\text{Spin}(2l)\right)^{\text{Spin}(2l)}=\left(S^{2l}\right)^{\text{Spin}(2l)}
  \end{equation*}
  consists out of  two points. Therefore the second statement follows.
\end{proof}

\begin{lemma}
\label{sec:lie-groups-2}
  Let \(G\) be a Lie-group, which acts on the manifold \(M\).
  Furthermore, let \(N\subset M\) be a submanifold.
  If the intersection of \(Gx\) and \(N\) is transverse in \(x\) for all \(x\in N\), then \(GN\) is open in \(M\).
\end{lemma}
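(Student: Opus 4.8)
The plan is to realise $GN$ as the image of the smooth action map and to show that this map is a submersion at the relevant points, so that its image is automatically open.

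First I would introduce the map
\[
  \alpha: G\times N\rightarrow M,\qquad (g,x)\mapsto gx,
\]
whose image is precisely $GN$. The goal is to prove that $\alpha$ is a submersion at every point of the form $(e,x)$ with $x\in N$, where $e\in G$ denotes the identity. Identify $T_{(e,x)}(G\times N)=\mathfrak{g}\oplus T_xN$, with $\mathfrak{g}=LG$. Restricting $d\alpha_{(e,x)}$ to $\mathfrak{g}\oplus\{0\}$ recovers the differential at $e$ of the orbit map $g\mapsto gx$, whose image is $T_x(Gx)$; restricting to $\{0\}\oplus T_xN$ recovers the inclusion $T_xN\hookrightarrow T_xM$. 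Hence the image of $d\alpha_{(e,x)}$ is $T_x(Gx)+T_xN$, which by the transversality hypothesis equals $T_xM$. Thus $\alpha$ is a submersion at each $(e,x)$, and by the local normal form for submersions it carries every neighbourhood of $(e,x)$ onto a neighbourhood of $\alpha(e,x)=x$. Consequently each $x\in N$ possesses an open neighbourhood $U_x$ with $x\in U_x\subset GN$.

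Finally I would propagate this local statement to all of $GN$ using its $G$-invariance. Given an arbitrary $y\in GN$, write $y=gx$ with $g\in G$ and $x\in N$. Since $g$ acts on $M$ as a diffeomorphism and $GN$ satisfies $g\cdot GN=GN$, the set $gU_x$ is an open neighbourhood of $y=gx$ contained in $GN$. Hence every point of $GN$ is an interior point, so $GN$ is open, as claimed.

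I do not expect a serious obstacle here; the only points requiring care are the correct identification of $\image d\alpha_{(e,x)}$ with $T_x(Gx)+T_xN$ and the remark that, because $N$ enters the hypothesis only through its tangent spaces, the argument is insensitive to whether $N$ is embedded or merely immersed.
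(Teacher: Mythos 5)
Your proposal is correct and follows essentially the same route as the paper: both realise \(GN\) as the image of the action map \(G\times N\to M\), use transversality to see that the differential at \((e,x)\) is surjective, and then propagate via the diffeomorphisms given by group elements. The only cosmetic difference is that the paper uses equivariance to conclude the map is a submersion at \emph{every} point \((g,x)\) and then invokes openness of submersions, whereas you translate the open neighbourhoods obtained at points \((e,x)\) directly by \(g\).
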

\begin{proof}
  We will show that \(f:G\times N \rightarrow M\), \((h,x)\mapsto hx\) is a submersion.
  Because a submersion is an open map, it follows that \(GN=f(G\times N)\) is open in \(M\).
  For \(g \in G\), let
  \begin{align*}
    l_g: G\times N & \rightarrow G\times N\\
    (h,x)&\mapsto (gh,x)
  \end{align*}
and
\begin{align*}
  l_g': M&\rightarrow M\\
  x&\mapsto gx.
\end{align*}

Then we have for all \(g\in G\)
\begin{equation*}
  f=l_g'\circ f \circ l_{g^{-1}}.
\end{equation*}
Now for \((g,x)\in G\times N\) we have
\begin{equation*}
  D_{(g,x)}f=D_xl_g'\, D_{(e,x)}f\, D_{(g,x)}l_{g^{-1}}.
\end{equation*}
Because \(Gx\) and \(N\) intersect transversely in \(x\), the differential \(D_{(e,x)}f\) is surjective.
Because \(l_g'\), \(l_{g^{-1}}\) are diffeomorphisms, it follows that  \(D_{(g,x)}f\) is surjective.
Therefore \(f\) is a submersion.
\end{proof}

\section{Generalities on torus manifolds}
\label{sec:gen-tor-mgf}

\begin{Lemma}
\label{lem:torus-m2}
  Let \(M\) be a torus manifold and \(M_1,\dots,M_k\) pairwise distinct characteristic submanifolds of \(M\) with \(N=M_1\cap\dots\cap M_k\neq \emptyset\).
 Then each \(M_i\) intersects transversely with \(\bigcap_{j=1}^{i-1}M_j\).
 Therefore \(N\) is a submanifold of \(M\) with \(\codim N = 2k\) and \(\dim \langle\lambda(M_1),\dots,\lambda(M_k)\rangle=k\).
Furthermore, \(N\) is the union of some components of \(M^{\langle\lambda(M_1),\dots,\lambda(M_k)\rangle}\).
\end{Lemma}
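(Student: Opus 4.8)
The plan is to argue by induction on $k$, reducing every assertion to a linear-algebra analysis of the isotropy representation $V = T_pM$ at a point $p$ of the intersection; throughout I write $\lambda_i = \lambda(M_i)$. First I would record the local fact that each $M_i$ is \emph{exactly} a connected component of $M^{\lambda_i}$: it is pointwise fixed by $\lambda_i$, is connected of codimension two, and the component of $M^{\lambda_i}$ through a point of $M_i$ cannot have codimension zero (that would force $\lambda_i$ into the kernel of the action, contradicting almost effectiveness), so that component has codimension two and hence coincides with $M_i$. Consequently each $M_i$ is $T$-invariant (the connected abelian group $T$ preserves $M^{\lambda_i}$ and its components), and for every $p\in M_i$ we have $T_pM_i = V^{\lambda_i}$, of real codimension two, with normal line $N_p(M_i,M)$ carrying a nonzero weight of $\lambda_i$.

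For the inductive step put $N' = M_1\cap\dots\cap M_{k-1}$ and $H' = \langle\lambda_1,\dots,\lambda_{k-1}\rangle$, assuming the full statement for $k-1$. Transversality at $k-1$ then gives, at each $p\in N'$, a splitting $V = T_pN'\oplus L_1\oplus\dots\oplus L_{k-1}$ into $T$-invariant complex lines $L_i = N_p(M_i,M)$ (they are $T$-invariant because the $T_pM_j$ and their orthogonal complements for an invariant metric are), where $\lambda_i$ acts nontrivially on $L_i$ and every $\lambda_j$ with $j\neq i$ acts trivially on $L_i$; in particular $T_pN' = V^{H'}$ and $\dim H' = k-1$. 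Now fix $p\in N = N'\cap M_k$. Since $\lambda_k$ preserves each summand, its unique non-fixed complex weight line lies either inside $T_pN'$ or inside a single $L_{i_0}$. In the first case $L_1\oplus\dots\oplus L_{k-1}\subseteq V^{\lambda_k} = T_pM_k$, so $T_pM_k + T_pN' = V$; moreover $\lambda_k$ then acts nontrivially on $V^{H'}$, so $\lambda_k\not\subseteq H'$ and $\dim\langle H',\lambda_k\rangle = k$. This yields transversality, $\codim N = 2k$, and the dimension count simultaneously at every point of $N$.

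The crux is excluding the second case. There $V^{\lambda_k} = T_pN'\oplus\bigoplus_{i\neq i_0}L_i = V^{\lambda_{i_0}}$, the last equality because $\lambda_{i_0}$ acts nontrivially precisely on $L_{i_0}$. Setting $S = \langle\lambda_{i_0},\lambda_k\rangle$, the component of $M^S$ through $p$ has tangent space $V^S = V^{\lambda_{i_0}} = V^{\lambda_k}$, so it is a closed submanifold of the same dimension as, and hence by connectedness equal to, both of the codimension-two connected submanifolds $M_{i_0}$ and $M_k$; thus $M_{i_0} = M_k$, contradicting that the $M_i$ are pairwise distinct. This forces the first case at every $p\in N$ and completes the induction. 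Finally, the assertion that $N$ is a union of components of $M^H$ with $H = \langle\lambda_1,\dots,\lambda_k\rangle$ drops out: $H$ fixes $N$ pointwise so $N\subseteq M^H$, while at each $p\in N$ one has $T_pN = \bigcap_i V^{\lambda_i} = V^H$, so $N$ and the component of $M^H$ through $p$ have equal dimension, making $N$ open and closed in $M^H$. The main obstacle is exactly the exclusion of the degenerate second case; everything else is weight-space bookkeeping, using only the definitions together with the standard facts that fixed-point components of torus actions are closed submanifolds of even codimension and that a circle acting trivially on an open set would violate almost effectiveness.
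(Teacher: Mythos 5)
Your proof is correct, and it shares the paper's skeleton---induction on $k$ together with a weight-space analysis of the isotropy representation at a point of $N$---but each of the three decisive steps is closed differently, so a comparison is worthwhile. To rule out the degenerate case (the new circle $\lambda_k$ having its nontrivial weight line equal to some $L_{i_0}$), the paper contradicts \emph{almost effectiveness}: the two circles generate a two-dimensional torus whose fixed-point set has codimension two, which forces a positive-dimensional subtorus to act trivially on a neighborhood and hence on all of $M$. You instead contradict \emph{pairwise distinctness}: you identify both $M_{i_0}$ and $M_k$ with the component of $M^{S}$ through $p$, where $S=\langle\lambda_{i_0},\lambda_k\rangle$, so that $M_{i_0}=M_k$. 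Your version is more self-contained (it needs only that fixed-point components are closed submanifolds whose tangent space is the fixed subspace, plus your preliminary observation that each $M_i$ \emph{is} a component of $M^{\lambda(M_i)}$), and it applies verbatim when $\lambda_{i_0}=\lambda_k$, a case in which the paper's ``dimension two'' assertion is false and which the paper must silently exclude by the same distinctness reasoning. Likewise, you obtain $\dim\langle\lambda(M_1),\dots,\lambda(M_k)\rangle=k$ incrementally, from $\lambda_k$ acting nontrivially on $T_pN'$ and hence $\lambda_k\not\subset H'$, where the paper argues via linear independence of the weights of the $V_j$; and you obtain the union-of-components claim from the pointwise identity $T_pN=(T_pM)^{H}$, where the paper instead uses the bound $\dim M^{H}\le 2n-2k$, which again rests on effectiveness. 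Both routes are sound; the paper's is a little shorter, while yours confines the appeal to almost effectiveness to one elementary statement. One imprecision you should fix: your lines $L_i$ are not ``$T$-invariant''---$T$ need not fix $p$, since the lemma does not assume $N\cap M^T\neq\emptyset$---they are invariant under the stabilizer $T_p$, which contains $\langle\lambda(M_1),\dots,\lambda(M_k)\rangle$, and that weaker invariance is all your argument actually uses.
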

\begin{proof}
  We prove the lemma by induction on \(k\).
  Let \(k\geq 1\) and \(x\in N\). Then we have
  \begin{equation*}
    T_xM=\bigcap_{i=1}^kT_xM_i \oplus \bigoplus_j V_j,
  \end{equation*}
where the \(V_j\) are one-dimensional complex  \(\langle\lambda(M_1),\dots,\lambda(M_k)\rangle\)-representations.
Since the \(M_i\) have codimension two in \(M\), each \(\lambda(M_i)\) acts non-trivially on exactly one \(V_{j_i}\).

If \(\codim \bigcap_{i=1}^kT_xM_i < 2k\), then there are \(i_1\) and \(i_2\), such that \(V_{j_{i_1}}=V_{j_{i_2}}\).
Therefore
\begin{equation*}
  T_xM_{i_1}=T_xM_{i_2}=T_xM^{\langle\lambda(M_{i_1}),\lambda(M_{i_2})\rangle}
\end{equation*}
has codimension two.

Since \(\langle\lambda(M_{i_1}),\lambda(M_{i_2})\rangle\) has dimension two, it does not act almost effectively on \(M\).
This is a contradiction.
 Therefore \(\bigcap_{i=1}^kT_xM_i\) has codimension \(2k\).
By induction hypothesis \(\bigcap_{i=1}^{k-1}M_i\) is a submanifold of codimension \(2k-2\) and \(T_x\bigcap_{i=1}^{k-1}M_i=\bigcap_{i=1}^{k-1}T_xM_i\).
Thus, \(M_k\) and \(\bigcap_{i=1}^{k-1}M_i\) intersect transversely.
Therefore \(N\) is a submanifold of \(M\) of codimension \(2k\).

If \(\langle\lambda(M_1),\dots,\lambda(M_k)\rangle\) has dimension smaller than \(k\) then the weights of the \(V_j\) are linear dependent.
Therefore there is \((a_1,\dots,a_k)\in \mathbb{Z}^k-\{0\}\), such that
\begin{equation*}
  \C=V_1^{a_1}\otimes\dots\otimes V_k^{a_k},
\end{equation*}
where \(\C\) denotes the trivial \(\langle\lambda(M_1),\dots,\lambda(M_k)\rangle\)-representation.
 This gives a contradiction because each \(\lambda(M_i)\) acts non-trivially on exactly one \(V_{j}\).

Because \(\langle\lambda(M_1),\dots,\lambda(M_k)\rangle\) has dimension \(k\), \(M^{\langle\lambda(M_1),\dots,\lambda(M_k)\rangle}\) has dimension at most \(2n-2k\).
But \(N\) is contained in \(M^{\langle\lambda(M_1),\dots,\lambda(M_k)\rangle}\) and has dimension \(2n-2k\).
Therefore it is the union of some components of \(M^{\langle\lambda(M_1),\dots,\lambda(M_k)\rangle}\).
\end{proof}

\begin{Lemma}
\label{lem:torus-m1}
  Let \(M\) be a torus manifold of dimension \(2n\) and \(N\) a component of the intersection of \(k (\leq n)\) characteristic submanifolds \(M_1,\dots,M_k\)  of \(M\) with \(N^T\neq \emptyset\).
  Then \(N\) is a torus manifold.
  Moreover, the characteristic submanifolds of \(N\) are given by the components of intersections of characteristic submanifolds \(M_i \neq M_1,\dots,M_k\) of \(M\) with \(N\), which contain a \(T\)-fixed point.
\end{Lemma}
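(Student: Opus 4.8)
The plan is to produce the right torus acting on $N$ and then read everything off the isotropy representation at a $T$-fixed point. Set $L=\langle\lambda(M_1),\dots,\lambda(M_k)\rangle$. By Lemma~\ref{lem:torus-m2} this is a $k$-dimensional subtorus, $M_1\cap\dots\cap M_k$ is a closed submanifold of codimension $2k$ of which $N$ is a union of components, and $N\subset M^{L}$. Thus $N$ is closed, connected, of dimension $2(n-k)$; since $T$ is connected it preserves $N$, and because $L$ fixes $N$ pointwise the $T$-action descends to the $(n-k)$-dimensional torus $T/L$, with $N^{T/L}=N^{T}\neq\emptyset$. Orientability of $N$ follows from the transversality in Lemma~\ref{lem:torus-m2}: it yields a $T$-equivariant isomorphism $N(N,M)\cong\bigoplus_{i=1}^{k}N(M_i,M)|_{N}$, and as each $N(M_i,M)$ is complex, $N(N,M)$ is oriented, which together with the orientation of $M$ orients $N$.

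First I would prove that $T/L$ acts almost effectively. Choosing $x\in N^{T}$, the slice theorem identifies the kernel of the isotropy representation $T\to GL(T_xM)$ with the set of elements fixing a neighbourhood of $x$; since such an element fixes a whole component of the connected manifold $M$, this kernel is the finite kernel of the $T$-action, so the weights of $T_xM$ span $LT^{*}\otimes\Q$. Writing $T_xM=T_xN\oplus\bigoplus_{i=1}^{k}N_x(M_i,M)$ and noting that $L$ acts trivially on $T_xN$, the weights of $T_xN$ lie in the $(n-k)$-dimensional space $L^{\perp}$, while the $k$ weights of the normal summands span at most a $k$-dimensional space. A dimension count forces the weights of $T_xN$ to span all of $L^{\perp}\otimes\Q$, so $T/L$ acts almost effectively on $T_xN$, and any subtorus of $T/L$ acting trivially on $N$ lies in the finite kernel of this isotropy representation. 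Hence $N$ is a torus manifold.

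For the characteristic submanifolds I would compare weight spaces at $x\in N^{T}$, using the description (Lemma~\ref{sec:gener-torus-manif}) that the characteristic submanifolds of a torus manifold through $x$ correspond bijectively to the weight spaces of its isotropy representation, each submanifold being sent to its normal space at $x$. Applied to $M$, the weight spaces of $T_xM$ are the $N_x(M_j,M)$; those coming from $M_1,\dots,M_k$ are exactly the ones on which $L$ acts nontrivially, so $T_xN$ is the sum of the remaining $n-k$ weight spaces. For one inclusion, if $M_j\notin\{M_1,\dots,M_k\}$ and $N'$ is a component of $M_j\cap N$ with $(N')^{T}\neq\emptyset$, then Lemma~\ref{lem:torus-m2} applied to $M_1,\dots,M_k,M_j$ shows that $N'$ has codimension two in $N$ and that $\langle\lambda(M_1),\dots,\lambda(M_k),\lambda(M_j)\rangle$ is $(k+1)$-dimensional; hence $\lambda(M_j)\not\subset L$, its image in $T/L$ is a one-dimensional subtorus fixing $N'$ pointwise, and $N'$ is characteristic in $N$.

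Conversely, for a characteristic submanifold $N'$ of $N$ through $x$, its normal space $N_x(N',N)$ is a weight space of $T_xN$, hence equals $N_x(M_j,M)$ for a unique $M_j\notin\{M_1,\dots,M_k\}$; letting $N''$ be the component of $M_j\cap N$ through $x$, the forward direction makes $N''$ characteristic in $N$, and a weight-space computation gives $T_xN''=T_xM_j\cap T_xN=T_xN'$, so $N'$ and $N''$ share a normal weight space at $x$ and the bijection forces $N'=N''$. The main obstacle is precisely this comparison: I must ensure the weight-space parametrization of characteristic submanifolds is compatible between $N$ and the ambient $M$, i.e.\ that a single $T/L$-weight space of $T_xN$ is genuinely a single $T$-weight space $N_x(M_j,M)$ of $T_xM$, since this is what lets me identify an abstract characteristic submanifold of $N$ with a component of some $M_j\cap N$.
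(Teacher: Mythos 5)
Your proof is correct and follows essentially the same route as the paper: the forward direction via the transversality from Lemma~\ref{lem:torus-m2}, and the converse by decomposing \(T_xM\) at a \(T\)-fixed point and matching the normal weight space of a characteristic submanifold of \(N\) with a weight space \(N_x(M_j,M)\) of \(T_xM\). You additionally verify what the paper leaves implicit (orientability of \(N\) and almost effectiveness of the \(T/L\)-action), and the compatibility issue you flag at the end is indeed settled because the weights of \(T_xM\) at a \(T\)-fixed point are linearly independent, so the \(T/L\)-weight spaces of \(T_xN\) are exactly the \(T\)-weight spaces of \(T_xM\) on which \(L\) acts trivially.
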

\begin{proof}
  Let \(M_i \neq M_1,\dots, M_k\) be a characteristic submanifold of \(M\) with \((M_i\cap N)^T \neq \emptyset\).
  Then, by Lemma~\ref{lem:torus-m2}, each component of \(M_i\cap N\) which contains a \(T\)-fixed point has codimension two in \(N\). 
  That means that they are characteristic submanifolds of \(N\).
  
  Now let \(N_1\subset N\) be a characteristic submanifold and \(x \in N_1^T\).
  Then we have
  \begin{equation*}
    T_xM=T_xN_1\oplus V_0 \oplus N_x(N,M)
  \end{equation*}
  as \(T\)-representations with \(V_0\) a one dimensional complex \(T\)-representation.
  Let \(M_i\) be the characteristic submanifold of \(M\), which corresponds to \(V_0\).
  Then \(N_1\) is the component of the intersection \(M_i\cap N\), which contains \(x\).
\end{proof}

\begin{lemma}
\label{sec:gener-torus-manif}
  Let \(M\) be a \(2n\)-dimensional torus manifold and \(T'\) a subtorus of \(T\).
  If \(N\) is a component of \(M^{T'}\), which contains a \(T\)-fixed point \(x\), then \(N\) is a component of the intersection of some characteristic submanifolds of \(M\).
\end{lemma}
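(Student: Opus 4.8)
The plan is to work at the $T$-fixed point $x\in N$ and use the weight decomposition of the tangent space. At $x$ the representation $T_xM$ splits into $n$ complex one-dimensional weight spaces $V_1,\dots,V_n$, with weights $\alpha_1,\dots,\alpha_n$, and the characteristic submanifolds of $M$ through $x$ are exactly $M_1,\dots,M_n$ with $N_x(M_i,M)=V_i$ (as used in the proof of Lemma~\ref{lem:iso2}). By Lemma~\ref{lem:torus-m2} the $\alpha_i$ are linearly independent, and $\lambda(M_i)$ acts nontrivially on $V_i$ and trivially on $V_j$ for $j\neq i$, so $\langle\alpha_j,\lambda(M_i)\rangle\neq 0$ exactly when $i=j$. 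I partition the indices into $I=\{i: T' \text{ acts trivially on } V_i\}$ and $J=\{1,\dots,n\}\setminus I$. Since $N$ is the component of $M^{T'}$ through $x$, its tangent space is $T_xN=(T_xM)^{T'}=\bigoplus_{i\in I}V_i$, so $\dim N = 2\#I$.

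Next I consider the characteristic submanifolds $M_j$ with $j\in J$; they all contain $x$, so by Lemma~\ref{lem:torus-m2} the intersection $\bigcap_{j\in J}M_j$ is a submanifold of codimension $2\#J$ and a union of components of $M^{T''}$, where $T''=\langle\lambda(M_j);\,j\in J\rangle$ has dimension $\#J$. Let $N''$ be the component of $\bigcap_{j\in J}M_j$ containing $x$. Its tangent space is $T_xN''=\bigcap_{j\in J}T_xM_j=\bigoplus_{i\in I}V_i=T_xN$, so $N$ and $N''$ agree infinitesimally at $x$ and have the same dimension $2\#I$. It remains to upgrade this to the global equality $N=N''$, and this is the main point of the argument.

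To do so I compare the two subtori. Let $K=\bigcap_{i\in I}\ker\alpha_i$; since the $\alpha_i$, $i\in I$, are $\#I$ independent characters, the identity component $K^0$ is a subtorus of dimension $\#J$. On one hand, $T'$ acting trivially on $V_i$ for $i\in I$ means $\alpha_i|_{T'}=0$, so $T'\subseteq K$ and, being connected, $T'\subseteq K^0$. On the other hand, $\langle\alpha_i,\lambda(M_j)\rangle=0$ for all $i\in I$, $j\in J$ (here $i\neq j$), so each $\lambda(M_j)$ lies in $K$; hence $T''\subseteq K^0$, and since both are subtori of dimension $\#J$ we get $T''=K^0$. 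Therefore $T'\subseteq T''$, so $M^{T''}\subseteq M^{T'}$ and in particular $N''\subseteq M^{T'}$; as $N''$ is connected and contains $x$, this forces $N''\subseteq N$. Finally $N''$ is a submanifold of $N$ of the same dimension, hence open in $N$, while being a component of $M^{T''}$ it is closed in $M$, hence closed in $N$; connectedness of $N$ then yields $N=N''$. Thus $N$ is a component of $\bigcap_{j\in J}M_j$, an intersection of characteristic submanifolds, as claimed.
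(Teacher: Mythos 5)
Your proof is correct and follows essentially the same route as the paper: decompose \(T_xM\) into its weight spaces at the \(T\)-fixed point, take the characteristic submanifolds \(M_j\), \(j\in J\), on whose normal spaces \(T'\) acts non-trivially, and identify \(N\) with the component of \(\bigcap_{j\in J}M_j\) through \(x\), using Lemma~\ref{lem:torus-m2} throughout. The only difference is that the paper reduces the claim to the tangent-space equality \(T_xN=T_x\bigl(\bigcap_{j\in J}M_j\bigr)\) and treats that as sufficient, whereas you make the passage from this infinitesimal equality to the global one explicit (via \(T'\subseteq T''=K^0\), hence \(N''\subseteq N\), followed by the open--closed argument), thereby filling in a step the paper leaves implicit.
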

\begin{proof}
  By Lemma~\ref{lem:torus-m2}, the intersection of the characteristic submanifolds \(M_1,\dots M_k\) is a union of some components of \(M^{\langle\lambda(M_1),\dots,\lambda(M_k)\rangle}\).

  Therefore we have to show that there are characteristic submanifolds \(M_1,\dots,M_k\) of \(M\) such that
  \begin{equation*}
    T_xN=T_x\left(M_1\cap\dots\cap M_k\right).
  \end{equation*}
  There are \(n\) characteristic submanifolds \(M_1,\dots,M_n\) which intersect transversely in \(x\).
  Therefore we have
  \begin{equation*}
    T_xM=N_x(M_1,M)\oplus \dots \oplus N_x(M_n,M).
  \end{equation*}
  We may assume that there is a \(1\leq k\leq n\) such that \(T'\) acts trivially on \(N_x(M_i,M)\) for \(i>k\) and non-trivially on \(N_x(M_i,M)\) for \(i\leq k\).
Then we have
\begin{equation*}
  T_xN=\left(T_xM\right)^{T'}= N_x(M_{k+1},M)\oplus\dots\oplus N_x(M_n,M)=T_x\left(M_1\cap\dots\cap M_k\right).
\end{equation*}
\end{proof}

\begin{lemma}
\label{sec:gener-torus-manif-1}
  Let \(M\) be a torus manifold with \(T^n\times \mathbb{Z}_2\)-action, such that \(\mathbb{Z}_2\) acts non-trivially on \(M\).
  Furthermore, let \(B\subset M\) be a submanifold of codimension one on which \(\mathbb{Z}_2\) acts trivially and \(N\) the intersection of characteristic submanifolds \(M_1,\dots,M_k\) of \(M\).
  Then \(B\) and \(N\) intersect transversely. 
\end{lemma}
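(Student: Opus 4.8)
The plan is to check transversality pointwise. Fix \(x\in B\cap N\) (if \(B\cap N=\emptyset\) there is nothing to prove) and show \(T_xB+T_xN=T_xM\). Since \(B\) has codimension one, this is equivalent to \(T_xN\not\subseteq T_xB\), and I intend to prove this \emph{not} by a direct tangent-space computation but by an orientation/determinant argument applied to the nontrivial element \(g\) generating \(\mathbb{Z}_2\): I will compute \(\det(dg_x|_{T_xM})\) in two ways and obtain a sign clash from the assumption \(T_xN\subseteq T_xB\).

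First I would record the local picture of the \(\mathbb{Z}_2\)-action at \(x\). As \(\mathbb{Z}_2\) fixes \(B\) pointwise, \(x\in M^{\mathbb{Z}_2}\) and \(g(x)=x\); linearizing the action of the compact group \(\mathbb{Z}_2\) near \(x\) (equivalently, working with a \(T\times\mathbb{Z}_2\)-invariant metric), \(dg_x\) is an involution of \(T_xM\), so \(T_xM\) is the sum of its \((+1)\)- and \((-1)\)-eigenspaces. Because \(B\subseteq M^{\mathbb{Z}_2}\) is a submanifold of dimension \(2n-1\), \(T_xB\) lies in the \((+1)\)-eigenspace; and because \(g\) acts nontrivially on the connected manifold \(M\), no component of the fixed submanifold \(M^{\mathbb{Z}_2}\) can be all of \(M\), so the \((+1)\)-eigenspace has dimension exactly \(2n-1\). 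Hence it equals \(T_xB\), the \((-1)\)-eigenspace \(L=N_x(B,M)\) is one-dimensional, and therefore \(\det(dg_x|_{T_xM})=(+1)^{2n-1}(-1)=-1\).

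Next I would analyze \(g\) on the normal directions of \(N\). By Lemma~\ref{lem:torus-m2}, with \(T'=\langle\lambda(M_1),\dots,\lambda(M_k)\rangle\) of dimension \(k\), the set \(N\) is a union of components of \(M^{T'}\), so \(T_xN=(T_xM)^{T'}\), while \(N_x(N,M)=\bigoplus_{j=1}^k V_j\) is a sum of \(k\) complex lines on which \(T'\) acts with linearly independent, hence distinct and nonzero, weights. Since \(g\) commutes with \(T'\) and fixes \(x\), it preserves the component of \(M^{T'}\) through \(x\); thus it preserves \(T_xN\) and permutes the weight spaces \(V_j\), and distinctness of the weights forces \(g(V_j)=V_j\) for each \(j\), so \(g\) also preserves \(N_x(N,M)=\bigoplus_j V_j\). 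On each \(V_j\), \(g\) is a real-linear involution commuting with the circle \(\lambda(M_{i_j})\) acting by complex multiplication; a real-linear map commuting with multiplication by an infinite subgroup of the unit circle must be complex-linear, and an involution then acts as \(\pm\id_{\mathbb{C}}\), which is orientation preserving on \(\mathbb{R}^2\) in either case. Therefore \(\det(dg_x|_{N_x(N,M)})=+1\).

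Finally I would derive the contradiction. Suppose \(T_xN\subseteq T_xB\). Then \(g\) is the identity on \(T_xN\) (it is the identity on all of \(T_xB\)), so using the \(g\)-invariant splitting \(T_xM=T_xN\oplus N_x(N,M)\) one gets
\[
\det(dg_x|_{T_xM})=\det(dg_x|_{T_xN})\cdot\det(dg_x|_{N_x(N,M)})=(+1)(+1)=+1,
\]
contradicting \(\det(dg_x|_{T_xM})=-1\) from the first step. Hence \(T_xN\not\subseteq T_xB\), which gives transversality at \(x\); since \(x\) was arbitrary, the lemma follows. I expect the middle step to be the main obstacle: controlling the \(\mathbb{Z}_2\)-action on \(N_x(N,M)\) precisely enough to conclude it is orientation preserving. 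This rests on commutation with \(T'\) together with the nondegeneracy of the normal weights supplied by Lemma~\ref{lem:torus-m2}; the distinct-weight input is exactly what prevents \(g\) from permuting the \(V_j\) nontrivially and what makes the parity bookkeeping in the determinant comparison valid.
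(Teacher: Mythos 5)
Your proof is correct, and it reaches the conclusion by a genuinely different route than the paper. The paper's argument is direct rather than by contradiction: at \(x\in B\cap N\) it decomposes \(T_xM\) under \(T'\times\mathbb{Z}_2\), where \(T'=\langle\lambda(M_1),\dots,\lambda(M_k)\rangle\), observes that the \((-1)\)-eigenspace of the involution is the one-dimensional normal line \(N_x(B,M)\), and notes that this line is \(T'\)-invariant (the acting group \(T^n\times\mathbb{Z}_2\) is abelian) and hence \(T'\)-fixed, because a nontrivial irreducible real torus representation is two-dimensional; therefore \(N_x(B,M)\subseteq (T_xM)^{T'}=T_xN\), and transversality is immediate since \(T_xB\) is a hyperplane. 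In other words, the paper applies the representation theory to the normal line of \(B\), whereas you apply it to the normal space \(N_x(N,M)=\bigoplus_j V_j\) of \(N\) and then conclude indirectly through the determinant bookkeeping \(\det(dg_x|_{T_xM})=-1\) versus \(\det(dg_x|_{\bigoplus_j V_j})=+1\). Both arguments are sound, and both hinge on the same two pillars (the eigenspace structure of the involution at \(x\) and torus representation theory); what differs is cost and payoff. The paper's version is three lines, needs from Lemma~\ref{lem:torus-m2} only that \(T_xN=(T_xM)^{T'}\), and yields slightly more, namely that the normal direction of \(B\) is actually tangent to \(N\). Your version additionally needs the weight structure of \(N_x(N,M)\) (linear independence of the weights, which is established in the proof of Lemma~\ref{lem:torus-m2} rather than in its statement) and the complex-linearity observation, but the parity argument is robust: it never has to locate the \((-1)\)-eigenspace inside the isotypic decomposition, and it is the same style of orientation argument the paper itself uses elsewhere, for instance in Lemmas~\ref{sec:action-weyl-group} and~\ref{sec:case-g_1=so2l_1}.
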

\begin{proof}
  Let \(x\in B\cap N\) then we have the \(\langle \lambda(M_1),\dots,\lambda(M_k)\rangle\times \mathbb{Z}_2\)-representation \(T_xM\).
  It decomposes as the sum of the eigenspaces of the non-trivial element of \(\mathbb{Z}_2\).
  Because \(B\) has codimension one the eigenspace to the eigenvalue \(-1\) is one dimensional.
  Because the irreducible non-trivial torus representations are two-dimensional, we have
  \begin{align*}
    T_xN&= \left(T_xM\right)^{\langle \lambda(M_1),\dots,\lambda(M_k)\rangle} =T_xM^{\langle \lambda(M_1),\dots,\lambda(M_k)\rangle\times\mathbb{Z}_2}\oplus N_x(B,M)^{\langle \lambda(M_1),\dots,\lambda(M_k)\rangle} \\
    & = T_xM^{\langle \lambda(M_1),\dots,\lambda(M_k)\rangle\times\mathbb{Z}_2}\oplus N_x(B,M).
  \end{align*}
  That means that the intersection is transverse.
\end{proof}

\begin{lemma}
\label{sec:gener-torus-manif-2}
  Let \(M^{2n}\) be a \((\mathbb{Z}_2)_1\times(\mathbb{Z}_2)_2\)-manifold such that \((\mathbb{Z}_2)_i\) acts non-trivially on \(M\).
  Furthermore, let \(B_i\subset M\), \(i=1,2\), be closed connected submanifolds of codimension one such that \((\mathbb{Z}_2)_i\) acts trivially on \(B_i\).
  Then the following statements are equivalent:
  \begin{enumerate}
  \item \(B_1,B_2\) intersect transversely
  \item \(B_1\neq B_2\)
  \item  \((\mathbb{Z}_2)_1\times(\mathbb{Z}_2)_2\) acts effectively on \(M\) or \(B_1\cap B_2= \emptyset\)
  \end{enumerate}
\end{lemma}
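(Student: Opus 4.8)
The plan is to prove the cyclic chain of implications (1)$\Rightarrow$(2)$\Rightarrow$(3)$\Rightarrow$(1). Throughout I write $g_i$ for the non-trivial element of $(\mathbb{Z}_2)_i$, I take $M$ connected and the $B_i$ non-empty, and I fix a $(\mathbb{Z}_2)_1\times(\mathbb{Z}_2)_2$-invariant Riemannian metric, so that every group element acts as an isometry. The key preliminary observation, which I would establish first, is that each $B_i$ is actually a connected component of the fixed point set $M^{g_i}$: since $g_i$ is non-trivial and $M$ is connected, no component of $M^{g_i}$ has codimension zero, so the component of $M^{g_i}$ containing the closed codimension-one submanifold $B_i$ has dimension $2n-1$ and hence coincides with $B_i$ (it contains $B_i$ as an open and closed subset). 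Consequently, for $x\in B_i$ the $(+1)$-eigenspace of $D_xg_i$ equals $T_xB_i$, and $D_xg_i$ is the orthogonal reflection in the hyperplane $T_xB_i$.

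For (1)$\Rightarrow$(2) I argue that if $B_1=B_2$, then at any $x\in B_1=B_2$ one has $T_xB_1+T_xB_2=T_xB_1\neq T_xM$, because $B_1$ has codimension one; so the intersection is not transverse, and transversality forces $B_1\neq B_2$. For (2)$\Rightarrow$(3) I argue contrapositively: suppose the group does not act effectively and $B_1\cap B_2\neq\emptyset$. Non-effectiveness means the remaining non-trivial element $g_1g_2$ acts trivially, i.e.\ $g_1=g_2=:g$. By the preliminary observation $B_1$ and $B_2$ are then both components of $M^{g}$, and two components of a fixed point set are either disjoint or equal; since $B_1\cap B_2\neq\emptyset$ this gives $B_1=B_2$, so (2) fails.

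For (3)$\Rightarrow$(1), if $B_1\cap B_2=\emptyset$ there is nothing to prove, so I assume the group acts effectively and take $x\in B_1\cap B_2$. As both $T_xB_1$ and $T_xB_2$ are hyperplanes, either $T_xB_1+T_xB_2=T_xM$ (transversality at $x$) or $T_xB_1=T_xB_2$. In the latter case the preliminary observation shows $D_xg_1=D_xg_2$, since both are the reflection in this common hyperplane, and hence $D_x(g_1g_2)=\id$. As $x\in M^{g_1}\cap M^{g_2}\subset M^{g_1g_2}$, the map $g_1g_2$ is an isometry of the connected manifold $M$ fixing $x$ with identity differential at $x$; by the rigidity of isometries it must be the identity, contradicting effectiveness. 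Therefore $T_xB_1\neq T_xB_2$ and the intersection is transverse at every point.

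The genuinely non-formal step is the one just used in (3)$\Rightarrow$(1): excluding the tangential coincidence $T_xB_1=T_xB_2$ under the effectiveness hypothesis. Everything else is dimension counting and the component structure of fixed point sets, but here I must invoke the standard rigidity fact that an isometry of a connected Riemannian manifold fixing a point with trivial differential there is the identity (via $\phi\circ\exp_x=\exp_x\circ D_x\phi$, so the locus where $\phi=\id$ and $D\phi=\id$ is open and closed). I expect this to be the main obstacle to write cleanly, together with the care needed to justify that $B_i$ is a full fixed component and that $D_xg_i$ is the reflection in $T_xB_i$; both of these in turn rely on $M$ being connected and on the choice of an invariant metric, which I would state explicitly as standing hypotheses.
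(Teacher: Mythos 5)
Your proof is correct and follows essentially the same route as the paper's: both reduce to the dichotomy, at a point \(x\in B_1\cap B_2\), between the \((-1)\)-eigendirections of \(g_1\) and \(g_2\) in \(T_xM\) coinciding (forcing \(g_1g_2\) to act trivially, \(B_1=B_2\), and non-transversality) or being distinct (giving effectiveness and transversality). The difference is only presentational: you arrange it as a cycle of implications and make explicit the invariant-metric and isometry-rigidity facts that the paper uses implicitly when it asserts the two-case decomposition \(T_xM=\mathbb{R}^{2n-1}\oplus V_1\otimes V_2\) or \(\mathbb{R}^{2n-2}\oplus V_1\oplus V_2\).
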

\begin{proof}
  Denote by \(V_i\) the non-trivial real irreducible representation of \((\mathbb{Z}_2)_i\).
  Let \(x \in B_1\cap B_2\).
  Then for the  \((\mathbb{Z}_2)_1\times(\mathbb{Z}_2)_2\)-representation \(T_xM\) there are two possibilities:
  \begin{equation*}
    T_xM=
    \begin{cases}
      \mathbb{R}^{2n-1}\oplus V_1 \otimes V_2\\
      \mathbb{R}^{2n-2}\oplus V_1 \oplus V_2
    \end{cases}
  \end{equation*}
  In the first case \(B_i\), \(i=1,2\), is the component of \(M^{(\mathbb{Z}_2)_1\times(\mathbb{Z}_2)_2}\) containing \(x\) and  \((\mathbb{Z}_2)_1\times(\mathbb{Z}_2)_2\) acts non-effectively on \(M\).
  In the second case  \((\mathbb{Z}_2)_1\times(\mathbb{Z}_2)_2\) acts effectively on \(M\) and \(B_1,B_2\) intersect transversely in \(x\).
  
  All conditions given in the lemma imply that we are in the second case or \(B_1\cap B_2=\emptyset\).
  Therefore they are equivalent.
\end{proof}

\begin{remark}
  Lemmas~\ref{lem:torus-m2},~\ref{sec:gener-torus-manif-1} also hold if we do not require that a characteristic manifold contains a \(T\)-fixed point.
\end{remark}

\bibliography{test}{}


\bibliographystyle{amsplain}

\end{document}